\newtheorem{thm}{Theorem}[subsection]
\newtheorem{lem}[thm]{Lemma}
\newtheorem{prop}[thm]{Proposition}
\newtheorem{cor}[thm]{Corollary}
\newtheorem{defn}[thm]{Definition}
\newenvironment{rmk}{\refstepcounter{thm} \medskip \noindent {\bf  Remark \arabic{section}.\arabic{subsection}.\arabic{thm}.\,}}{\hfill\mbox{}\bigskip}
\newtheorem{thmint}{Theorem}
\newtheorem{corint}[thmint]{Corollary}
\newcounter{num}
\newenvironment{thmlist}{\begin{list}{(\roman{num})}{\usecounter{num}\setlength{\leftmargin}{25pt}
\setlength{\itemindent}{0pt}\setlength{\labelwidth}{20pt}\setlength{\labelsep}{5pt}\setlength{\itemsep}{0in}}}{\end{list}}
\def\fg{{\mathfrak g}}
\def\fk{{\mathfrak k}}
\def\fp{{\mathfrak p}}
\def\fz{{\mathfrak z}}
\def\B{\mathbb{B}}
\def\C{\mathbb{C}}
\def\N{\mathbb{N}}
\def\R{\mathbb{R}}
\def\i{\sqrt{-1}}
\def\del{\partial}
\newcommand{\cD}{\mathcal{D}}
\newcommand{\cE}{\mathcal{E}}
\newcommand{\cF}{\mathcal{F}}
\newcommand{\cG}{\mathcal{G}}
\newcommand{\cH}{\mathcal{H}}
\newcommand{\cK}{\mathcal{K}}
\newcommand{\cL}{\mathcal{L}}
\newcommand{\cM}{\mathcal{M}}
\newcommand{\cO}{\mathcal{O}}
\newcommand{\cS}{\mathcal{S}}
\newcommand{\re}{\operatorname{Re}}
\newcommand{\Aut}{\operatorname{Aut}}
\newcommand{\aut}{\operatorname{\mathfrak{aut}}}
\newcommand{\Cal}{\operatorname{Cal}}
\newcommand{\End}{\operatorname{End}}
\newcommand{\sF}{\mathscr{F}_{\xi}}
\newcommand{\fol}{\operatorname{\mathfrak{fol}}}
\newcommand{\Fol}{\operatorname{Fol}}
\newcommand{\Ham}{\mathfrak{h}^{\operatorname{Ham}}}
\newcommand{\hol}{\operatorname{\mathfrak{hol}}}
\newcommand{\im}{\operatorname{Im}}
\newcommand{\Li}{\operatorname{L}}
\newcommand{\PSH}{\operatorname{PSH}}
\newcommand{\Ric}{\operatorname{Ric}}
\newcommand{\tr}{\operatorname{tr}}
\newcommand{\Vol}{\operatorname{Vol}}
\newcommand{\contr}{\,\lrcorner\,}
\newcommand{\ol}[1]{\overline{#1}}
\begin{document}

\title[uniqueness of Sasaki-extremal metrics]
{Monge-Amp\`{e}re operators, energy functionals, and uniqueness of Sasaki-extremal metrics}
\author[C. van Coevering]{Craig van Coevering}
\address{School of Mathematical Sciences, U.S.T.C., Anhui, Hefei 230026, P. R. China}
\curraddr{The Mathematical Sciences Research Institute, 17 Gauss Way, Berkeley, CA 94720-5070}
\email{craigvan@ustc.edu.cn}
\keywords{Sasakian, Sasaki-extremal, K-energy, Monge-Amp\`{e}re operator}
\subjclass{53C25 primary, 32W20 secondary}

\begin{abstract}
We develop some pluripotential theoretic techniques for the transversally holomorphic foliation of a Sasakian manifold.
We prove the convexity of the K-energy along weak geodesics for Sasakian manifolds.  This implies that the K-energy is
bounded below if a constant scalar curvature structure exists with those metrics minimizing it.  More generally, a
relative version of the K-energy is convex, and bounded below if there exists a Sasaki-extremal metric, providing
an important necessary condition for Sasaki-extremal metrics.  Another application is a proof of the uniqueness of Sasaki-extremal metrics for a fixed transversally holomorphic structure on the Reeb foliation.

\end{abstract}

\maketitle

\section{Introduction}\label{sec:intro}

There has been a renewed interest in Sasakian geometry recently from two sources.  First, they have provided a very good
source of new examples of Einstein manifolds~\cite{BGK05,BoGa05,Kol05} and the survey article~\cite{Spa11}.
Second, they play a crucial role in the AdS/CFT correspondence~\cite{AFHS98,Mal98,MoPl99,MSY08},
which is a proposed duality between string theory on an odd dimensional Einstein manifold and conformal field theory.
It is also worth mentioning that metric cones over Sasakian manifolds arise as the tangent cones at infinity of
non-compact Calabi-Yau manifolds with Euclidean volume growth~\cite{DoSu14,CoMi14}.

These new results have been facilitated by the fact that
a Sasakian manifold is an odd dimensional contact analogue of a K\"{a}hler manifold, both the metric cone over the manifold
and the transversal space to the Reeb foliation have natural K\"{a}hler structures, so many of the techniques used
in K\"{a}hler geometry are applicable.  In particular, one expects that much of the results in K\"{a}hler geometry
related to the program proposed by S. Donaldson~\cite{Don97,Don99}, which was conjectured earlier by S.-T. Yau~\cite{Yau93},
will hold for Sasakian manifolds, in which the existence and uniqueness of constant scalar curvature K\"{a}hler metrics is
considered as a problem in infinite dimensional geometric invariant theory.  Much of the work was done earlier and independently
by T. Mabuchi, and S. Semmes~\cite{Mab87,Sem92}, in which the space of K\"{a}hler metrics $\cH$ in a given K\"{a}hler class
was shown to have a natural weak Riemannian structure and Riemannian connection.
The role of the Kempf-Ness functional in finite dimensional
geometric invariant theory is played by the K-energy on $\cH$.  It was observed by S. Donaldson that the existence of
geodesics in $\cH$ would lead to a proof of uniqueness of constant scalar curvature metrics in $\cH$ and
some sort of convexity of the K-energy should provide necessary and sufficient for existence.
Unfortunately, smooth geodesics are not known to exist in $\cH$.  But X. X. Chen proved
the existence of weak $C^{1,1}$ geodesics~\cite{Che00a}.  This was sufficient for X. X. Chen to
prove the uniqueness of constant scalar curvature K\"{a}hler metrics when $c_1(M)\leq 0$. Uniqueness was proved in general
by X. X. Chen and G. Tian~\cite{ChTi08} by proving stronger partial regularity on the geodesics.  Recently,
R. Berman and B. Berndtsson~\cite{BeBer14} and X. X. Chen, L. Li, and M. Paun~\cite{CLP14} proved the geodesic convexity
of the K-energy on weak geodesics, giving a simpler proof of uniqueness of constant scalar curvature K\"{a}hler metrics.

In Sasakian geometry the Reeb vector field is the analogue of a polarization in K\"{a}hler geometry.
And $\cH$ is the space of transversal K\"{a}hler metrics with the given polarization and transversal complex structure.
P. Guan and X. Zhang~\cite{GuaZha12} proved the existence of weak $C^{1,1}$ geodesics between elements of $\cH$.  They also
proved the uniqueness of constant scalar curvature Sasaki(cscS) metrics when $c_1^b(M)\leq 0$, where $c_1^b(M)$ denotes
the basic first Chern class of the Reeb foliation.  In the Sasaki-Einstein case $c_1^b(M) =a[\omega^T], a>0$,
Y. Nitta and K. Sekiya proved the uniqueness of Sasaki-Einstein metrics, up to automorphisms of the transversal
holomorphic structure, by extending the arguments of S. Bando and T. Mabuchi~\cite{NiSe12}.  Uniqueness for toric
cscS structures is also known due to K. Cho, A. Futaki, K. Ono~\cite{CFO08}.

In this article we prove the uniqueness of cscS metrics in $\cH$ in general, and more generally, we prove the uniqueness
of Sasaki-extremal metrics up to the automorphisms of the Reeb foliation and its transversal holomorphic structure.
Sasaki-extremal metrics were first defined by C. Boyer, K. Galicki, S. Simanca~\cite{BoGaSi08}, termed
\emph{canonical Sasakian} metrics.
A Sasaki-extremal metric is a critical point of the \emph{Calabi functional}
\[\Cal_{M,\xi} :\cH \rightarrow\R \]
\begin{equation}
\Cal_{M,\xi}(\phi) := \int_M (S_{\phi} -\ol{S})^2 \, d\mu_\phi,
\end{equation}
where $d\mu_\phi =(\omega^T +dd^c \phi)^m \wedge\eta$.  As in the K\"{a}hler
case, extremal metrics are constant scalar curvature precisely when the transversal Futaki invariant vanishes.
Thus it enlarges the cases in which a \emph{canonical} metric exists.  There has been much research on
Sasaki-extremal metrics recently.  See~\cite{Boy11,BoTF13,BoTF14,BoTF15} for some recent work.

This article will provide the useful uniqueness result and obstructions involving the K-energy.
Thus, when they exist Sasaki-extremal metrics provide a canonical Sasakian metric for a given
transversely holomorphic foliation.  But from work in the K\"{a}hler case, we know that such metrics will not always exist.

The central result is convexity of the K-energy along weak $C^{1,1}$ geodesics, denoted $C^{1,1}_w$.
See the definition before Theorem~\ref{thm:weak-geo}.
As in~\cite{Che00b} we can extend the
K-energy $\cM:\cH \rightarrow\R$ to $\cH_{1,1}$, where $\cH_{1,1}$ is the space of transversal K\"{a}hler potentials
$\phi\in C^{1,1}_w$, weak $C^{1,1}$, with $\omega^T + dd^c \phi \geq 0$.

Let $\phi_0,\phi_1 \in\cH$, where we consider $\cH$ to be the space of smooth transversal K\"{a}hler potentials.
Let $\phi_t, 0\leq t\leq 1,$ be a weak $C^{1,1}$ geodesic, that is $\phi \in C^{1,1}_w(M\times [0,1])$ and
$\omega^T +dd^c \phi_t \geq 0$ for each $t\in[0,1]$.
\begin{thmint}\label{thmint:conv-K-ener}
The K-energy $\cM$ is convex along weak $C_w^{1,1}$ geodesics, that is, $\cM(\phi_t)$ is convex in $t\in[0,1]$.
\end{thmint}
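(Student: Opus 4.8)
The plan is to transplant the argument of Berman--Berndtsson \cite{BeBer14} (see also Chen--Li--Paun \cite{CLP14}) to the transversally holomorphic foliation, using the basic pluripotential theory developed above. The starting point is a transversal Chen--Tian decomposition of the K-energy,
\[
\cM(\phi) = \operatorname{Ent}(\phi) + E(\phi),
\]
where $\operatorname{Ent}(\phi) = \int_M \log\!\big(d\mu_\phi/d\mu_0\big)\, d\mu_\phi$ is the relative entropy of $d\mu_\phi = (\omega^T + dd^c\phi)^m\wedge\eta$ against a fixed background measure $d\mu_0$ built from a transversal Ricci potential, and $E(\phi)$ collects the energy functionals that are polynomial in $\phi$ and linear in the transversal Ricci form $\rho^T$ and in $\ol{S}$. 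Convexity of $t\mapsto\cM(\phi_t)$ is then proved by treating the two summands separately.

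First I would dispose of the energy part. After the standard complexification --- a weak $C^{1,1}_w$ geodesic $\phi_t$ corresponds, via $t = \log|\tau|$ on an annulus $A\subset\C$, to a bounded basic potential $\Phi$ on $M\times A$ solving the transversal homogeneous complex Monge--Amp\`ere equation $(\omega^T + dd^c\Phi)^{m+1}\wedge\eta = 0$ with $\omega^T + dd^c\Phi\ge 0$ (where $dd^c$ now includes the $\tau$-directions) --- the transversal Aubin--Mabuchi energy becomes affine in $t$. The terms carrying the unsigned form $\rho^T$ are reorganized using the identity $\rho^T = \rho^T_0 - dd^c\log(d\mu_\phi/d\mu_0)$, so that the part with no definite sign is absorbed into the entropy to form a single convex functional, exactly as in the K\"ahler case; what remains is controlled by positivity of the relevant mixed Monge--Amp\`ere currents along $\Phi$.

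The heart of the matter, and the main obstacle, is the convexity of the entropy term. Here I would invoke a transversal analogue of Berndtsson's positivity theorem: the subharmonicity in $\tau$ of the fibrewise $L^2$ data attached to $\Phi$, which yields convexity of $t\mapsto\operatorname{Ent}(\phi_t)$. To make this rigorous one first works with smooth $\ep$-geodesics, i.e.\ solutions on $M\times A$ of the nondegenerate equation $(\omega^T + dd^c\Phi)^{m+1}\wedge\eta = \ep\,(\omega^T_0)^{m+1}\wedge\eta$ with the given endpoints, where the basic $\delb$-theory and a transversal Bochner formula give the required curvature positivity and the entropy's second $t$-derivative is bounded below by an error that is $O(\ep)$. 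One then lets $\ep\to 0$, using the uniform $C^{1,1}$ bounds of \cite{GuaZha12}, weak-$*$ convergence of the Monge--Amp\`ere measures, and lower semicontinuity of the entropy, to pass convexity to the weak geodesic $\phi_t$.

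The principal difficulty is that Berndtsson's machinery is intrinsically complex-analytic --- it lives on an honest complex manifold carrying a line bundle, through $L^2$ estimates for $\delb$ --- whereas the Reeb foliation carries no global transversal complex structure. The resolution is to run the positivity argument with the basic operators $\del_B,\delb_B$ furnished by the basic pluripotential theory, verifying the needed subharmonicity via a basic Bochner--Kodaira identity; alternatively, one approximates the Sasakian structure by quasi-regular ones, reducing to K\"ahler orbifolds where Berndtsson's theorem applies directly, and then passes to the limit. In either approach the transversal character of $\omega^T$ and the extra factor $\eta$ present in every volume form must be carried consistently through all the integrations by parts.
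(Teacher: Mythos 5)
Your overall strategy --- the Chen decomposition $\cM(u)=\bigl(\ol{S}^T\cE(u)-\cE^{\Ric_{\omega^T}}(u)\bigr)+H_{\mu_0}(d\mu_u)$, affineness of $\cE$ along the geodesic, and Berndtsson-type positivity for the entropy --- is the same as the paper's, and the obstacle you identify (no global transversal complex structure) is the right one. But your concrete plan for the crucial entropy step has a genuine gap. You propose to prove convexity along smooth $\ep$-geodesics with an error that is $O(\ep)$ and then let $\ep\to 0$ using the uniform $C^{1,1}_w$ bounds. The error term in the second variation of $\cM$ along an $\ep$-geodesic is $-\ep\int_M (S^T_{\phi^\ep_t}-\ol{S}^T)\,(\omega^T)^m\wedge\eta$, coming from pairing the geodesic defect $(\ddot\phi-\tfrac{1}{2}|d\dot\phi|^2_{\omega^T_\phi})(\omega^T_\phi)^m\wedge\eta=\ep(\omega^T)^m\wedge\eta$ with the scalar curvature. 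Since $S^T_{\phi^\ep_t}$ involves four derivatives of $\phi^\ep_t$ and is integrated against the \emph{fixed} volume form $(\omega^T)^m\wedge\eta$ rather than against $(\omega^T_{\phi^\ep_t})^m\wedge\eta$, this integral is not cohomological and is not controlled by uniform $C^{1,1}_w$ bounds; showing it is $o(1)$ is essentially equivalent to the partial regularity of the weak geodesic that one is trying to avoid. This is precisely the classical obstruction in the K\"ahler case, and it is why Berman--Berndtsson do not argue via $\ep$-geodesics. Your fallback of approximating by quasi-regular structures also does not apply here: the Reeb field $\xi$ is fixed throughout, and perturbing it changes $\cH$, the geodesic equation, and $\cM$ itself, with no continuity statement available.

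The paper's resolution, which you gesture at but do not pin down, works directly on the weak geodesic with no regularization in $\ep$. One writes $dd^c_\tau$ of the entropy term as $\int_M dd^c\bigl(\Psi\,(\pi^*\omega^T+dd^cU)^m\bigr)\wedge\eta$ for the truncated singular fibrewise metric $\Psi_A=\max\{\log(\omega^T+dd^cu_\tau)^m,\chi-A\}$ on the transversal canonical bundle, and proves positivity of this current \emph{locally}: each foliation chart $W_\alpha\times V\subset\C^m\times\C$ is an honest complex domain, so one may approximate $\Psi_A$ there by local Bergman kernels $\beta_{k\phi_\tau}$, apply Berndtsson's subharmonicity of Bergman kernels to get $dd^c\log\beta_{k\phi_\tau}\geq -k\,dd^c\Phi$, hence $dd^c\Psi_{A,k}\wedge(dd^c\Phi)^m\geq -k(dd^c\Phi)^{m+1}=0$, and pass to the limit $k\to\infty$ using the Berman--Berndtsson convergence theorem for Bergman measures of $C^{1,1}_w$ weights. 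A separate argument (local entropies composed with strictly convex $\kappa_\ep$) is then required to show that $\cM(u_t)$ is continuous in $t$, so that distributional subharmonicity upgrades to pointwise convexity; your proposal does not address this step either.
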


The proof of Theorem~\ref{thmint:conv-K-ener} involves pluripotential theoretic arguments on the transversal space
to the Reeb foliation.  Much of \S~\ref{sec:Sasak-trans} is spent developing the necessary background
on transversal plurisubharmonic functions, currents, and Monge-Anp\`{e}re operators on the transversal space.
Much of this work is of independent interest, such as weak continuity of the transversal Monge-Anp\`{e}re operator
and a strong maximal principle.  The latter give uniqueness for weak geodesics that are only assumed to
be continuous.  These results hopefully will provide a useful framework for future work in Sasakian geometry
along the lines of the analytical approaches to K\"{a}hler geometry such as~\cite{BBGZ13,Ber15}.

In \S~\ref{sec:en-funct} we define the energy functionals on the space of potentials that will be
needed to define the K-energy $\cM$ on weak potentials and in proving Theorem~\ref{thmint:conv-K-ener}.
Theorem~\ref{thmint:conv-K-ener} is proved in \S~\ref{subsec:con-K-en}.  The main part of the proof
is proving that $\cM(u_\tau)$ is weakly subharmonic in $\tau\in D\subset\C$ when $\{ u_\tau \}$ is a
weak geodesic in the domain $D$.

An important application of Theorem~\ref{thmint:conv-K-ener} is the proof of uniqueness of constant scalar curvature
Sasakian (cscS) structures modulo diffeomorphisms preserving the transversely holomorphic foliation.
We denote by $\cS(\xi,J)$ the space of Sasakian structures with Reeb vector field $\xi$ and transversely holomorphic structure $J$.
Let $\Fol(\mathscr{F}_\xi,J)$ be the group of diffeomorphisms preserving the Reeb foliation $\mathscr{F}_\xi$ along
with its transversely holomorphic structure.
\begin{corint}\label{corint:unique-cscS}
Suppose that $(\eta_0,\xi,\omega^T_0),(\eta_1,\xi,\omega^T_1) \in\cS(\xi,J)$ are two cscS structures.  Then there is
a $g\in\Fol(\mathscr{F}_\xi,J)$ so that $g^*\omega^T_1 =\omega_0^T$.
\end{corint}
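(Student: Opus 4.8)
The plan is to adapt the Berman--Berndtsson strategy~\cite{BeBer14} to the Sasakian setting, combining the convexity of the K-energy (Theorem~\ref{thmint:conv-K-ener}) with a rigidity analysis of the equality case. First I would reduce to potentials: since $(\eta_0,\xi,\omega_0^T)$ and $(\eta_1,\xi,\omega_1^T)$ both lie in $\cS(\xi,J)$, they share the Reeb field $\xi$ and the transverse holomorphic structure $J$, so their transversal K\"ahler forms represent the same basic cohomology class and $\omega_1^T=\omega_0^T+dd^c\phi_1$ for a smooth basic potential $\phi_1\in\cH$; set $\phi_0=0$. By~\cite{GuaZha12} there is a weak $C^{1,1}$ geodesic $\phi_t$, $t\in[0,1]$, joining $\phi_0$ and $\phi_1$. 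A cscS metric is a critical point of $\cM$, and the first-variation formula $\tfrac{d}{dt}\cM(\phi_t)=-\int_M (S_{\phi_t}-\ol{S})\dot{\phi}_t\,d\mu_{\phi_t}$ vanishes at an endpoint where $S_{\phi_t}=\ol{S}$; hence by Theorem~\ref{thmint:conv-K-ener} each cscS endpoint is a minimizer of $\cM$ along geodesics issuing from it. Applying this at both ends gives $\cM(\phi_1)\ge\cM(\phi_0)$ and $\cM(\phi_0)\ge\cM(\phi_1)$, so the endpoint values agree; a convex function on $[0,1]$ with equal endpoint values that are simultaneously the minimum must be constant. Thus $\cM(\phi_t)$ is constant in $t$.

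The crux is to extract rigidity from this constancy. Following the complexification used in the proof of Theorem~\ref{thmint:conv-K-ener}, the geodesic corresponds to an $S^1$-invariant solution $u_\tau$ of the homogeneous transversal Monge--Amp\`ere equation over a strip $D\subset\C$, and the convexity of $\cM(\phi_t)$ is exactly the subharmonicity of $\tau\mapsto\cM(u_\tau)$. Constancy of $\cM(\phi_t)$ promotes this to harmonicity, i.e. $dd^c\cM(u_\tau)=0$, so the nonnegative fiber integrand in the Berndtsson-type formula for $dd^c\cM(u_\tau)$ must vanish identically. This forces $\delb\bigl(\grad^{1,0}\dot{\phi}_t\bigr)=0$; that is, for each $t$ the complex gradient $V_t:=\grad^{1,0}\dot{\phi}_t$ is a transversally holomorphic, basic Hamiltonian vector field for $\omega_t^T$.

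It then remains to integrate $V_t$ into the desired automorphism. Because $\dot{\phi}_t$ is basic, $V_t$ is projectable and its real flow $g_t$ preserves the Reeb foliation $\sF$; transverse holomorphicity of $V_t$ makes $g_t$ preserve $J$, so $g_t\in\Fol(\sF,J)$. Differentiating $g_t^*\omega_t^T$ and using the geodesic equation $\ddot{\phi}_t=|\del\dot{\phi}_t|_{\omega_t^T}^2$ together with the identification $V_t=\grad^{1,0}\dot{\phi}_t$ yields $\tfrac{d}{dt}\bigl(g_t^*\omega_t^T\bigr)=0$, whence $g_t^*\omega_t^T=\omega_0^T$; taking $g:=g_1$ gives $g^*\omega_1^T=\omega_0^T$, as claimed.

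The main obstacle is regularity: the geodesic is only $C^{1,1}_w$, so neither the equality-case vanishing $\delb V_t=0$ nor the integration of $V_t$ is justified by a pointwise computation. Making the rigidity rigorous requires the weak continuity of the transversal Monge--Amp\`ere operator and the strong maximum principle developed in \S\ref{sec:Sasak-trans}, used as in~\cite{BeBer14} to show that the extremal (harmonic) case genuinely produces a holomorphic family of foliation automorphisms in spite of the limited regularity. This low-regularity passage from $dd^c\cM(u_\tau)=0$ to an honest one-parameter subgroup in $\Fol(\sF,J)$ is the heart of the argument.
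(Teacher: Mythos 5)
Your reduction to a weak geodesic and the conclusion that $\cM(\phi_t)$ is constant are fine (this is exactly what Lemma~\ref{lem:K-en-dif} and Theorem~\ref{thmint:conv-K-ener} give: the one-sided derivative at each cscS endpoint is $\geq 0$, resp.\ $\leq 0$, so the convex function $\cM(\phi_t)$ is affine, hence constant). The gap is in the rigidity step, and you have in effect flagged it yourself without closing it. The proof of Theorem~\ref{thmint:conv-K-ener} in \S\ref{subsec:con-K-en} only establishes that the transversal current $T=dd^c\bigl(\Psi(\pi^*\omega^T+dd^cU)^m\bigr)\wedge\eta$ is a \emph{positive} measure whose fiber integral is $dd^c\cM(u_\tau)$; it does not identify the density of $T$ as $\|\delb_b\,\grad^{1,0}\dot u_t\|^2$ plus nonnegative terms. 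So even granting that constancy forces $T=0$, nothing in the paper (or in your proposal) converts $T=0$ into $\delb_b\bigl(\grad^{1,0}\dot\phi_t\bigr)=0$. Worse, for a $C^{1,1}_w$ geodesic $\dot\phi_t$ is only Lipschitz, so $\grad^{1,0}\dot\phi_t$ is merely an $L^\infty$ vector field: neither the assertion that it is transversally holomorphic nor the integration of its flow into a one-parameter family $g_t\in\Fol(\sF,J)$ is meaningful at this regularity, and the tools you invoke (weak continuity of the Monge--Amp\`ere operator, the strong maximum principle of Theorem~\ref{thm:stron-max}) do not supply the missing partial regularity. This is precisely the step that, in the K\"ahler case, requires either the Chen--Tian partial regularity theory or a separate perturbation argument; asserting that it ``is the heart of the argument'' does not discharge it.

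The paper's proof is built to avoid this equality-case analysis altogether. It perturbs the K-energy to $\cM^{t\mu}=\cM+t\cF^{\mu}$, where $\cF^{\mu}$ is \emph{strictly} convex along weak geodesics in the quantitative sense of Proposition~\ref{prop:vol-twist-con}. Proposition~\ref{prop:vol-twist-prop} produces a distinguished minimizer $\phi_0$ of $\cF^{\mu}$ on each $H$-orbit, and Proposition~\ref{prop:csc-def} (the Chen--P\u{a}un--Zeng bifurcation argument, needed because $d\cG|_{(\phi_0,0)}$ has kernel $\mathscr{H}_{\phi_0}$) produces critical points $\phi^i_s$ of $\cM^{s\mu}$ for small $s>0$ near each of the two cscS orbits. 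Joining $\phi^0_s$ to $\phi^1_s$ by a weak geodesic, the one-sided derivative estimates force $\frac{d}{dt}\cM^{s\mu}(u^s_t)|_{t=1^-}-\frac{d}{dt}\cM^{s\mu}(u^s_t)|_{t=0^+}\leq 0$, which contradicts the strict convexity of $\cF^{\mu}$ unless $\omega^T_{u^s_0}=\omega^T_{u^s_1}$; letting $s\to 0$ identifies the two orbits. So the two routes are genuinely different: yours would, if completed, give a more direct and conceptually cleaner proof, but completing it requires a regularity/rigidity theorem for weak geodesics along which $\cM$ is affine that is not available in this paper, whereas the paper trades that difficulty for the implicit-function-theorem construction of Proposition~\ref{prop:csc-def}.
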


Using basic properties of convex functions we easily prove the following sub-slope inequality.
\begin{corint}\label{cor:Mab-bound}
Suppose $\phi_0,\phi_1 \in\cH$, then the following inequality holds
\[ \cM(\phi_1) -\cM(\phi_0) \geq -d(\phi_0,\phi_1)\bigl(\Cal_{M,\xi}(\phi_0)\bigr)^{\frac{1}{2}}, \]
where $d$ is the distance function of the Mabuchi metric on $\cH$.
\end{corint}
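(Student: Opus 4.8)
The plan is to combine the convexity statement of Theorem~\ref{thmint:conv-K-ener} with the elementary slope inequality for convex functions of one real variable. First I would invoke the existence (due to P.~Guan and X.~Zhang) of a weak $C^{1,1}_w$ geodesic $\phi_t$, $0\le t\le 1$, joining $\phi_0$ to $\phi_1$, and recall that such a geodesic has constant speed and realizes the Mabuchi distance, so that
\[
\|\dot\phi_0\|_{\phi_0} = \Bigl(\int_M \dot\phi_0^{\,2}\, d\mu_{\phi_0}\Bigr)^{1/2} = d(\phi_0,\phi_1).
\]
Setting $f(t) := \cM(\phi_t)$, Theorem~\ref{thmint:conv-K-ener} asserts that $f$ is convex on $[0,1]$. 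Every convex function satisfies $f(1)-f(0)\ge f'_+(0)$, where the right-hand derivative $f'_+(0)$ exists by convexity. Thus the entire problem reduces to producing a lower bound for $f'_+(0)$.

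Next I would identify $f'_+(0)$ with the first variation of the K-energy at the smooth endpoint $\phi_0$. Since $\phi_0\in\cH$ is smooth and the geodesic is $C^{1,1}_w$ in $(x,t)$, the initial velocity $\dot\phi_0$ is a well-defined continuous basic function, and the transversal first-variation formula for the K-energy gives
\[
f'_+(0) = -\int_M \dot\phi_0\,(S_{\phi_0} - \ol{S})\, d\mu_{\phi_0}.
\]
Applying the Cauchy--Schwarz inequality with respect to the measure $d\mu_{\phi_0}$ and recognizing the second factor as the square root of the Calabi functional, I obtain
\[
\Bigl| \int_M \dot\phi_0\,(S_{\phi_0}-\ol{S})\, d\mu_{\phi_0} \Bigr|
\le \|\dot\phi_0\|_{\phi_0}\,\bigl(\Cal_{M,\xi}(\phi_0)\bigr)^{1/2}.
\]
Chaining the three displays together gives $f(1)-f(0)\ge f'_+(0)\ge -\|\dot\phi_0\|_{\phi_0}\bigl(\Cal_{M,\xi}(\phi_0)\bigr)^{1/2} = -d(\phi_0,\phi_1)\bigl(\Cal_{M,\xi}(\phi_0)\bigr)^{1/2}$, which is precisely the asserted sub-slope inequality.

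The main obstacle I anticipate is the justification of the first-variation formula for $f'_+(0)$ at the merely $C^{1,1}_w$ level: one must know that $t\mapsto \cM(\phi_t)$ is right-differentiable at $t=0$ with derivative equal to the classical integral above, rather than only knowing abstract convexity. Because $\phi_0$ is smooth and the energy functionals assembled in \S\ref{sec:en-funct} are differentiable along $C^{1,1}_w$ paths issuing from a smooth potential, this should follow by differentiating the component functionals term by term near $t=0$. The constant-speed and distance-realizing property of the geodesic, needed for the identification $\|\dot\phi_0\|_{\phi_0}=d(\phi_0,\phi_1)$ (note one genuinely needs equality here, since $-d(\phi_0,\phi_1)\ge -\|\dot\phi_0\|_{\phi_0}$ would otherwise point the wrong way), is the other point requiring care and follows from the Riemannian structure of the Mabuchi metric on $\cH$ exactly as in the K\"ahler case.
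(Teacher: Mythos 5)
Your overall skeleton coincides with the paper's: convexity of $\cM$ along the weak geodesic (Theorem~\ref{thmint:conv-K-ener}), the sub-slope inequality $\cM(\phi_1)-\cM(\phi_0)\geq \frac{d}{dt}\cM(\phi_t)|_{t=0^+}$, a bound on the one-sided derivative at the smooth endpoint by the classical first-variation integrand, Cauchy--Schwarz, and the identification of $\bigl(\int_M \dot\phi_0^{\,2}\, d\mu_{\phi_0}\bigr)^{1/2}$ with $d(\phi_0,\phi_1)$. The step that does not go through as written is the claim that $f'_+(0)$ \emph{equals} $-\int_M \dot\phi_0 (S_{\phi_0}-\ol{S})\,d\mu_{\phi_0}$, justified by ``differentiating the component functionals term by term.'' That works for the energy part $\ol{S}^T\cE(u)-\cE^{\Ric_{\omega^T}}(u)$ of the decomposition (\ref{eq:K-en}) (this is Corollary~\ref{cor:en-first-var}), but it fails for the entropy term $H_{\mu_0}(d\mu_{\phi_t})$: for $t>0$ the potential $\phi_t$ is only $C^{1,1}_w$, so $(\omega^T_{\phi_t})^m\wedge\eta$ has merely bounded density and $S_{\phi_t}$, $\Ric_{\phi_t}$ are not classically defined. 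Right-differentiability of $t\mapsto H_{\mu_0}(d\mu_{\phi_t})$ at $t=0$ is not known, so the asserted equality is an overclaim and the proposed justification does not cover the term that actually causes the difficulty.

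The paper's Lemma~\ref{lem:K-en-dif} sidesteps this by proving only the one-sided \emph{inequality} $\frac{d}{dt}\cM(\phi_t)|_{t=0^+}\geq -\int_M (S^T_{\phi_0}-\ol{S})\,\dot\phi_0\, d\mu_{\phi_0}$: one uses convexity of $\nu\mapsto H_{\mu_0}(\nu)$ on measures of fixed total mass to bound the difference quotient of the entropy from below by $\int_M\log\bigl(\tfrac{d\mu_{\phi_0}}{d\mu_0}\bigr)\tfrac{1}{t}\bigl((\omega^T_{\phi_t})^m-(\omega^T_{\phi_0})^m\bigr)\wedge\eta$, integrates by parts against $dd^c\log\bigl(\tfrac{d\mu_{\phi_0}}{d\mu_0}\bigr)$, and passes to the limit using the weak convergence of Theorem~\ref{thm:weak-con}. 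Since an inequality in this direction is all your chain of estimates requires, replacing your asserted equality by this lower bound repairs the argument and yields the corollary; you should make that substitution explicit rather than relying on term-by-term differentiation.
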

Thus any metric with constant scalar curvature minimizes the K-energy.  Furthermore, by Corollary~\ref{corint:unique-cscS}
in this case the K-energy achieves its minimum precisely on the orbit of $\Fol(\mathscr{F}_\xi,J)$.

More generally we consider Sasaki-extremal structures.
When considering Sasaki-extremal structures it is useful to consider a modified or relative version of
the K-energy $\cM^V$.  Let $G\subset\Fol(\mathscr{F}_\xi,J)$ be a maximal compact connected subgroup,
and $(g,\eta,\xi,\Phi)$ be $G$-invariant.  Then $\cM^V$ is restricted to the space $\cH^G$ of $G$-invariant
potentials and has critical point precisely the potential corresponding to Sasaki-extremal structures.
Here $V$ denotes the extremal vector field which is a transversely holomorphic vector field which depends
only on the choice of maximal compact group.

Using the convexity of $\cM^V$ along weak geodesics we are able to prove the uniqueness of Sasaki-extremal structures
modulo $\Fol(\mathscr{F}_\xi,J)$.
\begin{corint}\label{corint:unique-Sasak-ext}
Suppose that $(\eta_0,\xi,\omega^T_0),(\eta_1,\xi,\omega^T_1) \in\cS(\xi,J)$ are two Sasaki-extremal structures.  Then there is
an $g\in\Fol(\mathscr{F}_\xi,J)$ so that $g^*\omega^T_1 =\omega_0^T$.
\end{corint}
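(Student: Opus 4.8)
The plan is to deduce uniqueness of Sasaki-extremal structures from the convexity of the relative K-energy $\cM^V$ along weak geodesics, mirroring the strategy already used for the cscS case in Corollary~\ref{corint:unique-cscS}. First I would fix a maximal compact connected subgroup $G\subset\Fol(\mathscr{F}_\xi,J)$ and observe that, after moving the two structures by elements of $\Fol(\mathscr{F}_\xi,J)$, both $\omega^T_0$ and $\omega^T_1$ may be assumed $G$-invariant, so that their potentials lie in $\cH^G$. This reduction uses that the extremal vector field $V$ depends only on the choice of $G$; hence $\cM^V$ is well-defined on $\cH^G$ and has Sasaki-extremal structures as its critical points. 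Because the two given structures are Sasaki-extremal, their potentials $\phi_0,\phi_1$ are both critical points of $\cM^V|_{\cH^G}$.

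\smallskip

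Next I would connect $\phi_0$ and $\phi_1$ by a weak $C^{1,1}$ geodesic $\phi_t$, whose existence in $\cH$ follows from Guan--Zhang, and argue that this geodesic stays inside the $G$-invariant class: since the endpoints are $G$-invariant and the $C^{1,1}_w$ geodesic is unique (by the strong maximum principle established in \S~\ref{sec:Sasak-trans}), the $G$-action carries the geodesic to itself, forcing $\phi_t\in\cH^G$ for all $t$. Then the relative analogue of Theorem~\ref{thmint:conv-K-ener} gives that $t\mapsto\cM^V(\phi_t)$ is convex on $[0,1]$. Since both endpoints are critical points, the one-sided derivatives of this convex function vanish at $t=0$ and $t=1$; convexity then forces $\cM^V(\phi_t)$ to be affine, and in fact constant, so its second derivative vanishes identically along the geodesic.

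\smallskip

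The crucial step is to extract rigidity from the vanishing of the second derivative. In the cscS case the second variation of $\cM$ along a geodesic equals a nonnegative Dirichlet-type integral involving $\delb$ of the geodesic velocity, so its vanishing forces the velocity field $\dot\phi_t$ to generate a transversely holomorphic vector field; integrating this field produces the desired $g\in\Fol(\mathscr{F}_\xi,J)$ with $g^*\omega^T_1=\omega^T_0$. For the extremal case I would use the corresponding second-variation formula for $\cM^V$, in which the nonnegative term is the transversal analogue of the norm of the $(0,1)$-part of $\bar\partial(\grad^{1,0}\dot\phi_t)$ relative to the reduced Lichnerowicz operator; its vanishing says exactly that the complex gradient of the geodesic velocity is a transversely holomorphic vector field commuting with $V$. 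This is where the regularity of the weak geodesic becomes delicate: $\phi_t$ is only $C^{1,1}_w$, so $\dot\phi_t$ need not be smooth, and one must justify the second-variation computation weakly and then upgrade the resulting holomorphic vector field to something genuinely integrable. I expect this regularity-and-integration step to be the main obstacle.

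\smallskip

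Finally, having produced a transversely holomorphic vector field interpolating $\omega^T_0$ and $\omega^T_1$, I would integrate its real part to obtain a one-parameter family in $\Fol(\mathscr{F}_\xi,J)$ and take $g$ at time one, verifying $g^*\omega^T_1=\omega^T_0$ by checking that the pulled-back Sasakian structure has the same potential as $\omega^T_0$ in the same basic K\"ahler class. The argument is entirely parallel to Corollary~\ref{corint:unique-cscS}, with $\cM$ replaced by $\cM^V$ and the space $\cH$ replaced by the $G$-invariant subspace $\cH^G$; the only genuinely new content beyond the cscS proof is the bookkeeping of the extremal field $V$ and the identification of the degenerate direction of the second variation with the reduced Lichnerowicz operator.
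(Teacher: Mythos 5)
Your reduction to $G$-invariant potentials, the construction of the weak geodesic, and the conclusion that $\cM^V(\phi_t)$ must be affine are all consistent with the paper. The gap is exactly at what you call the crucial step: extracting rigidity from the vanishing of the second derivative. The second-variation formula you invoke --- a nonnegative Dirichlet-type integral of $\ol{\del}(\grad^{1,0}\dot\phi_t)$ whose vanishing produces a transversely holomorphic vector field to be integrated to the desired $g$ --- is only justified along smooth geodesics, while the geodesic here is merely $C^{1,1}_w$. The paper states explicitly in the introduction that $\cM$ and $\cM^V$ are \emph{not known to be strictly convex} along weak geodesics and that this is precisely why an additional deformation technique is needed. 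You flag the regularity issue as ``the main obstacle'' but do not resolve it; in the K\"ahler setting closing this gap required the partial regularity theory of Chen--Tian, which is not developed here for Sasakian weak geodesics. As written, the argument does not close.

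The paper's route is genuinely different at this point. It perturbs the functional to $\cM^{V,t\mu}=\cM^V+t\cF^{\mu}$, where $\cF^{\mu}$ is \emph{strictly} convex along weak $C^{1,1}_w$ geodesics (Proposition~\ref{prop:vol-twist-con}); it shows that $\cF^{\mu}$ is proper with a unique minimum on each orbit $\cO_P$ (Proposition~\ref{prop:vol-twist-prop}), and uses a bifurcation/implicit-function-theorem argument in the style of Chen--P\u{a}un--Zeng (Proposition~\ref{prop:S-E-def}) to produce, for small $t$, smooth critical points $\phi^i_t$ of $\cM^{V,t\mu}$ emanating from those minimizers. Connecting $\phi^0_s$ to $\phi^1_s$ by a weak geodesic, the endpoint derivative inequalities of Lemma~\ref{lem:K-en-dif}, together with the strict convexity of $\cM^{V,s\mu}$ (using also that $\cE^V$ is affine along weak geodesics, Proposition~\ref{prop:extr-en-geo}), force $\omega^T_{u^s_0}=\omega^T_{u^s_1}$, and letting $s\to 0$ gives the result. (The first step of your outline --- conjugating so that both automorphism groups equal $G$ via Theorem~\ref{thm:aut-S-E} --- is the same as the paper's.) To salvage your approach you would need either to supply the missing partial regularity for Sasakian weak geodesics or to adopt this deformation scheme.
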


We also have sub-slope inequality for the relative K-energy $\cM^V$.
\begin{corint}\label{cor:Mab-rel-bound}
Suppose $\phi_0,\phi_1 \in\cH^G$, then the following inequality holds
\[ \cM^V(\phi_1) -\cM^V(\phi_0) \geq -d(\phi_0,\phi_1)\bigl(\Cal^G_{M,\xi}(\phi_0)\bigr)^{\frac{1}{2}}, \]
where $d$ is the distance function of the Mabuchi metric on $\cH$.
\end{corint}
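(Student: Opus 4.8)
The plan is to obtain Corollary \ref{cor:Mab-rel-bound} as a formal consequence of the convexity of the relative K-energy $\cM^V$ together with the elementary fact that a convex function on $[0,1]$ lies above its supporting line at the left endpoint. First I would let $\phi_t$, $0\le t\le 1$, be the weak $C^{1,1}_w$ geodesic in $\cH^G$ joining $\phi_0$ to $\phi_1$; such a geodesic exists by the work of Guan and Zhang~\cite{GuaZha12}, and since both endpoints and the relevant data are $G$-invariant one checks it stays in $\cH^G$. By the convexity of $\cM^V$ along weak $C^{1,1}_w$ geodesics (the relative version of Theorem~\ref{thmint:conv-K-ener}), the function $f(t):=\cM^V(\phi_t)$ is convex on $[0,1]$. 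For a convex function the difference quotient $t\mapsto (f(t)-f(0))/t$ is nondecreasing, so $f(1)-f(0)\ge f'_+(0)$, where $f'_+(0)$ is the right derivative at $t=0$; it therefore suffices to bound $f'_+(0)$ from below.

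Next I would evaluate $f'_+(0)$ using the first-variation formula for $\cM^V$ from \S\ref{sec:en-funct}. Since the endpoint $\phi_0$ is smooth, that formula gives
\[
f'_+(0)\;=\;\frac{d}{dt}\Big|_{t=0^+}\cM^V(\phi_t)\;=\;-\int_M \dot\phi_0\,\bigl(S_{\phi_0}-\ol S-\theta_V\bigr)\,d\mu_{\phi_0},
\]
where $\theta_V$ is the holomorphy potential of the extremal field $V$ with respect to $\phi_0$ and $\dot\phi_0=\partial_t\phi_t|_{t=0}$ is the initial velocity. Applying the Cauchy--Schwarz inequality in $L^2(d\mu_{\phi_0})$ to this integral yields
\[
f'_+(0)\;\ge\;-\Bigl(\int_M\dot\phi_0^{\,2}\,d\mu_{\phi_0}\Bigr)^{\frac12}\Bigl(\int_M\bigl(S_{\phi_0}-\ol S-\theta_V\bigr)^2\,d\mu_{\phi_0}\Bigr)^{\frac12}\;=\;-\,\|\dot\phi_0\|_{\phi_0}\,\bigl(\Cal^G_{M,\xi}(\phi_0)\bigr)^{\frac12},
\]
the second factor being exactly the relative Calabi functional evaluated at $\phi_0$.

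It then remains to identify the Mabuchi norm $\|\dot\phi_0\|_{\phi_0}$ of the initial velocity with the distance $d(\phi_0,\phi_1)$: along a (weak) geodesic the speed $\|\dot\phi_t\|_{\phi_t}$ is constant in $t$, so the length of $\phi_t$ equals $\|\dot\phi_0\|_{\phi_0}$, and because $C^{1,1}_w$ geodesics realize the Mabuchi distance this length is precisely $d(\phi_0,\phi_1)$. Combining the displays gives $f'_+(0)\ge -d(\phi_0,\phi_1)\bigl(\Cal^G_{M,\xi}(\phi_0)\bigr)^{1/2}$, and with $f(1)-f(0)\ge f'_+(0)$ this is the asserted inequality. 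The one genuine technical point—and the only place the weakness of the geodesic enters—will be justifying that $f'_+(0)$ is actually given by the smooth first-variation formula above: for $t>0$ the scalar curvature $S_{\phi_t}$ is not controlled along a merely $C^{1,1}_w$ geodesic, but one needs the variation only at the smooth endpoint $t=0$, where $S_{\phi_0}$, $\theta_V$ and $\dot\phi_0$ are all well defined, and the existence and value of the one-sided derivative follow from the differentiability properties of $\cM^V$ established in the course of proving its convexity.
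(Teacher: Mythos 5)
Your proposal follows essentially the same route as the paper: convexity of $\cM^V$ along the weak geodesic, the sub-slope property $f(1)-f(0)\ge f'_+(0)$, the first-variation formula combined with Cauchy--Schwarz, and the identification of $\|\dot\phi_0\|_{\phi_0}$ with $d(\phi_0,\phi_1)$. The one point where you diverge is the "genuine technical point" you defer at the end: the paper never proves that $f'_+(0)$ \emph{equals} the smooth first-variation expression, and in fact does not need to. It establishes only the one-sided inequality $\frac{d}{dt}\cM(\phi_t)|_{t=0^+}\ge -\int_M(S^T_{\phi_0}-\ol S)\dot\phi_0\,d\mu_{\phi_0}$ (Lemma~\ref{lem:K-en-dif}), obtained by lower-bounding the difference quotient of the entropy term via convexity of $H_\mu$ on measures, and then handles the extremal correction separately using that $\cE^V$ is affine along weak geodesics (Proposition~\ref{prop:extr-en-geo}), so its derivative at $t=0^+$ is exactly $\int_M\dot\phi_0\,h^V_{\phi_0}\,d\mu_{\phi_0}$. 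Since the inequality goes in the right direction for the sub-slope argument, this is all that is required; your appeal to "differentiability properties established in the course of proving convexity" to get an actual equality is both unjustified and unnecessary, and you should replace it with the entropy-convexity inequality plus the linearity of $\cE^V$.
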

Here we use a relative version of the Calabi functional $\Cal^G_{M,\xi}(\phi) =\int_M \bigl( S_\phi^G \bigr)^2 \, d\mu_\phi$,
where $S_\phi^G$ is the reduced scalar curvature, which is zero precisely when $\phi\in\cH^G$ gives an extremal
structure.  Thus a Sasaki-extremal structure is a minimum of the relative K-energy $\cM^V$.
Corollary~\ref{cor:Mab-bound} and Corollary~\ref{cor:Mab-rel-bound} provide interesting obstructions
to the existence of cscS and Sasaki-extremal structures.  The existence of a cscS (respectively
Sasaki-extremal) structure requires that the K-energy (respectively relative K-energy) is bounded below and that
it achieves its minimum.

Both uniqueness results Corollary~\ref{corint:unique-cscS} and Corollary~\ref{corint:unique-Sasak-ext}
are a consequence of the convexity of $\cM$ and $\cM^V$ along weak geodesics.  But since these
functionals are not known to be strictly convex an additional deformation technique is needed to
prove these results.  This is done in \S~\ref{sec:unique}.  This involves deforming $\cM$ by adding
a strictly convex functional $\cF^{\mu}$
\[  \cM^{t\mu} =\cM +t\cF^{\mu} \]
for small $t$.  Using an implicit function theorem argument we prove that $\cM^{t\mu}$ has a path of
critical points $\phi_t \in\cH$ for $t\in [0,\epsilon)$ for a particular potential $\phi_0$ in the orbit
of a cscS metric.  This is proved in Proposition~\ref{prop:csc-def}.  A bifurcation technique, due to
X. Chen, M. P\u{a}un, and Y. Zeng~\cite{CPZ15}, must be used since the differential of the map used has a kernel.
Uniqueness then follows from the strict convexity of $\cM^{t\mu}$.

The corresponding deformation result for the Sasaki-extremal case is given in Proposition~\ref{prop:S-E-def}.
The proof is similar, but more technicalities involving automorphism groups and the relative K-energy
$\cM^V$ need to be addressed.  Uniqueness again follows from the strict convexity of a deformed functional
\[ \cM^{V,t\mu}= \cM^V  +t\cF^{\mu} \]
where here the functional is defined on potentials $\cH^G$.

Uniqueness can be slightly generalized.  We say that a manifold with a transversely holomorphic foliation with
one dimensional leaves $(M,\mathscr{F},J)$ is of Sasakian type if it admits a Sasakian structure with $(\mathscr{F},J)$, with
its transversely holomorphic structure $J$ as its Reeb foliation.  The next result shows that for such a foliated manifold
$(M,\mathscr{F},J)$ a compatible Sasaki-extremal structure is unique up to homotheties and varying the contact form
by harmonic representatives of $H_b^1(M,\R) =H^1(M,\R)$.

\begin{corint}\label{cor:unique-gen}
Suppose $(\eta_0,\xi_0,\omega^T_0),(\eta_1,\xi_1,\omega^T_1)$ are two Sasaki-extremal structures
compatible with $(M,\mathscr{F},J)$, then there is a $g\in\Fol(M,\mathscr{F},J)$ and an $a>0$ so that $g^*a\omega^T_1 =\omega_0^T$
and $g_* \xi_0 =a^{-1} \xi_1$.
\end{corint}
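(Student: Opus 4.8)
The plan is to reduce Corollary~\ref{cor:unique-gen} to the fixed-Reeb uniqueness statement of Corollary~\ref{corint:unique-Sasak-ext} by first matching the two Reeb vector fields through a transverse homothety (the $\cD$-homothetic transformation). Since both Sasaki-extremal structures are compatible with the \emph{same} transversely holomorphic foliation $(M,\mathscr{F},J)$, their Reeb foliations coincide, $\mathscr{F}_{\xi_0}=\mathscr{F}=\mathscr{F}_{\xi_1}$, so $\Fol(\mathscr{F}_{\xi_0},J)=\Fol(M,\mathscr{F},J)=\Fol(\mathscr{F}_{\xi_1},J)$; in particular $\xi_0$ and $\xi_1$ are both nowhere-vanishing, positively oriented sections of the line field $T\mathscr{F}$, whence $\xi_1=f\,\xi_0$ for a smooth positive function $f$ on $M$. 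The heart of the argument is to show that $f$ is constant; granting this, a single $\cD$-homothety aligns the Reeb fields and Corollary~\ref{corint:unique-Sasak-ext} finishes the job.

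To see that $f$ is constant, I would use that each $\xi_i$ is a transversely holomorphic Killing field whose flow is isometric for $g_i$ and preserves the transverse K\"ahler structure. Let $T_i\subset\Fol(M,\mathscr{F},J)$ denote the closure of the one-parameter group generated by $\xi_i$; this is a torus with $\xi_i\in\ft_i:=\Lie T_i$, and its orbits are the closures of the leaves of $\mathscr{F}$. Because $\xi_0$ and $\xi_1$ generate the same foliation, these orbit closures coincide, so $T_0=T_1=:T$ and $\xi_0,\xi_1\in\ft:=\Lie T$. On each orbit closure both fields are tangent to the same one-dimensional foliation and lie in the abelian algebra $\ft$; a field in $\ft$ everywhere parallel to the $\xi_0$-direction must be a constant multiple of $\xi_0$ there, and connectedness of $M$ propagates this to one global constant. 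Thus $\xi_1=a^{-1}\xi_0$ for some $a>0$. This rigidity of the Reeb ray, forced by fixing the foliation, is the main obstacle: the quasi-regular case reduces to comparing the periods of two $S^1$-actions with identical orbits, while the irregular case is exactly the torus computation above.

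With $\xi_1=a^{-1}\xi_0$, apply the transverse homothety of factor $a$ to the second structure, replacing $(\eta_1,\xi_1,\omega^T_1)$ by $(a\eta_1,a^{-1}\xi_1,a\,\omega^T_1)$. This is again Sasaki-extremal, since the homothety scales the transverse metric by $a$ and multiplies the transverse scalar curvature by a constant, leaving the extremal condition (transverse holomorphicity of $\grad S$) intact; and its Reeb field $a^{-1}\xi_1=\xi_0$ now agrees with that of the first structure. The two structures $(\eta_0,\xi_0,\omega^T_0)$ and $(a\eta_1,\xi_0,a\,\omega^T_1)$ therefore share the Reeb field $\xi_0$ and the transverse holomorphic structure $J$, so Corollary~\ref{corint:unique-Sasak-ext} produces $g\in\Fol(\mathscr{F}_{\xi_0},J)=\Fol(M,\mathscr{F},J)$ with $g^*(a\,\omega^T_1)=\omega^T_0$; as the diffeomorphism obtained there arises from the flow of a transversely holomorphic field that can be taken to commute with $\xi_0$, it satisfies $g_*\xi_0=\xi_0=a^{-1}\xi_1$, which are precisely the two asserted identities. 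Finally, I would note that the conclusion is phrased only in terms of $\omega^T$ and $\xi$, rather than of the contact form, because $\eta$ is recovered from $(\omega^T,\xi)$ only up to addition of a closed basic $1$-form annihilating $\xi$; modulo basic-exact forms these are classified by $H^1_b(M,\R)=H^1(M,\R)$, and adding such a form alters the Sasakian structure while fixing $(\omega^T,\xi,J,\mathscr{F})$. This is exactly the freedom of ``harmonic representatives of $H^1_b$'' anticipated before the statement.
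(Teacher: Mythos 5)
Your reduction to Corollary~\ref{corint:unique-Sasak-ext} via a transverse homothety is the right endgame, but the step on which everything hinges --- that $\xi_1=a^{-1}\xi_0$ for a \emph{constant} $a$ --- is false as stated, and the torus argument offered for it does not close. Writing $\xi_1=f\xi_0$ with $f>0$ is fine, but $f$ need not be constant: take the round Sasaki--Einstein structure on $S^3$ with the Hopf foliation $\mathscr{F}$ and any $\psi\in\Fol(M,\mathscr{F},J)$ that preserves each Hopf fibre setwise but reparametrizes it non-isometrically (such $\psi$ exist, supported in a trivializing neighborhood, and act trivially on the leaf space, hence preserve $J$). Then $\psi^*(\eta_0,\xi_0,\Phi_0,g_0)$ is again a compatible Sasaki-extremal structure, and its Reeb field $(\psi^{-1})_*\xi_0=f\xi_0$ has non-constant $f$. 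The same example shows where your torus argument breaks: the closures $T_0$ and $T_1$ of the two Reeb flows have identical orbits (the leaf closures), but $T_1=\psi^{-1}T_0\psi\neq T_0$ as subgroups of $\Diff(M)$, so there is no common abelian algebra $\ft$ containing both $\xi_0$ and $\xi_1$, and the evaluation-map rigidity you invoke never gets started.

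What is true, and what the paper proves, is that the two Reeb fields become proportional by a constant only \emph{after} a leaf-preserving diffeomorphism. The paper extracts this from leafwise cohomology: by Kamber--Tondeur $\dim E_2^{0,1}(\mathscr{F})=1$, and every compatible contact form defines a $d_1$-closed class there, so $[\eta_0]_{\mathscr{F}}=a[\eta_1]_{\mathscr{F}}$, i.e.\ $\eta_0-a\eta_1=df$ along $T\mathscr{F}$; a Moser-type flow along the leaves then produces $\Psi_1$ with $\eta_0|_{T\mathscr{F}}=a\Psi_1^*\eta_1|_{T\mathscr{F}}$, which forces $a>0$ and $(\Psi_1^{-1})_*\xi_1=a\xi_0$. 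Only then does the transverse homothety apply, and one must still use Lemma~\ref{lem:trans-def} to strip off the $d\psi$ term by the gauge transformation $\exp(-\psi\xi_0)$ and absorb the harmonic $1$-form before invoking Corollary~\ref{corint:unique-Sasak-ext}. Your closing remark about the $H^1_b$ ambiguity gestures at this last point, but the missing ingredient is the cohomological/Moser step that replaces your constancy claim; without it the proof does not go through.
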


More precisely, in the Corollary we have
\[g^* (a\eta_1 ,a^{-1}\xi_1 ,a\omega^T_1 ) =(\hat{\eta}_0, \xi_0 ,\omega_0^T),\]
where $\hat{\eta}_0 =\eta_0 +\alpha$ and $\alpha$ is a harmonic representative of $H_b^1(M,\R)$.
This result solves the uniqueness problem of Sasaki-extremal structures for a fixed transversely holomorphic foliation,
since varying the contact form $\eta$ with a harmonic representative of an element of
$H_b^1(M,\R) =H^1(M,\R)$, does not effect the scalar curvature or the transversal metric.

The above techniques give results in the \emph{$\alpha$-twisted} setting.  This approach has shown promise
in tackling problems in K\"{a}hler geometry~\cite{Che15}, so it is of interest in Sasakian geometry.
Let $\alpha$ be a closed, basic, positive
$(1,1)$-form.  The $\alpha$-twisted transversal scalar curvature of $(\eta,\xi,\Phi,g)$ is
\begin{equation}\label{eq:twist-sc}
S_g^T -\tr_{\omega^T}\alpha.
\end{equation}
A Sasakian metric is twisted cscS if (\ref{eq:twist-sc}) is a constant, and a Sasakian metric is twisted
Sasaki-extremal if (\ref{eq:twist-sc}) is the potential of a transversely holomorphic vector field.

\begin{thmint}
Suppose that $(\eta_0,\xi,\omega^T_0),(\eta_1,\xi,\omega^T_1) \in\cS(\xi,J)$ are two twisted constant scalar curvature  structures. Then $\omega^T_0 =\omega_1^T$.
\end{thmint}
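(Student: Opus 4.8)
The plan is to show that the twisting term makes the relevant energy functional \emph{strictly} convex along weak geodesics, which removes the automorphism freedom present in Corollary~\ref{corint:unique-cscS} and forces the two transversal forms to coincide. First I would introduce the twisted K-energy
\begin{equation}
\cM_\alpha := \cM + \cJ_\alpha,
\end{equation}
where $\cJ_\alpha$ is the transversal analogue of the $\alpha$-energy whose differential at $\phi\in\cH$ in the direction $\psi$ is
\begin{equation}
\langle d\cJ_\alpha(\phi),\psi\rangle = m\int_M \psi\,\alpha\wedge(\omega^T+dd^c\phi)^{m-1}\wedge\eta ,
\end{equation}
so that $d\cJ_\alpha(\phi)=(\tr_{\omega^T_\phi}\alpha)\,d\mu_\phi$ as a measure. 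Since the differential of $\cM$ is $-(S^T_\phi-\ol{S})\,d\mu_\phi$, the critical points of $\cM_\alpha$ on $\cH$ are exactly the potentials for which $S^T_\phi-\tr_{\omega^T_\phi}\alpha$ is constant, i.e. the twisted cscS structures. As both structures lie in $\cS(\xi,J)$ they share the transversal K\"ahler class, so $\omega^T_1=\omega^T_0+dd^c\phi_1$ for a genuine potential $\phi_1\in\cH$; normalizing $\phi_0=0$, the theorem reduces to showing $\phi_1$ is constant.

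Next I would establish convexity. By Theorem~\ref{thmint:conv-K-ener} the K-energy $\cM$ is convex along the weak $C^{1,1}$ geodesic $\phi_t$ joining $\phi_0$ and $\phi_1$ (which exists by~\cite{GuaZha12}). For the twist I would pass to the complexified picture of \S\ref{subsec:con-K-en}: writing the geodesic as a family $u_\tau$ over a strip $D\subset\C$ depending only on $t=\re\tau$, and setting $\Omega:=p^*\omega^T+dd^c_{M\times D}\Phi\ge0$ with $\Phi(\cdot,\tau)=u_\tau$, the geodesic equation is the degenerate transversal Monge-Amp\`ere equation $\Omega^{m+1}=0$. Then
\begin{equation}
dd^c_\tau\,\cJ_\alpha(u_\tau) = p_*\bigl(p^*\alpha\wedge\Omega^{m}\bigr)\ \geq 0 ,
\end{equation}
because $\alpha\ge0$ and $\Omega\ge0$, so $\cJ_\alpha(u_\tau)$ is subharmonic in $\tau$ and hence $\cJ_\alpha(\phi_t)$ is convex in $t$; consequently $\cM_\alpha(\phi_t)$ is convex. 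The weak continuity of the transversal Monge-Amp\`ere operator and the regularity of $\Phi$ from \S\ref{sec:Sasak-trans} are what make this manipulation legitimate at the $C^{1,1}_w$ level.

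The uniqueness then comes from upgrading this to strict convexity. Since $\phi_0,\phi_1$ are critical points of the convex functional $\cM_\alpha$, they are both minimizers, so $\cM_\alpha(\phi_t)$ is affine in $t$; as $\cM$ and $\cJ_\alpha$ are each convex, both must be affine, forcing $p^*\alpha\wedge\Omega^{m}\equiv0$ on $M\times D$. Here is the decisive step. Diagonalizing $\alpha$ pointwise one computes $p^*\alpha\wedge\Omega^{m}=m!\sum_{k=1}^{m}\det\bigl(\Omega_{I_k\ol{I_k}}\bigr)\,\mathrm{vol}$, a sum of principal minors of the positive semidefinite matrix $\Omega$, each deleting one transversal index but retaining the $\tau$-index. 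Strict positivity of $\alpha$ keeps every summand, so all these minors vanish, while the complementary minor $\det(\Omega_{i\ol{j}})_{i,j\le m}=\det\omega^T_{u_\tau}>0$. A Schur-complement computation then shows the mixed entries $\Omega_{a\ol\tau}$ and $\Omega_{\tau\ol\tau}$ all vanish, i.e. $\partial_a\partial_{\ol\tau}\Phi=0$ and $\partial_\tau\partial_{\ol\tau}\Phi=0$. Since $\Phi$ depends only on $t=\re\tau$, this gives $\ddot\phi_t=0$ and $d\dot\phi_t=0$, so $\dot\phi_t$ is a constant; integrating, $\omega^T_{\phi_t}$ is independent of $t$ and $\omega^T_0=\omega^T_1$.

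The main obstacle I anticipate is precisely this degeneracy step carried out in the weak category: the identity $p^*\alpha\wedge\Omega^{m}=0$ holds only as an equality of measures, so the pointwise linear algebra must be justified almost everywhere using the weak-convergence and comparison results for the transversal Monge-Amp\`ere operator in \S\ref{sec:Sasak-trans}, and one must check that the $C^{1,1}_w$ regularity of $\Phi$ suffices to integrate $d\dot\phi_t=0$ to the conclusion that the geodesic is trivial. By contrast, no bifurcation or implicit-function argument as in Proposition~\ref{prop:csc-def} is needed here, since the twist supplies strict convexity outright, which is also why the conclusion is the honest equality $\omega^T_0=\omega^T_1$ rather than equality up to $\Fol(\mathscr{F}_\xi,J)$.
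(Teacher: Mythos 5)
Your strategy --- make the twist do the work of strict convexity along the weak geodesic --- is not the paper's route, and its decisive step has a genuine gap. The problem is the unjustified claim that the complementary minor $\det(\Omega_{i\ol j})_{i,j\le m}=\det\omega^T_{u_\tau}$ is positive. For the weak $C^{1,1}_w$ geodesic of Theorem~\ref{thm:weak-geo} one only knows $\omega^T+dd^c u_\tau\geq 0$, and the transversal block may degenerate. After diagonalizing $\alpha$, the vanishing of $p^*\alpha\wedge\Omega^m$ gives only that every $m\times m$ principal minor of $\Omega$ retaining the $\tau$-index vanishes; when the transversal block has rank $\le m-2$ this no longer forces the mixed and $\tau\ol\tau$ entries to vanish. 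For instance, with $m=2$ the pointwise value $\Omega=\mathrm{diag}(0,0,1)$ (indices $1,2,\tau$) is positive semidefinite, satisfies $\Omega^{3}=0$ and kills both relevant minors, yet has $\Omega_{\tau\ol\tau}=1$. So you cannot conclude $\ddot\phi_t=0$ and $d\dot\phi_t=0$ from $p^*\alpha\wedge\Omega^m\equiv0$ alone. This is precisely why the paper records, after (\ref{eq:alph-twist}) and (\ref{eq:2nd-var-alph}), that $\cF^{\alpha}$ is convex along weak geodesics but strictly convex only along \emph{smooth} ones, and why it does not attempt your degeneracy analysis.

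The paper's actual proof of Theorem~\ref{thm:unique-acsc} instead perturbs: it considers $\cM^{\alpha,t\mu}=\cM^{\alpha}+t\cF^{\mu}$, where $\cF^{\mu}$ \emph{is} strictly convex along weak geodesics (Proposition~\ref{prop:vol-twist-con}, proved by approximating with smooth $\epsilon$-geodesics and a Poincar\'{e} inequality, so no nondegeneracy of the limiting geodesic is needed), and produces a path of critical points of $\cM^{\alpha,t\mu}$ emanating from each twisted cscS structure. Contrary to your closing remark, an implicit-function argument is therefore still required; what the strict positivity of $\alpha$ buys in the paper is that the linearization $dG|_{\phi_0}$ has trivial kernel (the term $\int_M(\i\del u\wedge\ol{\del}u,\alpha)_{\phi_0}\,d\mu_{\phi_0}$ is positive for nonconstant $u$), so the bifurcation analysis of Proposition~\ref{prop:csc-def} is unnecessary and the conclusion is honest equality rather than equality modulo $\Fol(\mathscr{F}_\xi,J)$. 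To rescue your direct argument you would need to establish a.e.\ nondegeneracy of the weak geodesic, which is not available; otherwise the $t\cF^{\mu}$ perturbation must be reinstated.
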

We are able to prove a partial uniqueness result for twisted Sasaki-extremal structures.

It has been pointed out to the author that some of the same results are in the article of Xishen Jin and Xi Zhang~\cite{JinZha15},
though this work was done entirely independently.

\section{Saakian geometry and transversal space}\label{sec:Sasak-trans}

\subsection{Sasakian manifolds}

We review some of the properties of Sasakian manifolds that we will use.  See the monograph~\cite{BoGa08} for
details.

\begin{defn}
A Riemannian manifold $(M,g)$ is a \emph{Sasaki manifold}, or has a compatible Sasaki structure, if the metric cone
$(C(M),\ol{g})=(\R_{>0} \times M, dr^2 +r^2 g)$ is K\"{a}hler with respect to some complex structure $I$, where $r$ is the
usual coordinate on $\R_{>0}$.
\end{defn}
Thus $\dim M$ is odd and denoted $n=2m+1$, while $C(M)$ is a complex manifold with $\dim_{\C} C(M) =m+1$.

 We will identify $M$ with the $\{1\}\times M\subset C(M)$.  Let $r\partial_r$ be the Euler vector field on $C(M)$.
Using the warped product formulae for the cone metric $\ol{g}$~\cite{ONeil83} it is easy check that $r\partial_r$ is real holomorphic, $\xi$ is Killing with respect to both $g$ and $\ol{g}$, and furthermore the orbits of $\xi$ are geodesics on $(M,g)$.
Define $\eta =\frac{1}{r^2}\xi\contr\ol{g}$, then we have
\begin{equation}
\eta =-\frac{I^* dr}{r} =d^c \log r,
\end{equation}
where $d^c =\sqrt{-1}(\ol{\partial} -\partial)$.  If $\omega$ is the K\"{a}hler form of $\ol{g}$, then
\begin{equation}\label{eq:Kaehler-pot1}
\omega =\frac{1}{2}d(r^2 \eta)=\frac{1}{4}dd^c(r^2).
\end{equation}
From (\ref{eq:Kaehler-pot1}) we have
\begin{equation}\label{eq:Kaehler-pot2}
\omega=rdr\wedge\eta +\frac{1}{2}r^2 d\eta.
\end{equation}

Then (\ref{eq:Kaehler-pot2}) implies that
$\eta$ is a contact form with Reeb vector field $\xi$, since $\eta(\xi)=1$ and $\mathcal{L}_{\xi} \eta =0$.
Let $D\subset TM$ be the contact distribution which is defined by
\begin{equation}
D_x =\ker\eta_x
\end{equation}
for $x\in M$.  Furthermore, if we restrict the almost complex structure to $D$, $J:=I|_D$, then $(D,J)$ is a strictly pseudoconvex CR structure on $M$.  We have a splitting of the tangent bundle $TM$
\begin{equation}
TM =D\oplus L_{\xi},
\end{equation}
where $L_{\xi}$ is the trivial subbundle generated by $\xi$.  It will be convenient to define a tensor $\Phi\in\End(TM)$ by
$\Phi|_D =J$ and $\Phi(\xi) =0$.  Then
\begin{equation}\label{eq:comp-tens}
\Phi^2 =-\mathbb{1} +\eta\otimes\xi.
\end{equation}
Since $\xi$ is Killing, we have
\begin{equation}
d\eta (X,Y) =2 g(\Phi(X),Y), \quad\text{where }X,Y\in\Gamma(TM),
\end{equation}
and $\Phi(X) =\nabla_X \xi$, where $\nabla$ is the Levi-Civita connection of $g$.  Making use of (\ref{eq:comp-tens}) we see
that
\[ g(\Phi X,\Phi Y) =g(X,Y) -\eta(X)\eta(Y), \]
and one can express the metric by
\begin{equation}\label{eq:metric}
g(X,Y)=\frac{1}{2}(d\eta)(X,\Phi Y) +\eta(X)\eta(Y).
\end{equation}
We will denote a Sasaki structure on $M$ by $(g,\eta,\xi,\Phi)$.

\subsection{Transversal holomorphic structure}

We now describe a transverse K\"{a}hler structure on $\mathscr{F}_{\xi}$.
The vector field $\xi -\sqrt{-1}I\xi =\xi +\sqrt{-1}r\partial_r$ is holomorphic on $C(M)$.  If we denote by $\tilde{\C}^*$ the
universal cover of $\C^*$, then $\xi +\sqrt{-1}r\partial_r$ induces a holomorphic action
of $\tilde{\C}^*$ on $C(M)$.  The orbits of $\tilde{\C}^*$ intersect $M\subset C(M)$ in the orbits of the Reeb foliation
generated by $\xi$.  We denote the Reeb foliation by $\mathscr{F}_\xi$.  This gives $\mathscr{F}_\xi$ a transversely holomorphic
structure.

The foliation $\mathscr{F}_{\xi}$ together with its transverse holomorphic structure is given by an open covering
$\{U_\alpha \}_{\alpha\in A}$ of $M$ by product neighborhoods.  That is, there are charts
\begin{equation}\label{eq:fol-prod}
\Psi_\alpha : U_\alpha \rightarrow W_\alpha \times (-\epsilon,\epsilon),
\end{equation}
with $W_\alpha \subset\C^m$, where $\Psi_\alpha(x) =(\phi_\alpha (x),\tau_\alpha)$ and the leaves are locally given by
$\phi_\alpha^{-1}(z)$ for $z\in W_\alpha$.  And we may assume that $\xi$ is mapped to $\del_t$ in the
coordinates $(Z_\alpha, t_\alpha)$ on $W_\alpha \times (-\epsilon,\epsilon)$.  When $U_\alpha \cap U_\beta \neq\emptyset$
the transition maps
\begin{equation}\label{eq:fol-prod-trans}
 \Psi_\beta \circ\Psi_\alpha^{-1} :\Psi_\alpha (U_\alpha \cap U_\beta) \rightarrow\Psi_\beta (U_\alpha \cap U_\beta)
\end{equation}
are given by $ \Psi_\beta \circ\Psi_\alpha^{-1}(z,t)=(\phi_\beta \circ\phi_\alpha^{-1}(z),t +\theta_{\beta\alpha}(z))$.
Since $\mathscr{F}_{\xi}$ is \emph{transversely holomorphic} the transitions
\begin{equation}\label{eq:fol-tran}
 \phi_{\beta\alpha} :=\phi_\beta \circ\phi_\alpha^{-1}: W_\alpha \cap W_\beta \rightarrow W_\alpha \cap W_\beta
\end{equation}
are biholomorphisms, and satisfy the cocyle condition $\phi_{\gamma\beta}\circ\phi_{\beta\alpha} =\phi_{\gamma\alpha}$
on $W_\alpha \cap W_\beta \cap W_\gamma$.
The transversal K\"{a}hler form $\omega^T$ induces a K\"{a}hler form $\omega_\alpha$ on $W_\alpha$
with $\phi_{\beta\alpha}^* \omega_\beta =\omega_\alpha$.

In working on the transversal space we work on the charts $\{W_\alpha\}_{\alpha\in A}$ with their K\"{a}hler structure
invariant under the transitions $\phi_{\beta\alpha}$.  But it will also be useful to consider \emph{basic} functions and tensors.
If we define $\nu(\mathscr{F}_\xi) =TM/{L_\xi}$ to be the normal bundle to the leaves, then we can generalize the above concept.
\begin{defn}
A tensor $\Psi\in\Gamma\bigl((\nu(\mathscr{F}_\xi)^*)^{\otimes p} \bigotimes\nu(\mathscr{F}_\xi)^{\otimes q}\bigr)$ is \emph{basic} if
$\mathcal{L}_V \Psi =0$ for any vector field $V\in\Gamma(L_\xi)$.
\end{defn}
It is sufficient to check this for $V=\xi$.
Then $g^T$ and $\omega^T$ are such tensors on $\nu(\mathscr{F}_\xi)$.  We will also make use of the bundle isomorphism
$\pi:D \rightarrow\nu(\mathscr{F}_\xi)$, which induces an almost complex structure $\ol{J}$ on $\nu(\mathscr{F}_\xi)$ so that
$(D,J)\cong(\nu(\mathscr{F}_\xi),\ol{J})$ as complex vector bundles.  Clearly, $\ol{J}$ is basic and is mapped by the
foliation charts $\phi_\alpha$ to the complex structure on $W_\alpha$.  In the sequel we will denote the almost
complex structure on $\nu(\mathscr{F}_\xi)$ by $J$ and denote the Reeb foliation with its transversal holomorphic structure
by $(\mathscr{F}_\xi,J)$.

Smooth basic functions will be denoted by $C^\infty_b(M)$.  The basic exterior r-forms are denoted $\Omega^r_b$, and
split into types
\[\Omega^r_b =\bigoplus_{p+q =r} \Omega_b^{p,q}. \]

To work on the K\"{a}hler leaf space we define the Levi-Civita connection of $g^T$ by
\begin{equation}
\nabla^T_X Y =\begin{cases}
\pi_\xi(\nabla_X Y) & \text{ if }X, Y\text{ are smooth sections of }D, \\
\pi_\xi([V,Y]) & \text{ if } X=V\text{ is a smooth section of }L_\xi,
\end{cases}
\end{equation}
where $\pi_\xi :TM \rightarrow D$ is the orthogonal projection onto $D$.  Then $\nabla^T$ is the unique torsion free connection
on $D\cong\nu(\mathscr{F}_\xi)$ so that $\nabla^T g^T=0$.  Then for $X,Y\in\Gamma(TM)$ and $Z\in\Gamma(D)$ we have the
curvature of the transverse K\"{a}hler structure
\begin{equation}
R^T(X,Y)Z =\nabla^T_X \nabla^T_Y Z -\nabla^T_Y \nabla^T_X Z -\nabla^T_{[X,Y]} Z,
\end{equation}
and similarly we have the transverse Ricci curvature $\Ric^T$ and scalar curvature $S^T$.

The following follows from O'Neill tensor computations for a Riemannian submersion.  See~\cite{ONe66} and~\cite[Ch. 9]{Be87}.
\begin{prop}\label{prop:Sasaki-Ric}
Let $(M,g,\eta,\xi,\Phi)$ be a Sasaki manifold of dimension $n=2m+1$, then
\begin{thmlist}
\item  $\Ric_g (X,\xi) =2m\eta(X),\quad\text{for }X\in\Gamma(TM)$,\label{eq:submer-Ric-Reeb}
\item  $\Ric^T (X,Y) =\Ric_g (X,Y) +2g^T(X,Y),\quad\text{for }X,Y\in\Gamma(D),$
\item  $S^T =S_g +m.$\label{eq:submer-scal}
\end{thmlist}
\end{prop}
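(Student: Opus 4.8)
The plan is to exploit the fact that, locally over each foliation chart, the projection onto the leaf space $W_\alpha$ is a Riemannian submersion $\pi:(M,g)\to(W_\alpha,g^T)$ whose vertical distribution is $L_\xi$ and whose horizontal distribution is the contact distribution $D$. Since $\xi$ is a unit Killing field whose orbits are geodesics, the fibres are one-dimensional and totally geodesic, so the O'Neill tensor $T$ measuring the second fundamental form of the fibres vanishes identically and only the integrability tensor $A$ survives. The base $(W_\alpha,g^T)$ carries the transverse K\"ahler metric, so its Levi-Civita connection, curvature, Ricci and scalar curvature pull back to $\nabla^T$, $R^T$, $\Ric^T$ and $S^T$. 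Thus the three identities become the specialization of the O'Neill curvature equations to this submersion.

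First I would compute the two O'Neill tensors. For $X,Y\in\Gamma(D)$ the vertical part of $\nabla_X Y$ is $g(\nabla_X Y,\xi)\xi=-g(Y,\nabla_X\xi)\xi=-g(\Phi X,Y)\xi$, using $\eta(Y)=g(\xi,Y)=0$ and $\nabla_X\xi=\Phi X$; hence $A_X Y=-g(\Phi X,Y)\xi=-\tfrac12 d\eta(X,Y)\xi$, which is skew in $X,Y$ as required, while $A_X\xi=\pi_\xi(\nabla_X\xi)=\Phi X$. The only numerics needed from these are $|A_X\xi|^2=|\Phi X|^2=|X|^2$, $\sum_i|A_X X_i|^2=|\Phi X|^2=|X|^2$ and $\sum_i|A_{X_i}\xi|^2=\sum_i|\Phi X_i|^2=2m$, where $\{X_i\}_{i=1}^{2m}$ is a $g$-orthonormal frame of $D$.

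Next I would read off the three statements from the O'Neill equations with $T=0$. For (i), the vertical Ricci curvature reduces to $\Ric_g(\xi,\xi)=\sum_i|A_{X_i}\xi|^2=2m$, while the horizontal--vertical Ricci curvature is a divergence of $A$ that vanishes here because $A$ is built from $\Phi$ and the Sasaki structure equation for $\nabla\Phi$ forces the relevant term to be zero; together these give $\Ric_g(X,\xi)=2m\eta(X)$ for all $X$. Equivalently, a direct computation with $\nabla_X\xi=\Phi X$ gives $R(Z,X)\xi=\eta(X)Z-\eta(Z)X$, and tracing in $Z$ yields the same. For (ii), the horizontal Ricci curvature of $M$ is assembled from the horizontal O'Neill term $\sum_i\bigl(\check{R}(X,X_i,X,X_i)-3|A_X X_i|^2\bigr)=\Ric^T(X,X)-3|X|^2$ together with the mixed term $R_g(X,\xi,X,\xi)=|A_X\xi|^2=|X|^2$; summing gives $\Ric_g(X,X)=\Ric^T(X,X)-2|X|^2$, and polarizing yields $\Ric^T(X,Y)=\Ric_g(X,Y)+2g^T(X,Y)$ on $D$. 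Finally (iii) follows by tracing the identity in (ii) over an orthonormal basis of $D$ and using (i) to supply the $\xi$-contribution to $S_g$.

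The main obstacle is bookkeeping rather than conceptual: one must fix consistent sign conventions for $\Phi$, $d\eta$ and $R$, and then track the coefficients in the O'Neill curvature equations accurately, since the $-3|A_X X_i|^2$ in the horizontal term and the $+|A_X\xi|^2$ in the mixed term must be combined with the correct multiplicities. The subtlest single point is the vanishing of the horizontal--vertical Ricci curvature needed for the off-diagonal part of (i); this is where the Sasaki structure equation, as opposed to the bare submersion data, is essential, and I would verify it either through the codifferential $\delta A$ of the O'Neill tensor or, more transparently, through the direct identity $R(Z,X)\xi=\eta(X)Z-\eta(Z)X$.
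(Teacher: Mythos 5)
Your proposal is correct and follows exactly the route the paper itself indicates: the paper gives no written proof but simply cites the O'Neill curvature equations for a Riemannian submersion (\cite{ONe66} and Besse, Ch.~9), and your argument is precisely that computation carried out, with the totally geodesic one-dimensional fibres killing $T$, the $A$-tensor determined by $\nabla_X\xi=\Phi X$, and the norms $|A_X\xi|^2=|X|^2$, $\sum_i|A_XX_i|^2=|X|^2$, $\sum_i|A_{X_i}\xi|^2=2m$ feeding the three identities (the factor $m$ rather than $2m$ in (iii) coming from the paper's convention that $S_g$ and $S^T$ are half the usual traces). Your sign caveats are apt but harmless, and the fallback identity $R(Z,X)\xi=\eta(X)Z-\eta(Z)X$ is the standard way to settle the mixed Ricci term in (i).
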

Here we define $S_g$ (respectively $S^T$) to be $1/2$ the trace of $\Ric_g$ with respect to $g$ (respectively 1/2 the trace of
$\Ric^T$ with respect to $g^T$) to simplify notation later on.

We define $\mathcal{S}(\xi,J)$ to be the set of Sasakian structures with Reeb vector field $\xi$ and
with the holomorphic structure $J$ on the Reeb foliation $\mathscr{F}_\xi$.
In other words, the set of Sasakian structures inducing the same complex normal bundle $(\nu(\mathscr{F}_\xi),J)$.
This is the set of $(\tilde{g},\tilde{\eta},\xi,\tilde{\Phi})\in\mathcal{S}(\xi)$ such that the following diagram
commutes
\begin{equation}\label{eq:trans-CD}
\begin{CD}
TM @>{\tilde{\Phi}}>> TM \\
@VVV            @VVV \\
\nu(\mathscr{F}_\xi) @>{J}>> \nu(\mathscr{F}_\xi).
\end{CD}
\end{equation}

The next lemma describes $\mathcal{S}(\xi,J)$ in detail.  Define
\[ \cH_{\omega^T} =\{ \phi\in C^\infty_b (M)\ |\ (\omega^T +dd^c \phi)^m \wedge\eta >0\ \} \]

\begin{lem}[\cite{BoGa08,BoGaSi08}]\label{lem:trans-def}
The space $\mathcal{S}(\xi,J)$ of all Sasaki structures with Reeb vector field $\xi$ and transverse holomorphic
structure $J$  is an affine space modeled on $\cH /\R \times C^\infty_b (M)/\R \times H^1(M,\R)$.
If $(g,\eta,\xi,\Phi)\in\mathcal{S}(\xi,J)$ is a fixed Sasaki structure then another structure
$(\tilde{g},\tilde{\eta},\tilde{\xi},\tilde{\Phi})\in\mathcal{S}(\xi,J)$ is determined by real basic functions
$\phi$ and $\psi$ and an harmonic, with respect to $g$, 1-form $\alpha$ such that
\begin{equation}\label{eq:trans-def}
\begin{split}
\tilde{\eta} & =\eta + 2d^c \phi +d\psi +\alpha, \\
\tilde{\Phi} & =\Phi -\xi\otimes\tilde{\eta}\circ\Phi,\\
\tilde{g}    & =\frac{1}{2}d\tilde{\eta}\circ(\mathbb{1}\otimes\tilde{\Phi}) +\tilde{\eta}\otimes\tilde{\eta},
\end{split}
\end{equation}
and the transversal K\"{a}hler form becomes $\tilde{\omega}^T =\omega^T +dd^c \phi$.
\end{lem}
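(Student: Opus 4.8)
The plan is to study the difference one-form $\beta := \tilde\eta - \eta$ of the two contact forms and to decompose it with transverse Hodge theory, recovering $\tilde\Phi$ and $\tilde g$ algebraically from $\tilde\eta$. First I would show that $\beta$ is basic. Since $(\tilde g,\tilde\eta,\tilde\xi,\tilde\Phi)$ and $(g,\eta,\xi,\Phi)$ share the Reeb field $\xi$, both forms satisfy $\eta(\xi)=\tilde\eta(\xi)=1$ and $\iota_\xi d\eta=\iota_\xi d\tilde\eta=0$; hence $\beta(\xi)=0$ and $\iota_\xi d\beta=0$, so $\mathcal{L}_\xi\beta=\iota_\xi d\beta+d(\iota_\xi\beta)=0$ and $\beta\in\Omega^1_b$. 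Next I would recover $\tilde\Phi$ and $\tilde g$ from $\tilde\eta$. The new contact distribution is $\tilde D=\ker\tilde\eta$, and commutativity of the diagram~\eqref{eq:trans-CD} forces $\tilde\Phi$ to kill $\xi$ and to restrict on $\tilde D$ to the pullback of $J$ under $\tilde D\to\nu(\mathscr{F}_\xi)$; a direct check shows that $\tilde\Phi(X)=\Phi(X)-\tilde\eta(\Phi X)\,\xi$ does precisely this, since it annihilates $\xi$, lands in $\ker\tilde\eta$, and projects to $J$ on $\nu(\mathscr{F}_\xi)$ because $\Phi$ already does and the correction term is vertical. This is the second line of~\eqref{eq:trans-def}, and the metric is then forced by formula~\eqref{eq:metric} applied to the new data, giving the third line. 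Compatibility of $\tilde\Phi$ with $J$ also forces $d\tilde\eta$, and hence $d\beta$, to be of type $(1,1)$.

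The heart of the matter is the splitting $\beta=2d^c\phi+d\psi+\alpha$. Since $\omega^T=\tfrac12 d\eta$ and $\tilde\omega^T=\tfrac12 d\tilde\eta$, we have $\tilde\omega^T-\omega^T=\tfrac12 d\beta$, a $d$-exact basic $(1,1)$-form. Invoking the transverse Hodge theory of the Reeb foliation (El Kacimi-Alaoui), the transverse $\del\delb$-lemma applies: a $d$-closed, $d$-exact basic $(1,1)$-form is $\del_b\delb_b$-exact, so $\tfrac12 d\beta=dd^c\phi$ for a real basic function $\phi$, unique up to an additive constant. This is exactly $\tilde\omega^T=\omega^T+dd^c\phi$, and positivity of $\tilde\omega^T$ — that is, that $\tilde g$ is a genuine Riemannian metric — is equivalent to $\phi\in\cH$. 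The residual one-form $\beta-2d^c\phi$ is closed and basic, so basic Hodge theory splits it as $d\psi+\alpha$ with $\psi\in C^\infty_b(M)$ and $\alpha$ the harmonic representative of the class $[\beta-2d^c\phi]\in H^1_b(M,\R)$.

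Finally I would assemble the parametrization and run the converse. For any $\phi\in\cH$, $\psi\in C^\infty_b(M)$, and harmonic $\alpha$, the formulas~\eqref{eq:trans-def} manifestly produce a structure in $\mathcal{S}(\xi,J)$, so the correspondence is onto; and since $\phi$ and $\psi$ enter only through $dd^c\phi$ and $d\psi$, they are determined modulo constants, yielding the factors $\cH/\R$ and $C^\infty_b(M)/\R$, while $\alpha$ contributes $H^1_b(M,\R)$. The main obstacle — and the place where I would rely on cited machinery — is twofold: establishing the transverse $\del\delb$-lemma for basic forms, and proving the identification $H^1_b(M,\R)=H^1(M,\R)$ together with the fact that every $g$-harmonic $1$-form is automatically basic, so that $\alpha$ may be taken harmonic with respect to $g$. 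The latter rests on $\xi$ being Killing: for harmonic $\alpha$ one has $\mathcal{L}_\xi\alpha=d(\iota_\xi\alpha)$, which is simultaneously exact and harmonic, hence zero, while $\iota_\xi\alpha$ is a constant that must vanish — this basic-ness of harmonic forms being Tachibana's theorem for compact $K$-contact manifolds.
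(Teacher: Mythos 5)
Your proposal is correct and follows essentially the same route as the paper's (sketched) proof: the difference $\tilde\eta-\eta$ is basic, its exterior derivative is a real, exact, basic $(1,1)$-form handled by transverse $\partial\bar\partial$/Hodge theory, the closed remainder is split as $d\psi+\alpha$, and the identification of the harmonic piece rests on the fact that $g$-harmonic $1$-forms on a compact Sasakian manifold are automatically basic. You supply somewhat more detail than the paper, which gives only a sketch and refers to \cite{BoGa08} for the rest.
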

\begin{proof}
We give only a sketch.  See~\cite{BoGa08} for details.  The 1-form $\gamma=\tilde{\eta}-\eta$ is basic, and since
$d\gamma\in\Gamma(\Lambda_b^{1,1})$ and $\gamma$ is real, $d^c d\gamma =0$.  And we have the Hodge decomposition
\begin{equation}
\gamma =d^c \phi + d\psi +\alpha,
\end{equation}
with respect to the transversal K\"{a}hler metric $g^T$, where $\alpha\in\mathcal{H}^1_{g^T}$ is harmonic.
But note that $\mathcal{H}_{\R,g^T}^1 =\mathcal{H}_{\R,g}^1$, where the latter is the space of real harmonic 1-forms on $(M,g)$.
This is because a $\beta\in\Gamma(\Lambda^1(M))$ satisfying $d\beta =0$ and $\mathcal{L}_\xi \beta=0$ must be basic.
\end{proof}

It is easy to check that the parameter $\psi$ in (\ref{eq:trans-def}) changes the structure only by a gauge transformation along
the leaves.  That is, if $\psi\in C^\infty_b(M)$, then $\exp(\psi\xi)^*\eta =\eta +d\psi$.
Altering by a harmonic form likewise does not effect the transversal metric, so it will not be of much interest.

Given $\phi\in\cH_{\omega^T}$ we define the \emph{transversal K\"{a}hler deformation} of $(g,\eta,\xi,\Phi)$
to be the Sasakian structure given in the lemma, which we will denote
$(g_\phi,\eta_\phi,\xi,\Phi_\phi)$.  We will denote by $\omega^T_\phi =\omega^T +dd^c \phi$ the transversal K\"{a}hler
form of the deformed structure.

This article is concerned with constant scalar curvature Sasakian structures (cscS) and more generally
Sasaki-extremal structures.  By Proposition~\ref{prop:Sasaki-Ric} the scalar curvature $S_g$ and the
scalar curvature of the transversal structure $S_g^T$ differ by a constant, both these conditions are
given by the transversal K\"{a}hler structure.

Let $\fol(M,\sF,J)$ be the space of vector fields preserving the Reeb foliation along with its transversal
holomorphic structure.  The corresponding group is denoted by $\Fol(M,\sF,J)$.  This is an infinite dimensional
group, since any vector field tangent to the leaves is in $\fol(M,\sF,J)$.  So we define
$\hol^T(\xi,J)$ to be the image of
\begin{equation}
\begin{array}{rcl}
\fol(M,\mathscr{F}_\xi,J) & \overset{\pi}{\longrightarrow} & \Gamma(\nu(\mathscr{F}_\xi)) \\
X & \mapsto & \ol{X}
\end{array}
\end{equation}
which is a finite dimensional complex Lie algebra.  We will use $\hol^T(\xi,J)$ to denote both transversally
holomorphic $(1,0)$ vector fields, or transversally real holomorphic vector fields
depending on the context.

Given a basic $\phi\in C^\infty_b(M,\C)$, we define $\del_g^{\#} \phi$ to be the $(1,0)$ component of the gradient, that is
\begin{equation}
g(\del_g^{\#} \phi,\cdot) =\ol{\del}\phi.
\end{equation}
In order for $\del_g^{\#} \phi \in\hol^T(\xi,J)$, transversely holomorphic,
we need in addition $\ol{\del}_b \del_g^{\#} \phi =0$.
This is equivalent to the fourth-order transversally elliptic equation
\begin{equation}
\Li_g \phi := (\ol{\partial}\partial_g^{\#})^*\ol{\partial}\partial_g^{\#}\phi.
\end{equation}
We have
\begin{equation}\label{eq:hol-oper}
\Li_g \phi =\frac{1}{4}\Delta_b^2 \phi +\frac{1}{2}(\rho^T,dd^c \phi) +(\del S^T)\contr\del_g^{\#}\phi.
\end{equation}
We define the space of \emph{holomorphy potentials} to be  $\mathscr{H}_g :=\ker\Li_g$.

We define the \emph{Calabi functional}
\[\Cal_{M,\xi} :\mathcal{S}(\xi,J) \rightarrow\R \]
\begin{equation}\label{eq:Cal-func}
\Cal_{M,\xi}(g) := \int_M (S_{g} -\ol{S})^2 \, d\mu_g,
\end{equation}
where $d\mu_g =(\omega^T)^m \wedge\eta$.
\begin{defn}
A Sasakian structure $(g,\eta,\xi,\Phi)$ is \emph{Sasaki-extremal} if it is a critical point of the Calabi functional.
Equivalently, $\del^{\#} S_g$ is transversally holomorphic.
\end{defn}
See~\cite{BoGaSi08} for the proof that the two conditions are equivalent.

A function $u:W\rightarrow\R\cup\{-\infty\}$ on an open set $W\subset\C^m$ is \emph{quasi-plurisubharmonic}
if it can locally be written as the sum of a plurisubharmonic function and a smooth function.  Recall that a
function $u$ on $W$ is plurisubharmonic if
\begin{thmlist}

\item  $u$ is upper semicontinuous;

\item  for every complex line $L\subset\C^m$, $u|_{L\cap L}$ is subharmonic on $L\cap W$.
\end{thmlist}

Let $\theta$ be any closed basic $(1,1)$-form.
\begin{defn}
A function $u:M\rightarrow\R\cup\{-\infty\}$ is said to be transversally $\theta$-plurisubharmonic ($\theta$-psh) if u is invariant
under the Reeb flow, is upper semi-continuous, in each foliation chart $W_\alpha$ $u$ is quasi-plurisubharmonic and
\[ \theta +dd^c u\geq 0, \]
as a $(1,1)$-current.

The set of $\theta$-psh functions on $M$ is denoted $\PSH(M,\theta)$.
\end{defn}

Because a function $u\in\PSH(M,\theta)$ is defined to be $\theta$-psh in each holomorphic foliation chart $W_\alpha$
most of the familiar properties translate into this situation.  For example $u\in L^1(d\mu_{\eta})$ where
$d\mu_{\eta} =(\omega^T)^m \wedge\eta$.  See~\cite{Dem12}.

The following approximation result will be useful.
\begin{prop}\label{prop:psh-aprox}
Suppose $\theta$ is a positive basic $(1,1)$-form on $M$.
Let $\phi\in\PSH(M,\theta)\cap C^0(M)$, then there exists a sequence $\phi_j \in\PSH(M,\theta)$ decreasing to $\phi$.
\end{prop}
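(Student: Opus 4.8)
The plan is to produce the $\phi_j$ by regularizing $\phi$ in the foliation charts and gluing the local pieces, following the Demailly smoothing scheme adapted to the transverse structure; in fact each $\phi_j$ will come out smooth (basic) as well as $\theta$-psh, which is stronger than required. First I would pass to the transverse picture. Since $\phi$ is Reeb-invariant it descends, in each chart $\Psi_\alpha : U_\alpha \to W_\alpha \times (-\epsilon,\epsilon)$, to a function $\phi_\alpha$ on $W_\alpha \subset \C^m$ independent of the leaf coordinate, and $\theta$ induces a smooth $(1,1)$-form $\theta_\alpha$ on $W_\alpha$ invariant under the transitions $\phi_{\beta\alpha}$; the hypothesis $\theta + dd^c \phi \ge 0$ says precisely that $\phi_\alpha$ is $\theta_\alpha$-psh. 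Because $M$ is compact and $\theta$ is positive, we may fix $\epsilon_0 > 0$ with $\theta \ge \epsilon_0 \omega^T$, and this uniform reserve of positivity is what absorbs the curvature error introduced by mollification.

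Next, on each chart I would mollify. Writing $W_\alpha \subset \C^m \cong \R^{2m}$, set $\phi_\alpha^{(\delta)} = \phi_\alpha * \rho_\delta$ for a standard smoothing kernel $\rho_\delta$. Two facts drive the estimate: convolution of a quasi-psh function is smooth and decreases to the original as $\delta \downarrow 0$, and since $\theta_\alpha$ is smooth one has $dd^c \phi_\alpha^{(\delta)} = (dd^c \phi_\alpha) * \rho_\delta \ge -\theta_\alpha * \rho_\delta$, so that $\theta_\alpha + dd^c \phi_\alpha^{(\delta)} \ge \theta_\alpha - \theta_\alpha * \rho_\delta \ge -C\delta^2 \omega^T$, the $O(\delta^2)$ coming from Taylor-expanding the smooth form $\theta_\alpha$. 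Thus after a correction of size $O(\delta^2)$ the mollified functions are genuinely $\theta$-psh once $\delta$ is small, using the reserve $\epsilon_0 \omega^T$.

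The substance is the global gluing, and this is the step I expect to be the main obstacle. The local mollifications live in different holomorphic coordinate systems, and since the transitions $\phi_{\beta\alpha}$ are only biholomorphisms, convolution does not commute with them and one cannot simply patch. I would handle this by Demailly's regularized maximum construction (see~\cite{Dem12}): take a locally finite refinement of $\{W_\alpha\}$, mollify on slightly enlarged charts with radius $\delta$, and define the global approximant as a regularized maximum of the local pieces $\phi_\alpha^{(\delta)}$ over the charts meeting a given point. On overlaps the local approximants differ from $\phi$ by $O(\delta^2)$ plus a term controlled by the modulus of continuity of $\phi$, which tends to $0$ as $\delta \to 0$ (available because $\phi \in C^0$); choosing the gluing constants to dominate these errors yields a single smooth, basic function that is $\theta$-psh, the regularized maximum preserving both the $\theta$-psh property (being a smooth convex combination) and the ordering.

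Finally I would extract the decreasing sequence: taking $\delta = \delta_j \downarrow 0$ and adding a decreasing null sequence of constants to repair the monotonicity lost from the smooth part of the convolution, the construction gives $\phi_j \in \PSH(M,\theta) \cap C^\infty_b(M)$ with $\phi_j \downarrow \phi$, the convergence being uniform since $\phi$ is continuous. The two points needing genuine care are that the regularized-maximum gluing stays basic — which holds because every ingredient is built from the basic functions $\phi_\alpha$ and the basic form $\theta$, so the result is Reeb-invariant — and that the errors are absorbed uniformly into $\epsilon_0 \omega^T$, for which the compactness of $M$ is essential.
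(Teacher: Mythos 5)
Your proposal is correct in outline but takes a genuinely different route from the paper. The paper does not regularize in the transverse charts at all: it pulls $\phi$ back to an annular neighborhood $X'$ of $M=\{r=1\}$ in the K\"ahler cone $C(M)$, applies the regularization theorem of \cite{BloKo07} there as a black box to obtain smooth $\psi_j\in\PSH(X',\theta+\varepsilon_j\omega)$ decreasing to $\phi$, averages over the Reeb torus, and then must remove the extra term $2\,d\psi_j(\del_r)\,\omega^T$ that appears when the complex Hessian is restricted to $M$; this is done by noting that $\psi_j+\varepsilon_j t^2$ is convex in $t=\log r$ and converges uniformly to the $t$-independent limit $\phi$, which forces $\del_t\psi_j\to 0$ uniformly. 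Your route instead re-runs the local-mollification-plus-regularized-maximum argument (which is essentially the proof of the cited theorem) intrinsically in the foliation charts: this avoids the cone and the radial-derivative estimate entirely, at the price of carrying out the gluing yourself, and you correctly identify the gluing as the crux. Two points should be made precise if you write it up. First, the loss of positivity coming from the gluing is not $O(\delta^2)$: to make the regularized maximum well defined across charts you must weight each local piece by $\eta h_\alpha$ with $h_\alpha$ very negative near $\del W_\alpha$ and with $\eta$ dominating the oscillation of the local approximants on overlaps, which is of the order of the modulus of continuity of $\phi$ at scale $\delta$; the resulting curvature error $\eta\sup_\alpha\|dd^c h_\alpha\|$ therefore tends to zero only at that rate --- still sufficient, but slower than $O(\delta^2)$. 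Second, ``absorbing the error into $\epsilon_0\omega^T$'' should be implemented as a rescaling $\phi_j=u_j/\lambda_j$ with $\lambda_j\searrow 1$ (after normalizing $\phi\leq -1$), exactly as in the last step of the paper's proof; an additive correction by a constant does not restore positivity. With these two points spelled out, your construction yields smooth basic $\phi_j\in\PSH(M,\theta)$ decreasing to $\phi$ after adding a suitable null sequence of constants, which is what the paper's argument also delivers.
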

The proof will mostly follow from the following.

\begin{thm}[\cite{BloKo07}]
Let $X$ be a complex manifold with a positive hermitian form $\omega$ and $\gamma$ a continuous $(1,1)$-form on $M$.
Let $\phi\in\PSH(X,\gamma)$ be locally bounded.  Then for any relatively compact open $X'\subset X$ we have a decreasing sequence
$\varepsilon_j \searrow 0$ and a sequence $\phi_j \in\PSH(X', \gamma+\varepsilon_j \omega)\cap C^\infty(X')$ decreasing to $\phi$.
\end{thm}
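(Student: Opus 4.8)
The plan is to prove the statement locally by convolution and then patch the local smooth approximants into a single global one using Demailly's regularized maximum. Since $X'$ is relatively compact, I would fix a finite cover of $\overline{X'}$ by coordinate charts $B_i$, each biholomorphic to a ball in $\C^n$, together with nested shrinkings $B_i'' \Subset B_i' \Subset B_i$ whose innermost members $B_i''$ still cover $\overline{X'}$. All the curvature bookkeeping then takes place on these Euclidean balls, where convolution is available and where the positive form $\omega$ enters only through its positivity, so that no K\"ahler hypothesis on $\omega$ and no closedness of $\gamma$ are needed.

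\textbf{Local step.} On a fixed ball $B_i$, let $\phi_\delta = \phi * \rho_\delta$ be the mollification by a standard radial approximate identity. Writing the twisted condition componentwise, $\bigl(\phi_{j\bar k}\bigr) + \bigl(\gamma_{j\bar k}\bigr) \geq 0$ as a current, and convolving with the nonnegative kernel preserves positivity, so $dd^c\phi_\delta + \gamma * \rho_\delta \geq 0$. Since $\gamma$ is continuous, $\gamma * \rho_\delta \to \gamma$ uniformly on $\overline{B_i'}$, and positivity of $\omega$ gives $\gamma * \rho_\delta \leq \gamma + \varepsilon\omega$ there once $\delta$ is small; hence $\phi_\delta \in \PSH(B_i', \gamma + \varepsilon\omega) \cap C^\infty$. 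For monotonicity I would use that on $B_i$ one may write $\phi = \psi - C|z|^2$ with $\psi$ plurisubharmonic (choosing $C$ so that $C\,dd^c|z|^2 \geq \gamma$); then $\phi_\delta$ plus a constant multiple of $\delta^2$ equals $\psi_\delta - C|z|^2$, which decreases to $\phi$ as $\delta\searrow 0$, the added constant leaving $dd^c$ unchanged. This produces, on each $B_i'$, a family $\hat\phi_i^{\,\delta}$ of smooth $(\gamma+\varepsilon\omega)$-psh functions decreasing to $\phi$.

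\textbf{Gluing step.} The essential tool is Demailly's regularized maximum $M_\eta$ (see \cite{Dem12}): a smooth, convex, nondecreasing function that agrees with $\max$ off an $\eta$-strip and satisfies the translation identity $\sum_i \partial_i M_\eta \equiv 1$. That identity is exactly what preserves the twisted class: from $dd^c M_\eta(u_1,\dots,u_N) \geq \sum_i (\partial_i M_\eta)\, dd^c u_i$ together with $dd^c u_i \geq -(\gamma+\varepsilon\omega)$ and $\partial_i M_\eta \geq 0$, one gets $dd^c M_\eta(u_1,\dots,u_N) \geq -(\gamma+\varepsilon\omega)$. I would glue the local regularizations chart by chart: to attach $B_{m+1}$ to the region already built over $B_1'' \cup\cdots\cup B_m''$, I replace the two functions on their overlap by $M_\eta$ of them, having first added constants so that one competitor strictly dominates (by more than $\eta$) near the inner boundary and the other strictly dominates near the outer boundary, which makes $M_\eta$ collapse to a single chart's function outside the transition zone and so extends smoothly. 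Monotonicity of the final sequence is then clean: letting both the mollification scale $\delta_j\searrow 0$ and the regularization width $\eta_j\searrow 0$, the glued function decreases because $M_\eta$ is nondecreasing in each argument (as the $\hat\phi_i^{\,\delta_j}$ decrease) and increasing in $\eta$ (as $\eta_j$ decreases), while the overshoot of $M_{\eta_j}$ over $\max$ is $O(\eta_j)\to 0$, forcing pointwise convergence to $\phi$.

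\textbf{Main obstacle.} The delicate point is precisely this patching, and in particular arranging the domination/sign pattern of $\hat\phi_i - \hat\phi_k$ on the overlaps so that the regularized maximum selects the right chart and glues smoothly. Under the mere hypothesis that $\phi$ is locally bounded, rather than continuous, the two local regularizations over an overlap are each only close to $\phi$ in an $L^1$ and upper-semicontinuous sense, so one cannot separate them by a uniform gap; this is the genuine contribution of \cite{BloKo07} over the classical continuous (Richberg) case. I expect the bulk of the work to lie here: choosing the nested cover, the shift constants, and the coordination of $\delta_j$ and $\eta_j$ so that the chart-by-chart gluing is simultaneously smooth, $(\gamma+\varepsilon_j\omega)$-psh, decreasing in $j$, and convergent to $\phi$ at every point, using upper semicontinuity and the monotone decrease in $\delta$ in place of uniform estimates.
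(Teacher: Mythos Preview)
The paper does not prove this theorem at all: it is quoted verbatim from \cite{BloKo07} as an external input and immediately applied in the proof of Proposition~\ref{prop:psh-aprox}. There is therefore no ``paper's own proof'' to compare your proposal against.

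For what it is worth, your outline is essentially the B{\l}ocki--Ko{\l}odziej strategy: local mollification on coordinate balls to obtain smooth $(\gamma+\varepsilon\omega)$-psh approximants, followed by a Richberg-type gluing via Demailly's regularized maximum. You have correctly located the genuine difficulty in the gluing step, namely that without continuity of $\phi$ one cannot separate the competing chart regularizations by a uniform gap on overlaps. Your sketch stops short of the actual device that resolves this: in \cite{BloKo07} one does not rely on the two local approximants being uniformly close to each other, but instead exploits the local lower bound on $\phi$ to modify each chart approximant by a function that is $\omega$-psh on the chart and tends to $-\infty$ at the chart boundary (a scaled local potential for $\omega$), thereby \emph{forcing} the required domination near the edge of each $B_i'$ regardless of how the raw mollifications compare. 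Your proposal to ``add constants'' on overlaps will not achieve this in the merely locally bounded case, and your final paragraph acknowledges as much. So the plan is sound in architecture but incomplete at exactly the point you flag; filling it in would amount to reproducing the argument of \cite{BloKo07}.
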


\begin{proof}[Proof of Proposition]
Let $X=C(M)$ with $M=\{r=1\}\subset X$.  And let $X'\subset X$ be $X'=\{(r,x)\in C(M)\ |\ 1-\epsilon <r<1+\epsilon \}$.

For $\phi\in\PSH(M,\theta)\cap C^0(M)$ consider $\phi$ to be a function on $X$ via the projection $p: X\rightarrow M$,
$p(r,x)=x$, and similarly consider $\theta$ as a $(1,1)$-form on $X$.  Define
\[ \omega = \frac{dr}{r}\wedge\eta +\theta \]
a positive hermitian form on $X$.  By the theorem there exists
$\psi_j \in\PSH(X', \theta +\varepsilon_j \omega)\cap C^\infty(X')$ with $\psi_j \searrow\phi$.
Let $T\subset\Aut(M,\eta,\xi,g)$ be the torus generated by $\xi$.  By averaging by $T$ we may assume that $\psi_j$ are
invariant by $\xi$.

Suppose that $\psi\in C^\infty(X')$ is invariant under $\xi$.  Routine calculation shows that the complex hessian at a point of
$M\subset X'$ for $X,Y\in\Gamma(D)$ basic is
\begin{equation}\label{eq:hess-cr}
 dd^c \psi (X,Y) = d_b d^c_b \psi(X,Y) + 2d\psi(\del_r)\omega^T(X,Y).
\end{equation}

On $X'$ we have
\begin{equation}\label{eq:hess-pos}
 \varepsilon_j \frac{dr}{r}\wedge\eta +(1+\varepsilon_j)\theta +dd^c \psi_j \geq 0.
\end{equation}
Introduce the coordinate $t=\log r$ so $\del_t =r\del_r$.  Then substituting $(\del_t,\xi)$ into the above inequality
gives $\varepsilon_j +\del_t^2 \psi_j \geq 0$.  Then $\hat{\psi}_j =\psi_j +\varepsilon_j t^2$ is convex with respect to
$t$ and converges uniformly to $\phi$ on $[-\delta,\delta]\times M =\{(t,x)\in X' \ |\ -\delta<t<\delta \}\subset X'$.
By convexity we have
\[ \frac{\hat{\psi}_j(0,x) -\hat{\psi}_j(-\delta,x)}{\delta} \leq\del_t \psi_j (0,x)\leq
\frac{\hat{\psi}_j(\delta,x) -\hat{\psi}_j(0,x)}{\delta}\]
Or
\[ \frac{\psi_j(0,x) -\psi_j(-\delta,x)-\varepsilon_j \delta^2}{\delta}\leq \del_t \psi_j(0,x)\leq
\frac{\psi_j(\delta,x) -\phi_j(o,x) +\epsilon_j \delta^2}{\delta}. \]
Thus $\del_t \psi_j \rightarrow 0$ uniformly on $M\subset X'$.

From (\ref{eq:hess-cr}) and (\ref{eq:hess-pos}) on $M$ we have
\[ (1+\varepsilon_j)\theta +d_b d^c_b \psi_j +2d\psi_j(\del_t)\omega^T \geq 0. \]
After possibly passing to a subsequence of $\psi_j$ there are constants $\hat{\varepsilon}_j\searrow 0$ with
\[ (1+\hat{\varepsilon}_j)\theta + d_b d^c_b \psi_j \geq 0.\]
Without loss of generality we may assume that $\phi\leq -1$.  Then we have $\lambda_j =1+\hat{\varepsilon}_j$,
$\lambda_j \searrow 1$, and negative $\psi_j \in\PSH(M,\lambda_j \theta)$.  Then
$\phi_j :=\psi_j /\lambda_j \in\PSH(M,\theta)\cap C^\infty(M)$ is a sequence decreasing to $\phi$.
\end{proof}

\subsection{Monge-Amp\`{e}re operator}

We will define a transversal version of the Monge-Amp\`{e}re operator on transversally quasi-plurisubharmonic
functions on $M$.  This will be needed to define weak geodesics in the space of Sasakian structures.  It will
also be needed to define necessary energy functionals on weak structures, in particular the Monge-Amp\`{e}re
energy and the Mabuchi K-energy.

We can define a \emph{transversal current} $T$ on the foliation $\mathscr{F}_{\xi}$ to be a collection
$\{W_\alpha,T_\alpha\}_{\alpha\in A}$ so that
$\phi_{\beta\alpha*} T_{\alpha}|_{W_\alpha \cap W_{\beta}}= T_{\beta}|_{W_\alpha \cap W_{\beta}}$.
Since that transition maps (\ref{eq:fol-tran}) are holomorphic, we define $T$ to have bidegree $(p,q)$ if
each $T_\alpha$ has bidegree $(p,q)$, $T_\alpha \in{\cD'}^{p,q}(W_\alpha)$.  Similarly, we define the notions of
a closed transversal current, respectively a positive current, to be transversal currents with each $(W_\alpha, T_\alpha)$ closed,
respectively positive.

If $\theta$ is any basic closed $(1,1)$-form, and $u\in\PSH(M,\theta)$, then $\theta +dd^c u$ is a closed positive
$(1,1)$ transversal current.  Suppose that $u\in\PSH(M,\theta)\cap L^\infty$ and $T$ is a closed positive transversal current
of bidegree $(p,p)$.  One can employ the Bedford-Taylor~\cite{BeTa76} construction to define
the closed, positive, degree $(p+1,p+1)$ transversal current
\[ (\theta +dd^c u)\wedge T. \]
This is of course defined in each chart $W_\alpha$, as follows.  If $\theta=dd^c w$, then
$dd^c(w+u)\wedge T_\alpha := dd^c\bigl((w+u)T_\alpha \bigr)$.  One can check that this is independent of $w$.

Given $u_1,\ldots, u_m \in\PSH(M,\theta)\cap L^\infty$ by applying this definition inductively, we get
\[ (\theta +dd^c u_1)\wedge\cdots\wedge (\theta+dd^c u_m), \]
a positive, bidegree $(m,m)$ current defined in each $W_\alpha$.  It therefore defines a Radon measure in each $W_\alpha$,
invariant under the transitions (\ref{eq:fol-tran}).

We define the \emph{Monge-Amp\`{e}re operator} as follows.  We define the measure, denoted
\begin{equation}\label{eq:def-MA-meas}
(\theta +dd^c u_1)\wedge\cdots\wedge (\theta+dd^c u_m)\wedge\eta,
\end{equation}
to be the product measure on $U_\alpha \cong W_\alpha \times (-\epsilon,\epsilon)$ given by the chart (\ref{eq:fol-prod}).
It is easy to see that this is invariant of the transition maps (\ref{eq:fol-prod-trans}).
This is easy to see using Fubini's theorem, and the invariance of $(\theta +dd^c u_1)\wedge\cdots\wedge (\theta+dd^c u_m)$
under the holomorphic transitions (\ref{eq:fol-tran}).

The Monge-Amp\`{e}re operator is defined as locally a product, so many of the usual properties of the usual
Monge-Amp\`{e}re operator on complex manifolds hold.  The most important will be weak convergence under several cases
of convergence of function.
\begin{thm}\label{thm:weak-con}
Let $u_0^j$ be a sequence of bounded transversally quasi-psh functions, and sequences
$u_1^j,\ldots, u_m^j \in \PSH(M,\theta)\cap L^\infty$.  Then

\[ u_0^j (\theta +dd^c u_1^j)\wedge\cdots\wedge (\theta+dd^c u_m^j)\wedge\eta \]

converges weakly to

\[  u_0 (\theta +dd^c u_1)\wedge\cdots\wedge (\theta+dd^c u_m)\wedge\eta, \]

where $u_0$ is transversal quasi-psh and bounded and $u_1,\ldots, u_m \in \PSH(M,\theta)\cap L^\infty$, when
the convergence $u_k^j \rightarrow u_k$ for each k is one of the following.
\begin{itemize}
\item  $u_k^j$ decreases pointwise to $u_k$.

\item  $u_k^j$ increases to $u_k$ a.e. with respect to Lebesgue measure.

\item  $u_k^j$ converges to $u_k$ uniformly on $M$.

\end{itemize}
\end{thm}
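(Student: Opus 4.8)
The plan is to reduce the global assertion to the classical Bedford--Taylor weak continuity theorems, chart by chart, using the fact established just above the statement that in each foliated chart $U_\alpha \cong W_\alpha \times (-\epsilon,\epsilon)$ the measure (\ref{eq:def-MA-meas}) is the product of the Bedford--Taylor measure $\mu_\alpha := (\theta + dd^c u_1)\wedge\cdots\wedge(\theta+dd^c u_m)$ on $W_\alpha$ with the fiber measure $d\tau$ coming from $\eta$. Since $M$ is compact, weak convergence of Radon measures is tested against $\chi\in C^0(M)$, and with a finite partition of unity $\{\rho_\alpha\}$ subordinate to the covering $\{U_\alpha\}$ it suffices to prove, for each $\alpha$,
\[ \int_{U_\alpha} \rho_\alpha\,\chi\, u_0^j\, \mu_\alpha^j \wedge \eta \longrightarrow \int_{U_\alpha} \rho_\alpha\,\chi\, u_0\, \mu_\alpha \wedge \eta, \]
where $\mu_\alpha^j := (\theta + dd^c u_1^j)\wedge\cdots\wedge(\theta + dd^c u_m^j)$.

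First I would integrate out the leaf direction. Because the functions $u_k^j$ are \emph{basic}, they descend to functions of $z\in W_\alpha$ alone, so by Fubini the left-hand integral equals $\int_{W_\alpha} u_0^j(z)\,\psi(z)\, d\mu_\alpha^j(z)$, where $\psi(z) := \int_{-\epsilon}^{\epsilon} (\rho_\alpha\chi)(z,\tau)\, d\tau$ is a fixed, compactly supported, continuous test function on $W_\alpha$ independent of $j$. The problem is thereby reduced exactly to the weak convergence $u_0^j\,\mu_\alpha^j \to u_0\,\mu_\alpha$ of measures on the complex manifold $W_\alpha\subset\C^m$.

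Next I would check that each of the three convergence modes transports correctly to $W_\alpha$. Decreasing-pointwise and uniform convergence descend verbatim, the functions being $\tau$-independent. For the a.e.-increasing case the set in $U_\alpha$ where convergence fails has the form $E\times(-\epsilon,\epsilon)$, which is Lebesgue-null in $U_\alpha$ if and only if $E$ is Lebesgue-null in $W_\alpha$; hence $u_k^j\nearrow u_k$ a.e.\ on $W_\alpha$ as well. With this in hand the three cases are precisely the classical weak continuity statements for the complex Monge--Amp\`ere operator acting on locally bounded (quasi-)plurisubharmonic functions: continuity along decreasing sequences, along a.e.-increasing sequences, and along uniformly convergent sequences (see~\cite{BeTa76,Dem12}). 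Applying the appropriate one in each chart and summing over the finitely many charts meeting $\operatorname{supp}\chi$ yields the claim.

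The main obstacle, and essentially the only place needing care, is the bookkeeping that makes the reduction to $W_\alpha$ legitimate: one must confirm that the local Bedford--Taylor measures $\mu_\alpha^j$ glue to a well-defined global object (already guaranteed by transition-invariance under (\ref{eq:fol-tran})), that the partition-of-unity pieces $\rho_\alpha\chi$ yield genuine compactly supported test functions $\psi$ after integration in $\tau$, and that each convergence mode is carried to $W_\alpha$ without loss---in particular the Fubini argument for the exceptional null set in the increasing case. Once these are settled, the analytic heart of the theorem is supplied entirely by the classical theory, since the transversal operator is by construction a leafwise product.
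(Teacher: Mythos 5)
Your argument is correct and is essentially the one the paper intends: the paper gives no explicit proof of Theorem~\ref{thm:weak-con}, only the remark that the transversal operator is ``defined as locally a product'' so the usual Bedford--Taylor continuity properties carry over, and your chart-by-chart reduction via a partition of unity, Fubini in the leaf direction (using that the $u_k^j$ are basic), and the classical weak-continuity theorems on $W_\alpha$ is precisely the detailed version of that remark. The bookkeeping points you flag (transition-invariance of the local measures, the test functions $\psi$ obtained by integrating out $\tau$, and the null-set argument in the a.e.-increasing case) are exactly the right ones and are handled correctly.
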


We note that if $T$ is a closed positive transversal current of bidegree $(m-1,m-1)$ and $u,v$ are bounded
transversal quasi-psh then $du \wedge d^c v \wedge T \wedge\eta$ can be defined.  We may suppose $u\geq 0$ and define
\[  du \wedge d^c u \wedge T \wedge\eta := \frac{1}{2} dd^c u^c \wedge T -udd^c u\wedge T \wedge\eta,\]
and the general case can be defined by polarization.
In particular, $du \wedge d^c u \wedge  T \wedge\eta \geq 0$ and we have the analogous convergence as in Theorem
~\ref{thm:weak-con}.

Integration by parts formulae will be useful.
\begin{prop}\label{prop:int-parts}
Suppose that $\theta$ is a positive, closed, basic $(1,1)$-form on $M$.  Let $v,w$ each be differences of continuous
transversally  quasi-psh functions, and let $u_1,\ldots,u_{m-1} \in\PSH(M,\theta)\cap C^0(M)$.  Then
\begin{multline}
\int_M vdd^c w \wedge(\theta+dd^c u_1)\wedge\cdots\wedge (\theta +dd^c u_{m-1})\wedge\eta \\
  =\int_M wdd^c v \wedge(\theta+dd^c u_1)\wedge\cdots\wedge (\theta +dd^c u_{m-1})\wedge\eta \\
  =-\int_M dv\wedge d^c w \wedge(\theta+dd^c u_1)\wedge\cdots\wedge (\theta +dd^c u_{m-1})\wedge\eta.
\end{multline}
\end{prop}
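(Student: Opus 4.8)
The plan is to prove the identity first for smooth data, where it follows from Stokes' theorem, and then to recover the stated regularity by approximation, using the smooth approximation result (Proposition~\ref{prop:psh-aprox}) together with the weak continuity of the transversal Monge-Amp\`ere operator (Theorem~\ref{thm:weak-con}). Throughout I would write $\Omega = (\theta+dd^c u_1)\wedge\cdots\wedge(\theta+dd^c u_{m-1})$, a closed, positive, basic $(m-1,m-1)$ current, so that each of the three integrands is the top-degree form on $M$ obtained by wedging a basic $(m,m)$ object with $\eta$.

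For smooth $v,w,u_1,\ldots,u_{m-1}$ I would consider the $(2m)$-form $\beta = v\, d^c w\wedge\Omega\wedge\eta$ and expand $d\beta$. Since each factor $\theta+dd^c u_i$ is closed, $d\Omega=0$; moreover $d\eta$ is basic, so $d^c w\wedge\Omega\wedge d\eta$ is a basic form of degree $2m+1$ on a transversal space of real dimension $2m$ and therefore vanishes. Hence $d\beta = dv\wedge d^c w\wedge\Omega\wedge\eta + v\, dd^c w\wedge\Omega\wedge\eta$, and since $M$ is closed, Stokes' theorem yields $\int_M v\, dd^c w\wedge\Omega\wedge\eta = -\int_M dv\wedge d^c w\wedge\Omega\wedge\eta$, the equality of the first and third integrals. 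For the symmetry with the middle integral, only the $(1,1)$-component of $dv\wedge d^c w$ survives after wedging with the $(m-1,m-1)$-form $\Omega$, since the $(2,0)$ and $(0,2)$ parts raise one holomorphic or antiholomorphic degree above $m$; that surviving component is $\i(\del v\wedge\delb w+\del w\wedge\delb v)$, which is symmetric in $v$ and $w$. Therefore $dv\wedge d^c w\wedge\Omega = dw\wedge d^c v\wedge\Omega$, and applying the first step with the roles of $v$ and $w$ exchanged gives $\int_M v\, dd^c w\wedge\Omega\wedge\eta = \int_M w\, dd^c v\wedge\Omega\wedge\eta$.

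To remove the smoothness hypotheses I would argue by approximation. By bilinearity in $v$ and $w$ it suffices to treat the case where $v,w\in\PSH(M,\theta')\cap C^0(M)$ for a single fixed positive basic form $\theta'$. Proposition~\ref{prop:psh-aprox}, applied in the relevant cones, produces smooth sequences $v^j,w^j,u_i^j$ decreasing to $v,w,u_i$; since the limits are continuous, Dini's theorem upgrades this to uniform convergence on $M$. Writing $dd^c w^j\wedge\Omega^j = (\theta'+dd^c w^j)\wedge\Omega^j-\theta'\wedge\Omega^j$ as a difference of Bedford-Taylor products, Theorem~\ref{thm:weak-con} (and its evident extension to mixed reference forms, which follows verbatim from the local construction) gives weak convergence of the associated measures; combined with the uniform convergence $v^j\to v$ and the uniform boundedness of the total masses, the integrals $\int_M v^j\, dd^c w^j\wedge\Omega^j\wedge\eta$ converge to the desired limit, and likewise for the middle integral.

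The main obstacle I anticipate is the passage to the limit in the gradient term $\int_M dv\wedge d^c w\wedge\Omega\wedge\eta$. Unlike the other two integrals, this is not directly a Monge-Amp\`ere measure paired with a continuous function: it is defined only through the polarization of the positive expression $du\wedge d^c u\wedge T\wedge\eta$ introduced after Theorem~\ref{thm:weak-con}, and since $v,w$ need not be psh, positivity is lost under this polarization. I would handle it by expressing the gradient pairing through that polarization identity, reducing convergence to the analogous weak continuity already asserted there for the positive expressions $du^j\wedge d^c u^j\wedge T^j\wedge\eta$, and then verifying that the decreasing (hence uniform) approximations can be chosen simultaneously for all arguments while keeping the total masses controlled. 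Once this continuity is established, all three equalities pass to the limit at once, giving the proposition in the stated generality.
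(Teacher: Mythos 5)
Your proposal is correct and follows essentially the same route as the paper: establish the identities for smooth data by Stokes' theorem, then approximate $q,r,s,t,u_1,\ldots,u_{m-1}$ by decreasing (hence, by Dini, uniformly convergent) sequences of smooth transversally quasi-psh functions via Proposition~\ref{prop:psh-aprox} and pass to the limit with Theorem~\ref{thm:weak-con}. The gradient-term subtlety you flag is exactly what the remark following Theorem~\ref{thm:weak-con} is for: $dv\wedge d^c w\wedge T\wedge\eta$ is defined by polarization of the positive expression $du\wedge d^c u\wedge T\wedge\eta$, and the analogous weak convergence is asserted there, so no further work is needed.
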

\begin{proof}
By assumption $v=q-r, w=s-t$ with $q,r,s,t$ quasi-psh.  By Proposition~\ref{prop:psh-aprox} $q,r,s,t,u_1,\ldots,u_{m-1}$
can be approximated by decreasing sequences of smooth transversally quasi-psh functions.  The above equations hold
for these approximations by Stoke's theorem.  Then the result follows from Theorem~\ref{thm:weak-con}.
\end{proof}

\subsection{Weak geodesics}

We are primarily concerned with applications of the Monge-Amp\`{e}re operator to weak geodesics in $\cH$.
We describe the weak Riemannian structure on $\cH$.  Given $\phi\in\cH$ and $\psi_1,\psi_2 \in T_\phi \cH\cong C_b^\infty(M)$

\[ \langle\psi_1 ,\psi_2 \rangle_\phi :=\int_M \psi_1 \psi_2 \, d\mu_\phi ,\]
where $d\mu_\phi =(\omega_\phi^T)^m \wedge\eta_\phi$.

There is a torsion free connection compatible with this metric.  Given a smooth path $\{\phi_t | a\leq t\leq b \}$ in $\cH$
a vector field along $\{ \phi_t \}$ can be identified with a smooth path $\{ \psi_t | a\leq t\leq b \}\in C_b^\infty([a,b]\times M)$.
The covariant derivative, in transversal holomorphic coordinates (\ref{eq:fol-prod}), is

\[ \frac{D}{\del t} := \del_t -\frac{1}{2\sqrt{-1}}\sum\bigl(\omega_{\phi}^T \bigr)^{\alpha\ol{\beta}}\bigl(\dot{\phi}_{\alpha}\del_{\ol{\beta}}+\dot{\phi}_{\ol{\beta}}\del_{\alpha} \bigr).\]
The geodesic equation is then
\begin{equation}\label{eq:geo}
\ddot{\phi} =\frac{1}{2}|d\dot{\phi}|^2_{\omega^T_{\phi}}
\end{equation}
for a smooth path $\{\phi_t | a\leq t\leq b \}\subset\cH$.

We define the \emph{Monge-Amp\`{e}re energy} to be the potential $\cE:\cH\rightarrow\R$ with derivative
\[ d\cE|_{\phi}(\psi)=\int_M \psi\, d\mu_\phi, \]
which is easily seen to be closed.  So we may define
\begin{equation}\label{eq:M-A-pre}
\cE(\phi)=\int_0^1 \int_M \dot{\phi}_t \, d\mu_{\phi_t} dt,
\end{equation}
where $\{\phi_t \ |\ 0\leq t\leq 1 \}$ is a smooth path in $\cH$ with $\phi_0 =0,\phi_1 =\phi$.

Define
\begin{equation}\label{eq:nor-Kah-pot}
\tilde{\cH} =\{ \phi\in\cH\ |\ \cE(\phi)=0 \}\subset\cH.
\end{equation}

The map
\[\begin{array}{ccl}
\cH & \cong & \tilde{\cH}\times\R \\
\phi & \leftrightarrow & (\phi -\frac{\cE(\phi)}{\Vol(M)}, \frac{\cE(\phi)}{\Vol(M)})
\end{array} \]
is an isometry.  And $\tilde{\cH}$ is geodesically convex, that is a geodesic $\{ \phi_t \ |\ a\leq t\leq b \}$ with
$\phi_a,\phi_b \in\tilde{\cH}$ is contained in $\tilde{cH}$.

If $\cK$ denotes the space of Sasakian structures associated to $\cH$, then we have an isomorphism
\[\begin{array}{ccl}
\tilde{\cH} & \cong & \tilde{\cK} \\
\phi & \leftrightarrow & (\eta_\phi ,\xi,\Phi_\phi, g_\phi)
\end{array} \]
A geodesic in $\cK$ is defined to be a geodesic in $\tilde{\cH}$.

Let $A=\{ \tau\in\C\ |\ 1\leq |\tau|\leq e \}$, then $N:= M\times A$ is a manifold with boundary, with a transversely
holomorphic foliation.  The foliation charts are as in (\ref{eq:fol-prod}).  If $V\subset A$, then the charts are
\begin{equation}\label{eq:fol-prod-N}
 \Phi_\alpha :U_\alpha \times V \rightarrow W_\alpha \times V\times(-\epsilon,\epsilon)
\end{equation}
with $W_\alpha \times V$ giving the local holomorphic leaf space.

A path $\phi\in C_b^\infty([0,1]\times M)$ corresponds to an $S^1$-invariant function $\Phi_\tau$ on
$N$ under $\tau =e^t$.  If $\{\phi_t \}$ is a smooth path in $\cH$ then a routine calculation shows that

\[ (\pi^* \omega^T +dd^c \Phi)^{m+1} =\frac{(m+1)}{4}(\ddot{\phi}_t -\frac{1}{2}|d\dot{\phi}_t|^2_{\omega_{\phi_t}^T})
   (\omega^T_{\phi_t})^m \wedge\frac{d\tau\wedge d\ol{\tau}}{|\tau|^2}.\]
Thus a smooth geodesic between $\phi_0,\phi_1 \in\cH$ is given by $\Phi\in C_b^\infty(N)$ with
$\Phi_\tau =\phi_j, |\tau|=e^j, j=0,1$, and $\omega^T +dd^c \Phi_\tau >0$ for all $\tau\in A$ so that
\[ (\pi^* \omega^T +dd^c \Phi)^{m+1} =0.\]
We can define a \emph{weak geodesic} between $\phi_0,\phi_1 \in\cH$ as follows.
\begin{equation}\label{eq:weak-geo}
\begin{cases}
\Phi\in\PSH(\overset{\circ}{N},\pi^* \omega^T)\cap C^0(N) & \\
\Phi_\tau =\phi_j, |\tau|=e^j ,j=0,1 & \\
(\pi^* \omega^T +dd^c \Phi)^{m+1}\wedge\eta =0 &
\end{cases}
\end{equation}
We assume $\Phi\in C^0(N)$ merely because it is the weakest regularity we will consider.

The best regularity for a solution of the Dirichlet problem (\ref{eq:weak-geo}) is due to
P. Guan and Xi Zhang~\cite{GuaZha12}.  We define $C_{w}^{1,1}(N)$ to be the completion of $C^{\infty}_b(N)$ with
norm $\|\phi\|_w =\|\phi\|_{C^1} +\|dd^c \phi\|_{L^\infty}$.
\begin{thm}[\cite{GuaZha12}]\label{thm:weak-geo}
The Dirichlet problem (\ref{eq:weak-geo}) for $\phi_0,\phi_1 \in\cH$ has a unique solution
$\Phi\in C_w^{1,1}(N)$.
\end{thm}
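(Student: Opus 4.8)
The plan is to follow X.~X.~Chen's strategy for the K\"ahler geodesic problem \cite{Che00a}, carried out transversally as in \cite{GuaZha12}. First I would recast (\ref{eq:weak-geo}) as a homogeneous complex Monge-Amp\`ere Dirichlet problem on the leaf space. In each foliation chart (\ref{eq:fol-prod-N}) the basic, $S^1$-invariant function $\Phi$ descends to $W_\alpha \times V \subset \C^{m+1}$, and, since $(\pi^*\omega^T + dd^c\Phi)^{m+1}\wedge\eta$ is a product measure across the $\eta$-direction, the degenerate equation becomes $(\omega_\alpha + dd^c\Phi)^{m+1}=0$ on $W_\alpha \times V$, where $\omega_\alpha$ is the chart K\"ahler form pulled back (degenerate along the $\tau$-direction) and invariant under (\ref{eq:fol-tran}). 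Because this equation is degenerate, I would not attack it directly; instead I would perturb the right-hand side to a strictly positive volume form and solve the family of nondegenerate equations $(\pi^*\omega^T + dd^c\Phi_\varepsilon)^{m+1}\wedge\eta = \varepsilon\,(\omega^T)^m\wedge\eta\wedge \frac{\sqrt{-1}\,d\tau\wedge d\ol{\tau}}{|\tau|^2}$ with the same boundary values $\phi_0,\phi_1$, and recover the geodesic as the limit $\varepsilon \searrow 0$.

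For fixed $\varepsilon>0$ I would produce a smooth solution $\Phi_\varepsilon$ by the continuity method on the space of basic, $S^1$-invariant potentials with $\pi^*\omega^T + dd^c\Phi > 0$. Openness follows from invertibility of the linearized operator, a transversally elliptic second-order operator with the Dirichlet boundary condition, by standard elliptic theory on basic functions. Closedness requires a priori estimates uniform in $\varepsilon$: a $C^0$ bound from the maximum principle, comparing $\Phi_\varepsilon$ against the linear interpolation of $\phi_0,\phi_1$ as sub- and super-barriers; a boundary gradient estimate from barrier functions; and the second-order estimate $\|dd^c\Phi_\varepsilon\|_{L^\infty}\leq C$, split into its interior and boundary parts. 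Once $dd^c\Phi_\varepsilon$ is bounded the equation is uniformly elliptic, so Evans-Krylov and Schauder theory, applied chart by chart and patched using invariance under (\ref{eq:fol-tran}), upgrade $\Phi_\varepsilon$ to a smooth solution and close the continuity method.

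With the bound $\|\Phi_\varepsilon\|_{C^1} + \|dd^c\Phi_\varepsilon\|_{L^\infty}\leq C$ independent of $\varepsilon$, Arzel\`a-Ascoli gives a subsequence converging in $C^{1,\alpha}$ to some $\Phi\in C_w^{1,1}(N)$ with the prescribed boundary data and $\pi^*\omega^T + dd^c\Phi\geq 0$. By the weak continuity of the transversal Monge-Amp\`ere operator, Theorem~\ref{thm:weak-con}, the measures $(\pi^*\omega^T + dd^c\Phi_\varepsilon)^{m+1}\wedge\eta$ converge weakly to $(\pi^*\omega^T + dd^c\Phi)^{m+1}\wedge\eta$, which must therefore vanish; hence $\Phi$ solves (\ref{eq:weak-geo}). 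For uniqueness I would invoke the comparison principle for the transversal Monge-Amp\`ere operator: two solutions agreeing on $\partial N$ dominate one another as sub- and super-solutions of the same homogeneous equation, forcing them to coincide.

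The main obstacle is the $\varepsilon$-independent second-order estimate, in particular the boundary $C^2$ estimate and the interior estimate as the equation degenerates; this is precisely the delicate analytic core of Chen's theorem \cite{Che00a} (later clarified in \cite{BloKo07,ChTi08}), and here it must additionally be carried out transversally so that the constants are uniform across foliation charts and compatible with the Reeb and $S^1$ symmetries. The degeneracy is exactly what obstructs regularity beyond $C^{1,1}$, which is why the solution is only asserted in $C_w^{1,1}(N)$.
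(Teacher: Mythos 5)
The paper does not prove this theorem: it is quoted from Guan--Zhang \cite{GuaZha12}, and the only internal commentary is the remark after the statement that smooth $\epsilon$-geodesics $\Phi^{\epsilon}$ decrease to the weak solution. Your outline is the correct strategy and is the one used in the cited source: reduce to a degenerate homogeneous Monge--Amp\`ere problem on the local leaf spaces, regularize to the nondegenerate $\epsilon$-geodesic equation, solve by the continuity method, and pass to the limit using uniform estimates together with the weak convergence of the transversal Monge--Amp\`ere operator (Theorem~\ref{thm:weak-con}); the uniqueness step you invoke is exactly what the paper later supplies via the comparison/maximal principles of \S 2.5, which apply to continuous solutions and hence to the $C^{1,1}_w$ one.

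That said, as a proof your proposal has a real gap: the entire analytic content of the theorem is the $\varepsilon$-independent a priori estimate $\|\Phi_\varepsilon\|_{C^1}+\|dd^c\Phi_\varepsilon\|_{L^\infty}\le C$, and in particular the boundary second-order estimate for a degenerate equation on the manifold with boundary $N=M\times A$, carried out so that the constants are uniform across foliation charts and respect the Reeb-invariance constraint (one cannot simply work chart by chart, since the basic condition couples the charts and the transversal ellipticity degenerates along the leaves). You name this obstacle but do not address it; it occupies most of \cite{GuaZha12} and cannot be waved through by citing the K\"ahler case, because the Sasakian boundary barriers and the interior Laplacian estimate require genuinely new constructions adapted to $\eta$ and $\xi$. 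Until that estimate is established, what you have is an accurate roadmap of the cited proof rather than a proof.
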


We only have weak regularity to the non-elliptic problem (\ref{eq:weak-geo}), so for $\epsilon>0$
we define a path $\{\phi_t \ |\ 0\leq t\leq 1 \}$ in $\cH$ to be an \emph{$\epsilon$-geodesic} if
\begin{equation}
\bigl(\ddot{\phi}-\frac{1}{2}|d\dot{\phi}|^2_{\omega^T_{\phi}}\bigr)(\omega_{\phi}^T)^m =\epsilon(\omega^T)^m.
\end{equation}
An $\epsilon$-geodesic is necessarily a smooth path in $\cH$, since it is a solution to the transversely elliptic
problem
\[ (\pi^* \omega^T +dd^c \Phi)^{m+1} =\frac{\epsilon}{4}\bigl(\pi^* \omega^T +\frac{\sqrt{-1}d\tau\wedge d\ol{\tau}}{|\tau|^2}\bigr)^{m+1}. \]

It follows from the proof in~\cite{GuaZha12} and the maximal principle proved below that there are
smooth $\epsilon$-geodesics $\Phi^\epsilon$ monotonically decreasing in $\epsilon>0$ and
$\Phi^\epsilon \rightarrow \Phi$, the weak solution in Theorem~\ref{thm:weak-geo}, weakly in $C_{w}^{1,1}(N)$
as $\epsilon\rightarrow 0$.

\subsection{Maximal principle and uniqueness results}

We will prove a maximal principle for the Monge-Amp\`{e}re operator on $N$ and some uniqueness results.
First we give a version of Proposition~\ref{prop:int-parts} for $N$.
\begin{prop}\label{prop:int-parts2}
Let $\theta$ be a basic positive $(1,1)$-form on $N$.  Let $v,w$ each be differences of continuous
transversally  quasi-psh functions, and let $u_1,\ldots,u_{m} \in\PSH(N,\pi^*\theta)\cap C^0(N)$.
Then
\begin{multline}
\int_N vdd^c w \wedge(\pi^*\theta+dd^c u_1)\wedge\cdots\wedge (\pi^*\theta +dd^c u_{m})\wedge\eta \\
  =\int_N wdd^c v \wedge(\pi^*\theta+dd^c u_1)\wedge\cdots\wedge (\pi^*\theta +dd^c u_{m})\wedge\eta \\
  =-\int_N dv\wedge d^c w \wedge(\pi^*\theta+dd^c u_1)\wedge\cdots\wedge (\pi^*\theta +dd^c u_{m})\wedge\eta,
\end{multline}
provided one of $v,w,$ or $T=(\pi^*\theta+dd^c u_1)\wedge\cdots\wedge (\pi^*\theta +dd^c u_{m})$ has compact support
in $N\setminus\del N$.
\end{prop}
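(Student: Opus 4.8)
The plan is to carry over the proof of Proposition~\ref{prop:int-parts} to the manifold-with-boundary $N$, the only genuinely new ingredient being the boundary $\partial N=M\times\partial A$ and the precise role played by the compact-support hypothesis in discarding the resulting boundary integrals. As in the closed case I would first reduce to $v$ and $w$ continuous transversally quasi-psh: each of the three expressions is separately linear in $v$ and in $w$, and $v=q-r$, $w=s-t$ with $q,r,s,t$ quasi-psh, so it suffices to prove the identities for quasi-psh $v,w$. Throughout, $T$ denotes the closed positive $(m,m)$-current $(\pi^*\theta+dd^c u_1)\wedge\cdots\wedge(\pi^*\theta+dd^c u_m)$ (closedness being inherited from that of $\theta$, as in Proposition~\ref{prop:int-parts}), whose transverse bidegree is one less than the top degree $(m+1,m+1)$ of the $(m+1)$-dimensional transverse leaf space $W_\alpha\times V$.

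Second, I would establish the identities for smooth data $v,w,u_i$ by Stokes' theorem on $N$. Expanding $d\bigl(v\,d^c w\wedge T\wedge\eta\bigr)=dv\wedge d^c w\wedge T\wedge\eta+v\,dd^c w\wedge T\wedge\eta-v\,d^c w\wedge T\wedge d\eta$ and using $dT=0$, the last term drops because $d^c w\wedge T\wedge d\eta$ already exceeds the top transverse bidegree $(m+1,m+1)$. The same degree count kills the $\partial v\wedge\partial w$ and $\bar\partial v\wedge\bar\partial w$ pieces against $T$, giving $dv\wedge d^c w\wedge T=dw\wedge d^c v\wedge T$; hence expanding $d\bigl((v\,d^c w-w\,d^c v)\wedge T\wedge\eta\bigr)$ produces the symmetric equality $\int_N v\,dd^c w\wedge T\wedge\eta=\int_N w\,dd^c v\wedge T\wedge\eta$, and the expansion above produces the equality with $-\int_N dv\wedge d^c w\wedge T\wedge\eta$. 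In each case Stokes leaves only a boundary integral of the shape $\int_{\partial N}v\,d^c w\wedge T\wedge\eta$ (or its antisymmetrization), which vanishes once one of $v,w,T$ is supported in $N\setminus\partial N$: if $v$ (respectively $w$) has compact interior support then $v$ and $d^c v$ (respectively $w$ and $d^c w$) vanish near $\partial N$, and if $T$ does then the integrand vanishes near $\partial N$.

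Third comes the approximation and passage to the limit. Here the essential device is asymmetric: I would keep the compactly supported factor \emph{exact} and regularize only the remaining data. Concretely, if (say) $v$ has compact support $K\Subset N\setminus\partial N$, mollify $v$ as a single function so that the regularizations retain support in a fixed $K'\Subset N\setminus\partial N$, while approximating $w$ and the $u_i$ by smooth decreasing transversally quasi-psh sequences via the approximation theorem~\cite{BloKo07} (in its relatively compact form, exactly as in Proposition~\ref{prop:psh-aprox}, applied in the charts $W_\alpha\times V$). Since all integrands are then supported in $K'$, the relatively compact approximation suffices and, crucially, the boundary integrals vanish at every stage because the regularized $v$ is identically zero near $\partial N$. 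The smooth identities of the previous step then hold with vanishing boundary terms, and letting the regularization parameters tend to their limits, the weak continuity Theorem~\ref{thm:weak-con} (together with its stated analogue for $dv\wedge d^c w\wedge T\wedge\eta$) yields the three equalities for the original continuous data. The case in which $T$ carries the compact support is handled analogously, running Stokes on a fixed smooth domain $U$ with $K\Subset U\Subset N\setminus\partial N$.

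The main obstacle is precisely this interface between monotone approximation and the vanishing of boundary terms. Monotone quasi-psh regularization of a \emph{difference} $q-r$ does not preserve compact support, so one cannot simply feed the compactly supported factor through its quasi-psh decomposition and expect the boundary contributions to cancel; the asymmetric treatment above is what circumvents this. The remaining technical point is to justify Stokes' theorem when the compactly supported factor is only continuous (its differential being a measure and its $dd^c$ a current), which I expect to handle by the single-function mollification just described, whose support control is exactly what the quasi-psh mollification lacks.
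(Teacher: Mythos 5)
Your smooth-level Stokes computation is fine, and you have correctly diagnosed the central difficulty: monotone quasi-psh regularization of the difference $v=q-r$ does not preserve compact support, so one cannot kill the boundary terms by regularizing $q$ and $r$ separately. (Your opening reduction ``it suffices to prove the identities for quasi-psh $v,w$'' is already inconsistent with this when $v$ is the compactly supported factor, since then neither $q$ nor $r$ is compactly supported and the identity cannot be established for each piece alone.) The gap lies in your proposed cure. Mollifying $v$ as a single function preserves compact support and gives uniform convergence $v_\epsilon\to v$, but it surrenders exactly what Theorem~\ref{thm:weak-con} requires for the other two integrals: to pass to the limit in $\int_N w_j\,dd^c v_\epsilon\wedge T_j\wedge\eta$ and in $\int_N dv_\epsilon\wedge d^c w_j\wedge T_j\wedge\eta$ one must write $dd^c v_\epsilon$ as a difference $(C\theta+dd^c q_\epsilon)-(C\theta+dd^c r_\epsilon)$ of positive currents with $q_\epsilon,r_\epsilon$ quasi-psh, converging monotonically or uniformly to $q,r$, and with a constant $C$ \emph{uniform in} $\epsilon$; without that, the signed measures $dd^c v_\epsilon\wedge T_j\wedge\eta$ carry no uniform mass bound and their weak limit tested against the merely continuous $w$ is not controlled (the same applies to the polarized definition of $dv_\epsilon\wedge d^c w\wedge T\wedge\eta$, which presupposes quasi-psh factors). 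A partition-of-unity mollification across several foliation charts does not supply such a uniform decomposition --- this is precisely why the B{\l}ocki--Ko{\l}odziej regularization is invoked instead of naive convolution --- so unless the support of $v$ fits inside a single chart your construction stalls at the limit passage.

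The paper reconciles the two competing requirements (quasi-psh monotone approximation and compact support of the difference) with a device absent from your proposal. Write $v=q-r$ with $q,r\le -1$ and introduce $f(\tau)=(\log|\tau|)^2-\log|\tau|$, strictly psh on $A$ and vanishing on $\partial A$. For $M$ large one has $\max\{q,Mf\}-\max\{r,Mf\}=v$ everywhere, while both maxima equal $Mf$ near $\partial N$. Taking B{\l}ocki--Ko{\l}odziej approximants $q_i\searrow q$, $r_i\searrow r$ on a relatively compact $W\Subset N\setminus\partial N$, chosen $<Mf$ near $\partial W$, and forming the regularized maxima $\tilde q_i=\max\{q_i,Mf\}$, $\tilde r_i=\max\{r_i,Mf\}$, one obtains smooth, uniformly quasi-psh, decreasing approximants whose difference $v_i=\tilde q_i-\tilde r_i$ is compactly supported in $\overline W$. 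Stokes then applies at each stage with no boundary term, and Theorem~\ref{thm:weak-con} applies to every factor. You would need this construction, or an equivalent one, to close your argument.
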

\begin{proof}
Suppose $v=q-r$ has compact support where $q,r$ are quasi-psh, and we may assume $q,r\leq -1$.
Let $f(\tau)=(\log |\tau|)^2 -\log|\tau|$ a strictly psh function on $A$ vanishing on $\del A$.
Choose $M>0$ large enough that $q,r > Mf$ outside of the interior of $U=\{x\in N\ |\ q(x)=r(x) \}$.
If we define $\tilde{q}=\max\{q,Mf\}$ and $\tilde{r}=\max\{r,Mf\}$.  Let $W\subset N\setminus\del N$ be
a relatively compact open neighborhood containing $\{\tilde{q}\geq Mf\}\cup\{\tilde{r}\geq Mf\}$.
Proposition~\ref{prop:psh-aprox} gives decreasing sequences of smooth transversely quasi-psh $q_i$ and $r_i$ on $W$
with $q_i \searrow q$ and $r_i \searrow r$.  The sequences can be chosen so that $q_i ,r_i <Mf$ near $\del W$.
Define $\tilde{q}_i=\max\{q_i,Mf\}$ and $\tilde{r}_i=\max\{r_i,Mf\}$, where we make take the regularized maximum
(See~\cite[I-5.18]{Dem12}) so that $\tilde{q}_i ,\tilde{r}_i$ are smooth.
Similarly, for each $k=1,\ldots,m$ choose a sequence $u^i_k \in\PSH(W,\pi^* \theta)\cap C^\infty_b(W)$
with $u^i_k \searrow u_k$.  And if $w=s-t$ with $s,t$ quasi-psh, we choose sequences $s_i,t_i$ of smooth
quasi-psh on $W$.

The integration by parts formula then holds with $v_i =\tilde{q}_i -\tilde{r}_i ,w_i =s_i -t_i$ and
$u^i_1,\ldots,u^i_m$ substituted by Stoke's theorem, since $v_i$ has compact support in $\ol{W}$.
Applying Theorem~\ref{thm:weak-con} finishes the proof.
\end{proof}

We prove weak maximal principle first.  Let $\theta$ be a basic positive $(1,1)$-form on $N$.
\begin{prop}\label{prop:weak-max}
Let $u,v\in\PSH(N,\pi^* \theta)\cap C^0(N)$ satisfy $u\leq v$ on $\del N$.  Then
\[ \int_{v<u}(\pi^* \theta +dd^c u)^{m+1}\wedge\eta \leq \int_{v<u}(\pi^* \theta +dd^c v)^{m+1}\wedge\eta. \]
\end{prop}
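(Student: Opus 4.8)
The plan is to adapt the Bedford-Taylor comparison principle to the transversal, $\pi^*\theta$-twisted setting on $N$. Write $\theta':=\pi^*\theta$ and abbreviate $\mu_u:=(\theta'+dd^c u)^{m+1}\wedge\eta$ and $\mu_v:=(\theta'+dd^c v)^{m+1}\wedge\eta$, so that the goal is $\int_{\{v<u\}}\mu_u\le\int_{\{v<u\}}\mu_v$. The central device is the auxiliary function $w_\epsilon:=\max(v,u-\epsilon)$ for small $\epsilon>0$. Since $u,v$ are $\theta'$-psh and basic, $w_\epsilon\in\PSH(N,\theta')\cap C^0(N)$ is again basic, so the sublevel sets appearing below are saturated and all the measures are well defined. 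Because $u\le v$ on $\partial N$, continuity of $u-v$ on the compact $N$ forces $u-\epsilon<v$ on an open neighborhood of $\partial N$; hence $w_\epsilon=v$ near $\partial N$.

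First I would establish the total-mass equality
\[
\int_N(\theta'+dd^c w_\epsilon)^{m+1}\wedge\eta=\int_N(\theta'+dd^c v)^{m+1}\wedge\eta.
\]
Setting $f:=w_\epsilon-v$, which has compact support in $N\setminus\partial N$, and expanding the difference of the two top powers as the telescoping sum $\sum_{j=0}^{m}dd^c f\wedge(\theta'+dd^c w_\epsilon)^{j}\wedge(\theta'+dd^c v)^{m-j}$, each summand has the form $\int_N dd^c f\wedge T\wedge\eta$ with $T$ a closed positive transversal current of bidegree $(m,m)$. Pairing $f$ against the constant function $1$ in Proposition~\ref{prop:int-parts2}, whose compact-support hypothesis is met by $f$, gives $\int_N dd^c f\wedge T\wedge\eta=\int_N f\,dd^c 1\wedge T\wedge\eta=0$, so every summand vanishes and the two masses agree.

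Next I would split the mass of $w_\epsilon$ over the saturated sets $\{v<u-\epsilon\}$, $\{v>u-\epsilon\}$, and $\{v=u-\epsilon\}$. On each of the two open sets the transversal Monge-Amp\`{e}re operator is local: $w_\epsilon=u-\epsilon$ on the first, so its measure there is $\mu_u$, while $w_\epsilon=v$ on the second, so its measure is $\mu_v$. Discarding the nonnegative contribution over the coincidence set yields
\[
\int_N(\theta'+dd^c v)^{m+1}\wedge\eta\ \ge\ \int_{\{v<u-\epsilon\}}\mu_u+\int_{\{v>u-\epsilon\}}\mu_v.
\]
Writing the left side as $\int_{\{v\le u-\epsilon\}}\mu_v+\int_{\{v>u-\epsilon\}}\mu_v$ and cancelling the common term produces the $\epsilon$-level inequality
\[
\int_{\{v<u-\epsilon\}}\mu_u\ \le\ \int_{\{v\le u-\epsilon\}}\mu_v.
\]

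Finally I would let $\epsilon\searrow 0$. As $\mu_u$ and $\mu_v$ are fixed positive measures and both $\{v<u-\epsilon\}=\{u-v>\epsilon\}$ and $\{v\le u-\epsilon\}=\{u-v\ge\epsilon\}$ increase to $\{v<u\}=\{u-v>0\}$, continuity of measure from below gives $\int_{\{v<u\}}\mu_u\le\int_{\{v<u\}}\mu_v$, which is the claim; note that both sides converge to integrals over the \emph{same} set $\{v<u\}$, so no gap between $\{v<u\}$ and $\{v\le u\}$ remains to be closed. The step I expect to demand the most care is the total-mass equality: one must check that $f=w_\epsilon-v$ genuinely has compact support away from $\partial N$ and that the telescoping expansion and term-by-term integration by parts are legitimate for merely continuous $\theta'$-psh potentials, so that Proposition~\ref{prop:int-parts2} applies. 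The locality of the transversal Monge-Amp\`{e}re measure on the open sets, justified by the weak-continuity machinery of Theorem~\ref{thm:weak-con}, underlies the remaining steps.
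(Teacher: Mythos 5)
Your argument is correct, and it takes a genuinely (if mildly) different route from the paper's. Both proofs are versions of the Bedford--Taylor comparison principle and both hinge on the same two ingredients: the cancellation $\int_N dd^c f\wedge T\wedge\eta=0$ for a compactly supported difference $f$ of continuous transversally quasi-psh functions (Proposition~\ref{prop:int-parts2}, applied to the telescoping expansion of the difference of the two top powers), and a $\max$ construction that agrees with $v$ near $\del N$ thanks to the boundary hypothesis. The difference lies in how the resulting mass identity is converted into the comparison inequality. You use the single cutoff $\max(v,u-\epsilon)$ on all of $N$, invoke the locality of the transversal Monge-Amp\`{e}re operator on the saturated open sets $\{v<u-\epsilon\}$ and $\{v>u-\epsilon\}$ to identify the restricted measures with those of $u$ and of $v$ respectively, discard the coincidence set, and finish with monotone convergence, with both sides landing directly on integrals over $\{v<u\}$. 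The paper instead works on the sublevel domain $\Omega=\{v<u-\delta\}$ with the two-parameter function $u^\epsilon=\max\{u-\delta,v+\epsilon\}$, obtains the mass identity on $\Omega$, lets $\epsilon\searrow 0$ using the weak convergence of Monge-Amp\`{e}re measures under decreasing limits (Theorem~\ref{thm:weak-con}) together with semicontinuity of weak limits on open sets, and only then sends $\delta\to 0$. In effect you trade the weak-convergence step for locality of the operator on saturated open sets; that locality is not isolated as a formal statement in the paper, but it is immediate from the chart-by-chart Bedford--Taylor definition of the transversal operator, so nothing is missing. Your route has the small advantages of avoiding the double limit and of making explicit why no gap between $\{v<u\}$ and $\{v\le u\}$ survives the limiting process.
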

\begin{proof}
Let $\delta>0$ then $\Omega:=\{v <u -\delta \}\Subset N\setminus\del N$.  Define
$u^\epsilon :=\max\{u-\delta,v +\epsilon \}$ for small $\epsilon>0$, so $u^\epsilon =v+\epsilon$
in a neighborhood of $\del\Omega$.  We have
\[ \int_\Omega (\pi^* \theta +dd^c u^\epsilon)^{m+1}\wedge\eta = \int_\Omega (\pi^* \theta +dd^c v)^{m+1}\wedge\eta.\]
This is because
\begin{equation}\label{eq:MA-diff}
(\pi^* \theta +dd^c u^\epsilon)^{m+1}\wedge\eta -(\pi^* \theta +dd^c v)^{m+1}\wedge\eta
= dd^c(u^\epsilon -v)\wedge T\wedge\eta
\end{equation}
where $T=\sum_{j=0}^m (\pi^* \theta +dd^c u^\epsilon)^j \wedge(\pi^* \theta +dd^c v)^{m-j}$.
Then the integral of (\ref{eq:MA-diff}) is zero by Proposition~\ref{prop:int-parts2}.  Since $u^\epsilon$ decreases
to $u-\delta$ on $\Omega$, weak convergence of measures gives
\[ \int_{\{v <u -\delta \}} (\pi^* \theta +dd^c u)^{m+1}\wedge\eta \leq \int_{\{v <u -\delta \}} (\pi^* \theta +dd^c v)^{m+1}\wedge\eta.\]
The result then follows by taking $\delta\rightarrow 0$ and applying monotone convergence.
\end{proof}

A consequence is that solutions $u\in\PSH(N,\pi^* \theta)\cap C^0(N)$ to the transversal homogeneous Monge-Amp\`{e}re
equation
\begin{equation}\label{eq:HCMA}
(\pi^* \theta +dd^c u)^{m+1}\wedge\eta =0
\end{equation}
are maximal.  By \emph{maximal} we mean that given any neighborhood $U\subset N$ invariant under the Reeb flow, if
$v\in\PSH(U,\pi^* \theta)\cap C^0(\ol{U})$ satisfies $v\leq u$ on $\del U$, then $v\leq u$ on $U$.
For suppose $v$ satisfies $v\leq u$ on $\del U$, then
\[ \tilde{u}=\begin{cases}
u & \text{ on } N\setminus U \\
\max\{u,v\} & \text{ on } U
\end{cases}
\in \PSH(N,\pi^* \theta)\cap C^0(N).\]
We may replace $\pi^* \theta$ with
$\tilde{\theta}=\pi^* \theta +dd^c f =\pi^* \theta +\frac{\sqrt{-1}d\tau\wedge d\ol{\tau}}{|\tau|^2}$ and subtract
$f$ from $u$ and $v$.  Thus we may assume that $u\in\PSH(N,\tilde{\theta})\cap C^0(N)$ with $\tilde{\theta}$ a basic strictly
positive, closed, $(1,1)$-form.  For $\delta>0$ small
\[\begin{split}
\int_{\{u<(1-\delta)\tilde{u}\}} (\delta\tilde{\theta})^{m+1}\wedge\eta & \leq \int_{\{u<(1-\delta)\tilde{u}\}} (\tilde{\theta}+(1-\delta)dd^c \tilde{u})^{m+1}\wedge\eta \\
   & \leq \int_{\{u<(1-\delta)\tilde{u}\}} (\tilde{\theta} +dd^c u)^{m+1}\wedge\eta =0.
\end{split}\]
This clearly implies uniqueness of continuous solutions $u\in\PSH(N,\pi^* \theta)\cap C^0(N)$ to (\ref{eq:HCMA})
with fixed $u|_{\del N} \in C^0(\del N)$.  This also follows from the \emph{strong maximal principle} which we prove
next.

\begin{thm}\label{thm:stron-max}
Let $u,v \in\PSH(N,\pi^* \theta)\cap C^0(N)$.  Suppose that
\[ (\pi^* \theta +dd^c v)^{m+1}\wedge\eta \leq (\pi^* \theta +dd^c u)^{m+1}\wedge\eta \]
and $u\leq v$ on $\del N$.  Then $u\leq v$ on $N$.
\end{thm}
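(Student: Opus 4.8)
The plan is to prove this as a domination (strong comparison) principle, bootstrapping from the weak maximal principle (Proposition~\ref{prop:weak-max}) and the integration by parts formula (Proposition~\ref{prop:int-parts2}). First I would perform the same reduction used in the maximality discussion preceding the theorem: replace $\pi^*\theta$ by $\tilde\theta=\pi^*\theta+\frac{\sqrt{-1}d\tau\wedge d\ol\tau}{|\tau|^2}$ and subtract the corresponding potential from both $u$ and $v$. This preserves both the hypothesis $(\tilde\theta+dd^c v)^{m+1}\wedge\eta\le(\tilde\theta+dd^c u)^{m+1}\wedge\eta$ and the boundary inequality, so I may assume $\tilde\theta$ is a \emph{strictly} positive basic closed $(1,1)$-form, and after subtracting a constant from both functions I normalize $v\le 0$. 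I then argue by contradiction: if $u\le v$ fails, the open set $G=\{u>v\}$ is nonempty, and since $u\le v$ on $\del N$ and both functions are continuous, the superlevel sets $G_t=\{u>v+t\}$ satisfy $G_t\Subset N\setminus\del N$ for every $t>0$.

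The first main step is to show the hypothesis forces equality of the Monge--Amp\`ere measures on $G$. Writing $\beta_u=\tilde\theta+dd^c u$ and $\beta_v=\tilde\theta+dd^c v$, I use the telescoping identity
\[(\tilde\theta+dd^c u)^{m+1}\wedge\eta-(\tilde\theta+dd^c v)^{m+1}\wedge\eta=dd^c(u-v)\wedge T\wedge\eta,\qquad T=\sum_{j=0}^{m}\beta_u^{\,j}\wedge\beta_v^{\,m-j}.\]
Applying the weak maximal principle to the pair $(u,v+t)$, for which $(\tilde\theta+dd^c(v+t))^{m+1}=(\tilde\theta+dd^c v)^{m+1}$, gives $\int_{G_t}(\tilde\theta+dd^c u)^{m+1}\wedge\eta=\int_{G_t}(\tilde\theta+dd^c v)^{m+1}\wedge\eta$; since the hypothesis yields the pointwise inequality $(\tilde\theta+dd^c v)^{m+1}\le(\tilde\theta+dd^c u)^{m+1}$, this is an equality of measures on $G_t$. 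Next I test the identity above against $\chi_t=\max(u,v+t)-(v+t)=\max(u-v-t,0)$, which is a difference of continuous transversally quasi-psh functions supported in $\ol{G_t}\Subset N\setminus\del N$; integrating by parts via Proposition~\ref{prop:int-parts2}, using $d\chi_t=d(u-v)$ on $G_t$ and the positivity of $d\psi\wedge d^c\psi\wedge T\wedge\eta$, yields the vanishing of the Dirichlet energy $\int_{G_t}d(u-v)\wedge d^c(u-v)\wedge T\wedge\eta=0$.

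The decisive step, and the one I expect to be the main obstacle, is to upgrade this degenerate information to a genuine contradiction. The current $T$ is only positive, not strictly positive, so the vanishing of the energy does not by itself force $u-v$ to be constant on the components of $G_t$: the strict positivity of $\tilde\theta$ must be injected by a perturbation. Mirroring the maximality argument that precedes the theorem, I would compare $u$ with a competitor constructed to be \emph{strictly} $\tilde\theta$-psh, whose background curvature satisfies $\tilde\theta+dd^c(\,\cdot\,)\ge\epsilon\tilde\theta$, so that its Monge--Amp\`ere measure dominates $\epsilon^{m+1}\tilde\theta^{m+1}\wedge\eta>0$ everywhere. Feeding such a competitor into the weak maximal principle on the disagreement set and combining with the mass equality from the previous step should produce a bound of the form $\epsilon^{m+1}\int\tilde\theta^{m+1}\wedge\eta\le 0$ over a nonempty open set, contradicting the strict positivity of $\tilde\theta^{m+1}\wedge\eta$.

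The delicate point is reconciling the perturbation with the \emph{direction} of the hypothesis: the mixed terms $\beta_u^{\,j}\wedge\beta_v^{\,m-j}$ and the background term must be arranged so that the strictly positive contribution $\epsilon^{m+1}\tilde\theta^{m+1}$ lands on the side controlled by $\int_{G}(\tilde\theta+dd^c v)^{m+1}\wedge\eta$ rather than being absorbed into the larger measure $(\tilde\theta+dd^c u)^{m+1}\wedge\eta$. Building a competitor that simultaneously satisfies $u\le(\text{competitor})$ on $\del N$ and dominates $(\tilde\theta+dd^c v)^{m+1}\wedge\eta$ plus a strictly positive error is the crux; once this is in place, letting $t\to 0$ and then $\epsilon\to 0$ forces $G=\emptyset$, that is $u\le v$ on $N$.
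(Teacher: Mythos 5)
Your first two steps are sound and essentially reproduce the first half of the paper's own argument: the paper replaces $u$ by $\max\{u,v+\epsilon\}$ and invokes Demailly's inequality for the Monge--Amp\`ere measure of a maximum rather than testing against the truncation $\max(u-v-t,0)$, but either route legitimately arrives at the vanishing of the Dirichlet energies
\[ \int d\rho\wedge d^c\rho\wedge\theta_{u}^{\,j}\wedge\theta_{v}^{\,m-j}\wedge\eta=0,\qquad j=0,\dots,m, \]
where $\rho$ is the nonnegative, compactly supported difference. The genuine gap is in your ``decisive step,'' which you yourself flag as unresolved, and the route you sketch for it cannot work. The weak maximal principle only ever produces $\int_{B}(\tilde\theta+dd^cu)^{m+1}\wedge\eta\le\int_{B}(\tilde\theta+dd^c w)^{m+1}\wedge\eta$ for a competitor $w$ with $u\le w$ on $\del N$; making $w$ strictly $\tilde\theta$-plurisubharmonic only makes the right-hand side \emph{larger}, so no inequality of the form $\epsilon^{m+1}\int\tilde\theta^{m+1}\wedge\eta\le0$ can come out of it. To run your scheme you would need an upper bound on $(\tilde\theta+dd^cw)^{m+1}$ by $(\tilde\theta+dd^cu)^{m+1}$ minus a positive error, and for $w=(1-s)v+sh$ this requires control of the mixed measures $(\tilde\theta+dd^cv)^{j}\wedge\tilde\theta^{m+1-j}$, which the hypothesis (an inequality between top powers only) does not give. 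The perturbation trick does succeed in the maximality discussion preceding the theorem, but only because there $(\pi^*\theta+dd^cu)^{m+1}\wedge\eta$ is identically zero.

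What actually closes the argument in the paper is an inductive Cauchy--Schwarz step, absent from your proposal: writing $\theta_\phi^{i}\wedge\theta_\psi^{j}\wedge\theta^{k}$ as $\theta_\phi^{i+k}\wedge\theta_\psi^{j}$ minus $dd^c\phi$ wedged with lower-order terms, integrating by parts via Proposition~\ref{prop:int-parts2}, and applying the Schwarz inequality for the semipositive form $(f,g)\mapsto\int df\wedge d^cg\wedge S\wedge\eta$, one trades the factors $\theta_\phi,\theta_\psi$ for factors of the fixed background form one at a time, arriving at $\int d\rho\wedge d^c\rho\wedge\theta^{m}\wedge\eta=0$; after your own reduction to the strictly positive $\tilde\theta$ this forces $d\rho=0$, hence $\rho\equiv0$ since $\rho$ vanishes near $\del N$. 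Without this step, or a genuine substitute, the vanishing of the energy against the merely semipositive current $T$ gives nothing, as you correctly observe; so as written the proposal does not prove the theorem.
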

\begin{proof}
Let $u^\epsilon :=\max{u,v+\epsilon}$, then $u^\epsilon =v+\epsilon$ near $\del N$.  The following formula, due
to J-P Demailly: for $s,w \in\PSH\cap L^\infty_{loc}$
\[ (dd^c \max\{s,w \})^{m+1} \geq\mathbb{1}_{\{s\geq w\}}(dd^c s)^{m+1} +\mathbb{1}_{\{s <w\}}(dd^c w)^{m+1}\]
implies that
\[ (\pi^* \theta +dd^c u^\epsilon)^{m+1}\wedge\eta\geq (\pi^* \theta +dd^c v)^{m+1}\wedge\eta.\]

In order to simplify notation, in the following we will denote $\pi^* \theta$ by $\theta$ and $\theta +dd^c u$ by $\theta_u$, etc.

Setting $\phi$ to be $u^\epsilon$ and $\psi$ to be $v+\epsilon$, we have $\phi=\psi$ near $\del N$ and $\phi\geq\psi$ on $N$.
We will show that $\phi=\psi$, which implies the theorem by taking $\epsilon\rightarrow 0$.

Set $\rho=\phi-\psi$, so
\[0\leq\theta_\phi^{m+1} -\theta_\psi^{m+1} =dd^c \rho\wedge\sum_{j=0}^m \theta_{\phi}^j \wedge\theta_{\psi}^{m-j}.\]
By Proposition~\ref{prop:int-parts2}
\[\begin{split}
0 & \leq\int_M dd^c \rho\wedge\sum_{j=0}^m \theta_{\phi}^j \wedge\theta_{\psi}^{m-j}\wedge\eta \\
  & = -\int_M d\rho\wedge d^c \rho \wedge\sum_{j=0}^m \theta_{\phi}^j \wedge\theta_{\psi}^{m-j}\wedge\eta,
\end{split} \]
which implies that
\begin{equation}\label{eq:van-forms}
 d\rho\wedge d^c \rho \wedge \theta_{\phi}^j \wedge\theta_{\psi}^{m-j}\wedge\eta =0,
\end{equation}
for $j=0,\ldots,m$.  We will prove that
\[ d\rho\wedge d^c \rho \wedge\theta^m \eta =0. \]

We will prove inductively in $k=0,\ldots,m$ that
\begin{equation}\label{eq:ind-form}
 d\rho\wedge d^c \rho \wedge\theta_{\phi}^j \wedge\theta_{\psi}^{j}\wedge\eta =0, \text{  for  }i+j=m-k.
\end{equation}
This holds for $k=0$.  Assume that it holds for $0,\ldots,k-1$.
\begin{gather*}
\theta_\phi^i \wedge\theta_\psi^j \wedge\theta^k =\theta_\phi^{i+k} \wedge\theta_\psi^j -dd^c \phi\wedge\alpha, \\
\alpha =\theta_\phi^i \wedge\theta_\psi^j \wedge\sum_{\ell =0}^{k-1} \theta_\phi^\ell \wedge\theta^{k-1-\ell}.
\end{gather*}
Thus we have
\[\begin{split}
d\rho\wedge d^c \rho\wedge\theta_\phi^i \wedge\theta_\psi^j \wedge\theta^k & \leq d\rho\wedge d^c \rho\wedge\bigl(T-dd^c \phi\wedge\alpha \bigr) \\
 & = d\bigl(\rho d^c \rho\wedge T -d^c \phi\wedge\alpha d\rho\wedge d^c \rho\bigr) \\
 & -\rho dd^c \rho\wedge T -d\rho\wedge d^c \phi\alpha\wedge dd^c \rho,
\end{split}\]
where $T=\sum_{j=0}^m \theta_\phi^j \wedge\theta_\psi^{m-j}$.  From (\ref{eq:van-forms}) and Proposition~\ref{prop:int-parts2}
\begin{equation}\label{eq:bound}
\int_N d\rho\wedge d^c \rho\wedge\theta_\phi^i \wedge\theta_\psi^j \wedge\theta^k \wedge\eta
\leq -\int_N d\rho\wedge d^c \phi\wedge\alpha\wedge dd^c \rho\eta.
\end{equation}
But we can bound the right-hand-side by
\begin{multline}\label{eq:split-bd}
 -\int_N d\rho\wedge d^c \phi\wedge\alpha\wedge dd^c \rho\eta \leq \\
 \bigl| \int_N d\rho\wedge d^c \phi\wedge\alpha\wedge\theta_\phi \wedge\eta\bigr| +\bigl|\int_N d\rho\wedge d^c \phi\wedge\alpha\wedge\theta_\psi \bigr|.
\end{multline}
By the Schwartz inequality
\begin{multline*}
\bigl| \int_N  d\rho\wedge d^c \phi\wedge\alpha\wedge\theta_\phi \wedge\eta \bigr| \\
\leq \Bigl( \int_N  d\rho\wedge d^c \rho\wedge\alpha\wedge\theta_\phi \wedge\eta\Bigr)^{\frac{1}{2}}
\Bigl( \int_N  d\phi\wedge d^c \phi\wedge\alpha\wedge\theta_\phi \wedge\eta\Bigr)^{\frac{1}{2}}
\end{multline*}
Since $d\phi\wedge d^c \phi\wedge\alpha\wedge\theta_\phi=0$ by induction,
\[ \int_N  d\rho\wedge d^c \phi\wedge\alpha\wedge\theta_\phi \wedge\eta =0. \]
Similarly,
\[ \int_N  d\rho\wedge d^c \phi\wedge\alpha\wedge\theta_\psi \wedge\eta =0. \]
Thus from (\ref{eq:bound}) and (\ref{eq:split-bd}) we have
\[d\rho\wedge d^c \rho\wedge\theta_\phi^i \wedge\theta_\psi^j \wedge\theta^k \wedge\eta =0, \]
since it is positive.
\end{proof}

\section{Energy functionals}\label{sec:en-funct}

We will define important functionals on the space of potentials $\cH$ and consider their extensions to potentials
of weak regularity.  Fix a Sasakian structure $(\eta,\xi,\Phi,g)$ on $M$.

\subsection{Monge-Amp\`{e}re energy}

We define the \emph{Monge Amp\`{e}re energy}
\begin{equation}\label{eq:M-A-en}
\cE(u) =\frac{1}{m+1}\sum_{j=0}^m \int_M u(\omega_u^T)^j \wedge(\omega^T)^{m-j} \wedge\eta.
\end{equation}
We will show that this is the same as the Monge Amp\`{e}re energy defined in (\ref{eq:M-A-pre}) for $u\in\cH$.
But the definition in (\ref{eq:M-A-en}) extends to $u\in\PSH(M,\omega^T)\cap L^\infty(M)$.  Let $\alpha$ be a basic, closed, $(1,1)$-form on $M$.  We define the $\alpha$-energy to be
\begin{equation}\label{eq:M-A-alph}
\cE^\alpha (u) =\sum_{j=0}^{m-1} \int_M u(\omega_u^T)^j \wedge(\omega^T)^{m-j-1} \wedge\alpha\wedge\eta,
\end{equation}
which is also defined for any $u\in\PSH(M,\omega^T)\cap L^\infty(M)$.
\begin{prop}\label{prop:en-first-dir}
Given $u_1, u_2 \in\PSH(M,\omega^T)\cap C^0(M)$, then
\begin{gather}
\frac{d}{dt} \cE\bigl((1-t)u_1 +tu_2 \bigr)|_{t=0^+} =\int_M (u_2 -u_1)(\omega^T +dd^c u_1)^m \wedge\eta, \\
\frac{d}{dt} \cE^{\alpha}\bigl((1-t)u_1 +tu_2 \bigr)|_{t=0^+} =m\int_M (u_2 -u_1)(\omega^T +dd^c u_1)^{m-1}\wedge\alpha\wedge\eta.
\end{gather}
\end{prop}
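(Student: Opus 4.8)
The plan is to prove the two formulas first for smooth potentials by a direct differentiation-and-telescoping computation, and then to extend them to continuous $u_1,u_2$ by approximation. Throughout I would write $v=u_2-u_1$, $u_t=(1-t)u_1+tu_2=u_1+tv$, and $\omega_t=\omega^T+dd^c u_t$, so that $u_t$ and $\omega_t$ are affine in $t$ with $\partial_t u_t=v$ and $\partial_t\omega_t=dd^c v$. In fact I would establish the stronger statement that, in the smooth case, $\frac{d}{dt}\cE(u_t)=\int_M v\,\omega_t^m\wedge\eta$ for \emph{all} $t\in[0,1]$ (and the analogous identity for $\cE^{\alpha}$), from which the claimed one-sided derivatives at $t=0^+$ follow by setting $t=0$.

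For smooth $u_1,u_2$ I would differentiate $\cE(u_t)=\frac{1}{m+1}\sum_{j=0}^m\int_M u_t\,\omega_t^j\wedge(\omega^T)^{m-j}\wedge\eta$ under the integral sign. Differentiating the factor $u_t$ contributes $I_j:=\int_M v\,\omega_t^j\wedge(\omega^T)^{m-j}\wedge\eta$, while differentiating $\omega_t^j$ produces $j\int_M u_t\,dd^c v\wedge\omega_t^{j-1}\wedge(\omega^T)^{m-j}\wedge\eta$. Moving the $dd^c$ off $v$ by the integration-by-parts formula of Proposition~\ref{prop:int-parts} and then using $dd^c u_t=\omega_t-\omega^T$ turns this second contribution into $j(I_j-I_{j-1})$. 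Hence $\frac{d}{dt}\cE(u_t)=\frac{1}{m+1}\sum_{j=0}^m\bigl[(j+1)I_j-jI_{j-1}\bigr]$, a telescoping sum collapsing to $I_m=\int_M v\,\omega_t^m\wedge\eta$. The computation for $\cE^{\alpha}$ is identical, with the sum running to $m-1$, the extra factor $\alpha$ carried through the wedge, and the telescope collapsing to $m\int_M v\,\omega_t^{m-1}\wedge\alpha\wedge\eta$.

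To pass to continuous $u_1,u_2$, I would use Proposition~\ref{prop:psh-aprox} to choose smooth decreasing sequences $u_i^k\searrow u_i$; since each $u_i$ is continuous on the compact $M$, Dini's theorem upgrades this to uniform convergence, so $v^k=u_2^k-u_1^k\to v$ and $u_t^k=u_1^k+tv^k\to u_t$ uniformly. Writing $h_k(t)=\cE(u_t^k)$ and $\omega_t^k=\omega^T+dd^c u_t^k$, integrating the smooth identity gives $h_k(t)-h_k(0)=\int_0^t\bigl(\int_M v^k\,(\omega_s^k)^m\wedge\eta\bigr)\,ds$. Letting $k\to\infty$, the left side converges to $\cE(u_t)-\cE(u_1)$ by the decreasing case of Theorem~\ref{thm:weak-con}; on the right, the inner densities converge for each $s$ to $\phi(s):=\int_M v\,\omega_s^m\wedge\eta$ — splitting off $\int_M(v^k-v)(\omega_s^k)^m\wedge\eta$, which is controlled by $\|v^k-v\|_\infty$, and applying the uniform-convergence case of Theorem~\ref{thm:weak-con} to the remaining term — while being uniformly bounded, since $\int_M(\omega_s^k)^m\wedge\eta=\int_M(\omega^T)^m\wedge\eta$ and $\|v^k\|_\infty\le C$; dominated convergence then yields $\cE(u_t)-\cE(u_1)=\int_0^t\phi(s)\,ds$. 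Finally $\phi$ is continuous in $s$, again because $u_s\to u_{s_0}$ uniformly forces $\omega_s^m\to\omega_{s_0}^m$ weakly, so the fundamental theorem of calculus gives $\frac{d}{dt}\cE(u_t)\big|_{t=0^+}=\phi(0)=\int_M(u_2-u_1)(\omega^T+dd^c u_1)^m\wedge\eta$, and the argument for $\cE^{\alpha}$ is verbatim. I expect the main obstacle to be exactly this passage to the continuous limit: one must control the non-monotone differences $v^k$ and the Bedford--Taylor products $(\omega_s^k)^m$ simultaneously, which is why the reduction to uniform convergence via Dini's theorem together with the weak-continuity theorem is the crux, rather than the routine smooth telescoping identity.
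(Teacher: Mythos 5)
Your proof is correct, and the core of it --- the telescoping identity $\sum_{j=0}^m\bigl[(j+1)I_j-jI_{j-1}\bigr]=(m+1)I_m$ obtained by moving $dd^c$ off $v$ and writing $dd^c u_t=\omega_t-\omega^T$ --- is exactly the computation the paper performs. Where you diverge is in the handling of regularity. The paper does not pass through the smooth case at all: it observes that $\cE(u_1+tw)=\frac{1}{m+1}\sum_j\int_M(u_1+tw)\,(\omega^T_{u_1}+t\,dd^c w)^j\wedge(\omega^T)^{m-j}\wedge\eta$ is literally a polynomial in $t$ (the binomial expansion of the Bedford--Taylor products is exact and finite), reads off the coefficient of $t$, and applies the integration-by-parts formula of Proposition~\ref{prop:int-parts} --- which is already stated for \emph{continuous} quasi-psh functions, the regularization having been absorbed into its proof --- so no further approximation is needed. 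Your route instead proves the derivative formula for smooth potentials by genuine differentiation under the integral and then runs a Dini/dominated-convergence argument through Theorem~\ref{thm:weak-con}; this is sound (the splitting $\int(v^k-v)(\omega^k_s)^m$ plus $\int v(\omega^k_s)^m$, with $v$ handled as a difference of two bounded quasi-psh functions, is the right way to apply the weak-continuity theorem), but it re-derives by hand the continuity of $\cE$ under uniform limits, which the paper gets for free afterwards as Corollary~\ref{cor:en-cocy}. In short: what you identify as the crux (the passage to continuous potentials) is precisely the step the paper's formulation sidesteps, at the cost of trusting multilinearity of the Bedford--Taylor products and Proposition~\ref{prop:int-parts} at $C^0$ regularity; your version is longer but makes that dependence explicit.
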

\begin{proof}
Let $w=u_2 -u_1$.  Then
\[\begin{split}
\cE\bigl((1-t)u_1 +tu_2 \bigr) & = \sum_{j=0}^m \int_M (u_1 +tw)((1-t)\omega^T_{u_1} +t\omega^T_{u_2})^j \wedge(\omega^T)^{m-j}\wedge\eta \\
   & = \sum_{j=0}^m \int_M u_1 (\omega^T_{u_1})^j \wedge(\omega^T)^{m-j}\wedge\eta \\
   & + t\sum_{j=0}^m \int_M w (\omega^T_{u_1})^j \wedge(\omega^T)^{m-j}\wedge\eta \\
   & + t\sum_{j=1}^m \int_M u_1 j(\omega^T_{u_1})^{j-1}\wedge dd^c w \wedge(\omega^T)^{m-j}\wedge\eta + O(t^2)
\end{split} \]
Then by Proposition~\ref{prop:int-parts}
\begin{multline}
\sum_{j=1}^m \int_M u_1 j(\omega^T_{u_1})^{j-1}\wedge dd^c w \wedge(\omega^T)^{m-j}\wedge\eta \\
 =\sum_{j=1}^m \int_M w dd^c u_1 j(\omega^T_{u_1})^{j-1} \wedge(\omega^T)^{m-j}\wedge\eta \\
 =\sum_{j=1}^m \int_M w j(\omega^T_{u_1})^{j} \wedge(\omega^T)^{m-j}\wedge\eta
 -\sum_{j=0}^{m-1} \int_M w (j+1)(\omega^T_{u_1})^{j} \wedge(\omega^T)^{m-j}\wedge\eta.
\end{multline}
But
\[ \sum_{j=0}^m (\omega^T_{u_1})^j \wedge(\omega^T)^{m-j} +\sum_{j=1}^m j(\omega^T_{u_1})^{j} \wedge(\omega^T)^{m-j}
-\sum_{j=0}^{m-1} (j+1)(\omega^T_{u_1})^{j} \wedge(\omega^T)^{m-j} \]
\[ =(m+1) (\omega^T_{u_1})^{m}.\]
Therefore
\[ \cE\bigl((1-t)u_1 +tu_2 \bigr) = \cE(u_1) +t\int_M w (\omega^T_{u_1})^{m}\wedge\eta +O(t^2), \]
and the first equation follows.  The proof of the second equation is completely analogous.
\end{proof}

\begin{cor}\label{cor:en-cocy}
For $u,v\in\PSH(M,\omega^T)\cap C^0(M)$, we have
\begin{gather}
\cE(u)-\cE(v) =\frac{1}{m+1}\sum_{j=0}^m \int_M (u-v)(\omega^T_u)^j \wedge(\omega^T_v)^{m-j}\wedge\eta, \\
\cE^\alpha (u) -\cE^\alpha (v) =\sum_{j=0}^{m-1} \int_M (u-v)(\omega^T_u)^j \wedge(\omega^T_v)^{m-j-1}\wedge\alpha\wedge\eta.
\end{gather}
\end{cor}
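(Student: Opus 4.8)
The plan is to join $u$ and $v$ by the affine path $u_t=(1-t)v+tu$, $t\in[0,1]$, to differentiate the energy along this path using Proposition~\ref{prop:en-first-dir}, and then integrate in $t$. First I would record that $u_t\in\PSH(M,\omega^T)\cap C^0(M)$ for every $t$, since $\omega^T+dd^c u_t=(1-t)\omega^T_v+t\omega^T_u\geq 0$ is a convex combination of positive currents and continuity is preserved, and that $\omega^T_{u_t}=(1-t)\omega^T_v+t\omega^T_u$ is affine in $t$. Because $u_t$ is linear in $t$ and $\omega^T_{u_t}$ is affine in $t$, each integrand in the defining formula (\ref{eq:M-A-en}) of $\cE$ is a polynomial in $t$; hence $t\mapsto\cE(u_t)$ is a polynomial, in particular $C^1$ on $[0,1]$.

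Next I would compute the derivative at an arbitrary $s\in[0,1)$. Reparametrizing the segment from $u_s$ to $u$ as $w_\tau=(1-\tau)u_s+\tau u$ and applying the first identity of Proposition~\ref{prop:en-first-dir} at its left endpoint, the factor $(1-s)$ arising from $u-u_s=(1-s)(u-v)$ cancels against the change of variables $\tfrac{dt}{d\tau}=1-s$, giving
\[ \frac{d}{dt}\cE(u_t)\Big|_{t=s}=\int_M (u-v)(\omega^T_{u_s})^m\wedge\eta. \]
Continuity of $s\mapsto\int_M(u-v)(\omega^T_{u_s})^m\wedge\eta$, needed to upgrade this to a genuine two-sided derivative and to integrate, follows from the weak continuity of the Monge--Amp\`ere operator (Theorem~\ref{thm:weak-con}) together with the uniform convergence $u_s\to u_{s_0}$; the polynomiality remark above in fact makes this automatic.

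Then I would integrate from $0$ to $1$ and expand. Substituting $\omega^T_{u_t}=(1-t)\omega^T_v+t\omega^T_u$ and using the binomial theorem,
\[ \cE(u)-\cE(v)=\int_M (u-v)\Bigl(\sum_{j=0}^m\binom{m}{j}\Bigl(\int_0^1 t^j(1-t)^{m-j}\,dt\Bigr)(\omega^T_u)^j\wedge(\omega^T_v)^{m-j}\Bigr)\wedge\eta. \]
The key arithmetic is the Beta integral $\binom{m}{j}\int_0^1 t^j(1-t)^{m-j}\,dt=\binom{m}{j}\tfrac{j!\,(m-j)!}{(m+1)!}=\tfrac{1}{m+1}$, which is independent of $j$ and collapses the sum to the asserted formula for $\cE$. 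The second identity is proved the same way starting from the second part of Proposition~\ref{prop:en-first-dir} and the definition (\ref{eq:M-A-alph}), now expanding $(\omega^T_{u_t})^{m-1}$ and using the companion identity $m\binom{m-1}{j}\int_0^1 t^j(1-t)^{m-1-j}\,dt=1$.

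I expect the only point requiring genuine care, rather than a true obstacle, to be the justification that the one-sided derivative supplied by Proposition~\ref{prop:en-first-dir} really is the derivative of $\cE(u_t)$ at every interior $t$, and not merely at $t=0^+$. The cleanest resolution is the observation that $\cE(u_t)$ is a polynomial in $t$, so that the endpoint derivative formula propagates to all $t$ via the reparametrization above; everything else is the elementary Beta-integral bookkeeping.
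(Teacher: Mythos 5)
Your proof is correct, but it follows a different route than the paper's. The paper's proof avoids the binomial/Beta-integral bookkeeping entirely: fixing $v$ and writing $\cF(u)$ for the right-hand side of the claimed identity, it observes that $\cF$ is itself (up to the prefactor) the Monge--Amp\`ere energy based at the reference form $\omega^T_v$, so Proposition~\ref{prop:en-first-dir} applied with $\omega^T_v$ in place of $\omega^T$ gives
\[
\frac{d}{dt}\cF\bigl((1-t)v+tu\bigr)=\int_M (u-v)\,(\omega^T_{(1-t)v+tu})^m\wedge\eta=\frac{d}{dt}\cE\bigl((1-t)v+tu\bigr),
\]
and since both $\cF(\cdot)$ and $\cE(\cdot)-\cE(v)$ vanish at $v$, they agree. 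You instead integrate $\frac{d}{dt}\cE(u_t)$ from $0$ to $1$, expand $((1-t)\omega^T_v+t\omega^T_u)^m$ by the binomial theorem (using multilinearity of the Bedford--Taylor product), and collapse the sum with the Beta integral $\binom{m}{j}\int_0^1 t^j(1-t)^{m-j}\,dt=\frac{1}{m+1}$; your companion computation $m\binom{m-1}{j}\int_0^1 t^j(1-t)^{m-1-j}\,dt=1$ for the $\alpha$-energy is likewise correct. The paper's argument is shorter because it never has to produce the coefficient $\frac{1}{m+1}$ by hand --- it is already built into the definition of $\cF$ --- while yours is more explicit and makes transparent where that coefficient comes from. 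Your attention to upgrading the one-sided derivative of Proposition~\ref{prop:en-first-dir} to a genuine derivative at interior $t$ (via the reparametrization and the observation that $t\mapsto\cE(u_t)$ is a polynomial) is a point the paper glosses over, since its displayed identity also implicitly uses differentiability along the whole path; your resolution is the right one and applies equally to the paper's version.
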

Note that this implies that both functionals are continuous along $C^0$ paths in $\PSH(M,\omega^T)\cap C^0(M)$.
\begin{proof}
Consider the first equation.  Fix $v$ and denote the right-hand expression by $\cF(u)$.  Applying Proposition
~\ref{prop:en-first-dir} with $\omega_v^T$ in place of $\omega^T$ gives
\[\frac{d}{dt}\cF((1-t)v +tu) =\int_M (u-v)(\omega^T_{(1-t)v +tu})^m \wedge\eta =\frac{d}{dt}\cE((1-t)v +tu). \]
And since both $\cF(\cdot)$ and $\cE(\cdot) -\cE(v)$, both vanish at $v$ the result follows.
The second equation follows from a completely analogous argument.
\end{proof}

\begin{cor}\label{cor:en-first-var}
Suppose $\{u_t \}$ is a continuous path in $\PSH(M,\omega^T)\cap C^0(M)$, meaning that $u\in C^0([a,b]\times M)$ and
$u_t \in\PSH(M,\omega^T)\cap C^0(M)$ for $t\in [a,b]$, and which also has $\dot{u}\in C^0([a,b]\times M)$.  Then
\begin{gather}
\frac{d}{dt}\cE(u_t) =\int_M \dot{u}_t (\omega^T +dd^c u_t)^m \wedge\eta, \\
\frac{d}{dt}\cE^{\alpha}(u_t) =m\int_M \dot{u}_t (\omega^T +dd^c u_t)^{m-1} \wedge\alpha\wedge\eta.
\end{gather}
\end{cor}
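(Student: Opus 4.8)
The plan is to differentiate $\cE(u_t)$ directly from the cocycle formula of Corollary~\ref{cor:en-cocy}, using the weak continuity of the transversal Monge-Amp\`ere operator (Theorem~\ref{thm:weak-con}) to pass to the limit in the difference quotient. First I would fix $t\in[a,b]$ and, for small $h$, apply the first identity of Corollary~\ref{cor:en-cocy} with $u=u_{t+h}$ and $v=u_t$ to obtain
\[
\frac{\cE(u_{t+h})-\cE(u_t)}{h}=\frac{1}{m+1}\sum_{j=0}^m \int_M \Big(\frac{u_{t+h}-u_t}{h}\Big)(\omega^T_{u_{t+h}})^j\wedge(\omega^T_{u_t})^{m-j}\wedge\eta.
\]
Set $g_h:=(u_{t+h}-u_t)/h$. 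Since $\dot u=\del_t u$ is continuous on the compact set $[a,b]\times M$, for each fixed $x$ one has $u(t+h,x)-u(t,x)=\int_t^{t+h}\dot u_s(x)\,ds$, whence $g_h\to\dot u_t$ \emph{uniformly} on $M$ as $h\to 0$ (for both signs of $h$), by uniform continuity of $\dot u$.

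Next I would split each summand as
\[
\int_M g_h\,(\omega^T_{u_{t+h}})^j\wedge(\omega^T_{u_t})^{m-j}\wedge\eta = \int_M \dot u_t\,(\omega^T_{u_{t+h}})^j\wedge(\omega^T_{u_t})^{m-j}\wedge\eta + \int_M (g_h-\dot u_t)\,(\omega^T_{u_{t+h}})^j\wedge(\omega^T_{u_t})^{m-j}\wedge\eta.
\]
The total mass $\int_M (\omega^T_{u_{t+h}})^j\wedge(\omega^T_{u_t})^{m-j}\wedge\eta$ is independent of the potentials and equals $\int_M(\omega^T)^m\wedge\eta$: peeling off one factor $dd^c u$ at a time and applying Proposition~\ref{prop:int-parts} with the constant function $\mathbb{1}$ in place of $v$ gives $\int_M dd^c(u-u')\wedge T\wedge\eta=\int_M(u-u')\,dd^c\mathbb{1}\wedge T\wedge\eta=0$. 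Hence the error term is bounded by $\|g_h-\dot u_t\|_{L^\infty}\int_M(\omega^T)^m\wedge\eta\to 0$. Because $u_{t+h}\to u_t$ uniformly on $M$ (continuity of $u$ on $[a,b]\times M$), the measures $(\omega^T_{u_{t+h}})^j\wedge(\omega^T_{u_t})^{m-j}\wedge\eta$ converge weakly to $(\omega^T_{u_t})^m\wedge\eta$ by the uniform-convergence case of Theorem~\ref{thm:weak-con}; testing against the \emph{fixed} continuous function $\dot u_t$ therefore sends the main term to $\int_M \dot u_t\,(\omega^T_{u_t})^m\wedge\eta$. All $m+1$ summands share this limit, so after dividing by $m+1$ the difference quotient converges, establishing both differentiability and the first formula $\frac{d}{dt}\cE(u_t)=\int_M\dot u_t\,(\omega^T+dd^c u_t)^m\wedge\eta$.

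The second formula is proved by the identical argument applied to the second cocycle identity of Corollary~\ref{cor:en-cocy}, with the fixed form $\alpha$ wedged throughout and the constancy of $\int_M(\omega^T_{u_{t+h}})^j\wedge(\omega^T_{u_t})^{m-j-1}\wedge\alpha\wedge\eta$ replacing its counterpart; here the $m$ summands (indexed $j=0,\dots,m-1$) each tend to $\int_M\dot u_t\,(\omega^T_{u_t})^{m-1}\wedge\alpha\wedge\eta$, producing the overall factor $m$. The step I expect to be the main obstacle is the interchange of limit and integral: one must carefully separate the uniformly convergent difference quotient $g_h$ from the only \emph{weakly} convergent Monge-Amp\`ere currents and exploit the cohomological constancy of their total masses to annihilate the error term, since Theorem~\ref{thm:weak-con} by itself controls only convergence tested against a single fixed continuous function.
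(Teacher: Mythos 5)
Your proof is correct and follows essentially the same route as the paper: both start from the cocycle identity of Corollary~\ref{cor:en-cocy}, use the uniform convergence of the difference quotient $(u_{t+h}-u_t)/h$ to $\dot u_t$, and conclude by the weak convergence of the transversal Monge-Amp\`ere measures from Theorem~\ref{thm:weak-con}. You merely spell out the details the paper leaves implicit (the splitting into a main term tested against the fixed function $\dot u_t$ and an error term killed by the constancy of the total mass), which is a welcome but not substantively different elaboration.
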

\begin{proof}
From Corollary~\ref{cor:en-cocy} we have for fixed $t$
\[ \frac{1}{h}\bigl(\cE(u_{t+h})-\cE(u_t)\bigr) =\frac{1}{m+1}\sum_{j=0}^m \int_M \frac{(u_{t+h}-u_t)}{h}(\omega^T_{u_{t+h}})^j \wedge(\omega^T_{u_t})^{m-j}\wedge\eta \]
By assumption $\frac{(u_{t+h}-u_t)}{h}$ converges uniformly to $\dot{u}_t$, thus the formula follows from
the weak converges of measures given in Theorem~\ref{thm:weak-con}.  The same argument gives the second formula.
\end{proof}

We consider the second variation.
\begin{prop}\label{prop:en-sec-var}
Let $U\in\PSH(N,\pi^* \omega^T)\cap C^0(N)$, that is, a subgeodesic if $U$ is $S^1$ invariant.  Then
\begin{gather}
d_\tau d^c_\tau \cE(u_\tau )=\frac{1}{m+1}\int_M (\pi^* \omega^T + dd^c U)^{m+1} \wedge\eta, \\
d_\tau d^c_\tau \cE^{\alpha}(u_\tau )=\int_M (\pi^* \omega^T + dd^c U)^{m}\wedge\alpha\wedge\eta,
\end{gather}
where the integration on the right-hand-side denotes the push-forward of currents under the projection
$M\times A\rightarrow A$, which is the fiberwise integral if the integrand is sufficiently regular.
\end{prop}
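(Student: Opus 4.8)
The plan is to prove the two formulas first for smooth $U$ by direct differentiation, and then to pass to general continuous $U$ by approximation. Throughout, write $\Theta:=\omega^T_{u_\tau}=\pi^*\omega^T+dd^c_x U$ for the ``horizontal'' part of $\pi^*\omega^T+dd^cU$ (the part carrying neither $d\tau$ nor $d\ol{\tau}$), where $d_x,d^c_x,dd^c_x$ denote differentiation in the $M$-directions only, and note that $u_\tau=U(\cdot,\tau)\in\PSH(M,\omega^T)\cap C^0(M)$ since a slice $W_\alpha\times\{\tau\}$ is a complex submanifold.

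\emph{Smooth case.} When $U\in C^\infty$ the slices $u_\tau$ form a smooth family in $\cH$, so the chain rule together with the first variation of Corollary~\ref{cor:en-first-var} (applied in the real and imaginary directions of $\tau$) gives $\partial_\tau\cE(u_\tau)=\int_M(\partial_\tau U)\,\Theta^m\wedge\eta$. Differentiating once more in $\ol{\tau}$ and using $\partial_{\ol{\tau}}\Theta^m=m\,\Theta^{m-1}\wedge dd^c_x(\partial_{\ol{\tau}}U)$ yields
\[
\partial_\tau\partial_{\ol{\tau}}\cE(u_\tau)=\int_M(\partial_\tau\partial_{\ol{\tau}}U)\,\Theta^m\wedge\eta
+m\int_M(\partial_\tau U)\,\Theta^{m-1}\wedge dd^c_x(\partial_{\ol{\tau}}U)\wedge\eta .
\]
On the other side I expand $(\pi^*\omega^T+dd^cU)^{m+1}$ and collect the coefficient of $d\tau\wedge d\ol{\tau}$; only two families survive, namely $\Theta^m$ wedged with the pure term $dd^c_\tau U$ (carrying $d\tau\wedge d\ol{\tau}$), with multinomial weight $m+1$, and $\Theta^{m-1}$ wedged with the two mixed terms carrying $dz_j\wedge d\ol{\tau}$ and $d\tau\wedge d\ol{z}_k$, with weight $(m+1)m$. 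After dividing by $m+1$, the first matches $\int_M(\partial_\tau\partial_{\ol{\tau}}U)\Theta^m\wedge\eta$ verbatim; the mixed contribution matches the cross term above once one integrates by parts in the $M$-directions via Proposition~\ref{prop:int-parts} to rewrite $m\int_M(\partial_\tau U)\Theta^{m-1}\wedge dd^c_x(\partial_{\ol{\tau}}U)\wedge\eta=-m\int_M d_x(\partial_\tau U)\wedge d^c_x(\partial_{\ol{\tau}}U)\wedge\Theta^{m-1}\wedge\eta$, which is exactly the symmetric mixed form produced by the expansion. This proves the smooth case for $\cE$; the $\cE^\alpha$ formula is identical, with $\frac{1}{m+1}(\pi^*\omega^T+dd^cU)^{m+1}$ replaced by $(\pi^*\omega^T+dd^cU)^m\wedge\alpha$ and using the second formula of Corollary~\ref{cor:en-first-var}.

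\emph{Continuous case.} For general $U\in\PSH(N,\pi^*\omega^T)\cap C^0(N)$ I test the claimed identity of currents against a smooth $\chi$ with compact support in $\overset{\circ}{A}$. Integrating by parts on $A$, the left-hand side becomes $\int_A\cE(u_\tau)\,d_\tau d^c_\tau\chi$, which only sees $U$ on the compact set $M\times\operatorname{supp}\chi\Subset N\setminus\partial N$. On this relatively compact piece Proposition~\ref{prop:psh-aprox} (more precisely the local regularization used in its proof) furnishes smooth $\pi^*\omega^T$-psh functions $U_j$ decreasing to $U$, and for each $U_j$ the smooth case applies. The left side passes to the limit because $\cE(u^j_\tau)\to\cE(u_\tau)$ uniformly in $\tau$: by Corollary~\ref{cor:en-cocy} the functional $\cE$ is Lipschitz in the sup-norm, and $U_j\searrow U$ converges uniformly on the compact set by Dini's theorem. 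The right side passes to the limit because $(\pi^*\omega^T+dd^cU_j)^{m+1}\wedge\eta$ converges weakly to $(\pi^*\omega^T+dd^cU)^{m+1}\wedge\eta$ by Theorem~\ref{thm:weak-con} while $\pi^*\chi$ is a fixed continuous function. Equating limits gives the formula; the $\cE^\alpha$ case is handled verbatim using weak continuity of $(\pi^*\omega^T+dd^cU_j)^m\wedge\alpha\wedge\eta$.

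The main obstacle is the smooth computation: keeping the binomial bookkeeping straight and, in particular, performing the $M$-directional integration by parts so that the genuinely mixed $dz\wedge d\ol{\tau}$ and $d\tau\wedge d\ol{z}$ terms in the expansion of $(\pi^*\omega^T+dd^cU)^{m+1}$ are correctly identified with the cross term $m\int_M(\partial_\tau U)\Theta^{m-1}\wedge dd^c_x(\partial_{\ol{\tau}}U)\wedge\eta$ coming from differentiating $\Theta^m$. The limiting step is comparatively routine, the only delicate point being that the degeneracy of $\pi^*\omega^T$ along the $\tau$-direction and the boundary $\partial N$ are both sidestepped by testing against $\chi$ supported in $\overset{\circ}{A}$, which localizes everything to a relatively compact interior subset where the approximation and weak-continuity results apply.
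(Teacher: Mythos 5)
Your argument is correct, but it takes a genuinely different route from the paper's. The paper proves this in three lines at the level of currents: it writes $\cE(u_\tau)=\sigma_*\bigl(U\sum_{j=0}^m(\pi^*\omega^T+dd^cU)^j\wedge(\pi^*\omega^T)^{m-j}\wedge\eta\bigr)$ for the projection $\sigma:N\to A$, uses that $dd^c$ commutes with the fiberwise push-forward and that $dd^c(UT)=dd^cU\wedge T$ is precisely the Bedford--Taylor definition for the closed positive $T=\theta_U^j\wedge\theta^{m-j}$, and then applies the telescoping identity $dd^cU\wedge\sum_j\theta_U^j\wedge\theta^{m-j}=\theta_U^{m+1}-\theta^{m+1}$ together with $(\pi^*\omega^T)^{m+1}=0$ (the pulled-back transversal form has rank $m$ in the $m+1$ transversal holomorphic directions of $N$). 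That computation is valid directly for $U\in\PSH(N,\pi^*\omega^T)\cap C^0(N)$, so no regularization, no binomial bookkeeping, and no separate smooth case are needed. Your two-step scheme --- the classical Semmes/Donaldson second-variation computation for smooth $U$, then testing against $\chi\in C_0^\infty(\overset{\circ}{A})$ and passing to the limit via Dini, the Lipschitz continuity of $\cE$ from Corollary~\ref{cor:en-cocy}, and the weak continuity of Theorem~\ref{thm:weak-con} along decreasing sequences --- is sound and has the side benefit of recovering the explicit pointwise formula $(\pi^*\omega^T+dd^c\Phi)^{m+1}\propto(\ddot\phi-\tfrac12|d\dot\phi|^2)(\omega^T_\phi)^m\wedge d\tau\wedge d\bar\tau$ that the paper quotes when defining geodesics, but it pays for this with the coefficient matching and the limiting argument. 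One point you should state more carefully: the local regularization on $W\Subset N\setminus\partial N$ (via B\l{}ocki--Ko\l{}odziej, after averaging over the Reeb flow) only produces smooth $U_j\in\PSH(W,\pi^*\omega^T+\varepsilon_j\omega)$, not $\pi^*\omega^T$-psh functions; this is harmless here because your smooth identity is pure calculus and needs no positivity of $U_j$, while the decreasing convergence $U_j\searrow U$ is all that the weak-continuity theorem requires on the right-hand side, but as written the claim that the $U_j$ are $\pi^*\omega^T$-psh is not what the cited regularization gives.
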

\begin{proof}
If $\sigma: N=M\times A\rightarrow A$ is the projection, as a current on $A$ we have
\[\cE(u_\tau )=\sigma_* \bigl( U\sum_{j=0}^m (\pi^* \omega^T +dd^c U)^j \wedge(\pi^* \omega^T)^{m-j}\wedge\eta\bigr). \]
Then
\[\begin{split}
d_\tau d^c_\tau \cE(u_\tau ) & =\sigma_* \bigl( dd^c U\wedge\sum_{j=0}^m (\pi^* \omega^T +dd^c U)^j \wedge(\pi^* \omega^T)^{m-j}\wedge\eta\bigr) \\
 & =\sigma_* \bigl( (\pi^* \omega^T +dd^c U)^{m+1} -(\pi^* \omega^T)^{m+1}  \bigr) \\
 & =\sigma_* \bigl( (\pi^* \omega^T +dd^c U)^{m+1} \bigr).
\end{split}\]
And a similar argument proves the second formula.
\end{proof}

Note that if $U\in\PSH(N,\pi^* \omega^T)\cap C_w^{1,1}(N)$, then the integrands in the proposition are differential
forms with $L^\infty$ coefficients.  Thus the push-forward is ordinary integration along the fibers.

\subsection{Mabuchi K-energy}

The Mabuchi K-energy is a functional is indispensable in K\"{a}hler geometry, as its critical points are constant
scalar curvature metrics.  It has been defined on Sasakian manifolds also~\cite{FOW09}.  The
\emph{Mabuchi K-energy} is the functional $\cM :\cH\rightarrow\R$ with derivative
\begin{equation}\label{eq:K-en-sm}
d\cM|_{\phi}(\psi) =-\int_M \psi\bigl(m\Ric_{\omega^T_{\phi}}\wedge(\omega^T_{\phi})^{m-1} -\ol{S}^T (\omega^T_{\phi})^m \bigr)\wedge\eta,
\end{equation}
where the average
\[\begin{split}
\ol{S}^T & = \frac{\int_M  S^T_g\, d\mu_g}{\int_M d\mu_g}  \\
	   & = \frac{2m\pi c_1(\mathscr{F}_\xi)\cup[\omega^T]^{m-1}}{[\omega^T]^m}
\end{split} \]
is independent of the transversal metric.

Consider the transverse canonical line bundle $\Lambda^{m,0}_b$, whose fiber in any foliation chart $U_\alpha$
as in (\ref{eq:fol-prod}) is the line spanned by $dz^1_\alpha \wedge\ldots\wedge dz^m_\alpha$.  The transversal
K\"{a}hler metric $\omega^T$ induces a metric on $\Lambda^{m,0}_b$.  The metric denoted in each chart by
$e^{-\Psi_\alpha}$, i.e. its value on the canonical section.  Thus
\[\Psi_\alpha =\log\bigl(\frac{(\omega^T)^m }{idz^1_\alpha \wedge d\ol{z}^1_\alpha \wedge\ldots\wedge idz^m_\alpha \wedge d\ol{z}^m_\alpha}\bigr).\]
Then $-\Psi_\alpha$ is the metric on the transversal anti-canonical bundle, or rather its weight, while the metric
in local coordinates is $e^{\Psi_\alpha}$.  More generally, given any measure $\mu$ absolutely continuous
w.r.t. $d\mu_\eta =(\omega^T)^m \wedge\eta$, and invariant under the Reeb flow, we have
$\Psi_\alpha^{\mu} =\log(f) +\Psi_\alpha$, where $f=\frac{d\mu}{d\mu_\eta}$ is the Radon-Nikodym derivative, and
$e^{\Psi_\alpha^{\mu}}$ defines a singular metric on the transverse anti-canonical bundle.

We recall the entropy of a measure.  If $\mu$ is absolutely continuous with respect to $\mu_0$ then
the \emph{entropy} of $\mu$ relative to $\mu_0$ is
\begin{equation}
H_{\mu_0}(\mu) :=\int_M \log\Bigl(\frac{d\mu}{d\mu_0} \Bigr) d\mu.
\end{equation}
In the sequel we will assume $\mu_0$ the measure induced by $d\mu_\eta =(\omega^T)^m \wedge\eta$.

We give an alternative formula for the Mabuchi energy, first given in~\cite{Che00b} in the K\"{a}hler
case.
\begin{equation}\label{eq:K-en}
\cM(u) =\bigl(\ol{S}^T \cE(u) -\cE^{\Ric_{\omega^T}}(u)\bigr) +H_{\mu_0}(d\mu_u),
\end{equation}
where $d\mu_u =(\omega_u^T)^m \wedge\eta$.  The functional $\cM$ is easily seen to be
extended to $\cH_{1,1}=\PSH(M,\omega^T)\cap C_w^{1,1}(M)$.  One can check that this
definition coincides with the above on $\cH$, by differentiating (\ref{eq:K-en}) along smooth
paths, which give the formula in (\ref{eq:K-en-sm}).

\subsection{Convexity of the Mabuchi K-energy}\label{subsec:con-K-en}

We prove Theorem~\ref{thmint:conv-K-ener} in this section, that $\cM(u_t)$ is convex along a weak $C^{1,1}_w$ geodesic.
First we prove that as a current on $A$, $\cM(u_\tau)$ is positive.  The proof follows the same
basic approach as that of R. Berman and B. Berndtsson~\cite{BeBer14} in the K\"{a}hler case, but because one has to be
careful when considering currents and weak differentiation in the transversal holomorphic foliation situation,
we give a complete proof.  The proof is based on properties of local Bergman kernels.

We recall that given a complex manifold $X$,  not assumed to be compact, with a line bundle $L$ and
a bounded metric $\phi$, the Bergman kernel $K_{k\phi}$ is the restriction to the diagonal of the
section of $\bigl( kL +K_X \bigr)\otimes\ol{\bigl( kL +K_X\bigr)}$ giving the reproducing kernel.
For the $L^2$ norm we take
\[ \|\sigma \|_{L^2} =\int_X \sigma\wedge\ol{\sigma}e^{-k\phi}, \]
where $\sigma\in\Gamma( kL +K_X )$.  Contracting by the metric on $kL$ gives a measure
\begin{equation}
\beta_k :=\frac{m!}{k^m} K_{k\phi}e^{-k\phi},
\end{equation}
where $\dim X=m$.

The following convergence result is used.  See~\cite{BeBer14} for the proof.
\begin{thm}[\cite{BeBer14}]\label{thm:Ber-conv}
Let $\B\subset\C^m$ be the unit ball, and let $\phi$ be a plurisubharmonic function defined on a neighborhood of $\B$
and in $C^{1,1}_w$.  If $d\mu$ denotes Lebesgue measure, then on any relatively compact subdomain $E\subset\B$
we have $\beta_k \leq C_E d\mu$.  And after passing to a subsequence
\[ \underset{k\rightarrow\infty}{\lim} \beta_k (x) =( dd^c \phi)^m (x), \]
for almost all $x\in\B$.  Thus $\beta_k \rightarrow( dd^c \phi)^m$ in total variation norm.
\end{thm}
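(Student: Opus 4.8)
The plan is to work from the variational (extremal) characterization of the Bergman kernel on the diagonal. In a holomorphic trivialization of $L$ over $\B$, a section of $kL+K_\B$ is a holomorphic $m$-form $f\,dz$ with squared norm $\int_\B |f|^2 e^{-k\phi}\,d\mu$, and
\[ K_{k\phi}(x)e^{-k\phi(x)} = \sup_{f\not\equiv 0} \frac{|f(x)|^2 e^{-k\phi(x)}}{\int_\B |f|^2 e^{-k\phi}\,d\mu}. \]
Everything reduces to two-sided control of this quantity: a uniform upper bound yielding domination, and a sharp pointwise limit at almost every point. The total variation statement will then be an immediate consequence of the dominated convergence theorem, which is presumably why the author flags it with ``Thus''.

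For the upper bound I would exploit the $C^{1,1}_w$ hypothesis through the resulting bound $dd^c\phi \le C\,\omega_{\mathrm{eucl}}$ (valid a.e., with $C=\|dd^c\phi\|_{L^\infty}$). For holomorphic $f$, the function $u=|f|^2 e^{-k\phi}$ satisfies $dd^c\log u = dd^c\log|f|^2 - k\,dd^c\phi \ge -kC\,\omega_{\mathrm{eucl}}$ (using $dd^c\log|f|^2\ge 0$), so $\log u + kC|z|^2$ is plurisubharmonic for a suitable $C$, whence $u\,e^{kC|z|^2}$ is plurisubharmonic. Applying the sub-mean-value inequality on a ball of radius $r\sim k^{-1/2}$ (on which the Gaussian factor oscillates by a bounded amount) gives $u(x)\le C'k^m\int_\B |f|^2 e^{-k\phi}\,d\mu$, uniformly for $x$ in a relatively compact $E$. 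Dividing and taking the supremum over $f$ yields $K_{k\phi}(x)e^{-k\phi(x)}\le C'k^m$, i.e. $\beta_k\le C_E\,d\mu$ on $E$.

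The heart of the matter is the pointwise convergence. By Alexandrov's theorem a $C^{1,1}$ function is twice differentiable at almost every point, so I fix such a good point $x_0$ and set $A=\bigl(\partial_i\partial_{\bar j}\phi(x_0)\bigr)$, so that $(dd^c\phi)^m(x_0)$ is a fixed multiple of $\det A$. I would rescale by $z=x_0+w/\sqrt{k}$; after absorbing the linear and pluriharmonic parts of $\phi$ into the holomorphic trivialization, the weight $k\phi(x_0+w/\sqrt k)$ converges to the Gaussian model $\langle Aw,w\rangle$ on $\C^m$, whose Bergman kernel at the origin is computed explicitly and matches $(dd^c\phi)^m(x_0)$ after the built-in normalization $m!/k^m$. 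The matching upper bound $\limsup_k\beta_k(x_0)\le (dd^c\phi)^m(x_0)$ comes from running the sub-mean-value estimate above against this quadratic model rather than the crude constant $C$, while the matching lower bound $\liminf_k\beta_k(x_0)\ge (dd^c\phi)^m(x_0)$ is obtained by using H\"ormander's $L^2$-estimate for $\bar\partial$ to construct a near-extremal \emph{peak section} concentrated at scale $k^{-1/2}$ about $x_0$. The main obstacle is exactly this step: the Taylor expansion of a merely $C^{1,1}$ function carries an error controlled only in an averaged, almost-everywhere sense rather than uniformly, so the rescaling must be arranged so that these errors are absorbed into the $L^2$ weights and do not perturb the limiting Gaussian problem — this is where the $C^{1,1}_w$ regularity (as opposed to $C^2$) genuinely enters.

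Finally, having extracted a subsequence along which $\beta_k\to(dd^c\phi)^m$ almost everywhere, I would upgrade to total variation on any relatively compact $E$ directly: the densities are dominated by the constant $C_E\in L^1(E)$ from the upper bound and converge a.e., so the dominated convergence theorem gives $\int_E|\beta_k-(dd^c\phi)^m|\,d\mu\to 0$, which is precisely convergence in total variation norm. Since the limit $(dd^c\phi)^m$ is independent of the subsequence, the full sequence converges.
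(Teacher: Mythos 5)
The paper does not prove this statement at all: it is imported verbatim from Berman--Berndtsson, with the line ``See~\cite{BeBer14} for the proof.''\ So the only meaningful comparison is with the cited source. Your upper bound is correct and is exactly the standard argument: $\log|f|^2-k\phi+kC|z-x|^2$ is plurisubharmonic because $0\le dd^c\phi\le C\omega_{\mathrm{eucl}}$ a.e.\ for a $C^{1,1}_w$ psh weight, and the sub-mean-value inequality on the ball of radius $k^{-1/2}$ centred at $x$ (where the correction term is $O(1)$) gives $K_{k\phi}e^{-k\phi}\le C''k^m$ on $E$. Likewise your $\limsup$ bound at Alexandrov points via rescaling $z=x_0+w/\sqrt{k}$ to the Gaussian model is sound, since restriction of the extremal problem to the shrinking ball only increases the kernel, and twice differentiability at $x_0$ makes the rescaled weight converge uniformly on compacts.

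The genuine gap is the matching pointwise lower bound, which you dispatch with ``H\"ormander's $L^2$-estimate to construct a near-extremal peak section.'' This step fails as described. H\"ormander's estimate for $\bar\partial$ requires a strictly positive curvature lower bound for the weight on the support of the data, and here $\phi$ is merely psh: Alexandrov second-order differentiability at the single point $x_0$ controls the Taylor expansion of $\phi$ \emph{at} $x_0$ but gives no pointwise lower bound on $dd^c\phi(z)$ for $z$ near $x_0$, so even if $dd^c\phi(x_0)>0$ the curvature may degenerate on the annulus where $\bar\partial(\chi s)$ is supported and the $L^2$-estimate gives nothing. (The naive global candidate $e^{kq(z)}$ without cutoff also fails, because plurisubharmonicity gives no lower bound on $\phi$ away from $x_0$, so the candidate's $L^2$-norm is uncontrolled.) The cited source circumvents the pointwise lower bound entirely: it establishes an \emph{integrated} lower bound $\liminf_k\int_E\beta_k\ge\int_E(dd^c\phi)^m$ (using the Ohsawa--Takegoshi extension theorem for the adjoint bundle $kL+K_{\B}$, which needs only $dd^c\phi\ge0$, together with the convergence $\frac1k\log K_{k\phi}\to\phi$), and then sandwiches this against the pointwise $\limsup$ and the uniform domination $\beta_k\le C_E$ via a reverse-Fatou argument to obtain $L^1_{\mathrm{loc}}$ convergence; the almost-everywhere convergence is then only extracted along a subsequence. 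This is precisely why the statement says ``after passing to a subsequence'': your proposed two-sided pointwise limit at every Alexandrov point would make the subsequence superfluous, which should have been a warning sign that the lower bound is where the real content lies. Your final step (domination plus a.e.\ convergence implies total-variation convergence, and independence of the limit upgrades this to the full sequence) is fine.
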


In the following theorem $D\subset\C$ is only required to be a smoothly bounded domain.
\begin{thm}
Let $\{u_\tau\}$ be a $C^{1,1}_w$ solution to the transversal Monge-Amp\`{e}re equation on $N=M\times D$, that is
if $U=\{u_\tau\}$, then $U\in\PSH(N,\pi^* \omega^T)\cap C_w^{1,1}(N)$ and $(\pi^*  \omega^T +dd^c U)^{m+1} \wedge\eta=0$.
Then the Mabuchi functional $\cM(u_\tau)$ is weakly subharmonic with respect to $\tau\in D$.
In particular, if $\{u_t \}$ is a weak geodesic connecting two elements of $\cH$ then $\cM(u_t)$ is weakly convex.
\end{thm}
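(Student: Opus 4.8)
The plan is to use the Chen-type decomposition~(\ref{eq:K-en}),
\[ \cM(u) = \bigl(\ol{S}^T \cE(u) - \cE^{\Ric_{\omega^T}}(u)\bigr) + H_{\mu_0}(d\mu_u), \]
and to treat the energy part and the entropy part separately as currents in $\tau$ on $D$, the goal being $d_\tau d^c_\tau \cM(u_\tau) \geq 0$. First I would dispose of the energy part. Since $U \in \PSH(N,\pi^*\omega^T) \cap C_w^{1,1}(N)$, the forms $(\pi^*\omega^T + dd^c U)^m$ have $L^\infty$ coefficients and the fiberwise push-forwards of Proposition~\ref{prop:en-sec-var} are honest integrations along the fibers. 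Applying that proposition, the hypothesis $(\pi^*\omega^T + dd^c U)^{m+1}\wedge\eta = 0$ annihilates the $\cE$-term entirely, so that $d_\tau d^c_\tau\bigl(\ol{S}^T\cE(u_\tau)\bigr) = 0$, while $d_\tau d^c_\tau \cE^{\Ric_{\omega^T}}(u_\tau) = \int_M (\pi^*\omega^T + dd^c U)^m \wedge \Ric_{\omega^T} \wedge\eta$. Hence the energy part contributes exactly $-\int_M (\pi^*\omega^T + dd^c U)^m \wedge \Ric_{\omega^T} \wedge\eta$ to $d_\tau d^c_\tau \cM(u_\tau)$, a term of no definite sign; everything therefore hinges on the entropy.

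The entropy term is the main obstacle, and here I would follow the local Bergman kernel strategy of Berman--Berndtsson~\cite{BeBer14}, taking care to respect the transversal foliation. Working in a foliation chart $W_\alpha \times D$, let $L$ be the local line bundle with $c_1(L) = [\omega^T]$ carrying the weight $\phi$ determined by $U$, and form the Bergman measures $\beta_k = \frac{m!}{k^m} K_{k\phi}e^{-k\phi}$ for the adjoint bundles $kL + K_{W_\alpha}$, where $K_{W_\alpha}$ is the canonical bundle of the chart. The crucial input is Berndtsson's positivity: because $U$ is $\pi^*\omega^T$-plurisubharmonic on the total space, the Bergman kernel function $(\tau,z)\mapsto \log K_{k\phi}$ is plurisubharmonic there, so that the $d_\tau d^c_\tau$ of the Bergman-regularized entropy acquires a nonnegative contribution. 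Expanding $\log\bigl((\omega_{u_\tau}^T)^m/(\omega^T)^m\bigr)$ through the relation $\log\beta_k = \log K_{k\phi} - k\phi + \mathrm{const}$ and integrating against $\beta_k$, I would then pass to the limit $k\to\infty$ using Theorem~\ref{thm:Ber-conv} --- both the pointwise convergence $\beta_k \to (\omega_{u_\tau}^T)^m$ and the uniform domination $\beta_k \leq C_E\,d\mu$ --- to recover simultaneously the entropy itself and the fixed Ricci contribution. The outcome is the distributional inequality $d_\tau d^c_\tau H_{\mu_0}(d\mu_{u_\tau}) \geq \int_M (\pi^*\omega^T + dd^c U)^m \wedge \Ric_{\omega^T}\wedge\eta$, in which the same \emph{fixed} background form $\Ric_{\omega^T}$ appears as in the energy computation.

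Adding the two parts, the indefinite Ricci terms cancel and one is left with $d_\tau d^c_\tau \cM(u_\tau) \geq 0$ as a current on $D$, i.e. $\cM(u_\tau)$ is weakly subharmonic. The subtle points I expect to grind out are precisely the foliated bookkeeping --- ensuring that the local Bergman constructions in the charts $W_\alpha$ are compatible with the holomorphic transitions~(\ref{eq:fol-tran}) and invariant under the Reeb flow, for which one may average over the torus generated by $\xi$ as in the proof of Proposition~\ref{prop:psh-aprox} --- together with the justification of differentiating under the integral sign at $C_w^{1,1}$ regularity, for which the domination $\beta_k \leq C_E\,d\mu$ is essential. For the final assertion, a weak geodesic joining $\phi_0,\phi_1\in\cH$ corresponds under $\tau = e^t$ to an $S^1$-invariant $U$ on $N = M\times A$; then $\cM(u_\tau)$ depends only on $|\tau|$, and an $S^1$-invariant weakly subharmonic function on the annulus is convex in $t = \log|\tau|$, giving convexity of $\cM(u_t)$ on $[0,1]$ and hence Theorem~\ref{thmint:conv-K-ener}.
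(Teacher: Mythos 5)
Your proposal is correct and follows essentially the same route as the paper: the paper packages the energy and entropy terms into a single functional $f^{\Psi}$ so that the Ricci contributions cancel at the outset, leaving $dd^c f^{\Psi}=\int_M dd^c\bigl(\Psi(\pi^*\omega^T+dd^cU)^m\bigr)\wedge\eta$, whose positivity is then established by exactly your local Bergman-kernel argument (Berndtsson's plurisubharmonic variation of $K_{k\phi_\tau}$ together with Theorem~\ref{thm:Ber-conv} and the domination $\beta_k\leq C_E\,d\mu$), whereas you carry the two pieces separately and perform the same cancellation at the end. The one technical point your sketch omits is that $\log(\omega^T_{u_\tau})^m$ need not be locally bounded below, so the paper first replaces it by the truncation $\Psi_A=\max\{\log(\omega^T_{u_\tau})^m,\chi-A\}$ for a continuous metric $\chi$ satisfying $dd^c\chi\geq -k_0(\pi^*\omega^T+dd^cU)$, proves $T_A\geq 0$, and only lets $A\to\infty$ at the very end.
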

\begin{proof}
Let $\Psi =\Psi(\tau,x)=\psi_\tau(x)$ be a possibly singular bounded metric on $\Lambda_b^{m,0} =K_{\sF}$.  We define
\begin{equation}\label{eq:Mab-mod}
f^{\Psi}(\tau):=\bigl(\ol{S}^T \cE(u) -\cE^{\Ric_{\omega^T}}(u)\bigr) +\int_M \log\Bigl(\frac{e^{\psi_\tau}}{(\omega^T)^m}\Bigr)(\omega^T_{u_\tau})^m \wedge\eta.
\end{equation}
First we claim that as a current on $D$
\begin{equation}\label{eq:diff-Mab-mod}
dd^c f^{\Psi} (\tau) =\int_M T,\quad T=dd^c \bigl( \Psi(\pi^*  \omega^T +dd^c U)^m \bigr)\wedge\eta,
\end{equation}
where $T$ is a transversal $(m+1,m+1)$ current on $N$.  Part of the problem is to show that $T$ is positive and
thus defines a measure on $N$.  If $v\in C_0^\infty(D)$ and $\varpi:N\rightarrow D$ is the projection,
then by (\ref{eq:diff-Mab-mod}) we mean
\[ \langle dd^c f^{\Psi},v\rangle :=\int_N \Psi(\pi^*  \omega^T +dd^c U)^m \wedge dd^c(\varpi^* v)\wedge\eta. \]
Let $\Psi_j$ be a sequence of uniformly bounded smooth metrics $\Psi_j \rightarrow\Psi$ almost everywhere on $M$
for each $\tau\in D$.  This sequence can be constructed by taking convolutions in each foliation chart as in
(\ref{eq:fol-prod-N}) with a subordinate partition of unity.  In order to get basic metrics $\Psi_j$ one must
average by the torus $T\subset\Aut(\eta,\xi,\Phi,g)$ generated by $\xi$.

Making use of Proposition~\ref{prop:en-sec-var} we compute
\[ dd^c f^{\Psi_j} (\tau) =\int_M T,\quad T_j =dd^c \bigl( \Psi_j (\pi^*  \omega^T +dd^c U)^m \bigr)\wedge\eta, \]
and since $\int_M T_j \rightarrow\int_M T$ weakly as currents by Lebesgue dominated convergence, and likewise
$f^{\Psi_j} \rightarrow f^{\Psi}$ pointwise, we have (\ref{eq:diff-Mab-mod}).

Let $\Theta_j = dd^c \Psi_j$.  We extend $T$ to be a transversal current on $N$ by
\[ \langle T,w\rangle :=\underset{j\rightarrow\infty}{\lim}\int_N w\Theta_j \wedge (\pi^*  \omega^T +dd^c U)^m \wedge\eta,
\quad\text{for  } w\in C_0^\infty(N). \]
This is \textit{a priori} not defined for all test functions.  The proof will proceed by showing when $\Psi$ is
the metric induce by $(\omega_{u_\tau}^T)^m$ that in each
holomorphic foliation chart
\[ \underset{j\rightarrow\infty}{\lim} \Theta_j \wedge (\pi^*  \omega^T +dd^c U)^m =dd^c \bigl( \Psi(\pi^*  \omega^T +dd^c U)^m \bigr) \]
is a positive current, and so $T$ defines a positive Radon measure on $N$.

We want to consider the singular metric $\psi_\tau =\log(\pi^*  \omega^T +dd^c u_\tau )^m$, but an additional problem
is that it is not locally bounded.  Fix $A>0$ and define
\[ \Psi_A =\max\{ \log(\pi^*  \omega^T +dd^c u_\tau )^m ,\chi -A\},\]
where $\chi$ is a fixed continuous metric on $\Lambda_b^{m,0}$ that we will define.
We will prove that the transversal $(m+1,m+1)$ current
\begin{equation}
T_A = dd^c \Psi_A \wedge(\pi^*  \omega^T +dd^c U)^m ,
\end{equation}
satisfies $T_A \geq 0$ if $\chi$ is chosen so that
\begin{equation}\label{eq:chi-boun}
dd^c \chi\geq -k_0 (\pi^*  \omega^T +dd^c U),
\end{equation}
for some $k_0 \in\N$.  This implies that $dd^c f^{\Psi_A} \geq 0$ on $D$.  But dominated convergence implies
$f^{\Psi_A}(\tau)\rightarrow\cM(u_\tau)$ pointwise, thus
\[ dd^c f^{\Psi_A}(\tau)\rightarrow dd^c \cM(u_\tau)\quad\text{weakly} \]
and $dd^c \cM(u_\tau)\geq 0$ as a current.

In order to satisfy (\ref{eq:chi-boun}) first let $\chi_0$ be an arbitrary smooth metric on $\Lambda_b^{m,0}$,
and choose $k_0 \in\N$ large enough that $k_0 \omega^T +dd^c \chi_0 \geq 0$.  Set $\chi=\pi^* \chi_0 -k_0 U$, then
\[\begin{split}
dd^c \chi & =\pi^* dd^c \chi_0 -k_0(\pi^* \omega^T +dd^c U) +k_0 \pi^* \omega^T \\
            & \geq  -k_0(\pi^* \omega^T +dd^c U).
\end{split}\]

We next prove that $T_A \geq 0$ in local foliation charts.  This will follow from a local approximation of the
metric $\Psi_A$ by Bergman densities.  Consider a fixed transversal holomorphic chart
\[ \phi_\alpha :U_\alpha \times V\rightarrow W_\alpha \times V,\quad U_\alpha \times V\subset M\times D.\]
Write
\[ \pi^* \omega^T +dd^c U =dd^c \Psi \]
for a plurisubharmonic function on $W_\alpha \times V$.  We will denote $\phi_\tau =\Phi(\dot,\tau)$.

Let $\beta_{k}$ be the Bergman measure for the Hilbert space of holomorphic functions on the
unit ball $\B\subset W_\alpha\subset\C^m$ with weight $k\phi_\tau$.  Consider the transversal current
\[ T_{A,k}:=dd^c \Psi_{A,k}\wedge\bigl(dd^c \Phi\bigr)^m,\quad \Psi_{A,k}:=\max\{\log\beta_{k\phi_\tau}, \chi-A \}.\]
By Theorem~\ref{thm:Ber-conv} and dominated convergence
\begin{equation}\label{eq:T-weak}
\underset{k\rightarrow\infty}{\lim} T_{A,k} =T_A \quad\text{weakly}
\end{equation}
as transversal currents.  By a result of B. Berndtsson~\cite{Ber06} on the plurisubharmonic variation of Bergman kernels,
$dd^c K_{k\phi_\tau} \geq 0$ on $B\times V$.  Thus
\[ dd^c \log\beta_{k\phi_\tau} \geq -kdd^c \Phi.  \]
Since by (\ref{eq:chi-boun}) we have $dd^c \chi\geq -k_0 dd^c \Phi$, for $k\geq k_0$
\[ dd^c \Psi_{A,k} \geq -kdd^c \Phi.\]
Thus
\[ T_{A,k} =dd^c \Psi_{A,k}\wedge\bigl(dd^c \Phi\bigr)^m \geq -k \bigl(dd^c \Phi\bigr)^{m+1} =0. \]
And finally, by Theorem~\ref{thm:Ber-conv}
\[ e^{\Psi_{A,k}} =\max\Bigl\{\frac{m!}{k^m}K_{k\phi_\tau} e^{-k\phi_\tau} ,e^{-(\chi-A)}\Bigr\}
\rightarrow\Bigl\{(dd^c \phi_tau )^m ,e^{-(\chi-A)}  \Bigr\}\]
pointwise almost everywhere on $M$ for all $\tau\in D$ in a dominated fashion.  Thus (\ref{eq:T-weak}) holds,
and the proof is complete.
\end{proof}

It remains to show that when $\{u_\tau \}$ is a weak geodesic, so $D=A=\{\tau\in\C\ |\ 1\leq |\tau|\leq e \}$
and $u_\tau$ depends only on $|\tau|=e^t$, that $\cM(u_t)$ is convex in the pointwise sense and thus continuous.
\begin{thm}
Suppose that $u_t$ is a weak $C^{1,1}_w$ geodesic.  Then $\cM(u_t)$ is continuous and therefore pointwise convex.
\end{thm}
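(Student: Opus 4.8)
The plan is to reduce the assertion to the \emph{continuity} of $t\mapsto\cM(u_t)$: once $\cM(u_t)$ is known to be continuous, the distributional subharmonicity established above --- which, by the $S^1$-invariance of the geodesic, is distributional convexity in the radial variable $t=\log|\tau|$ --- upgrades to genuine pointwise convexity by a standard mollification argument (a continuous function on an interval whose distributional second derivative is nonnegative is convex). So everything rests on proving continuity.

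To prove continuity I would split the Chen--Tian formula \eqref{eq:K-en}, $\cM(u)=\bigl(\ol{S}^T\cE(u)-\cE^{\Ric_{\omega^T}}(u)\bigr)+H_{\mu_0}(d\mu_u)$, into its energy part and its entropy part and treat each separately along the path. Since $U\in C^0(N)$ and $N=M\times A$ is compact, the map $t\mapsto u_t$ is continuous into $C^0(M)$; hence by Corollary~\ref{cor:en-cocy} the energy part $\ol{S}^T\cE(u_t)-\cE^{\Ric_{\omega^T}}(u_t)$ is continuous in $t$. For the entropy part, the uniform convergence $u_{t'}\to u_t$ gives weak convergence of the Monge--Amp\`ere measures $d\mu_{u_{t'}}\to d\mu_{u_t}$ by Theorem~\ref{thm:weak-con}, and relative entropy is lower semicontinuous under weak convergence of measures; thus $H_{\mu_0}(d\mu_{u_t})$, and therefore $\cM(u_t)$, is lower semicontinuous.

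It remains to establish the reverse inequality, namely upper semicontinuity of $\cM(u_t)$. Here I would exploit the truncated functionals $f^{\Psi_A}$ from the proof above: recall $\Psi_A=\max\{\log(\omega^T_{u_\tau})^m,\chi-A\}$, and that dominated convergence gives $f^{\Psi_A}(\tau)\searrow\cM(u_\tau)$ pointwise as $A\to\infty$, so $\cM(u_\tau)=\inf_A f^{\Psi_A}(\tau)$. Granting that each $f^{\Psi_A}$ is a genuine (not merely distributional) subharmonic function, hence upper semicontinuous, the pointwise infimum $\inf_A f^{\Psi_A}$ is upper semicontinuous as well. Combined with the lower semicontinuity from the previous step this yields continuity of $\cM(u_\tau)$; restricting to the radial variable $t$ and invoking the mollification step then finishes the proof.

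The main obstacle is precisely this claim that each $f^{\Psi_A}$ is genuinely upper semicontinuous, since the theorem above produced only the distributional bound $dd^c f^{\Psi_A}\ge 0$, and the entropy integrand is convex in the Monge--Amp\`ere density, which by itself gives only lower semicontinuity (and, for arbitrary weakly convergent measures, upper semicontinuity fails). The point is that the truncation makes $\Psi_A$ uniformly bounded --- here one uses that along a $C^{1,1}_w$ geodesic $dd^c U\in L^\infty(N)$ forces $(\omega^T_{u_\tau})^m$ to be bounded (Theorem~\ref{thm:weak-geo}) --- and then the Bergman-kernel approximants $\Psi_{A,k}$ used above, which are continuous and subharmonic, must be shown to converge to $f^{\Psi_A}$ in a way that preserves upper semicontinuity rather than merely the weak convergence of $dd^c$. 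Carrying this out, using the weak-continuity results of Theorem~\ref{thm:weak-con} together with the uniform bounds, is the delicate step on which the whole argument turns.
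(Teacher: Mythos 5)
Your overall frame (reduce to continuity of $t\mapsto\cM(u_t)$, then upgrade distributional to pointwise convexity) is reasonable, and your treatment of the energy part via Corollary~\ref{cor:en-cocy} and of lower semicontinuity of the entropy under weak convergence of the Monge--Amp\`ere measures is sound. But the proposal has a genuine gap exactly where you flag it: you never establish that the truncated functionals $f^{\Psi_A}$ are upper semicontinuous, and you explicitly defer this as ``the delicate step on which the whole argument turns.'' The distributional inequality $dd^c f^{\Psi_A}\ge 0$ obtained in the preceding theorem only says that $f^{\Psi_A}$ agrees almost everywhere with an u.s.c.\ subharmonic representative; it does not identify the actual pointwise values of $f^{\Psi_A}$ with that representative, and that identification is precisely what your $\cM=\inf_A f^{\Psi_A}$ argument needs. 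So as written the proof is incomplete at its crucial step.

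The paper closes this gap by a different mechanism, which you would essentially have to reproduce to finish your route. It composes the entropy integrand with strictly convex functions $\kappa_\epsilon$ tending to the identity, localizes with a partition of unity $\{\sigma_\alpha\}$ subordinate to the foliation charts, and replaces $\Psi_A$ by the local Bergman approximants $\Psi_{A,k}=\max\{\log\beta_{k\phi_\tau},\chi-A\}$. The resulting local entropies $H^{k}_\alpha$ are \emph{continuous} in $\tau$ (the Bergman densities are continuous), and Berndtsson's subharmonic variation of Bergman kernels~\cite{Ber06} together with the strict convexity of $\kappa_\epsilon$ yields the one-sided bound $d_\tau d_\tau^c H^{k}_\alpha\ge -C_{\epsilon,\alpha}$; a \emph{continuous} function whose distributional second derivative is bounded below becomes convex after adding $C_{\epsilon,\alpha}t^2$. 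Convexity then survives the pointwise (dominated-convergence) limits $k\to\infty$, the sum over $\alpha$, the limit $\epsilon\to 0$, and finally $A\to\infty$, and a pointwise-convex function on an interval is automatically continuous --- so in the paper continuity is a byproduct of convexity rather than an input obtained from a semicontinuity sandwich. The essential point your proposal is missing is this passage through continuous approximants carrying quantitative one-sided convexity bounds; without it, neither the upper semicontinuity of $f^{\Psi_A}$ nor the pointwise convexity of the limit can be extracted from purely distributional information.
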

\begin{proof}
Let $\Psi_A$ be defined as above.  We will prove that $f^{\Psi_A}$ is convex and continuous.  Then $f^{\Psi}$
will be convex by taking $A\rightarrow\infty$.

Let $\kappa_{\epsilon} :\R\rightarrow\R$, for $\epsilon>0$, be a sequence of strictly convex functions with
$\kappa'_{\epsilon} \geq 0$ tending to $Id$ as $\epsilon\rightarrow 0$.  Define $f^{\Psi_A}_\epsilon$ just
as $f^{\Psi_A}$ but with
\[ \log\bigl(\frac{e^{\psi_{A,\tau}}}{(\omega^T)^m}\bigr) \]
replaced by
\[ \kappa_{\epsilon} \Bigl(\log\bigl(\frac{e^{\psi_{A,\tau}}}{(\omega^T)^m}\bigr)\Bigr). \]

We will prove that $f^{\Psi_A}_\epsilon$ is convex for all $\epsilon>0$.  Note that by the argument
in the last theorem $f^{\Psi_A}_\epsilon$ is weakly convex.
Let $\{\sigma_\alpha \}$ be a
partition of unity subordinate to the covering of $M$ by foliation charts $\{U_\alpha \}$.  And
consider the local entropy functions
\[\begin{split}
H_{\alpha} & = \int_M \sigma_{\alpha} \kappa_{\epsilon}\Bigl(\log\bigl(\frac{e^{\psi_{A,\tau}}}{(\omega^T)^m}\bigr)\Bigr)(\omega^T_{\phi_\tau})^m \wedge\eta \\
        & \int_{W_\alpha} \hat{\sigma}_{\alpha} \kappa_{\epsilon}\Bigl(\log\bigl(\frac{e^{\psi_{A,\tau}}}{(\omega^T)^m}\bigr)\Bigr)(\omega^T_{\phi_\tau})^m
\end{split} \]
where $\hat{\sigma}_\alpha$ is $\sigma_\alpha$ integrated along the leaves.
Define $H^{k}_\alpha$ by replacing $\Psi_A$ by its approximation by local Bergman kernels
\[ \Psi_{A,k} =\max\{\log\beta_{k\phi_\tau} ,\chi-A \}.\]
We take $d_\tau d_\tau^c$ of $H^{k}_\alpha$,
\[ d_\tau d_\tau^c H^{k}_\alpha =\int_{W_\alpha} dd^c \Bigl(\hat{\sigma}_{\alpha} \kappa_{\epsilon}\Bigl(\log\bigl(\frac{e^{\psi_{A,\tau}}}{(\omega^T)^m}\bigr)\Bigr)\Bigr)(\omega^T_{\phi_\tau})^m ,\]
and use the plurisubharmonic variation of Bergman kernels~\cite{Ber06},
the strict convexity of $\kappa_{\epsilon}$, and the fact that $dd^c$ commutes with the push-forward
$W_\alpha \times A\rightarrow A$ to get
\begin{equation}
d_\tau d_\tau^c H^{k}_\alpha \geq -C_{\epsilon,\alpha}.
\end{equation}
Thus $H^{k}_\alpha +C_{\epsilon,\alpha} t^2$ is convex since $H^{k}_\alpha$ is continuous.  Taking $k\rightarrow\infty$
we get that $H_{\alpha} +C_{\epsilon,\alpha} t^2$ is convex by dominated convergence.  Summing over $\alpha$
gives that
\[ \int_M \kappa_{\epsilon}\Bigl(\log\bigl(\frac{e^{\psi_{A,\tau}}}{(\omega^T)^m}\bigr)\Bigr)(\omega^T_{\phi_\tau})^m \wedge\eta
+C_\epsilon t^2 \]
is convex and thus continuous.  Therefore $f^{\Psi_A}_\epsilon$ is continuous and poinwise convex, and
$f^{\Psi_A}$ is convex by dominated convergence.
\end{proof}

We now prove Corollary~\ref{cor:Mab-bound}.
\begin{lem}\label{lem:K-en-dif}
Let $\{\phi_t \}$ be the weak $C^{1,1}_w$ geodesic connecting $\phi_0, \phi_1 \in\cH$.  Then
\[ \frac{d}{dt}\cM(\phi_t)|_{t=0^+} \geq -\int_M (S^T_{\phi_0} -\ol{S})\frac{d\phi_t}{dt}|_{t=0^+}(\omega^T_{\phi_0})^m \wedge\eta. \]
\end{lem}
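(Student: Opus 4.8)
The plan is to exploit the decomposition (\ref{eq:K-en}) of the K-energy into an energy part and an entropy part,
\[ \cM(u) = \bigl(\ol{S}^T\cE(u) - \cE^{\Ric_{\omega^T}}(u)\bigr) + H_{\mu_0}(d\mu_u), \]
and to treat the two pieces separately. Since the weak geodesic lies in $C^{1,1}_w$, the map $t\mapsto\phi_t$ is a $C^1$ path in $\PSH(M,\omega^T)\cap C^0(M)$ with $\dot\phi_t$ continuous, so by Corollary~\ref{cor:en-first-var} the energy part $E(t):=\ol{S}^T\cE(\phi_t)-\cE^{\Ric_{\omega^T}}(\phi_t)$ is genuinely $C^1$ in $t$, with
\[ E'(0) = \int_M \dot\phi_0\bigl(\ol{S}^T(\omega^T_{\phi_0})^m - m\,\Ric_{\omega^T}\wedge(\omega^T_{\phi_0})^{m-1}\bigr)\wedge\eta. \]
By the convexity of $\cM$ along the geodesic (Theorem~\ref{thmint:conv-K-ener}), the function $t\mapsto\cM(\phi_t)$ is convex and continuous, hence has a right derivative at $0$; subtracting the $C^1$ function $E$, the entropy part $H(t):=H_{\mu_0}(d\mu_{\phi_t})$ also has a right derivative, and $\cM'(0^+)=E'(0)+H'(0^+)$.

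First I would establish a lower bound for the entropy difference quotient. Writing $f_s:=(\omega^T_{\phi_s})^m/(\omega^T)^m$, so that $H(s)=\int_M f_s\log f_s\,d\mu_0$ with $d\mu_0=(\omega^T)^m\wedge\eta$, I use the convexity of the scalar function $F(x)=x\log x$, namely $F(f_t)-F(f_0)\geq F'(f_0)(f_t-f_0)=(\log f_0+1)(f_t-f_0)$. Integrating against $d\mu_0$ and noting $\int_M(f_t-f_0)\,d\mu_0=0$ since both measures have total mass $\Vol(M)$, I get
\[ H(t)-H(0) \geq \int_M \log f_0\,\bigl((\omega^T_{\phi_t})^m-(\omega^T_{\phi_0})^m\bigr)\wedge\eta. \]
Because $\phi_0\in\cH$ is smooth, $\log f_0$ is a smooth basic function with $dd^c\log f_0=\Ric_{\omega^T}-\Ric_{\omega^T_{\phi_0}}$. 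Expanding the difference of top powers as $dd^c(\phi_t-\phi_0)$ wedged with $\sum_{j=0}^{m-1}(\omega^T_{\phi_t})^j\wedge(\omega^T_{\phi_0})^{m-1-j}$ and integrating by parts by Proposition~\ref{prop:int-parts} (legitimate since $\log f_0$ is smooth and $\phi_t-\phi_0$ is a difference of continuous quasi-psh functions) yields
\[ H(t)-H(0)\geq \int_M (\phi_t-\phi_0)\,(\Ric_{\omega^T}-\Ric_{\omega^T_{\phi_0}})\wedge\textstyle\sum_{j=0}^{m-1}(\omega^T_{\phi_t})^j\wedge(\omega^T_{\phi_0})^{m-1-j}\wedge\eta. \]

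Next I would divide by $t$ and let $t\to 0^+$. As $\phi$ is $C^1$ in $t$ we have $(\phi_t-\phi_0)/t\to\dot\phi_0$ uniformly, while $\phi_t\to\phi_0$ uniformly forces $\sum_j(\omega^T_{\phi_t})^j\wedge(\omega^T_{\phi_0})^{m-1-j}\wedge\eta\to m(\omega^T_{\phi_0})^{m-1}\wedge\eta$ weakly by Theorem~\ref{thm:weak-con}. Passing to the limit (a uniformly convergent factor paired against weakly convergent measures) gives
\[ H'(0^+)\geq m\int_M \dot\phi_0\,(\Ric_{\omega^T}-\Ric_{\omega^T_{\phi_0}})\wedge(\omega^T_{\phi_0})^{m-1}\wedge\eta. \]
Adding $E'(0)$, the $\Ric_{\omega^T}$ contributions cancel, and using $m\,\Ric_{\omega^T_{\phi_0}}\wedge(\omega^T_{\phi_0})^{m-1}=S^T_{\phi_0}(\omega^T_{\phi_0})^m$ I obtain
\[ \cM'(0^+)=E'(0)+H'(0^+)\geq -\int_M (S^T_{\phi_0}-\ol{S}^T)\,\dot\phi_0\,(\omega^T_{\phi_0})^m\wedge\eta, \]
which is the asserted inequality, $\ol{S}$ denoting the constant $\ol{S}^T$.

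The main obstacle is the entropy estimate. One must check that the convexity inequality for $F(x)=x\log x$ integrates correctly even though $f_t$ is only bounded (so $f_t\log f_t$ is integrable but $\log f_t$ itself need not be), which is why I reduce everything to the smooth endpoint weight $\log f_0$ before integrating by parts. One must also justify the final limit, where the integrand pairs a uniformly convergent factor with Monge-Amp\`ere measures converging only weakly; this is precisely where Theorem~\ref{thm:weak-con} and the $C^{1,1}_w$ regularity of the geodesic are essential. By contrast, the energy derivative $E'(0)$ and the single integration by parts are routine consequences of the results of \S\ref{sec:en-funct}. Note also that the formula (\ref{eq:K-en-sm}) for the first variation of $\cM$ along smooth paths shows the bound is sharp, with equality expected when $\phi_0$ is itself an interior smooth point of the geodesic.
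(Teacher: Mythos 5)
Your proposal is correct and follows essentially the same route as the paper: decompose $\cM$ via (\ref{eq:K-en}), bound the entropy difference quotient from below by convexity (the paper phrases this as convexity of $H_{\mu}$ on measures of fixed volume, which is exactly your pointwise $x\log x$ argument integrated), integrate by parts against the smooth weight $\log f_0$, and pass to the limit with Theorem~\ref{thm:weak-con} before adding the energy derivative from Corollary~\ref{cor:en-first-var}. The only differences are cosmetic, and your explicit attention to the existence of the one-sided derivative is if anything slightly more careful than the paper's presentation.
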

\begin{proof}
Let $\mu =(\omega^T)^m \wedge\eta$.  $H_{\mu}$ is convex on measure with volume $\Vol(\mu)$.
Set $\nu_0 =(\omega_{\phi_0}^T)^m \wedge\eta$ and $\nu_1 =(\omega_{\phi_t}^T)^m \wedge\eta$.
Hence if $\nu_s =s\nu_1 +(1-s)\nu_0$, we have
\[ H_{\mu}(\nu_1)-H_{\mu}(\nu_0)\geq\frac{d}{ds}H_{\mu}(\nu_s)|_{s=0} =\int_M \log\bigl( \frac{\nu_0}{\mu} \bigr)(d\nu_1 -d\nu_0).  \]
Thus
\begin{equation}
\begin{split}
\frac{1}{t}\bigl( H_{\mu}((\omega_{\phi_t}^T)^m \wedge\eta) -  &H_{\mu}((\omega_{\phi_0}^T)^m \wedge\eta)\bigr)  \\
& \geq \int_M \log\Bigl(\frac{(\omega_{\phi_0}^T)^m \wedge\eta}{(\omega^T)^m \wedge\eta} \Bigr)\frac{1}{t}\bigl((\omega^T_{\phi_t})^m -(\omega^T_{\phi_0})^m \bigr)\wedge\eta \\
  & = \int_M \log\bigl(\frac{\nu_0}{\mu}\bigr)\frac{1}{t}dd^c (\phi_t -\phi_0)\wedge\sum_{j=0}^{m-1}(\omega^T_{\phi_t})^{j}\wedge(\omega^T_{\phi_0})^{m-j-1} \wedge\eta \\
  & = \int_M dd^c \log\bigl(\frac{\nu_0}{\mu}\bigr)\frac{1}{t}(\phi_t -\phi_0)\wedge\sum_{j=0}^{m-1}(\omega^T_{\phi_t})^{j}\wedge(\omega^T_{\phi_0})^{m-j-1} \wedge\eta.
\end{split}
\end{equation}
We take the limit of $t$ to zero and apply Theorem~\ref{thm:weak-con}
\[ \underset{t\rightarrow 0^+}{\lim}\frac{1}{t}\bigl( H_{\mu}((\omega_{\phi_t}^T)^m \wedge\eta)
\geq m\int_M \frac{d\phi_t}{dt}|_{t=0^+}\Bigl( \Ric^T_{\omega^T}-\Ric^T_{\omega^T_{\phi_0}}\Bigr)\wedge(\omega^T_{\phi_0})^{m-1} \wedge\eta.  \]
Then the \emph{energy} part of (\ref{eq:K-en}) differentiates as in Corollary~\ref{cor:en-first-var} completing the proof.
\end{proof}
We now can prove Corollary~\ref{cor:Mab-bound}.  By the sub-slope property of convex functions
\[ \cM(\phi_1) -\cM(\phi_0) \geq \frac{d}{dt}\cM(\phi_t)|_{t=0^+}. \]
Thus
\[  \begin{split}
\cM(\phi_1) -\cM(\phi_0) & \geq -\int_M (S^T_{\phi_0} -\ol{S})\frac{d\phi_t}{dt}|_{t=0^+}(\omega^T_{\phi_0})^m \wedge\eta, \\
                        & \geq -\Bigl(\Cal(\phi_0)\Bigr)^{\frac{1}{2}}\Bigl(\int_M \bigl(\frac{d\phi_t}{dt}|_{t=0^+} \bigr)^2 d\mu_{\phi_0}\Bigr)^{\frac{1}{2}}
\end{split}\]
by the Cauchy-Schwartz inequality.  But the distance $d(\phi_0,\phi_1) = \Bigl(\int_M \bigl(\frac{d\phi_t}{dt}|_{t=0^+} \bigr)^2 d\mu_{\phi_0}\Bigr)^{\frac{1}{2}}$.

\section{Uniqueness of constant scalar curvature structures and extremal structures}\label{sec:unique}

\subsection{Automorphism groups}

We will need some facts about the Lie algebra $\hol^T(\xi,J)$ and automorphism groups of $C(M)$
when $\cS(\xi,J)$ admits an extremal structure.  The following was proved in~\cite{BoGaSi08}, and~\cite{vC12b} giving
the last statement.

\begin{thm}\label{thm:aut-S-E}
Let $(\eta,\xi,\Phi,g)\in\mathcal{S}(\xi,J)$ be a Sasaki-extremal structure.
Then we have the semidirect sum decomposition
\begin{equation}
\hol^T(\xi,J) =\mathfrak{a}\oplus\hol^T(\xi,J)_0,
\end{equation}
where $\mathfrak{a}$ is the Lie algebra of parallel, with respect to $g^T$, sections of $\nu(\mathscr{F}_\xi)$.  And we also have
\begin{equation}\label{eq:extr-decomp}
\hol^T(\xi,J)_0 =\fg\oplus J\fg\oplus\bigl(\bigoplus_{\lambda>0}\mathfrak{h}^{\lambda} \bigr),
\end{equation}
where $\fg=\aut(g,\eta,\xi,\Phi)/{\R\xi}$ is the image under $\partial_g^{\#}$ of the imaginary valued functions in $\mathscr{H}_g$
and $\mathfrak{h}^\lambda =\{\ol{X}\in\hol^T(\xi,J)_0 : [\partial^{\#}_g s_g ,\ol{X}]=\lambda\ol{X}\}$
and $\fg\oplus J\fg =C_{\hol^T(\xi,J)_0}(\partial^{\#}_g s_g)$, the centralizer of $\partial^{\#}_g s_g$.

Furthermore, the connected component of the identity $G=\Aut(\eta,\xi,\Phi,g)_0 \subset\Fol(M,\mathscr{F}_\xi,J)$ is a maximal
compact connected subgroup.  And any other maximal compact connected subgroup is conjugate to $G$ in $\Fol(M,\mathscr{F}_\xi,J)$.
\end{thm}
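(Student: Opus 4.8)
The plan is to transport Calabi's structure theorem for the holomorphic automorphisms of an extremal K\"{a}hler manifold, together with the Lichnerowicz--Matsushima reductivity theory and the Cartan--Iwasawa--Malcev conjugacy theorem, to the transverse K\"{a}hler geometry of the Reeb foliation, working throughout with basic tensors and the finite dimensional complex Lie algebra $\hol^T(\xi,J)$. The first step is to identify transversally holomorphic fields with holomorphy potentials: using the basic Hodge theory of the transverse $\overline{\partial}$-complex on $(\mathscr{F}_\xi,J)$ --- available because the leaf space behaves like a compact K\"{a}hler manifold --- one shows that each $\ol{X}\in\hol^T(\xi,J)$ is either parallel for $\nabla^T$ or is of the form $\partial_g^{\#} h$ for a basic complex potential $h\in\mathscr{H}_g$. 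The parallel fields form the abelian summand $\mathfrak{a}$ and split off, yielding the semidirect decomposition $\hol^T(\xi,J)=\mathfrak{a}\oplus\hol^T(\xi,J)_0$ in which $\hol^T(\xi,J)_0$ consists of the fields carrying a potential, the transverse analogue of the reduced automorphism algebra.

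For the refined splitting I would use the extremal vector field $V=\partial_g^{\#}s_g$, which is transversally holomorphic precisely by the Sasaki-extremal hypothesis. Its imaginary part is the transversal Killing field generated by $s_g$, hence generates a torus in $\Fol(M,\mathscr{F}_\xi,J)$; since $\ad$ is complex linear on the complex Lie algebra $\hol^T(\xi,J)_0$, the purely imaginary eigenvalues of this Killing generator become real eigenvalues of $\ad(V)$, so $\ad(V)$ is semisimple with real spectrum. The zero eigenspace is the centralizer $C(V)$, which I would identify with $\fg\oplus J\fg$ by a transverse Matsushima-type argument: $C(V)$ is reductive and equals the complexification of the compact algebra $\fg=\aut(g,\eta,\xi,\Phi)/\R\xi$ of transversal Killing fields commuting with the torus, with $V$ lying in its center. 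Writing $\hol^T(\xi,J)_0$ as this reductive Levi factor extended by a nilpotent radical, on which the central element $V$ acts with positive weights, then gives $\hol^T(\xi,J)_0=\fg\oplus J\fg\oplus\bigoplus_{\lambda>0}\mathfrak{h}^{\lambda}$.

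Finally, maximality and conjugacy follow from Lie theory once the algebraic structure is in place. The algebra $\hol^T(\xi,J)_0$ integrates to a finite dimensional Lie group whose unipotent radical, with Lie algebra the nilpotent part $\bigoplus_{\lambda>0}\mathfrak{h}^{\lambda}$, is simply connected and hence contains no nontrivial compact subgroup; therefore any maximal compact connected subgroup projects isomorphically onto a maximal compact of the reductive factor $\fg\oplus J\fg$, which is $G=\Aut(\eta,\xi,\Phi,g)_0$ corresponding to the compact real form $\fg$. Thus $G$ is a maximal compact connected subgroup inside $\Fol(M,\mathscr{F}_\xi,J)$, and all such subgroups are mutually conjugate by the Cartan--Iwasawa--Malcev theorem. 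I expect the principal obstacle to be the transverse Matsushima step --- proving that $C(V)$ is reductive and is the complexification of $\fg$ --- because it requires the foliated analogue of the Bochner--Matsushima vanishing arguments together with control of the quotient by $\R\xi$ and of the basic Hodge theory on the non-Hausdorff leaf space; a secondary difficulty is verifying that the nonzero weights are genuinely positive, i.e. that $C(V)$ exhausts the weight-zero part and its complement is nilpotent.
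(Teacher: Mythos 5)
The paper offers no proof of this theorem to compare against: it is stated with the remark that the decomposition is proved in \cite{BoGaSi08} and the statement on maximal compact subgroups in \cite{vC12b}. Your outline is nonetheless the correct skeleton and is essentially what those references do: split off the $\nabla^T$-parallel fields by transverse Hodge theory (note that the correct statement is that every $\ol{X}$ \emph{decomposes} as a parallel field plus a field $\partial^{\#}_g h$ with $h\in\mathscr{H}_g$, not an either/or dichotomy), and then run Calabi's eigenspace decomposition for $\ad(\partial^{\#}_g s_g)$, whose semisimplicity with real spectrum comes from the fact that $\im\partial^{\#}_g s_g$ is transversally Killing and generates a torus in the isometry group.

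Two steps in your sketch are asserted rather than proved, and they carry the actual content. First, semisimplicity of $\ad(V)$ with real spectrum does not by itself give that the zero eigenspace is exactly $\fg\oplus J\fg$ nor that all nonzero eigenvalues are positive; this requires working with the fourth-order operator $\Li_g=(\ol{\partial}\partial_g^{\#})^*\ol{\partial}\partial_g^{\#}$ on basic functions and a Bochner/Lichnerowicz identity showing that $\mathscr{H}_g$ is an $\ad(V)$-stable space of potentials on which the weights are eigenvalues of a nonnegative operator. You flag this as the "principal obstacle" but supply no mechanism, so the exclusion of negative weights and the identification $C(V)=\fg\oplus J\fg$ remain unproved. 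Second, and more seriously, the final assertion concerns maximal compact connected subgroups of $\Fol(M,\mathscr{F}_\xi,J)$, which the paper explicitly notes is \emph{infinite dimensional} (its Lie algebra contains every vector field tangent to the leaves). The Cartan--Iwasawa--Malcev theorem cannot be invoked there directly. One must first show that, up to conjugation, any compact connected subgroup of $\Fol(M,\mathscr{F}_\xi,J)$ meets the leafwise directions only in the circle (or torus) generated by $\xi$, i.e. that the projection to $\hol^T(\xi,J)$ restricted to the Lie algebra of such a subgroup has kernel reducible to $\R\xi$; only after this reduction to the finite-dimensional group integrating $\hol^T(\xi,J)$ does your Levi-decomposition argument apply. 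That reduction is precisely the contribution of \cite{vC12b} and is absent from your proposal.
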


We will need the following definition from~\cite{FOW09}.
\begin{defn}
A complex vector field $X$ on a Sasakian manifold $M$ is \emph{Hamiltonian holomorphic} if
\begin{thmlist}
\item  $X$ is basic, i.e. on each chart $U_\alpha$ it projects, and $\pi_\alpha (X)$ is holomorphic on $V_\alpha$.

\item  $u_X :=\sqrt{-1}\eta(X)$ is a holomorphy potential,
\[ \ol{\del}_b u_X =-\sqrt{-1}X\contr\omega^T.\]
\end{thmlist}
We denote the space of Hamiltonian holomorphic vector fields by $\Ham$.
\end{defn}
We list some useful properties.
\begin{prop}\label{prop:Ham-hol}
\begin{thmlist}
\item  The space of $\Ham$ is a Lie algebra, and $u_{[X,Y]} =X u_Y -Y u_X$ for $X,Y\in\Ham$.\label{prop:Ham1}

\item  There is a Lie algebra isomorphism $\hol^T(\xi,J)_0 \cong\Ham/{\C\xi}$.  \label{prop:Ham2}

\item  $\Ham$ is isomorphic to the Lie algebra of holomorphic vector fields $\tilde{X}$ on $C(M)$ for which
$[\xi,\tilde{X}]=[r\del_r,\tilde{X}]=0$.  \label{prop:Ham3}
\end{thmlist}
It follows that $\Ham$ only depends on $\xi$ and the complex structure on $C(M)$.
\end{prop}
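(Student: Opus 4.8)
The plan is to treat the three parts in order: I would prove (i) by a direct Cartan-calculus computation on the transversal Kähler structure, then deduce (ii) from the projection to the leaf space and (iii) from the correspondence with the Kähler cone, invoking~\cite{FOW09,BoGaSi08} for the construction of holomorphy potentials. For (i), first note that condition (a) is preserved under the bracket: the leafwise projection $\pi_\alpha$ is a Lie algebra map on $\xi$-invariant fields, and the bracket of two holomorphic $(1,0)$ fields is again holomorphic, so $[X,Y]$ is basic and projects holomorphically. To verify condition (b) together with the formula, I would use $\iota_{[X,Y]}\omega^T =\mathcal{L}_X(\iota_Y\omega^T) -\iota_Y(\mathcal{L}_X\omega^T)$. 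Writing $\iota_Y\omega^T =\sqrt{-1}\,\ol{\del}_b u_Y$ and using that $X$ is transversally holomorphic (so $\mathcal{L}_X$ commutes with $\ol{\del}_b$), the first term is $\sqrt{-1}\,\ol{\del}_b(X u_Y)$; since $\omega^T$ is closed, $\mathcal{L}_X\omega^T =d\iota_X\omega^T =\sqrt{-1}\,\del_b\ol{\del}_b u_X$, and contracting this $(1,1)$-form with the $(1,0)$ field $Y$ gives $\iota_Y(\mathcal{L}_X\omega^T)=\sqrt{-1}\,\ol{\del}_b(Y u_X)$. Hence $\iota_{[X,Y]}\omega^T =\sqrt{-1}\,\ol{\del}_b(X u_Y -Y u_X)$, so $[X,Y]\in\Ham$ with the stated potential; consistency with $u_{[X,Y]}=\sqrt{-1}\,\eta([X,Y])$ follows from $\eta([X,Y])=X\eta(Y)-Y\eta(X)-d\eta(X,Y)$ and the fact that $d\eta =2\omega^T$ is of type $(1,1)$ and so vanishes on the pair of transversally $(1,0)$ fields $X,Y$.

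For (ii), I would study the projection $\pi\colon\Ham\to\Gamma(\nu(\mathscr{F}_\xi))$, $X\mapsto\ol{X}$. By (i) this is a Lie algebra homomorphism, its image lies in $\hol^T(\xi,J)$, and its kernel consists of the Hamiltonian holomorphic fields tangent to the leaves, which have constant potential and hence equal $\C\xi$; this gives an injection $\Ham/{\C\xi}\hookrightarrow\hol^T(\xi,J)$. The real content is surjectivity onto $\hol^T(\xi,J)_0$: given $\ol{X}$ in this summand one must solve $\ol{\del}_b u =-\sqrt{-1}\,\ol{X}\contr\omega^T$ for a basic potential $u$, which is possible precisely because, by Theorem~\ref{thm:aut-S-E}, the parallel part $\mathfrak{a}$—for which $\ol{X}\contr\omega^T$ is harmonic and not $\ol{\del}_b$-exact—has been split off. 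For this existence I would invoke~\cite{FOW09,BoGaSi08}; setting $X=\ol{X}^H -\sqrt{-1}\,u\,\xi$ then produces the Hamiltonian holomorphic lift, and $u_X=u$.

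For (iii), I would set up explicit mutually inverse maps between $\Ham$ and the holomorphic fields $\tilde{X}$ on $C(M)$ with $[\xi,\tilde{X}]=[r\del_r,\tilde{X}]=0$, using $I\xi =-r\del_r$ and $I(r\del_r)=\xi$, which come from $\xi+\sqrt{-1}\,r\del_r$ being holomorphic. The lift is obtained by adding to the horizontal lift of $\ol{X}$ a multiple of the holomorphic field $\xi+\sqrt{-1}\,r\del_r$, with coefficient determined by $u_X$, so as to render $\tilde{X}$ holomorphic and homogeneous of degree zero; conversely a bihomogeneous holomorphic $\tilde{X}$ restricts to a basic field on $M$ whose transversal part is holomorphic and whose contact moment $\sqrt{-1}\,\eta(\,\cdot\,)$ supplies the potential. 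Holomorphicity of the lift and the two commuting conditions are checked directly from the cone Kähler structure~(\ref{eq:Kaehler-pot1}), and the bracket is preserved because the bracket of cone fields restricts to the one computed in (i). The final assertion is then immediate, since the characterization of $\tilde{X}$ refers only to $I$, $\xi$, and $r\del_r$.

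The step I expect to be the main obstacle is the surjectivity in (ii): producing a global basic holomorphy potential for a transversally holomorphic field amounts to knowing that its contraction with $\omega^T$ is $\ol{\del}_b$-exact, which is exactly the statement that the field lies in the reduced summand $\hol^T(\xi,J)_0$ rather than in the parallel part $\mathfrak{a}$. Establishing this cleanly requires the Hodge theory of the transversal Kähler structure together with the decomposition of Theorem~\ref{thm:aut-S-E}, and is where I would lean most heavily on the cited results.
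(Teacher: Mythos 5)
Your proposal is correct and follows essentially the same route as the paper: part (i) by direct verification that the bracket preserves the defining conditions and produces the potential $Xu_Y-Yu_X$ (the paper decomposes $X=X_D-\sqrt{-1}\,u_X\xi$ and computes the bracket term by term, while you use the Cartan identity $[\mathcal{L}_X,\iota_Y]=\iota_{[X,Y]}$ --- an equivalent computation, including the observation that the $d\eta$-term vanishes on a pair of $(1,0)$ fields), part (ii) via the projection to $\Gamma(\nu(\mathscr{F}_\xi))$ with kernel $\C\xi$ and the existence of basic holomorphy potentials for elements of $\hol^T(\xi,J)_0$, and part (iii) via the lift $\tilde{X}=X+u_Xr\del_r$, which is exactly your ``horizontal part plus $-\sqrt{-1}\,u_X(\xi+\sqrt{-1}\,r\del_r)$.'' The only point where you are more explicit than the paper is the surjectivity in (ii): the paper simply takes the holomorphy potential $u$ of $V\in\hol^T(\xi,J)_0$ as given, which is legitimate since that is precisely how this summand is characterized by the decomposition in Theorem~\ref{thm:aut-S-E}, so your extra care there is welcome but not a departure.
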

\begin{proof}
\noindent (\ref{prop:Ham1})  Suppose $X,Y\in\Ham$.  Write $X=X_D -\sqrt{-1}u_X \xi$ with $X_D \in\Gamma(D^{1,0})$ and
$X_D =\del^{\#} u_X$ as a section of $\nu(\sF)$ and $Y=Y_D -\sqrt{-1}u_Y$.  Then
\[ \begin{split}
[X,Y] & = [X_D,Y_D] -\sqrt{-1}\bigl([u_X \xi,Y_D] +[X_D,u_Y \xi]  \bigr) \\
        & = [X_D,Y_D] -\sqrt{-1}\bigl(X_D u_Y -Y_D u_X  \bigr).
\end{split}\]
It is easy to check that if we set $u_{[X,Y]} = X_D u_Y -Y_D u_X =X u_Y -Y u_X$, then
$\del^{\#} u_{[X,Y]} =[X_D,Y_D]$.  And further more $\eta([X_D,Y_D])=2\omega^T(X_D,Y,D)=0$.

\noindent (\ref{prop:Ham2})  Suppose $V\in\hol^T(\xi,J)_0$ and $V =\del^{\#}u$ with the holomorphy potential $u$
defined up to a constant.  Then taking $V\in\Gamma(D^{1,0})$, $X=V -\sqrt{-1}u\xi$ is representative of $\Ham$.

\noindent (\ref{prop:Ham3})  If $X\in \Ham$, then a direct computation shows that
$\tilde{X}=X+u_X r\del_r$ is holomorphic on $C(M)$.

Conversely, if $\tilde{X}$ is holomorphic and $[\xi,\tilde{X}]=[r\del_r,\tilde{X}]=0$, then it can be written as
\[ \tilde{X} =X_D -\sqrt{-1}u_X \xi +u_X r\del_r, \]
with $X_D\in\Gamma(D^{1,0})$ and $\xi u_X =0$.  A computation using the warped product formulas shows that
if $V\in\Gamma(D^{0,1})$, then $\nabla^{0,1}_{V} \tilde{X} =0$ implies that $\ol{\del}_b u_X(V) =g^T(X,V)$.
Thus $X_D -\sqrt{-1}u_X \xi\in\Ham$.
\end{proof}

For a given Sasakian structure $(\eta,\xi,\Phi,g)$ we denote by $\mathscr{H}$ the space of
\emph{holomorphy potentials}, that is, the space of $h\in C_b^\infty(M)$ with
\[ \del^{\#} h \in\hol^T(\xi,J)_0 .\]
Note that $h\in\mathscr{H}$ uniquely determines an element of $V_h \in\Ham$, so we will also use
$\del^{\#} h$ to denote this element $V_h \in\Ham$.
From Proposition~\ref{prop:Ham-hol} we have a Lie algebra isomorphism $\mathscr{H}\cong\Ham$.
Thus an element $V\in\Ham$ has a unique potential $h^V$, while as an element of $\hol^T(\xi,J)_0$ the potential
of $V$ is unique up to a constant.  The next observation is important.
\begin{prop}
$h\in\mathscr{H}$ corresponds to $V_h \in\Ham$ with $\re V_h$ preserving $\eta$, and thus $(\eta,\xi,\Phi,g)$,
if and only if up to a constant $h\in C^\infty_b(M,\i\R)$.
\end{prop}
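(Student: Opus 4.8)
The plan is to reduce the whole statement to the single clean identity
\[ \mathcal{L}_{\re V_h}\eta = d^c(\re h), \]
after which both directions of the equivalence and the final clause follow at once.

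First I would make $V_h$ explicit. By Proposition~\ref{prop:Ham-hol} the Hamiltonian holomorphic field with potential $h$ is $V_h = \del^{\#} h - \i h\xi$, with $\del^{\#} h\in\Gamma(D^{1,0})$ and $\i\eta(V_h)=h$. Writing $h = a + \i b$ with $a,b\in C^\infty_b(M,\R)$, and using that for real basic $f$ the defining relation $g(\del^{\#} f,\cdot)=\ol{\del} f$ together with the $(1,0)$-condition $\Phi\del^{\#} f = \i\del^{\#} f$ give $\del^{\#} f = \tfrac12(\grad f - \i\Phi\grad f)$, I would obtain
\[ \re V_h = \tfrac12\grad a + \tfrac12\Phi\grad b + b\xi. \]

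Next I would feed this into Cartan's formula $\mathcal{L}_{\re V_h}\eta = d(\re V_h\contr\eta) + \re V_h\contr d\eta$. Since $a,b$ are basic, the fields $\grad a,\grad b,\Phi\grad b$ lie in $D=\ker\eta$, so $\re V_h\contr\eta = b$ and the first term is $db$. For the second term I would use $d\eta(X,Y)=2g(\Phi X,Y)$ together with $\Phi\xi=0$ and $\Phi^2 = -\mathbb{1}+\eta\otimes\xi$ from (\ref{eq:comp-tens}) to get $\Phi(\re V_h) = \tfrac12\Phi\grad a - \tfrac12\grad b$, and then the skew-symmetry $g(\Phi X,Y)=-g(X,\Phi Y)$ on $D$ to identify $\re V_h\contr d\eta = d^c a - db$; here $(d^c a)(\,\cdot\,) = -da(\Phi\,\cdot\,)$ is exactly the sign prescribed by $d^c = \i(\ol{\del}-\del)$. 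The two copies of $db$ cancel, leaving the displayed identity $\mathcal{L}_{\re V_h}\eta = d^c a = d^c(\re h)$. The conclusion is then forced: on the connected $M$ one has $d^c(\re h)=0$ precisely when $d(\re h)|_D = 0$, i.e. $\re h$ is constant, which is exactly the statement that $h$ be purely imaginary up to a real constant. For the clause ``and thus $(\eta,\xi,\Phi,g)$,'' I would note that $\re V_h$ preserves $\xi$ for every $h$, since $V_h\in\Ham$ commutes with $\xi$ by Proposition~\ref{prop:Ham-hol}, and preserves the transverse complex structure $\Phi|_D$ because $V_h$ is transversally holomorphic; combined with $\mathcal{L}_{\re V_h}\eta=0$ and the formula $g=\tfrac12 d\eta\circ(\mathbb{1}\otimes\Phi)+\eta\otimes\eta$ from (\ref{eq:metric}), this yields $\mathcal{L}_{\re V_h}g=0$, so the entire Sasaki structure is preserved.

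The main obstacle is purely one of bookkeeping: the cancellation of the two $db$ terms is genuine only if the normalizations are pinned down consistently --- the factor relating $\del^{\#}$ to the real gradient, the factor $2$ in $d\eta=2g(\Phi\,\cdot\,,\cdot\,)$, the sign convention in $d^c$, and the fact that basic functions have horizontal gradients so that every $\xi$-direction contribution drops out. Once these are fixed the computation is rigid, and the robust content --- that $\mathcal{L}_{\re V_h}\eta$ is a nonzero multiple of $da\circ\Phi$ and hence vanishes exactly when $\re h$ is constant --- is independent of the conventions.
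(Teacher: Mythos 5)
The paper states this proposition without proof (it is introduced only as ``the next observation is important''), so there is no argument of the author's to compare against; your write-up supplies a complete verification, and I have checked that it is correct under the paper's conventions. The key identity $\mathcal{L}_{\re V_h}\eta=d^c(\re h)$ does hold with the normalizations as stated: from $g(\del^{\#}f,\cdot)=\ol{\del}f$ one gets $\del^{\#}f=\tfrac12(\grad f-\i\Phi\grad f)$ for real basic $f$, hence $\re V_h=\tfrac12\grad a+\tfrac12\Phi\grad b+b\xi$ for $h=a+\i b$; Cartan's formula together with $d\eta(X,Y)=2g(\Phi X,Y)$, $\Phi^2=-\mathbb{1}+\eta\otimes\xi$, the skew-symmetry $g(\Phi X,Y)=-g(X,\Phi Y)$, and $d^c a=-da\circ\Phi$ then gives $\mathcal{L}_{\re V_h}\eta=db+(d^c a-db)=d^c a$, with no stray factors of $2$. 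Since $a$ is basic, $d^c a=0$ forces $da=0$, i.e.\ $a$ constant on the connected $M$, which is exactly the ``up to a constant, $h$ is $\i\R$-valued'' condition, and the equivalence goes both ways from the single identity. Your handling of the clause ``and thus $(\eta,\xi,\Phi,g)$'' is also fine: $[\xi,\re V_h]=0$ because $a,b$ are basic and $\xi$ is Killing, preservation of $\eta$ gives preservation of $D$, transversal holomorphy gives $\mathcal{L}_{\re V_h}\Phi=0$, and then $g=\tfrac12 d\eta\circ(\mathbb{1}\otimes\Phi)+\eta\otimes\eta$ is preserved. In short: the proof is correct and fills a gap the paper leaves implicit.
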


We fix some important notation.  Let $H$ be the connected group with Lie algebra $\Ham$, and let $G\subset H$ be
a maximal connected compact subgroup.  We may assume, by taking $G$-averages, that we have a $G$-invariant
structure $(\eta,\xi,\Phi,g)$.  Let $K\subseteq G$ be connected compact, and let $\fk, \fg$ be their Lie algebras.
We also define
\begin{itemize}
\item  $\fz =Z(\fk )$, the center of $\fk$,

\item  $\fz' =C_{\Ham}(\fk )$, the centralizer of $\fk$ in $\Ham$,

\item  $\fp =N_{\Ham}(\fk )$, the normalizer of $\fk$ in $\Ham$.
\end{itemize}

We denote the corresponding space holomorphy potentials to be $\mathscr{H}^{\fk}, \mathscr{H}^{\fz}$, etc.
Note that $\mathscr{H}^{\fz}$ (respectively $\mathscr{H}^{\fz'}$) consist of $K$-invariant potentials in
$\mathscr{H}^{\fk}$ (respectively $\mathscr{H}$).

We have the injection
\begin{equation}\label{eq:norm-inc}
 \fz' /\fz \hookrightarrow \fp /\fk .
\end{equation}
\begin{prop}\label{prop:norm-inc}
The inclusion (\ref{eq:norm-inc}) is surjective, so we have an isomorphism of Lie algebras
\[  \fz' /\fz \cong \fp /\fk . \]
\end{prop}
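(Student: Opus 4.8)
The plan is to reduce the statement to the single algebraic identity $\fp = \fz' + \fk$. Granting this, surjectivity of (\ref{eq:norm-inc}) is immediate, and the asserted isomorphism follows because the kernel of the map $\fz' \to \fp/\fk$ is $\fz' \cap \fk = C_{\Ham}(\fk) \cap \fk = Z(\fk) = \fz$, which is exactly the injectivity already recorded before the proposition. The two structural inputs I would isolate at the outset are that, since $K$ is compact, $\fk$ is reductive, so $\fk = \fz \oplus [\fk,\fk]$ with $[\fk,\fk]$ semisimple; and that every $H \in \fz$ acts on $\Ham$ by a \emph{semisimple} endomorphism $\ad H$. For the latter I would argue that the closure of $\{\Ad(\exp tH)\}$ is a compact torus in $\GL(\Ham)$, hence preserves some inner product, so that $\ad H$ is skew with respect to it and therefore diagonalizable over $\C$.

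First I would fix $X \in \fp = N_{\Ham}(\fk)$ and view $D := \ad X|_{\fk}$ as a derivation of $\fk$. Any such $D$ preserves both the derived ideal $[\fk,\fk]$ and the center $\fz$ (for central $z$, $[Dz, \fk] = D[z,\fk] - [z, D\fk] = 0$). Since $[\fk,\fk]$ is semisimple all its derivations are inner, so $D|_{[\fk,\fk]} = \ad Z_1|_{[\fk,\fk]}$ for some $Z_1 \in [\fk,\fk] \subseteq \fk$. Because $\fk \subseteq \fp$, the element $X - Z_1$ still lies in $\fp$, and after replacing $X$ by $X - Z_1$ I may assume that $X$ centralizes $[\fk,\fk]$. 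This replacement does not alter the action of $\ad X$ on $\fz$, since $[\fk,\fk]$ commutes with the central ideal $\fz$, so $\ad Z_1|_{\fz} = 0$.

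The heart of the argument is then to show that this normalized $X$ also centralizes $\fz$, for once that is known $X \in C_{\Ham}(\fk) = \fz'$, whence the original $X$ lies in $\fz' + \fk$. Here is where I would invoke the semisimplicity above. Fix $H \in \fz$; since $X$ normalizes $\fk$ and $D$ preserves $\fz$, we have $[X,H] \in \fz$, and since $\fz$ is abelian $\ad H([X,H]) = 0$. On the other hand $\ad H(X) = [H,X] = -[X,H]$, so $(\ad H)^2 X = -\,[H,[X,H]] = 0$. As $\ad H$ is semisimple, $\ker(\ad H)^2 = \ker(\ad H)$, forcing $[H,X] = 0$, that is $[X,H] = 0$. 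Since $H \in \fz$ was arbitrary, $X$ centralizes $\fz$, completing the reduction and yielding $\fp = \fz' + \fk$.

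The only genuinely non-formal step is this last one — showing that $\ad X$ acts trivially on the center $\fz$ — and I expect it to be the main obstacle, with the decisive input being the compactness of $K$, which makes the central generators act semisimply on $\Ham$. Everything preceding it is routine reductive-Lie-algebra bookkeeping, so I anticipate a short write-up in which the only point deserving real care is the justification that the $\Ad$-orbit of a central generator has compact closure, ensuring the diagonalizability of $\ad H$ used in the final display.
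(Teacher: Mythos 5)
Your proof is correct, but it takes a genuinely different route from the paper. The paper works with the identification $\Ham\cong\mathscr{H}$ via holomorphy potentials: for $W\in\fp$ it observes that $\re X(h^W)=\tfrac12 h^{[X,W]}\in\mathscr{H}^{\fk}$ for $X\in\fk$, hence $\gamma^*h^W-h^W\in\mathscr{H}^{\fk}$ for $\gamma\in K$, and then averages $h^W$ over $K$ with respect to Haar measure to produce a $K$-invariant potential $\tilde h^W$ with $h^W-\tilde h^W\in\mathscr{H}^{\fk}$; this gives $W=\del^{\#}\tilde h^W+\del^{\#}\hat h^W\in\fz'+\fk$ in one stroke. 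You instead prove the same identity $\fp=\fz'+\fk$ by pure Lie theory: splitting $\fk=\fz\oplus[\fk,\fk]$, absorbing the $[\fk,\fk]$-component of $\ad X|_{\fk}$ into an inner derivation, and then using semisimplicity of $\ad H$ for $H\in\fz$ (via an invariant inner product coming from compactness of the torus closure) to conclude $(\ad H)^2X=0\Rightarrow[H,X]=0$. Both arguments ultimately rest on compactness of $K$ — the paper through group-level Haar averaging of potentials, you through complete reducibility of the adjoint action — and your version has the merit of being independent of the Sasakian machinery (it establishes $N_{\mathfrak{h}}(\fk)=C_{\mathfrak{h}}(\fk)+\fk$ for any compact subgroup of a Lie group), while the paper's is shorter because it reuses the correspondence $u_{[X,Y]}=Xu_Y-Yu_X$ already set up in Proposition 4.1.4. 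All the individual steps you flag (preservation of $\fz$ and $[\fk,\fk]$ by the derivation, innerness of derivations of the semisimple part, $\ker(\ad H)^2=\ker(\ad H)$, and $\fz'\cap\fk=\fz$ for injectivity) check out.
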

\begin{proof}
We claim $\fz' +\fk$.  Let $W\in\Ham$ with $W=\del^{\#} h^W$.
\[ W\in\fp \Leftrightarrow [X,W]\in\fk, \forall X\in\fk.\]
So $\re X(h^W )=h^{[\re X, W]} =\frac{1}{2} h^{[X,W]}\in\mathscr{H}^{\fk}$, since $\re X$ is contact.
Thus for $\gamma\in K$, $\gamma^* h^W -h^W \in\mathscr{H}^{\fk}$.  If we average with respect to Haar
measure on $K$, we get a $K$-invariant $\tilde{h}^W$ so that $\hat{h}^W :=h^W -\tilde{h}^W \in\mathscr{H}^{\fk}$.
We have
\[ W=\del^{\#}\tilde{h}^W + \del^{\#}\hat{h}^W \in \fz' +\fk ,\]
because $\del^{\#}\tilde{h}^W \in\fz'$ since it is $K$-invariant.
\end{proof}

Suppose that $(\eta,\xi,\Phi,g)$ is a Sasakian structure with $G=\Aut(g,\eta,\xi,\Phi)_0 \subset\Fol(M,\mathscr{F}_\xi,J)$
a maximal compact subgroup.  This is equivalent to requiring that $G\subset H$ is maximal compact.
Here we are taking $K=G$ in the above notation.
We have an orthogonal decomposition using the volume form $d\mu_g =(\omega^T)^m \wedge\eta$
\[ C_b^\infty (M)^G =\i\mathscr{H}_g^{\fz} \oplus W_g , \]
with projections
\[ \pi^G : C_b^\infty (M)^G \rightarrow\i\mathscr{H}_g^{\fz}\text{  and  }\pi_g^W :C_b^\infty (M)^G \rightarrow W_g .\]
The \emph{extremal vector field} is defined to be
\begin{equation}\label{eq:extr-vect}
V =\del^{\#} \pi^G(S_g)\in\hol^T(\xi,J)_0 .
\end{equation}
One can check that $V$ is independent of the of the $G$-invariant Sasakian structure in $\mathcal{S}(\xi,J)$.
See~\cite{FuMa95}, where the arguments can be applied in the Sasakian case also.

We define the \emph{reduced scalar curvature} to be
\begin{equation}\label{eq:red-sc}
S_g^G =\pi_g^W (S_g).
\end{equation}
Note that a $G$-invariant structure in $\mathcal{S}(\xi,J)$ has vanishing reduced scalar curvature
if and only if it is Sasaki-extremal.

Next suppose $(\eta,\xi,\Phi,g)$ is Sasaki-extremal.  Then $G=\Aut(\eta,\xi,\Phi,g)_0$ is a maximal compact subgroup
of $H$.  Furthermore, if it is cscS, then $H$ is the complexification of $G$.
Let $P:=N_{H}(G)_0$ be the connected component of the identity of the normalizer of $G$ in $H$.  Define
$Z' =C_H(G)_0$, the identity component of the centralizer of $G$ in $H$, and $Z_0 =Z' \cap G$.
\begin{prop}
Let $(\eta,\xi,\Phi,g)$ be cscS (respectively Sasaki-extremal), then its orbit $\cO =H/G$ (respectively its orbit
under $P$, $\cO_P =P/G =Z'/{Z_0}$) is a symmetric space with the Riemannian structure induced by
$\cO\subset\tilde{\cH}$ (respectively $\cO_P \subset\tilde{\cH}^G$).

It follows that the exponential map
\[ \exp: \Ham/\fg \rightarrow\cO \quad\text{(resp. }\exp:\fp /\fg \rightarrow\cO_P ) \]
is onto.
\end{prop}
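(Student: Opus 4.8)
The plan is to realize $\cO$ (respectively $\cO_P$) as a Riemannian homogeneous space, recognize the relevant pair of groups as a Riemannian symmetric pair, and then deduce both the symmetric-space structure and surjectivity of the exponential from the standard structure theory of reductive complex groups. First I would make the action of $H$ on $\tilde\cH$ precise. For $h\in H$, since $h$ preserves $(\sF_\xi,J)$ and lies in the identity component, it acts trivially on basic cohomology, so $h^*\omega^T_\phi$ is a transversal K\"ahler form in the class $[\omega^T]$; by the basic $dd^c$-lemma it equals $\omega^T+dd^c\psi$ for a unique normalized $\psi=:h\cdot\phi\in\tilde\cH$. A change-of-variables computation shows that the Mabuchi inner product $\int_M\psi_1\psi_2\,d\mu_\phi$ is preserved, so $H$ acts on $\tilde\cH$ by isometries, and hence the induced metric on any orbit is $H$-invariant. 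Taking $\phi_0$ to be the potential of the given cscS structure, its stabilizer in $H$ is the group of transversal isometries of $g_{\phi_0}$, which by Theorem~\ref{thm:aut-S-E} is exactly $G=\Aut(\eta,\xi,\Phi,g)_0$. Thus $\cO=H\cdot\phi_0\cong H/G$ with an $H$-invariant induced metric.

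Next I would exhibit the symmetric-pair structure. By Theorem~\ref{thm:aut-S-E}, in the cscS case $H$ is the complexification $G^{\C}$ and $\hol^T(\xi,J)_0=\fg\oplus J\fg$, so the Cartan conjugation $\theta$ of $G^{\C}$ with respect to $G$ is an involutive automorphism fixing $G$ and acting as $-\Id$ on $\mathfrak{m}:=J\fg$, the effective part of $\Ham/\fg$ (the central Reeb direction $\C\xi$ acts trivially). Hence $(H,G,\theta)$ is a symmetric pair, and I would invoke the standard fact that a homogeneous space $H/G$ equipped with an $H$-invariant metric and such an involution is a Riemannian (globally) symmetric space: the induced diffeomorphism $\bar\theta$ of $H/G$ fixes $\phi_0$, is an isometry because $\theta$ is a metric-preserving automorphism, and has $d\bar\theta_{\phi_0}=-\Id$ on $\mathfrak{m}$, so it is the geodesic symmetry at $\phi_0$. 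This realizes $\cO$ as the nonpositively curved symmetric space $G^{\C}/G$; as a consistency check, the resulting geodesics $t\mapsto\exp(tW)\cdot\phi_0$ with $W\in\mathfrak{m}$ are exactly the weak geodesics of \S\ref{subsec:con-K-en}, since $\exp(\tau W)$ extends to a transversally holomorphic family and so $\exp(tW)\cdot\phi_0$ solves the transversal homogeneous Monge-Amp\`ere equation.

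Surjectivity of $\exp:\Ham/\fg\to\cO$ then follows from the global Cartan (polar) decomposition $H=\exp(\mathfrak{m})\cdot G$ of the reductive group $G^{\C}$: every point of $\cO$ is $\exp(W)\cdot\phi_0$ with $W\in\mathfrak{m}\cong\Ham/\fg$, and for a symmetric space these curves are precisely the geodesics through $\phi_0$, so $\exp$ is the Riemannian exponential and is onto (indeed a diffeomorphism on $\mathfrak{m}$, by Cartan-Hadamard). For the Sasaki-extremal case I would run the same argument with $H$ replaced by $Z'=C_H(G)_0$ acting on the $G$-invariant potentials $\tilde\cH^G$, with base point the extremal potential and stabilizer $Z_0=Z'\cap G$; the orbit is $Z'/Z_0$, which equals $P/G=\cO_P$ by Proposition~\ref{prop:norm-inc}. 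Using the $\ad(\del^{\#}_g s_g)$-eigenspace decomposition (\ref{eq:extr-decomp}), the centralizer of $\fg$ is $\fz'=\fz\oplus J\fz$, so $Z'=Z_0^{\C}$ is reductive (in fact abelian) with maximal compact $Z_0$; the symmetric-pair structure, the symmetric-space property of $\cO_P\subset\tilde\cH^G$, and surjectivity of $\exp:\fp/\fg\cong\fz'/\fz\to\cO_P$ then follow exactly as before.

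The main obstacle is the first step: verifying that $H$ genuinely acts by isometries of the (weak) Mabuchi metric and correctly identifying the stabilizer of $\phi_0$ with $G$, together with the normalization keeping the action inside $\tilde\cH$ (handled through the isometry $\cH\cong\tilde\cH\times\R$). Once the orbit metric is known to be $H$-invariant and compatible with the Cartan involution, the symmetric-space conclusion and the surjectivity of $\exp$ are formal consequences of the structure of $G^{\C}$. A secondary point needing care, in the relative case, is confirming that $\fz'$ is the \emph{full} complexification $\fz\oplus J\fz$ of $\fz$ rather than merely containing $\fz$, which is what makes $Z'/Z_0$ a symmetric space; this is exactly what (\ref{eq:extr-decomp}) supplies.
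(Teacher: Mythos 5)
Your argument is correct, and its skeleton coincides with the paper's: $H$ acts by isometries on $\tilde{\cH}$ with stabilizer $G$ at the distinguished potential, the pair $(H,G)$ (resp.\ $(Z',Z_0)$, transported to $\fp/\fg$ via Proposition~\ref{prop:norm-inc}) is a Riemannian symmetric pair by Theorem~\ref{thm:aut-S-E}, and one concludes with the standard homogeneous-space argument from~\cite{Hel78}. The genuine difference is in how the exponential map is treated. You get surjectivity formally from the global Cartan decomposition $H=\exp(J\fg)\cdot G$ of $G^{\C}$, and you relegate to a ``consistency check'' the claim that $t\mapsto\exp(t\re Y)^*\omega^T$ is a geodesic of the Mabuchi metric. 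The paper does the opposite: its one substantive computation is a lemma proving exactly that claim, by showing that $\ddot{\phi}_t-\tfrac{1}{2}|d\dot{\phi}_t|^2_{\omega^T_{\phi_t}}=C_t$ is constant on $M$ and then killing $C_t$ by differentiating the normalization $\cE(\phi_t)=0$. Your route is arguably cleaner for the proposition as stated, but note two things. First, your consistency check as written elides a normalization issue: the potentials $\phi_t$ with $g_t^*\omega^T=\omega^T_{\phi_t}$ are determined only up to an additive function of $t$, and the geodesic equation (\ref{eq:geo}) holds only for the $\cE$-normalized choice, which is precisely what the paper's computation supplies. Second, that lemma is the ingredient actually reused downstream: Proposition~\ref{prop:vol-twist-prop} deduces properness of $\cF^{\mu}$ on the orbit from strict convexity along geodesics (Proposition~\ref{prop:vol-twist-con}) together with surjectivity of $\exp$, which requires the orbit curves to be genuine Mabuchi geodesics and not merely symmetric-space geodesics of the induced metric on $\cO$. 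So if this proposition is to feed into \S\ref{subsec:unique-cscS}, the consistency check should be promoted to a proved statement, and the cleanest proof is the paper's.
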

\begin{proof}
Suppose $Y\in\Ham$ and $\del^{\#} u^Y =Y$ with $u^Y \in\mathscr{H}\cap C_b^\infty(M,\R)$.
We first prove
\begin{lem}
Let $g_t =\exp(t\re Y)\in H$.  Then $\{ g_t^* \omega^T \ |\ t\in\R \}$ is a geodesic in $\cK$.
\end{lem}
We define a path $\{ \phi_t \ |\ t\in\R\}\subset\cK\cong\tilde{H}$ by $g_t^* \omega^T =\omega^T_{\phi_t}$.
\[ \i\del\ol{\del}\dot{\phi}_t =\frac{d}{dt}\omega^T_{\phi_t} =\cL_{\re Y}\omega^T_{\phi_t} =d(Y\contr\omega^T_{\phi_t})= \i\del\ol{\del}u^Y_{\omega^T_{\phi_t}} \]
So if $\tilde{u}^Y_{\omega^T_{\phi_t}}$ is normalized so that
$\int\tilde{u}^Y_{\omega^T_{\phi_t}}\, (\omega^T_{\phi_t})^m \wedge\eta=0$, then $\dot{\phi}_t =\tilde{u}^Y_{\omega^T_{\phi_t}}$.
Thus we have
\[ \i\del\ol{\del}\ddot{\phi}_t =\i\frac{d}{dt}\del\ol{\del}\dot{\phi}_t = \frac{d}{dt}\cL_{\re Y}\omega^T_{\phi_t}
=\cL_{\re Y}\i\del\ol{\del}\dot{\phi}_t = \i\del\ol{\del}(Y\dot{\phi}_t). \]
Therefore
\[ \ddot{\phi}_t -\frac{1}{2}|d\dot{\phi}_t |^2_{\omega^T_{\phi_t}} =C_t \]
for $C_t \in\R$ for all $t\in\R$.  But we have
\[\begin{split}
0 & = \frac{d}{dt}\cE(\phi_t) =\frac{d}{dt}\langle\dot{\phi}_t ,1\rangle _{\omega^T_{\phi_t}} \\
  & = \langle\frac{D}{dt}\dot{\phi}_t ,1\rangle _{\omega^T_{\phi_t}} =C(t)\Vol(M),
\end{split}\]
thus $C_t =0$, and the lemma is proved.

One can check that for $g\in H$
\[ \cK\ni\omega^T \mapsto g^* \omega^T \in\cK \]
is an isometry of $\cK$.
By Theorem~\ref{thm:aut-S-E}, in the cscS (respectively the Sasaki-extremal) case $(H,G)$ (respectively $(Z',Z_0)$) is
a symmetric pair.  Proposition~\ref{prop:norm-inc} shows that in the Sasaki-extremal case the orbit $\cO\cong \fp/{\fg}$ is
isomorphic to $Z'/{Z_0}$.  The inclusions
\[ \cO\subset\tilde{\cH}\quad (\text{respectively } \cO_P \subset\tilde{\cH}^G )\]
induce a homogeneous Riemannian structure on $\cO$ (respectively $\cO_P$).  Then
in both cases, $\cO \cong H/G$ ($\cO_P \cong Z'/{Z_0}$) has the structure of a Riemannian symmetric space
with the induced metric~\cite[Prop. 3.4]{Hel78}.
\end{proof}

\subsection{Modified Mabuchi functional}

We will consider several modification to the Mabuchi functional.  The first is useful because
the Mabuchi energy $\cM$ is not known to be strictly convex on weak geodesics.  The other
cases give Mabuchi functionals characterizing constant $\alpha$-twisted scalar curvature and Sasaki-extremal
structures.

Let $\mu$ be a smooth, strictly positive volume form on $M$, which for simplicity we assume to be
invariant of the Reeb flow.  We define
\begin{equation}\label{eq:vol-twist}
\cF^{\mu}(u) := \int_M u\, d\mu -c_{\mu}\cE(u),
\end{equation}
where $c_{\mu} >0$ is chosen so that $\cF^{\mu}(1)=0$.  Denote $J_\mu (u) =\int_M u\, d\mu$.
\begin{prop}\label{prop:vol-twist-con}
$\cF^{\mu}$ is strictly convex along weak $C^{1,1}_w$ geodesics.  In fact, if $\{u_t \}$ is a weak geodesic
$J_\mu (u_t)$ is strictly convex, in that if $J_\mu (u_t )$ is affine then $\omega^T_{u_t}$ is constant.

More precisely, if $\omega_{u_t}^T \leq C\omega^T$ and $\nu\geq A (\omega^T)^m \wedge\eta$, then
\[ \frac{d}{dt} J_\mu (u_t)|_{t=b} -\frac{d}{dt} J_\mu (u_t)|_{t=a} \geq \hat{C} d(\omega^T_{u_a},\omega^T_{u_b})^2 ,  \]
$b>a$, where $\hat{C}>0$ depends only on $C,\mu,\omega^t,$ and $M$.
\end{prop}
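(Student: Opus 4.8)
The plan is to reduce the whole statement to the single functional $J_\mu$ and then extract a coercive lower bound on its second $t$-derivative from a \emph{fixed} (reference-metric) Poincar\'e inequality. First I would record that the Monge--Amp\`ere energy $\cE$ is affine along any weak geodesic: by Proposition~\ref{prop:en-sec-var}, for the $S^1$-invariant potential $U=\{u_\tau\}$ on $N=M\times A$ one has $d_\tau d^c_\tau\cE(u_\tau)=\frac{1}{m+1}\int_M(\pi^*\omega^T+dd^c U)^{m+1}\wedge\eta$, and the geodesic equation $(\pi^*\omega^T+dd^c U)^{m+1}\wedge\eta=0$ forces this to vanish; since $u_\tau$ depends only on $|\tau|=e^t$, the operator $d_\tau d^c_\tau$ is a positive multiple of $\partial_t^2$, so $\cE(u_t)$ is affine in $t$. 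Consequently $\cF^\mu(u_t)=J_\mu(u_t)-c_\mu\cE(u_t)$ differs from $F(t):=J_\mu(u_t)$ by an affine function of $t$, and it suffices to study $F$.

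Next I would compute $F''$ through the smooth $\epsilon$-geodesics $\phi^\epsilon_t$ decreasing to $u_t$ in $C^{1,1}_w$. There the equation reads $\ddot\phi^\epsilon_t-\tfrac12|d\dot\phi^\epsilon_t|^2_{\omega^T_{\phi^\epsilon_t}}=\epsilon\,(\omega^T)^m/(\omega^T_{\phi^\epsilon_t})^m$, so $\frac{d^2}{dt^2}J_\mu(\phi^\epsilon_t)=\int_M\ddot\phi^\epsilon_t\,d\mu\ge\tfrac12\int_M|d\dot\phi^\epsilon_t|^2_{\omega^T_{\phi^\epsilon_t}}\,d\mu\ge0$; letting $\epsilon\to0$ and using Theorem~\ref{thm:weak-con} gives that $F$ is convex with $F''(t)\ge\tfrac12\int_M|d\dot u_t|^2_{\omega^T_{u_t}}\,d\mu$ as measures in $t$. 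If $F$ is affine then $F''\equiv0$, hence $d\dot u_t=0$ a.e., so $\dot u_t$ is a function of $t$ alone and $\frac{d}{dt}\omega^T_{u_t}=dd^c\dot u_t=0$; thus $\omega^T_{u_t}$ is constant. This already yields the strict convexity of $\cF^\mu$ in the stated sense.

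For the quantitative estimate I first normalize. Adding an affine function $\alpha+\beta t$ to $u_t$ preserves the geodesic equation and changes neither $F''$ nor $F'(b)-F'(a)$ nor the distance; since $\int_M\dot u_t\,d\mu_{u_t}=\frac{d}{dt}\cE(u_t)$ is constant in $t$ (step one) and $\int_M(\omega^T_{u_t})^m\wedge\eta$ is cohomological, a single $\beta$ arranges $\dot u_t$ to have mean zero with respect to $d\mu_{u_t}$ for every $t$. Writing $m_t$ for the mean of $\dot u_t$ against $d\mu_0=(\omega^T)^m\wedge\eta$ and $\lambda_1$ for the first nonzero eigenvalue of the basic Laplacian of the fixed metric $\omega^T$, the chain
\[
\int_M\dot u_t^2\,d\mu_{u_t}\le\int_M(\dot u_t-m_t)^2\,d\mu_{u_t}\le C^m\!\int_M(\dot u_t-m_t)^2\,d\mu_0\le\frac{C^m}{\lambda_1}\int_M|d\dot u_t|^2_{\omega^T}\,d\mu_0\le\frac{C^{m+1}}{A\lambda_1}\int_M|d\dot u_t|^2_{\omega^T_{u_t}}\,d\mu
\]
uses, in order: variance minimizes $\int(\,\cdot-c)^2$ over constants $c$; $d\mu_{u_t}\le C^m d\mu_0$; the reference Poincar\'e inequality; and finally $|d\dot u_t|^2_{\omega^T}\le C\,|d\dot u_t|^2_{\omega^T_{u_t}}$ together with $d\mu_0\le A^{-1}d\mu$ (these last two from $\omega^T_{u_t}\le C\omega^T$ and $\mu\ge A\mu_0$). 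Hence $F''(t)\ge\hat C\int_M\dot u_t^2\,d\mu_{u_t}$ with $\hat C=\tfrac12 A\lambda_1 C^{-(m+1)}$, depending only on $C,\mu,\omega^T,M$. As the weak geodesic has constant speed $s^2=\int_M\dot u_t^2\,d\mu_{u_t}$ and realizes the Mabuchi distance, $d(\omega^T_{u_a},\omega^T_{u_b})^2=(b-a)^2 s^2$; integrating and using $b-a\le1$ (so $b-a\ge(b-a)^2$, the geodesic being parametrized on $[0,1]$) gives $F'(b)-F'(a)=\int_a^b F''\,dt\ge\hat C(b-a)s^2\ge\hat C(b-a)^2 s^2=\hat C\,d(\omega^T_{u_a},\omega^T_{u_b})^2$.

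The main obstacle is precisely the uniform positivity of $\hat C$. The naive route would apply a Poincar\'e inequality for the \emph{varying} metric $\omega^T_{u_t}$, whose constant $1/\lambda_1(\omega^T_{u_t})$ is \emph{not} controlled by the upper bound $\omega^T_{u_t}\le C\omega^T$ alone, since an upper bound on the metric gives no lower bound on the first eigenvalue without Ricci or diameter control. The device above circumvents this entirely by transferring to the fixed metric $\omega^T$ via the two-sided measure comparison and the variance-minimization step, so that only the fixed constant $\lambda_1(\omega^T)$ enters. A secondary technical point, already flagged, is that $u_t$ is merely $C^{1,1}_w$, so $\ddot u_t$ exists only in $L^\infty$ and the formula for $F''$ must be obtained through the $\epsilon$-geodesic approximation and Theorem~\ref{thm:weak-con} rather than by differentiating directly.
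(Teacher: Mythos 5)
Your proof follows essentially the same route as the paper's: reduce to $J_\mu$, bound $\frac{d^2}{dt^2}J_\mu(u_t)$ below by $\int_M|d\dot u_t|^2_{\omega^T_{u_t}}\,d\mu$ via the subgeodesic inequality, transfer to the fixed metric through the two-sided measure comparisons and variance minimization so that only the Poincar\'{e} constant of $\omega^T$ enters, and pass to the weak geodesic by the smooth $\epsilon$-geodesic approximation. Your write-up is in fact slightly more careful than the paper's on two minor points --- the normalization making $\dot u_t$ mean-zero and the final step relating $\int_a^b s^2\,dt$ to $d(\omega^T_{u_a},\omega^T_{u_b})^2$, where $b-a\le 1$ is needed --- but the substance is identical.
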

\begin{proof}
First suppose that $\{u_t \}$ is a smooth subgeodesic, thus $\ddot{u}_t \geq\frac{1}{2}|d\dot{u}_t |^2_{\omega^T_{u_t}}$.
Suppose $\omega^T_{u_t} \leq C\omega^T$ and $\nu\geq A (\omega^T)^m \wedge\eta$, then
\[\begin{split}
\frac{d^2}{dt^2} J_\mu (u_t) & =\int_M \ddot{u}_t \, d\mu\geq\int_M |\ol{\del}\dot{u}_t |^2_{\omega^T_{u_t}} \, d\mu \\
                        & \geq C^{-1} \int_M |\ol{\del}\dot{u}_t |^2_{\omega^T} \, d\mu \\
                        & \geq\frac{A}{c}\int_M |\ol{\del}\dot{u}_t |^2_{\omega^T} (\omega^T)^m \wedge\eta \\
                        & \geq\frac{A}{c}\tilde{C}\int_M |\dot{u}_t -c_t |^2 (\omega^T)^m \wedge\eta,
\end{split}\]
where the last step in the Poincar\'{e} inequality and $c_t$ is the average of $\dot{u}_t$ with respect to
$(\omega^T)^m \wedge\eta$.

Furthermore,
\[ \begin{split}
\int_M |\dot{u}_t -c_t |^2 (\omega^T)^m \wedge\eta & \geq C^{-m} \int_M |\dot{u}_t -c_t |^2 (\omega^T_{u_t})^m \wedge\eta \\
                    & \geq C^{-m} \int_M |\dot{u}_t -b_t |^2 (\omega^T_{u_t})^m \wedge\eta,
\end{split}\]
where $b_t$ is the average of $\dot{u}_t$ with respect to $(\omega^T_{u_t})^m \wedge\eta$.

Combining these and integrating gives
\begin{equation}\label{eq:J-conv}
\begin{split}
 \frac{d}{dt} J_\mu (u_t)|_{t=b} -\frac{d}{dt} J_\mu (u_t)|_{t=a} & \geq\hat{C}\int_a^b \int_M |\dot{u}_t -b_t |^2 (\omega^T_{u_t})^m \wedge\eta \\
                        & = \hat{C} d(\omega^T_{u_a},\omega^T_{u_b})^2 .
\end{split}
\end{equation}

Now suppose that $u_t$ is merely a weak $C^{1,1}_w$ geodesics.  Then there are smooth $\epsilon$-geodesics $u^\epsilon_t $
with $u^\epsilon_t \rightarrow u_t$ as $\epsilon\rightarrow 0$ in the weak-$C^{1,1}_w$ topology.  In particular it converges
uniformly in $C_b^1([0,1]\times M)$, so inequality (\ref{eq:J-conv}) is valid by taking the limit.
\end{proof}

For the second twisting, let $\alpha$ be a basic, closed, strictly positive $(1,1)$-form.  Define
\begin{equation}\label{eq:alph-twist}
\cF^{\alpha}(u) :=\cE^{\alpha}(u) -c_{\alpha}\cE(u),
\end{equation}
where $c_{\alpha} >0$ is chosen so that $\cF^{\alpha}(1)=0$.  From Proposition~\ref{prop:en-sec-var}
$\cF^{\alpha}$ is convex along any weak geodesic in $\PSH(N,\pi^* \omega^T)\cap C^0(N)$.
It is further, strictly convex along smooth geodesics.  If $\{u_t \}$ is a smooth path in $\cH$ a
straightforward calculation gives
\begin{multline}\label{eq:2nd-var-alph}
\frac{d^2}{dt^2}\cF^{\alpha}(u_t)=\int_M \bigl( \ddot{u}_t -\frac{1}{2}|d\dot{u}_t |^2_{\omega^t_{u_t}} \bigr)
\Bigl[\tr_{u_t}\alpha -c_\alpha \Bigr](\omega^T_{u_t})^m \wedge\eta \\
+\int_M \bigl( d\dot{u}_t \wedge d^c \dot{u}_t ,\alpha\bigr)_{\omega^T_{u_t}}(\omega^T_{u_t})^m \wedge\eta.
\end{multline}

\begin{prop}\label{prop:vol-twist-prop}
Suppose $(\eta,\xi,\Phi,g)$ is cscS (respectively Sasaki-extremal), then $\cF^{\mu}$ is proper restricted
to the orbit $\cO$ of $H$ (respectively $\cO_P$ of $P$) and has a unique minimum.
\end{prop}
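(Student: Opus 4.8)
The plan is to restrict everything to the orbit and reduce to a statement about a strictly convex function on a finite-dimensional symmetric space. First I would observe that, since $\cO\subset\tilde{\cH}$ (respectively $\cO_P\subset\tilde{\cH}^G$) and $\tilde{\cH}=\{\cE=0\}$ by (\ref{eq:nor-Kah-pot}), the Monge--Amp\`ere energy $\cE$ vanishes identically on the orbit, so that $\cF^{\mu}$ coincides with $J_\mu$ there. By the preceding proposition the orbit is a finite-dimensional Riemannian symmetric space of noncompact type, hence a Hadamard manifold on which any two points are joined by a unique geodesic, the exponential map is onto, and geodesics through the base point $o$ are exactly the paths $t\mapsto\exp(t\,\re Y)\cdot o$ with $Y\in\Ham$ (respectively $Y\in\fp$). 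Along each such geodesic Proposition~\ref{prop:vol-twist-con} gives strict convexity of $\cF^{\mu}=J_\mu$. Uniqueness of the minimizer then follows immediately: two distinct minima would be joined by a geodesic along which a strictly convex function cannot have two minima.

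For existence it suffices to prove that $\cF^{\mu}|_{\cO}$ is proper, since a proper continuous function on a finite-dimensional manifold attains its infimum. I would prove properness in two stages. The first stage is a compactness reduction to geodesic rays: if some sublevel set were unbounded, I would take $u_i\in\cO$ with $d(o,u_i)\to\infty$ and $\cF^{\mu}(u_i)$ bounded, write $u_i=\gamma_{X_i}(\ell_i)$ for the unit-speed ray $\gamma_{X_i}$ from $o$ with initial direction $X_i$, and use convexity to bound $\cF^{\mu}(\gamma_{X_i}(T))$ for every fixed $T$. Passing to a limit direction $X_i\to X_\infty$ (legitimate because the unit sphere in $T_o\cO$ is compact, where finite-dimensionality is essential) and using continuous dependence of geodesics on initial data, I would conclude that $\cF^{\mu}$ stays bounded above along the whole ray $\gamma_{X_\infty}$. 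Hence it is enough to show that along every geodesic ray $\cF^{\mu}(\gamma_X(t))\to+\infty$, equivalently that the asymptotic slope $\ell(X):=\lim_{t\to+\infty}\frac{d}{dt}J_\mu(u_t)$, which exists by convexity, is strictly positive for $X\ne0$.

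The second and decisive stage is to compute $\ell(X)$. Writing $Y$ for the generator of $\gamma_X$, $u_0=u^Y_{\omega^T}$ for its holomorphy potential at the base metric, and $g_t=\exp(t\,\re Y)$, I would first note that on the leaf space $\re Y$ is a real-holomorphic field, so $[\re Y,\im Y]=0$, the flow commutes with $Y$, and the Hamiltonian transforms by pullback, $u^Y_{\omega^T_{u_t}}=u_0\circ g_t$ up to an additive constant. By the change of variables induced by $g_t$ on the leaf space, the zero-average normalizing constant equals the $t$-independent average $\ol{u}_0$ of $u_0$ against $(\omega^T)^m\wedge\eta$, so that Corollary~\ref{cor:en-first-var} gives
\[ \frac{d}{dt}J_\mu(u_t)=\int_M\bigl(u_0\circ g_t-\ol{u}_0\bigr)\,d\mu. \]
Finally, from the pointwise identity $2\,\re Y=\grad_{g^T}u_0$ the flow $g_t$ is the gradient ascent of the holomorphy potential $u_0$; its trajectories converge to the maximum set of $u_0$ off the stable manifolds of the lower critical components, which form a set of measure zero. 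Dominated convergence then gives $\ell(X)=\bigl(\sup_M u_0-\ol{u}_0\bigr)\int_M d\mu$, strictly positive since $u_0$ is nonconstant whenever $Y\ne0$.

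I expect this last step---the identification of the asymptotic slope---to be the main obstacle, and within it the genuinely analytic point is the $\mu$-almost-everywhere convergence $u_0\circ g_t\to\sup_M u_0$ of the holomorphic gradient flow on the compact transversal leaf space: convergence of trajectories requires a \L{}ojasiewicz-type argument for the real-analytic potential $u_0$, and the negligibility of the union of stable manifolds of non-maximal critical components must be verified to justify dominated convergence. The Sasaki-extremal case is handled by the identical argument with $\cO$ replaced by $\cO_P=Z'/Z_0$, $\Ham$ by $\fp$, and all potentials taken $G$-invariant in $\tilde{\cH}^G$; the symmetric-space structure needed is supplied by the preceding proposition together with Proposition~\ref{prop:norm-inc}.
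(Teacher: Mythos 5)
Your overall architecture is sound in two of its three parts: the reduction of properness to positivity of the asymptotic slope along every geodesic ray (using compactness of the unit sphere in the finite-dimensional tangent space), and the derivation of uniqueness from the strict convexity statement of Proposition~\ref{prop:vol-twist-con}, are both fine, as is the identity $\frac{d}{dt}J_\mu(u_t)=\int_M(u_0\circ g_t-\ol{u}_0)\,d\mu$ along the orbit geodesics. But the decisive step --- that $u_0\circ g_t\to\sup_M u_0$ for $\mu$-almost every point, so that $\ell(X)=(\sup_M u_0-\ol{u}_0)\Vol(\mu)>0$ --- is a genuine gap, which you flag but do not close, and it is not a routine one. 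Two things go wrong. First, $u_0$ is only smooth (nothing in the setup makes the Sasakian metric real-analytic), so the \L{}ojasiewicz argument for convergence of gradient trajectories is not available as stated. Second, and more seriously, even granting convergence of every trajectory, almost every trajectory limits onto the maximum set only if $u_0$ has no non-global local maximum component: the ascending basin of attraction of a local maximum is \emph{open}, hence of positive $\mu$-measure, so a second local max would falsify your formula. For a general smooth function this fails outright; for a holomorphy potential it is saved only by moment-map Morse--Bott theory (connectedness of level sets, evenness of indices), which would have to be transplanted to the basic/transversal setting where the leaf space need not be a manifold. Note also that monotone convergence alone only yields $\ell(X)\geq\int_M(u_0-\ol{u}_0)\,d\mu$, whose sign is indeterminate because $\ol{u}_0$ is the average of $u_0$ against $d\mu_{\omega^T}$, not against $\mu$. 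So as written the positivity of the asymptotic slope is not established.

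The paper avoids all of this with a comparison trick worth internalizing. Properness is first proved only for the reference measure $\nu=d\mu_{\omega^T}=(\omega^T)^m\wedge\eta$: there $c_\nu=1$ and the base point $\omega^T$ is a critical point of $\cF^{\nu}$ on the orbit, so, since every point of the orbit lies on a geodesic through this critical point, the quantitative estimate of Proposition~\ref{prop:vol-twist-con} (the lower bound by $\hat{C}\,d(\omega^T_{u_a},\omega^T_{u_b})^2$ on the increment of the derivative) gives properness with no asymptotic analysis whatsoever. For an arbitrary smooth strictly positive $\mu$ of the same total mass one then shows $|\cF^{\mu}(\phi)-\cF^{\nu}(\phi)|\leq C$ uniformly over $\cH$, using the Green's-function upper bound $\phi\leq C$ for potentials normalized by $\int_M\phi\,d\mu_{\omega^T}=0$ together with $\int_M\phi_+\,d\mu_{\omega^T}=\int_M\phi_-\,d\mu_{\omega^T}$; a functional within bounded distance of a proper one is proper, and the scaling $\cF^{c\nu}=c\,\cF^{\nu}$ removes the equal-mass restriction. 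This elementary estimate replaces your entire second stage.
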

\begin{proof}
Let $\mu$ and $\nu$ be smooth strictly positive volume forms both with the same total mass on $M$.

We fist consider the case with $\mu = d\mu_{\omega^T}=(\omega^T)^m \wedge\eta$.
Let $\phi\in\cH$ be normalized so that $\int_M \phi\, d\mu_{\omega^T} =0$.
We have $\Delta\phi\geq -m$ and by Green's formula
\begin{equation}\label{eq:upp-bd}
\begin{split}
\phi(x) & = \int_M \Delta\phi(y)G(x,y)\, d\mu_{\omega^T}(y) +\int_M \phi\, d\mu_{\omega^T} \\
        & \leq -m\int_M G(x,y)\, d\mu_{\omega^T}(y)  =C.
\end{split}
\end{equation}
Suppose that $\mu_{\omega^T}$ and $\nu$ have the same total mass on $M$, with $\nu =f\mu_{\omega^T}, f>0$.
Define $\phi_+ =\max(\phi,0)$ and $\phi_- =\max(-\phi,0)$, so $\phi=\phi_+ -\phi_-$.
We have
\begin{equation}\label{eq:int-eq}
\int_M \phi_+ \, d\mu_{\omega^T} =\int_M \phi_- \, d\mu_{\omega^T}.
\end{equation}
Then
\[\begin{split}
|\cF^{\mu_{\omega^T}}(\phi)-\cF^{\nu}(\phi)| & =|\int_M \phi(d\mu_{\omega^T}-d\nu)| \\
                                            & =|\int_M \phi(1-f)d\mu_{\omega^T}| \\
                                            & \leq\int_M |\phi f| d\mu_{\omega^T} \\
                                            & =\int_M (\phi_+  +\phi_- )f d\mu_{\omega^T} \leq\hat{C},
\end{split}\]
where $\hat{C}>0$ depends on $C$, an upper bound on $f$, and $\Vol(\mu_{\omega^T})$.

Then if $\mu$ is any smooth strictly positive measure, with the same total mass as $\nu$, it is easy
to see that there is a constant $C$ so that
\begin{equation}\label{eq:vol-twist-com}
|\cF^{\mu}(\phi)-\cF^{\nu}(\phi)| \leq C,\quad\text{for all } \phi\in\cH.
\end{equation}

If $\mu =\mu_{\omega^T}$ then $\omega^T$ is a critical point of $\cF^{\mu}$ on the orbit $\cO$
(respectively $\cO_P$).  Since the exponential map is onto by
Proposition~\ref{prop:vol-twist-con} $\cF^{\mu}$ is proper on $\cO$ (respectively $\cO_P$).
By (\ref{eq:vol-twist-com}) if $\nu$ is any smooth strictly positive measure with the same total mass, then
$\cF^{\nu}$ is proper also.  Since for $c>0$, $\cF^{c\nu} =c\cF^{\nu}$, the proof is complete.
\end{proof}

\subsection{Uniqueness of cscS structures}\label{subsec:unique-cscS}

In this section we will prove that the convexity of the K-energy implies the uniqueness of cscS metrics
up to automorphisms.  More precisely we prove Corollary~\ref{corint:unique-cscS}.  Since the K-energy
$\cM$ is not known to be strictly convex on weak geodesics we will modify the K-energy functional.

Let $\mu$ be a smooth strictly positive measure invariant under the Reeb flow.  We define the modified
Mabuchi functional
\begin{equation}\label{eq:Mab-mod-vol}
\cM^{t\mu}(\phi) :=\cM(\phi) +t\cF^{\mu}(\phi),\quad \phi\in\cH
\end{equation}
for $t\in[0,\epsilon)$.

The main step in the uniqueness is the following.
\begin{prop}\label{prop:csc-def}
Suppose $(\eta,\xi,\Phi,g)$ is cscS.  Then there exists a structure
$g^* (\eta,\xi,\Phi,g)=(\eta_{\phi_0},\xi,\Phi_{\phi_0},g_{\phi_0})$ in the orbit of $H$ and a smooth path, starting at
$\phi_0$, $\phi\in C^\infty_b([0,\epsilon)\times M)$ with $\phi_t =\phi(t,\cdot)\in\cH$ and $\phi_t$ a critical
point of $\cM^{t\mu}$.
\end{prop}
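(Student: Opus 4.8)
The plan is to produce the family $\phi_t$ as solutions of a one-parameter family of transversally elliptic equations via an implicit function theorem, overcoming the kernel coming from $\hol^T(\xi,J)$ through a Lyapunov--Schmidt reduction carried out along the $H$-orbit.

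First I would turn the variational condition into a PDE. Differentiating (\ref{eq:K-en-sm}) and using $m\Ric_{\omega^T_\phi}\wedge(\omega^T_\phi)^{m-1}\wedge\eta = S^T_\phi\,d\mu_\phi$ gives $d\cM|_\phi(\psi) = -\int_M \psi(S^T_\phi -\ol{S}^T)\,d\mu_\phi$, while $d\cF^\mu|_\phi(\psi) = \int_M \psi\bigl(\tfrac{d\mu}{d\mu_\phi} - c_\mu\bigr)d\mu_\phi$. Hence $\phi\in\cH$ is a critical point of $\cM^{t\mu}$ if and only if
\begin{equation}\label{eq:plan-EL}
S^T_\phi -\ol{S}^T = t\Bigl(\frac{d\mu}{d\mu_\phi} - c_\mu\Bigr).
\end{equation}
I would regard $F(t,\phi):=(S^T_\phi -\ol{S}^T) - t\bigl(\tfrac{d\mu}{d\mu_\phi}-c_\mu\bigr)$ as a smooth map $C^{k+4,\alpha}_b(M)\to C^{k,\alpha}_b(M)$ of basic functions, defined for $\phi$ near the cscS potentials, with $F(0,\phi)=0$ along the entire $H$-orbit $\cO$ of $\omega^T$, since every element of $\cO$ is again cscS.

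Next I would linearize at a cscS potential. Because $S^T$ is constant there, the last term of (\ref{eq:hol-oper}) drops out and the linearization $D_\phi F|_{(0,\cdot)}$ is a nonzero constant multiple of the transversal Lichnerowicz operator $\Li_g$, whose kernel is $\mathscr{H}_g=\ker\Li_g$. At a cscS point $\Li_g$ is formally self-adjoint with respect to $d\mu_\phi$, so its real kernel $\cN:=\mathscr{H}_g\cap C^\infty_b(M,\R)$, which is exactly the tangent space $T_{\phi_0}\cO$ (modulo constants), coincides with the cokernel. Thus $D_\phi F$ is not surjective and the naive implicit function theorem fails; this is the central difficulty, and it is the reason the bifurcation method of Chen--P\u{a}un--Zeng is needed.

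To carry out the reduction I would fix the base point and split the problem. Write $C^{k,\alpha}_b = \cN\oplus\cN^\perp$ with $L^2(d\mu_{\phi_0})$-orthogonal projections $\Pi,\Pi^\perp$. Since $\Pi^\perp D_\phi F|_{(0,\phi_0)}$ restricts to an isomorphism of $\cN^\perp$, the transverse equation $\Pi^\perp F(t,\phi)=0$ can be solved by the implicit function theorem for a smooth $\phi=\phi(t,\kappa)$ depending on $t$ and on the kernel parameter $\kappa\in\cN$. At $t=0$ the resulting solution set is, by uniqueness and dimension count, exactly the orbit $\cO$, so $\Pi F(0,\phi(0,\kappa))\equiv 0$; this lets me factor $\Pi F(t,\phi(t,\kappa)) = t\,G(t,\kappa)$ with $G(0,\kappa) = -\Pi\,\bigl(\tfrac{d\mu}{d\mu_{\phi(0,\kappa)}} - c_\mu\bigr)$, the negative of the projection of the $\cF^\mu$-gradient onto $\cN=T\cO$. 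The equation $G(t,\kappa)=0$ is the reduced bifurcation equation. By Proposition~\ref{prop:vol-twist-prop} the restriction $\cF^\mu|_\cO$ is proper with a unique minimum; I would choose $g\in H$ so that $\phi_0$ is that minimizer, whence $G(0,\kappa_0)=0$ at the corresponding $\kappa_0$. By Proposition~\ref{prop:vol-twist-con} the functional $\cF^\mu$ is strictly convex along geodesics in the symmetric space $\cO$, so the $\kappa$-differential of $G(0,\cdot)$ at $\kappa_0$ is the nondegenerate Hessian of $\cF^\mu|_\cO$ at its minimum. A final application of the implicit function theorem to $G$ then yields a smooth path $\kappa(t)$ with $\kappa(0)=\kappa_0$, and $\phi_t:=\phi(t,\kappa(t))$ solves (\ref{eq:plan-EL}).

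The principal obstacle is precisely the nontrivial kernel $\mathscr{H}_g$ of the linearization, which forces the Lyapunov--Schmidt reduction; the delicate points are the identification of $\cN$ with $T_{\phi_0}\cO$, the verification that the reduced obstruction is governed to leading order by $\cF^\mu|_\cO$ (so that it is annihilated at its minimizer), and the nondegeneracy supplied by strict convexity. Joint smoothness of $\phi$ in $(t,x)$ and the fact that $\phi_t\in\cH$ for small $t$ follow from the smooth dependence in the implicit function theorem together with transversal elliptic regularity applied to (\ref{eq:plan-EL}).
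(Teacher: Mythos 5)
Your architecture coincides with the paper's implementation of the Chen--P\u{a}un--Zeng bifurcation method: take $\phi_0$ to be the unique minimizer of $\cF^{\mu}$ on the orbit $\cO$ supplied by Proposition~\ref{prop:vol-twist-prop}, split off the kernel $\mathscr{H}_{\phi_0}$ of the linearized operator, solve the transverse part of the equation by the implicit function theorem, observe that the reduced map vanishes identically at $t=0$ because the $t=0$ solution manifold of the transverse equation is locally the orbit, factor out $t$, and apply the implicit function theorem to the quotient. All of that matches the paper.

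The gap is in your formula for the reduced map $G(0,\kappa)$, which is exactly where the difficulty of the proposition is concentrated. By the chain rule,
\[ G(0,\kappa)=\partial_t\bigl[\Pi F(t,\phi(t,\kappa))\bigr]\big|_{t=0}
=-\Pi\Bigl(\frac{d\mu}{d\mu_{\phi(0,\kappa)}}-c_\mu\Bigr)
+\Pi\Bigl[D_\phi F|_{(0,\phi(0,\kappa))}\bigl(\partial_t\phi|_{t=0}\bigr)\Bigr], \]
and you have discarded the second term. It does vanish at $\kappa=\kappa_0$, since there $D_\phi F$ is a multiple of $\Li_{\phi_0}$, which is self-adjoint for $d\mu_{\phi_0}$, and $\Pi$ projects onto its kernel; but it is not identically zero in $\kappa$, because for $\kappa\neq\kappa_0$ the operator $\Li_{\phi(0,\kappa)}$ is self-adjoint for a different measure and annihilates a different space than the fixed $\cN=\mathscr{H}_{\phi_0}$. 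Its $\kappa$-derivative at $\kappa_0$, namely $\Pi\bigl[\bigl(D_\kappa\Li_{\phi(0,\kappa)}\bigr)(v)\,w\bigr]$ with $w=\partial_t\phi|_{(0,\kappa_0)}$, therefore contributes to $D_\kappa G(0,\kappa_0)$ and cannot be dropped; it is precisely the term the paper controls with the Chen--P\u{a}un--Zeng identity (Lemma~\ref{lem:chen}), and only after combining it with $\Li_{\phi_0}w=1-f$, $f=d\mu/d\mu_{\phi_0}$, does one arrive at $\bigl\langle D_\kappa G(0,\kappa_0)v,v\bigr\rangle=\int_M(\del v,\ol{\del}v)_{\phi_0}\,d\mu>0$. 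The derivative of your truncated formula alone is $\Pi\bigl(f\Delta_{\phi_0}v\bigr)$, whose quadratic form is $\int_M f(\del v,\ol{\del}v)_{\phi_0}\,d\mu_{\phi_0}$ plus a cross term $\int_M v(\del f,\ol{\del}v)_{\phi_0}\,d\mu_{\phi_0}$ of indefinite sign, so invertibility does not follow from it. Your alternative justification, that $D_\kappa G(0,\kappa_0)$ is the Hessian of $\cF^{\mu}|_{\cO}$ at its minimum and hence nondegenerate by strict convexity, is correct and illuminating, but it rests on the identity $\Pi F(t,\phi(t,\kappa))=\nabla_\kappa\bigl[\cM^{t\mu}(\phi(t,\kappa))\bigr]$ --- valid here because $F$ is the differential of $\cM^{t\mu}$ realized as a measure, paired with test functions by a $\phi$-independent pairing, and the transverse equation kills the component seen by the $\cN^{\perp}$-valued part of $D_\kappa\phi$ --- and you neither state nor verify this identity. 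To close the proof you must either carry out the explicit computation via Lemma~\ref{lem:chen}, as the paper does, or prove this gradient identity.
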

The proof involves a bifurcation technique first used by S. Bando and T. Mabuchi~\cite{BaMa85} in their uniqueness
proof of K\"{a}hler-Einstein metrics.  More recently, it was used again by X. Chen, M. P\u{a}un and Y. Zeng~\cite{CPZ15}
to prove the uniqueness of extremal K\"{a}hler metrics.

\begin{proof}
By Proposition~\ref{prop:vol-twist-prop} there is a unique minimum $\phi_0 \in\cO$ of $\cF^{\mu}$ restricted
to $\cO$.

Define $\cH^{k+4,\alpha} =\{\phi\in C_b^{k+4,\alpha}\ |\ (\omega^T +dd^c \phi)^m \wedge\eta >0 \}$.
We define for $k\geq 0$ a map
\begin{equation}
\begin{gathered}
\cG :\cH^{k+4,\alpha}(M)\times[0,\epsilon)\longrightarrow C_b^{k,\alpha}(M)d\mu\times[0,\epsilon) \\
\cG(\phi,t) =\Bigl( (\ol{S}-S_{\phi})(\omega^T_{\phi})^m \wedge\eta +t(d\mu-(\omega^T_{\phi})^m \wedge\eta), t \Bigr)
\end{gathered}
\end{equation}
with differential
\begin{equation}
d\cG|_{(\phi_0,0)}(u,a) =\Bigl( \Li_{\phi_0}u (\omega^T_{\phi_0})^m \wedge\eta +a(d\mu-(\omega^T_{\phi_0})^m \wedge\eta ),a\Bigr).
\end{equation}
But
\[ d\cG|_{(\phi_0,0)}: C_b^{k+4,\alpha}(M)\times\R\longrightarrow C_b^{k,\alpha}(M)d\mu\times\R \]
is not surjective or injective if $\mathscr{H}\neq\C$.

Let $d\mu_{\phi_0 } =(\omega^T_{\phi_0})^m \wedge\eta$ and define
\begin{gather*}
\mathscr{H}_{\phi_0} :=\{ u\in C^\infty_b (M)\ |\ \Li_{\phi_0}(u)=0,\ \int u\, d\mu_{\phi_0 } =0 \} \\
\mathscr{H}^\perp_{\phi_0 ,k} :=\{ u\in C^{k,\alpha}_b (M)\ |\ \int uv\, d\mu_{\phi_0 }=0,\ \forall v\in\mathscr{H}_{\phi_0 }, \ \int u\, d\mu_{\phi_0 } =0 \}.
\end{gather*}
We have a splitting
\[  C_b^{k,\alpha}(M)d\mu =\R d\mu_{\phi_0}\oplus\mathscr{H}_{\phi_0}d\mu_{\phi_0}\oplus\mathscr{H}^\perp_{\phi_0 ,k}d\mu_{\phi_0}. \]
Since $\cF^{\mu}$ has $\phi_0$ as a critical point on $\cO$,
\[ d\mu-(\omega^T_{\phi_0})^m \wedge\eta \in\mathscr{H}^\perp_{\phi_0 ,k}d\mu_{\phi_0}. \]
Take the first component
\[ G(\phi,t) =(\ol{S}-S_{\phi})(\omega^T_{\phi})^m \wedge\eta +t(d\mu-(\omega^T_{\phi})^m \wedge\eta).  \]
Define
\[ \Pi:\Bigl( \R\oplus\mathscr{H}_{\phi_0}\oplus\mathscr{H}^\perp_{\phi_0 ,k}\Bigr)\times[0,\epsilon)\rightarrow
\Bigl( \R d\mu_{\phi_0}\oplus\mathscr{H}_{\phi_0}d\mu_{\phi_0}\oplus\mathscr{H}^\perp_{\phi_0 ,k}d\mu_{\phi_0}\Bigr)\times[0,\epsilon),\]
\begin{equation}
\Pi(a+u+w,t)=\bigl( ad\mu_{\phi_0}+ud\mu_{\phi_0}+\pi_2 \circ G(\phi_0 +a+u+w,t),t \bigr),
\end{equation}
where $\pi_2$ is projection onto $\mathscr{H}^\perp_{\phi_0 ,k}d\mu_{\phi_0}$.
Since $d\Pi|_{(0,0)}$ is bijective, we apply the implicit function theorem to get $\epsilon>0$ so that
$\|u\|_{C_b^{k,\alpha}} <\epsilon,\ t<\epsilon$ implies that there is
$\Psi(u,t)\in\mathscr{H}^\perp_{\phi_0,k}$ such that
\begin{equation}\label{eq:imp-1}
\pi_2 \circ G(\phi_0 +u +\Psi(u,t),t)=0.
\end{equation}
Differentiating (\ref{eq:imp-1}) with respect to $t$ gives
\begin{equation}\label{eq:imp-diff}
\Li_{\phi_0}\frac{\del\Psi}{\del t}|_{(0,0)}(\omega^T_{\phi_0})^m \wedge\eta +(d\mu-(\omega^T_{\phi_0})^m \wedge\eta )=0,
\end{equation}
while differentiating with respect to $u$ gives
\begin{equation}
\frac{\del\Psi}{\del u}|_{(0,0)}(v) =0,\quad\text{for all }v\in\mathscr{H}_{\phi_0}.
\end{equation}
Define
\begin{equation}
\begin{gathered}
P(u,t):=\pi_1 \circ G(\phi_0 +u+\Psi(u,t),t)  \\
\tilde{P}(u,t):=\frac{P(u,t)}{t},\ t\in(0,\epsilon),\quad \tilde{P}(u,0):=\underset{t\rightarrow 0^+}{\lim}\frac{P(u,t)}{t}=\frac{\del P}{\del t}|_{(u,0)}.
\end{gathered}
\end{equation}

To complete the proof we will need a technical lemma.  For $\phi\in\cH$ we define a bilinear form $B_\phi (\cdot,\cdot)$
on $C^\infty_b(M)$
\[\begin{split}
B_\phi (u,v) & :=\bigl(\del\ol{\del}v,\del\ol{\del}\Delta_{\phi}u\bigr)_{\phi} +\Delta_{\phi}\bigl(\del\ol{\del}v,\del\ol{\del}u\bigr)_\phi
 +\bigl(\del\ol{\del}\Delta_{\phi}v,\del\ol{\del}u\bigr)_{\phi} \\
& +{g^T}^{p\ol{q}}u_{,\ol{\alpha}p}v_{,\beta\ol{q}}\bigl(\Ric_{\phi}^T \bigr)^{\ol{\alpha}\beta} +{g^T}^{\ol{p}q}u_{,\ol{p}\alpha}v_{,q\ol{\beta}}\bigl(\Ric_{\phi}^T \bigr)^{\alpha\ol{\beta}}
\end{split}\]

We refer the reader to~\cite{CPZ15} for the proof of the following.
\begin{lem}\label{lem:chen}
Let $(\eta_\phi,\xi,\Phi_{\phi},g_\phi)$ be Sasaki-extremal, and $v\in\mathscr{H}_{\phi}$, then
\[ \Li_{\phi}\bigl(\del v,\ol{\del}u\bigr)_{\phi} =\bigl(\del v,\ol{\del}\Li_{\phi}u \bigr)_{\phi} +B_{\phi}(v,u), \]
for all $u\in C^\infty_b(M)$.
\end{lem}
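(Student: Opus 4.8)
The plan is to read the claimed identity as a commutator relation. Writing $\theta_v(u) := (\del v,\ol\del u)_\phi$, this quantity is (up to conjugation) the derivative of $u$ along the transversally holomorphic $(1,0)$ vector field $V=\del^{\#}_\phi v$ attached to the holomorphy potential $v\in\mathscr{H}_\phi$, and the Lemma asserts
\[ [\Li_\phi,\theta_v]\,u = \Li_\phi(\theta_v u)-\theta_v(\Li_\phi u) = B_\phi(v,u). \]
Every object here --- the operator $\Li_\phi$ of (\ref{eq:hol-oper}), the transversal Laplacian $\Delta_b$, the transversal Ricci form $\rho^T$, the scalar curvature $S^T$, and the form $B_\phi$ --- is basic and is built tensorially from the transverse data $(g^T,J)$ and its connection $\nabla^T$, so the identity is independent of the foliation chart. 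I would therefore first verify this chart-independence, reducing the problem to a fixed transversal holomorphic chart $W_\alpha$ as in (\ref{eq:fol-prod})--(\ref{eq:fol-tran}). On $W_\alpha$ the transverse structure is a genuine K\"ahler metric, $\nabla^T,R^T,\Ric^T,S^T$ are its ordinary connection and curvatures, and $\Li_\phi$ is the classical Lichnerowicz operator, so the statement becomes a purely local K\"ahler computation.

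The cleanest way to organize that computation is through the flow of $V$. Since $v\in\mathscr{H}_\phi$, the field $V=\del^{\#}_\phi v$ is holomorphic on $W_\alpha$ and generates a local one-parameter family of biholomorphisms $F_s$ under which the transverse K\"ahler potential evolves with $\dot\phi|_{s=0}=v$. The operator $\Li$ is \emph{natural} under biholomorphisms, i.e. $\Li_{F_s^*g}(F_s^*u)=F_s^*(\Li_\phi u)$; differentiating at $s=0$ identifies the commutator with the linearization of $\Li_\phi$ in the potential direction $v$,
\[ [\Li_\phi,\theta_v]\,u = \bigl(D_v\Li_\phi\bigr)u, \]
where $D_v$ denotes the derivative of $\Li_{\phi+sv}$ at $s=0$. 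I would then compute $D_v\Li_\phi$ term by term from (\ref{eq:hol-oper}), differentiating $\tfrac14\Delta_b^2$, $\tfrac12(\rho^T,dd^c\,\cdot\,)$ and $(\del S^T)\contr\del^{\#}_\phi$ against the metric variation $dd^c v$, and collect the resulting fourth- and second-order terms.

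Two facts make the collection close up exactly into $B_\phi$. First, the holomorphy of $v$, namely $\ol\del\del^{\#}_\phi v=0$ (equivalently $\Li_\phi v=0$), kills the top-order, fifth-order-in-$u$, contribution to the commutator, leaving an operator of order four in $u$ --- precisely the order of $B_\phi$. Second, the Sasaki-extremal hypothesis, which by Theorem~\ref{thm:aut-S-E} means $\del^{\#}_\phi S^T$ is transversally holomorphic, i.e. $\Li_\phi S^T=0$, is what makes the terms produced by varying $S^T$ in the lower-order part of $\Li_\phi$ cancel against the cross terms, so that the remainder assembles into the manifestly symmetric bilinear form $B_\phi(v,u)$. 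The main obstacle is thus not conceptual but the bookkeeping of the fourth-order calculation: repeatedly commuting the covariant derivatives $\nabla^T$ past one another by the Ricci identities for $R^T$ and $\Ric^T$, while checking that each step respects the basic/transversal structure. Since on each $W_\alpha$ this is verbatim the K\"ahler computation, I would carry it out exactly as in~\cite{CPZ15} and then invoke the chart-independence established at the outset to conclude the identity on all of $M$.
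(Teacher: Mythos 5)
The paper gives no proof of this lemma beyond the sentence citing \cite{CPZ15}, and your sketch --- pass to a transversal holomorphic chart where $\Li_\phi$, $\Delta_b$, $\Ric^T$ and $B_\phi$ become the ordinary K\"ahler objects on the local leaf space, then identify the commutator of $\Li_\phi$ with differentiation along $\del^{\#}_\phi v$ as the linearization $D_v\Li_\phi$ via naturality under the flow of the holomorphic field, and run the computation of \cite{CPZ15} --- is exactly the intended argument, the chart reduction being the only genuinely Sasakian ingredient. One small misattribution: the fifth-order-in-$u$ terms cancel for purely symbolic reasons (the commutator of a fourth-order operator with a vector field is automatically of order four), not because $\ol{\del}\del^{\#}_\phi v=0$; the holomorphy of $v$ is instead what makes the flow act by transversal biholomorphisms at all, and, together with the extremal condition $\Li_\phi S^T=0$, what lets the lower-order terms assemble into $B_\phi$.
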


We compute
\[\begin{split}
\tilde{P}(u,0) = & \frac{\del}{\del t} P|_{(u,0)} \\
                & =\pi_1 \Bigl(\Li_{\phi_0 +u+\Psi(u,0)}\frac{\del\Psi}{\del t}|_{(u,0)}(\omega^T_{\phi_0 +u+\Psi(u,0)})^m \wedge\eta \\
                & -(S_{\phi_0 +u+\Psi(u,0)}-\ol{S})\Delta_{\phi_0 +u+\Psi(u,0)}\frac{\del\Psi}{\del t}|_{(u,0)}(\omega^T_{\phi_0 +u+\Psi(u,0)})^m \wedge\eta \\
                & +(d\mu- (\omega^T_{\phi_0 +u+\Psi(u,0)})^m \wedge\eta) \Bigr).
\end{split}\]

Let $w=\frac{\del\Psi}{\del t}|_{(0,0)}$.  We compute the differential in the $\mathscr{H}_{\phi_0}$ direction.
For $v\in\mathscr{H}_{\phi_0}$
\[\begin{split}
\frac{\del}{\del u}\tilde{P}|_{(0,0)}(v) & =\pi_1 \Bigl(\frac{\del}{\del u}\Bigl\{ \Li_{\phi_0 +u+\Psi(u,0)} \frac{\del\Psi}{\del t}|_{(u,0)}(\omega^T_{\phi_0 +u+\Psi(u,0)})^m \wedge\eta  \Bigr\}(v) -\Delta_{\phi_0} v(\omega^T_{\phi_0})^m \wedge\eta \Bigr) \\
            & =\pi_1 \Bigl( -B_{\phi_0}(v,w) (\omega^T_{\phi_0})^m \wedge\eta +\Li_{\phi_0}(w) \Delta_{\phi_0}v (\omega^T_{\phi_0})^m \wedge\eta -\Delta_{\phi_0} v(\omega^T_{\phi_0})^m \wedge\eta  \Bigr)
\end{split}\]
By Lemma~\ref{lem:chen}
\begin{equation}\label{eq:imp-2}
\begin{split}
\frac{\del}{\del u}\tilde{P}|_{(0,0)}(v)= & \pi_1 \Bigl(-\Li_{\phi_0} \bigl(\del v,\ol{\del}w\bigr)_{\phi_0}(\omega^T_{\phi_0})^m \wedge\eta +\bigl(\del v,\ol{\del}\Li_{\phi_0}w \bigr) \\
                            & +\Li_{\phi_0}(w)\Delta_{\phi_0}v(\omega^T_{\phi_0})^m \wedge\eta-\Delta_{\phi_0}v (\omega^T_{\phi_0})^m \wedge\eta  \Bigr).
\end{split}
\end{equation}
We define $f\in C_b^\infty(M),\ f>0,$ by
\begin{equation}\label{eq:vol-ratio}
f =\frac{d\mu}{(\omega^T_{\phi_0})^m \wedge\eta}.
\end{equation}
Then (\ref{eq:imp-diff}) gives
\[ \Li_{\phi_0}w =\frac{-d\mu}{(\omega^T_{\phi_0})^m \wedge\eta} +1 =-f+1. \]
Substituting into (\ref{eq:imp-2}) we have
\begin{equation}\label{eq:imp-3}
 \frac{\del}{\del u}\tilde{P}|_{(0,0)}(v)=-\pi_1 \Bigl( \bigl(\del v,\ol{\del}f\bigr)_{\phi_0}(\omega^T_{\phi_0})^m \wedge\eta
+f(\Delta_{\phi_0} v)(\omega^T_{\phi_0})^m \wedge\eta\Bigr).
\end{equation}
And
\[\begin{split}
\Bigl\langle\frac{\del}{\del u}\tilde{P}|_{(0,0)}(v),v\Bigr\rangle_{L^2} & =-\int_M v\bigl(\del v,\ol{\del}f\bigr)_{\phi_0}(\omega^T_{\phi_0})^m \wedge\eta-\int_M vf(\Delta_{\phi_0} v)(\omega^T_{\phi_0})^m \wedge\eta \\
                & =\int_M \bigl(\del v,\ol{\del}v\bigr)_{\phi_0} f(\omega^T_{\phi_0})^m \wedge\eta =\int_M  \bigl(\del v,\ol{\del}v\bigr)_{\phi_0} d\mu >0
\end{split}\]
unless $v=0$.  Thus $\frac{\del}{\del u}\tilde{P}|_{(0,0)} :\cH_{\phi_0}\rightarrow\cH_{\phi_0}d\mu_{\phi_0}$ is an isomorphism.
By the implicit function theorem there exists $u_t \in\cH_{\phi_0},\ t\in[0,\epsilon)$ so that $\phi_0 +u_t +\Psi(u_t,t)$
is the required solution.
\end{proof}

We now prove Corollary~\ref{corint:unique-cscS}.  Let $(\eta_0 ,\xi,\Phi_0,g_0)$ and $(\eta_1,\xi,\Phi_1,g_1)$
be two cscS structures.  By Proposition~\ref{prop:csc-def} there exists a smooth path
$\{\phi^0_s \ |\ s\in[0,\epsilon)\}\in\cH_{\omega^T}$ such that $\omega^T_{\phi^0_0}$ is in the orbit
$\cO^0$ of $(\eta_0 ,\xi,\Phi_0,g_0)$.  Similarly, there exists a smooth path
$\{\phi^1_s \ |\ s\in[0,\epsilon)\}\in\cH_{\omega^T}$ such that $\omega^T_{\phi^1_0}$ is in the orbit $\cO^1$
of $(\eta_1,\xi,\Phi_1,g_1)$.  For each $s\in [0,\epsilon)$ let $\{ u^s_t \ |\ 0\leq t\leq 1 \}$ be the weak
$C^{1,1}_w$ geodesic with $u^s_0 =\phi^0_s$ and $u^s_1 =\phi^1_s$.

By Lemma~\ref{lem:K-en-dif} we have $\frac{d}{dt}\cM^{s\mu}(u^s_t)|_{t=0^+}\geq 0$, and similarly $\frac{d}{dt}\cM^{s\mu}(u^s_t)|_{t=1^-}\leq 0$.
Thus
\[ \frac{d}{dt}\cM^{s\mu}(u^s_t)|_{t=1^-} - \frac{d}{dt}\cM^{s\mu}(u^s_t)|_{t=0^+}\leq 0. \]
But by strict convexity of $\cM^{s\mu}(u^s_t)$, this must be $>0$, unless $\omega^T_{u^s_0} =\omega^T_{u^s_1}$
for all $s\in(0,\epsilon)$.  Thus $\omega^T_{\phi^0_0} =\omega^T_{\phi^1_0}$.

\subsection{Uniqueness of Sasaki-extremal structures}

Sasaki-extremal structures are characterized as critical points of a modified Mabuchi functional $\cM^V$.
Suppose that $(\eta,\xi,\Phi,g)$ is a Sasakian structure with $G=\Aut(g,\eta,\xi,\Phi)_0 \subset\Fol(M,\mathscr{F}_\xi,J)$
a maximal compact subgroup.  We will define this functional and prove uniqueness from convexity using a deformation
technique as in the cscS case.  Let $V$ be the extremal vector field, defined in (\ref{eq:extr-vect}),
and let $h^V_{\phi}$ be its holomorphy potential with respect to $\omega_{\phi}^T,\ \phi\in\cH^G$,
normalized by $\int_M h^V_{\phi}\, d\mu_{\phi} =0$.
\begin{prop}
Suppose $W\in\hol^T(\xi,J)_0$ has normalized holomophy potential $h^W$ with respect to $\omega^T$.  Then
the normalized holomorphy potential of $W$ with respect to $\omega^T_{\phi}$ is
\[ h_{\phi}^W =h^W +W(\phi).\]
\end{prop}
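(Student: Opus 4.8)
The plan is to reduce the statement to the defining first-order equation for holomorphy potentials and then track how it transforms under $\omega^T\mapsto\omega^T_\phi=\omega^T+dd^c\phi$. Recall from the definition of $\Ham$ and Proposition~\ref{prop:Ham-hol} that $W=\del^{\#}h^W$ being transversally holomorphic with potential $h^W$ is equivalent to the equation $\ol{\del}_b h^W = -\sqrt{-1}\,(W\contr\omega^T)$, in the normalization fixed there. Since $W$ is the \emph{same} transversally holomorphic vector field for every metric in $\cS(\xi,J)$, proving the proposition amounts to showing that $h^W+W(\phi)$ solves this same equation with $\omega^T$ replaced by $\omega^T_\phi$, and then that it carries the prescribed normalization $\int_M h^W_\phi\,d\mu_\phi=0$.

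First I would compute $W\contr\omega^T_\phi = W\contr\omega^T + W\contr dd^c\phi$. The heart of the argument is the identity $W\contr dd^c\phi = \ol{\del}_b\bigl(W(\phi)\bigr)$, up to the constant relating $dd^c$ to $\del_b\ol{\del}_b$, and this is exactly where the transversal holomorphicity of $W$ enters. I would establish it in a foliation chart $W_\alpha$ via Cartan's formula: since $W$ is of type $(1,0)$ one has $W\contr\ol{\del}_b\phi=0$, so $\cL_W\,\ol{\del}_b\phi = W\contr d(\ol{\del}_b\phi) = W\contr\del_b\ol{\del}_b\phi$; on the other hand, because $W$ is transversally holomorphic it commutes with $\ol{\del}_b$, giving $\cL_W\,\ol{\del}_b\phi = \ol{\del}_b(\cL_W\phi)=\ol{\del}_b\bigl(W(\phi)\bigr)$. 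Comparing the two displays yields $W\contr\del_b\ol{\del}_b\phi=\ol{\del}_b\bigl(W(\phi)\bigr)$, hence the claimed contraction identity. It follows that $\ol{\del}_b\bigl(h^W+W(\phi)\bigr)=-\sqrt{-1}\,(W\contr\omega^T_\phi)$ in the same normalization, so $h^W+W(\phi)$ is a holomorphy potential for $W$ with respect to $\omega^T_\phi$, and is therefore determined by $W$ up to an additive constant.

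It remains to fix that constant, i.e. to verify the normalization $\int_M\bigl(h^W+W(\phi)\bigr)\,d\mu_\phi=0$. I would interpolate along the ray $\phi_s=s\phi$, $s\in[0,1]$, whose transversal forms $\omega^T_{s\phi}$ join $\omega^T$ to $\omega^T_\phi$ and whose associated potentials are, by the previous paragraph, $h^W+s\,W(\phi)$. Setting $F(s)=\int_M\bigl(h^W+sW(\phi)\bigr)\,d\mu_{s\phi}$, I would differentiate, using $\tfrac{d}{ds}d\mu_{s\phi}=m\,dd^c\phi\wedge(\omega^T_{s\phi})^{m-1}\wedge\eta$, and then apply the integration-by-parts formula of Proposition~\ref{prop:int-parts} to move $dd^c$ off $\phi$. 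The point is that the two resulting terms cancel once one rewrites $(W\phi)\,(\omega^T_{s\phi})^m$ as $\del_b\phi\wedge\bigl(W\contr(\omega^T_{s\phi})^m\bigr)$ — which is legitimate because $\del_b\phi\wedge(\omega^T_{s\phi})^m$ vanishes for degree reasons on the $m$-dimensional transversal space — and then substitutes $W\contr\omega^T_{s\phi}=\sqrt{-1}\,\ol{\del}_b\bigl(h^W+sW(\phi)\bigr)$. This gives $F'(s)\equiv 0$, and since $F(0)=0$ by the normalization of $h^W$, we conclude $F(1)=0$, which is the desired normalization.

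The main obstacle I anticipate is precisely this last normalization step: the cancellation in $F'(s)$ hinges on keeping the normalizing constants in the defining equation and in $dd^c=c\,\del_b\ol{\del}_b$ consistent throughout, so that the two boundary terms match exactly rather than up to a spurious factor, and on invoking the transversal Stokes identities of Proposition~\ref{prop:int-parts} in the foliated setting. The first-order part, by contrast, is routine once the commutation $[\cL_W,\ol{\del}_b]=0$ for transversally holomorphic basic $W$ is justified chart by chart.
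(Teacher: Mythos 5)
The paper does not actually prove this proposition: it declares the proof ``straight forward'' and refers to Futaki--Mabuchi \cite{FuMa95}. Your proposal supplies a genuine argument, and it is correct in both of its halves. The first step --- establishing $W\contr \del_b\ol{\del}_b\phi=\ol{\del}_b(W\phi)$ via Cartan's formula, using that $W$ is of type $(1,0)$ (so $W\contr\ol{\del}_b\phi=0$) and transversally holomorphic (so $\cL_W$ commutes with $\ol{\del}_b$) --- is exactly the standard mechanism, and it correctly identifies $h^W+W(\phi)$ as \emph{a} holomorphy potential for the same $W$ with respect to $\omega^T_\phi$, up to the constant conventions you flag. The normalization step is also right, and you have correctly isolated the only delicate point. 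One refinement: when you compute $F'(s)$, do not apply the symmetric formula of Proposition~\ref{prop:int-parts} in the form $\int h\,dd^c\phi\wedge(\cdot)=-\int dh\wedge d^c\phi\wedge(\cdot)$, because expanding $dh\wedge d^c\phi$ by type produces \emph{both} $\del_b\phi\wedge\ol{\del}_bh$ and $\del_bh\wedge\ol{\del}_b\phi$, and only the first cancels against $\int W(\phi)\,d\mu_{s\phi}$; the leftover term $\int\del_bh\wedge\ol{\del}_b\phi\wedge(\omega^T_{s\phi})^{m-1}\wedge\eta$ vanishes too, but that is not immediate. The clean route is the one-sided integration by parts you describe at the end: write $dd^c\phi$ as a multiple of $\ol{\del}_b\del_b\phi$, move only the $\ol{\del}_b$ onto $h^W_{s\phi}$ (legitimate in the foliated setting since $\int_M\ol{\del}_b\beta\wedge\eta=0$ for basic $(m,m-1)$-forms $\beta$, as $\beta\wedge d\eta=0$ for degree reasons), substitute the defining equation $\ol{\del}_bh^W_{s\phi}=-\sqrt{-1}\,W\contr\omega^T_{s\phi}$, and compare with the degree-reasons identity $(W\phi)(\omega^T_{s\phi})^m=m\,\del_b\phi\wedge(W\contr\omega^T_{s\phi})\wedge(\omega^T_{s\phi})^{m-1}$; the two terms then cancel exactly, for arbitrary (not necessarily real) holomorphy potentials. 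With that single clarification your argument is complete and is, in substance, the Futaki--Mabuchi proof the paper cites, transplanted to the transversal setting.
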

The proof is straight forward.  See for example~\cite{FuMa95}.  Note that for the extremal vector field $V$
$h^V_{\phi}$ is real valued for $\phi\in\cH^G$, since $\im W \in\fg$.

We define $\cE^V$ on $\cH^G$ as the unique functional with
\[ d\cE^V|_\phi (\dot{\phi}) =\int_M \dot{\phi} h^V_\phi (\omega^T_\phi)^m \wedge\eta. \]
This form is well-known to be closed.  Thus the definition
\begin{equation}\label{eq:extr-en}
\cE^V (\phi) =\int_0^1 \int_M  \dot{\phi}_t h^V_{\phi_t} (\omega^T_{\phi_t})^m \wedge\eta,
\end{equation}
where $\{\phi_t \ |\ 0\leq t\leq 1 \}$ is a smooth path in $\cH^G$ with $\phi_0 =0$ and $\phi_1 =\phi$,
is path independent.  There is a closed form formula for $\cE^V$, found by integrating (\ref{eq:extr-en})
along linear paths,
\begin{equation}\label{eq:extr-en2}
  \cE^V (\phi) = \frac{1}{(m+1)(m+2)}\int_M \phi\sum_{k=0}^m \bigl((n-k+1) h^V +(k+1)h^V_{\phi} \bigr)
  (\omega^T)^k \wedge(\omega_\phi )^{m-k} \wedge\eta.
\end{equation}
One can then uniquely extend $\cE^V$ to $\PSH(M,\omega^T)\cap C^1(M)$ by (\ref{eq:extr-en2}).  This
functional is then continuous in $C^1$ by Theorem~\ref{thm:weak-con}.

\begin{prop}\label{prop:extr-en-geo}
Let $\{ u_t \ |\ 0\leq t\leq 1 \}$ be a $C^{1,1}_w$ weak geodesic between $u_0, u_1 \in\cH$.  Then
$\cE^V$ is linear along $\{ u_t \}$, that is $\frac{d^2}{dt^2}\cE(u_t)=0$.
\end{prop}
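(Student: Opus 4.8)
The plan is to reduce everything to the clean second--variation identity
\[
\frac{d^2}{dt^2}\cE^V(u_t)=\int_M\Bigl(\ddot u_t-\tfrac12|d\dot u_t|^2_{\omega^T_{u_t}}\Bigr)h^V_{u_t}\,(\omega^T_{u_t})^m\wedge\eta,
\]
valid along an \emph{arbitrary} smooth path $\{u_t\}\subset\cH^G$, and then to observe that the bracketed factor is exactly the left side of the geodesic equation (\ref{eq:geo}). To derive the identity I would start from the first variation (\ref{eq:extr-en}), $\frac{d}{dt}\cE^V(u_t)=\int_M\dot u_t\,h^V_{u_t}(\omega^T_{u_t})^m\wedge\eta$, and differentiate once more, splitting the result into the three contributions coming from $\frac{d}{dt}\dot u_t=\ddot u_t$, from $\frac{d}{dt}h^V_{u_t}$, and from $\frac{d}{dt}\bigl((\omega^T_{u_t})^m\wedge\eta\bigr)=m\,dd^c\dot u_t\wedge(\omega^T_{u_t})^{m-1}\wedge\eta$.

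The decisive input is the preceding Proposition, which gives $h^V_\phi=h^V+V(\phi)$ and hence $\frac{d}{dt}h^V_{u_t}=V(\dot u_t)$; this formula holds \emph{because} $V\in\hol^T(\xi,J)_0$ is transversally holomorphic, and, $\phi$ being $G$--invariant and $h^V_{u_t}$ real, $V(\dot u_t)$ is the transversal Dirichlet pairing of $\dot u_t$ against $h^V_{u_t}$. I would then integrate the third contribution by parts with Proposition~\ref{prop:int-parts}: moving $dd^c$ off $\dot u_t$ produces the term $-\int_M h^V_{u_t}\tfrac12|d\dot u_t|^2_{\omega^T_{u_t}}(\omega^T_{u_t})^m\wedge\eta$ together with a cross term $-\int_M \dot u_t\,V(\dot u_t)\,(\omega^T_{u_t})^m\wedge\eta$ that cancels precisely the second contribution. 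This cancellation is the heart of the proof and the step I expect to be the most delicate, since it requires the derivative of the potential to match, up to sign, exactly the Dirichlet term manufactured by integration by parts; it is here, and only here, that $h^V$ being a genuine holomorphy potential (rather than the potential of an arbitrary gradient field) is used.

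Granting the identity, I would apply it to the smooth $\epsilon$--geodesics $\{u^\epsilon_t\}$ produced in \cite{GuaZha12} (see the discussion after Theorem~\ref{thm:weak-geo}), which satisfy $\bigl(\ddot u^\epsilon_t-\tfrac12|d\dot u^\epsilon_t|^2_{\omega^T_{u^\epsilon_t}}\bigr)(\omega^T_{u^\epsilon_t})^m=\epsilon(\omega^T)^m$. Substituting this collapses the identity to
\[
\frac{d^2}{dt^2}\cE^V(u^\epsilon_t)=\epsilon\int_M h^V_{u^\epsilon_t}\,(\omega^T)^m\wedge\eta,
\]
and since the potentials $h^V_{u^\epsilon_t}=h^V+V(u^\epsilon_t)$ are uniformly bounded (the $u^\epsilon_t$ are bounded in $C^{1,1}_w$, hence in $C^1$), the right side is $O(\epsilon)$ uniformly in $t$. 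Thus each $t\mapsto\cE^V(u^\epsilon_t)$ is within $O(\epsilon)$ of an affine function.

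Finally I would let $\epsilon\to0$. As recalled after Theorem~\ref{thm:weak-geo}, $u^\epsilon_t\to u_t$ in $C^{1,1}_w(N)$, in particular in $C^1$; and $\cE^V$ is continuous in the $C^1$ topology by its closed form (\ref{eq:extr-en2}) together with Theorem~\ref{thm:weak-con}. Hence $\cE^V(u^\epsilon_t)\to\cE^V(u_t)$, and a uniform limit of functions that are $O(\epsilon)$--close to affine is itself affine. Therefore $t\mapsto\cE^V(u_t)$ is affine, i.e.\ $\frac{d^2}{dt^2}\cE^V(u_t)=0$, which is the assertion.
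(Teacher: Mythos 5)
Your proposal is correct and follows essentially the same route as the paper: establish the second-variation identity $\frac{d^2}{dt^2}\cE^V(w_t)=\int_M(\ddot w_t-\tfrac12|d\dot w_t|^2_{\omega^T_{w_t}})h^V_{w_t}(\omega^T_{w_t})^m\wedge\eta$ for smooth paths, apply it to the $\epsilon$-geodesics to get an $O(\epsilon)$ bound, and pass to the limit using convergence in $C^1$ and Theorem~\ref{thm:weak-con}. The only (immaterial) difference is that the paper takes the limit at the level of the first derivatives $\frac{d}{dt}\cE^V(u^\epsilon_t)$, whereas you take it at the level of the functions $\cE^V(u^\epsilon_t)$ themselves.
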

\begin{proof}
Let $\{ u^\epsilon_t \ |\ 0\leq t\leq 1 \}$ for $\epsilon>0$ be $\epsilon$-geodesics.  Thus
they are smooth and increase monotonically to $\{ u_t \}$ as $\epsilon\searrow 0$.  The paths
$\{ u^\epsilon_t \ |\ 0\leq t\leq 1 \}$ converge weakly in $C^{1,1}_w$ to $\{ u_t \ |\ 0\leq t\leq 1 \}$;
in particular, they converge in $C^1$.

Given any smooth path $\{ w_t \}$ in $\cH$ we have
\begin{equation}
\frac{d}{dt} \int_M \dot{w}_t h^V_{w_t} (\omega_{w_t}^T )^m \wedge\eta =\int_M \bigl(\ddot{w}_t -\frac{1}{2}|d\dot{w}_t |^2_{\omega^T_{w_t}} \bigr)h^V_{w_t} (\omega_{w_t}^T)^m \wedge\eta.
\end{equation}
Thus
\begin{equation}\label{eq:der-extr-en}
\frac{d}{dt} \int_M \dot{u}^{\epsilon}_t h^V_{u^\epsilon_t} (\omega_{u^\epsilon_t}^T )^m \wedge\eta =\epsilon\int_M h^V_{u^\epsilon_t}(\omega^T)^m\wedge\eta.
\end{equation}
By Theorem~\ref{thm:weak-con}
\begin{equation}
\frac{d}{dt}\cE^V(u^\epsilon_t) =\int_M \dot{u}^\epsilon_t h^V_{u^\epsilon_t} (\omega_{u^\epsilon_t}^T )^m \wedge\eta
\longrightarrow \int_M \dot{u}_t h^V_{u_t} (\omega_{u_t}^T )^m \wedge\eta.
\end{equation}
And from (\ref{eq:der-extr-en}) there is a constant $C>0$ so that
\[ -\epsilon C\leq\frac{d}{dt}\cE^V(u^\epsilon_t)|_{t=b}-\frac{d}{dt}\cE^V(u^\epsilon_t)|_{t=a} \leq\epsilon C. \]
Thus $\frac{d}{dt}\cE^V(u_t)$ is constant.
\end{proof}

We define
\begin{equation}
\cM^V (\phi):=\cM(\phi) +\cE^V(\phi),\quad \phi\in\cH^G.
\end{equation}
Since we do not have strict convexity for $\cM^V$ it will again be necessary to modify it.  So we define
\begin{equation}
\cM^{V,t\mu}(\phi):= \cM^V (\phi) +t\cF^{\mu},\quad \phi\in\cH^G,
\end{equation}
for $t\in[0,\epsilon)$, where we now assume that $\mu$ is a smooth strictly positive $G$-invariant volume form.

Uniqueness of Sasaki-extremal structures will follow from the following, whose proof is similar to
Proposition~\ref{prop:csc-def}
\begin{prop}\label{prop:S-E-def}
Suppose $(\eta,\xi,\Phi,g)$ is Sasaki-extremal  Then there exists a structure
$g^* (\eta,\xi,\Phi,g)=(\eta_{\phi_0},\xi,\Phi_{\phi_0},g_{\phi_0})$ in the orbit of $P$ and a smooth path, starting at
$\phi_0$, $\phi\in C^\infty_b([0,\epsilon)\times M)$ with $\phi_t =\phi(t,\cdot)\in\cH$ and $\phi_t$ a critical
point of $\cM^{V,t\mu}$.
\end{prop}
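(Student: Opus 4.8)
The plan is to run the bifurcation argument of Proposition~\ref{prop:csc-def} essentially verbatim, but on the space $\cH^G$ of $G$-invariant potentials and with three systematic substitutions: the constant-scalar-curvature residue $\ol{S}-S_\phi$ is replaced by the extremal residue $\ol{S}+h^V_\phi-S_\phi$; the Lichnerowicz operator $\Li_{\phi_0}$ is replaced by the linearization $\mathcal{L}^V_{\phi_0}$ of that residue at $\phi_0$; and the $H$-orbit $\cO$ is replaced by the $P$-orbit $\cO_P=Z'/Z_0$. The space $\mathscr{H}_{\phi_0}$ of normalized holomorphy potentials, which supplied both the kernel and the cokernel in the cscS case, is now played by $\mathscr{K}_{\phi_0}:=\ker\mathcal{L}^V_{\phi_0}$ on $G$-invariant, $d\mu_{\phi_0}$-mean-zero functions. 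By Theorem~\ref{thm:aut-S-E} and Proposition~\ref{prop:norm-inc} I expect $\mathscr{K}_{\phi_0}$ to coincide with the normalized real $G$-invariant holomorphy potentials whose gradient lies in $\fz'=C_{\Ham}(\fg)$ modulo $\fz$, i.e.\ precisely the tangent space of $\cO_P$ at $\phi_0$.

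First I would fix the base point. By Proposition~\ref{prop:vol-twist-prop} the functional $\cF^\mu$ restricted to $\cO_P$ is proper with a unique minimum; after conjugating $(\eta,\xi,\Phi,g)$ by an element of $P$ I may take this minimizer to be $\phi_0$, which gives the asserted $g^*(\eta,\xi,\Phi,g)=(\eta_{\phi_0},\xi,\Phi_{\phi_0},g_{\phi_0})$. Criticality of $\cF^\mu$ along $\cO_P$ says exactly that $d\mu-(\omega^T_{\phi_0})^m\wedge\eta$ is $L^2(d\mu_{\phi_0})$-orthogonal to $\mathscr{K}_{\phi_0}$. Then, for $k\ge 0$, I would define on $G$-invariant H\"older potentials the map
\[
\cG(\phi,t)=\Bigl((\ol{S}+h^V_\phi-S_\phi)(\omega^T_\phi)^m\wedge\eta+t\bigl(d\mu-(\omega^T_\phi)^m\wedge\eta\bigr),\,t\Bigr),
\]
whose first component vanishes exactly when $\phi$ is a critical point of $\cM^{V,t\mu}$, with differential
\[
d\cG|_{(\phi_0,0)}(u,a)=\Bigl(\mathcal{L}^V_{\phi_0}u\,(\omega^T_{\phi_0})^m\wedge\eta+a\bigl(d\mu-(\omega^T_{\phi_0})^m\wedge\eta\bigr),\,a\Bigr).
\]
As in the cscS case this is neither injective nor surjective once $\fz'/\fz\ne 0$, which is what forces the bifurcation.

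Mirroring Proposition~\ref{prop:csc-def}, I would split $C_b^{k,\alpha}(M)^G\,d\mu$ as $\R d\mu_{\phi_0}\oplus\mathscr{K}_{\phi_0}d\mu_{\phi_0}\oplus\mathscr{K}^\perp_{\phi_0,k}d\mu_{\phi_0}$, solve the $\mathscr{K}^\perp_{\phi_0,k}$-component of $\cG=0$ by the implicit function theorem to produce $\Psi(u,t)$, and form the reduced bifurcation map $\tilde P(u,t)=P(u,t)/t$ by projecting the remaining component onto $\mathscr{K}_{\phi_0}d\mu_{\phi_0}$. The crux is the computation of $\tfrac{\partial}{\partial u}\tilde P|_{(0,0)}$ on $v\in\mathscr{K}_{\phi_0}$: using the extremal analogue of Lemma~\ref{lem:chen} from~\cite{CPZ15} together with $\mathcal{L}^V_{\phi_0}w=-f+1$, where $w=\tfrac{\partial}{\partial t}\Psi|_{(0,0)}$ and $f=d\mu/((\omega^T_{\phi_0})^m\wedge\eta)$, I expect to recover exactly the cscS identity
\[
\Bigl\langle\tfrac{\partial}{\partial u}\tilde P|_{(0,0)}(v),v\Bigr\rangle_{L^2}=\int_M\bigl(\del v,\ol{\del}v\bigr)_{\phi_0}\,d\mu>0
\]
for $v\neq 0$. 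Hence $\tfrac{\partial}{\partial u}\tilde P|_{(0,0)}$ is an isomorphism of $\mathscr{K}_{\phi_0}$, and a final implicit function theorem produces the smooth family $\phi_t=\phi_0+u_t+\Psi(u_t,t)\in\cH$ of critical points of $\cM^{V,t\mu}$.

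The main obstacle, relative to the cscS case, is the extra bookkeeping forced by the extremal field $V$. I must check that $\mathcal{L}^V_{\phi_0}$ is the correct self-adjoint, transversally elliptic operator on $G$-invariant H\"older spaces whose mean-zero kernel is exactly $\mathscr{K}_{\phi_0}$, and that the whole construction---$\Psi$, the path $u_t$, and the elliptic estimates behind the implicit function theorem---remains inside the $G$-invariant category; this I would secure by equivariance and $G$-averaging. More delicate is the positivity computation, where cross terms involving $V$ that were absent for cscS must cancel. The point is that the kernel directions $v\in\mathscr{K}_{\phi_0}$ have gradient in $\fz'=C_{\Ham}(\fg)$, which commutes with the extremal field $V\in\fg\oplus J\fg$ by Theorem~\ref{thm:aut-S-E}; this commutation is precisely what makes the additional $V$-terms in the extremal Lichnerowicz identity drop out and restores the clean quadratic form above. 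Granting this, strict convexity of $\cM^{V,t\mu}$ along weak geodesics then yields uniqueness as in \S~\ref{subsec:unique-cscS}.
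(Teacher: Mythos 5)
Your proposal follows essentially the same route as the paper: minimize $\cF^\mu$ on the $P$-orbit to fix $\phi_0$, set up $\cG^V$ with the extremal residue $\ol{S}-S_\phi+h^V_\phi$, split off the kernel of the linearization, and run the two-step implicit function theorem with the same positivity computation $\langle\frac{\del}{\del u}\tilde P|_{(0,0)}(v),v\rangle_{L^2}=\int_M(\del v,\ol{\del}v)_{\phi_0}\,d\mu>0$. The only cosmetic difference is that the paper identifies your $\mathcal{L}^V_{\phi_0}$ outright as the Lichnerowicz operator $\Li_{\phi_0}$ (since the residue vanishes at $\phi_0$, its linearization reduces to $\Li_{\phi_0}$ and the $V$-cross terms you worry about are already absorbed there), so the kernel is exactly the $G$-invariant normalized holomorphy potentials as you expect.
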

\begin{proof}
As before, by Proposition~\ref{prop:vol-twist-prop} there is a unique minimum $\phi_0 \in\cH^G$ of $\cF^{\mu}$
restricted to the orbit $\cO_P$ of $P$.

If we denote by $h^V$ the normalized holomorphy potential for $V$ with respect to $\omega^T$, then
$h^V =S_g -\ol{S}$, while for $\phi\in\cH^G$ the holomorphy potential with respect to $\omega^T_{\phi}$ is
\[ h^V_{\phi} =S_g -\ol{S} +V(\phi).  \]

We define $\cH^G_{k+4,\alpha} =\{\phi\in C_b^{k+4,\alpha}(M)^G \ |\ (\omega^T +dd^c \phi)^m \wedge\eta >0 \}$.
And we define a map
\begin{equation}
\begin{gathered}
\cG^V :\cH^G_{k+4,\alpha}\times[0,\epsilon)\longrightarrow C_b^{k,\alpha}(M)^G d\mu\times[0,\epsilon) \\
\cG^V (\phi,t) =\Bigl( (\ol{S}-S_{\phi} +h^V_{\phi})(\omega^T_{\phi})^m \wedge\eta +t(d\mu-(\omega^T_{\phi})^m \wedge\eta), t \Bigr)
\end{gathered}
\end{equation}
with differential
\begin{equation}
d\cG^V |_{(\phi_0,0)}(u,a) =\Bigl( \Li_{\phi_0}u (\omega^T_{\phi_0})^m \wedge\eta +a(d\mu-(\omega^T_{\phi_0})^m \wedge\eta ),a\Bigr).
\end{equation}
But as before
But
\[ d\cG^V|_{(\phi_0,0)}: C_b^{k+4,\alpha}(M)^G\times\R\longrightarrow C_b^{k,\alpha}(M)^G d\mu\times\R \]
is in general not surjective or injective.
Let $d\mu_{\phi_0 } =(\omega^T_{\phi_0})^m \wedge\eta$ and define
\begin{gather*}
\mathscr{H}^G_{\phi_0} :=\{ u\in C^\infty_b (M)^G\ |\ \Li_{\phi_0}(u)=0,\ \int u\, d\mu_{\phi_0 } =0 \} \\
\mathscr{H}^{\perp,G}_{\phi_0 ,k} :=\{ u\in C^{k,\alpha}_b (M)\ |\ \int uv\, d\mu_{\phi_0 }=0,\ \forall v\in\mathscr{H}_{\phi_0 }, \ \int u\, d\mu_{\phi_0 } =0 \}.
\end{gather*}
We have a splitting
\[  C_b^{k,\alpha}(M)^G d\mu =\R d\mu_{\phi_0}\oplus\mathscr{H}^G_{\phi_0}d\mu_{\phi_0}\oplus\mathscr{H}^{\perp,G}_{\phi_0 ,k}d\mu_{\phi_0}. \]
Define
\[ G^V(\phi,t) =(\ol{S}-S_{\phi}+h^V_{\phi})(\omega^T_{\phi})^m \wedge\eta +t(d\mu-(\omega^T_{\phi})^m \wedge\eta).  \]

As before the implicit function theorem gives $\Psi(u,t)\in\mathscr{H}^{\perp,G}_{\phi_0 ,k}$ such that
\[ \pi_2 \circ G^V (\phi_0 +u+\Psi(u,t),t)=0. \]
Differentiating with respect to $t$ gives
\begin{equation}\label{eq:S-E-diff}
\Li_{\phi_0}\frac{\del\Psi}{\del t}|_{(0,0)}(\omega^T_{\phi_0})^m \wedge\eta +(d\mu-(\omega^T_{\phi_0})^m \wedge\eta )=0,
\end{equation}
while differentiating with respect to $u$ gives
\begin{equation}
\frac{\del\Psi}{\del u}|_{(0,0)}(v) =0,\quad\text{for all }v\in\mathscr{H}^G_{\phi_0}.
\end{equation}
Define
\begin{equation}
\begin{gathered}
P(u,t):=\pi_1 \circ G^V(\phi_0 +u+\Psi(u,t),t)  \\
\tilde{P}(u,t):=\frac{P(u,t)}{t},\ t\in(0,\epsilon),\quad \tilde{P}(u,0):=\underset{t\rightarrow 0^+}{\lim}\frac{P(u,t)}{t}=\frac{\del P}{\del t}|_{(u,0)}.
\end{gathered}
\end{equation}

Set $w=\frac{\del\Psi}{\del t}|_{(0,0)}$.  Then the same computation as in Proposition~\ref{prop:csc-def}
\begin{equation}\label{eq:imp-S-E}
\begin{split}
\frac{\del}{\del u}\tilde{P}|_{(0,0)}(v)= & \pi_1 \Bigl(\bigl(\del v,\ol{\del}\Li_{\phi_0}w \bigr) \\
                            & +\Li_{\phi_0}(w)\Delta_{\phi_0}v(\omega^T_{\phi_0})^m \wedge\eta-\Delta_{\phi_0}v (\omega^T_{\phi_0})^m \wedge\eta  \Bigr).
\end{split}
\end{equation}
Using that (\ref{eq:S-E-diff}) gives $\Li_{\phi_0} w =-f+1 \in C^\infty_b(M)^G$, where $f$ is defined in (\ref{eq:vol-ratio}),
we get as before
\[ \Bigl\langle\frac{\del}{\del u}\tilde{P}|_{(0,0)}(v),v\Bigr\rangle_{L^2}=\int_M  \bigl(\del v,\ol{\del}v\bigr)_{\phi_0} d\mu >0 \]
unless $v=0$ for $v\in\mathscr{H}^G_{\phi_0}$.

By the implicit function theorem there exists $u_t \in\mathscr{H}^G_{\phi_0}$ with $P(u_t,t)=0$ and
$G^V(\phi_0 +u_t +\Phi(u_t,t),t)=0$.
\end{proof}

We now prove Corollary~\ref{corint:unique-Sasak-ext}.  First we apply the last part of Theorem~\ref{thm:aut-S-E}
act by an element of $\Fol(M,\mathscr{F}_\xi,J)$ so that
$\Aut(\eta_0,\xi,\Phi_0,g_0)_0 = \Aut(\eta_1,\xi,\Phi_1,g_1)_0 =G$.
The rest of the proof of Corollary~\ref{corint:unique-Sasak-ext} is nearly identical to that of
Corollary~\ref{corint:unique-cscS}.  One just makes use that $\cE^V$ is affine along weak geodesics.

We now prove Corollary~\ref{cor:Mab-rel-bound}.  Let $\{\phi_t \}$ be the weak $C_w^{1,1}$ geodesic connecting
$\phi_0,\phi_1 \in\cH^G$.  Lemma~\ref{lem:K-en-dif} gives
\begin{equation}\label{eq:diff-rel-K-en}
\frac{d}{dt}^V \cM(\phi_t)|_{t=0^+} \geq -\int_M (S^T_{\phi_0} -\ol{S}^T-h^V_{\phi_0})\frac{d\phi_t}{dt}|_{t=0^+}(\omega^T_{\phi_0})^m \wedge\eta.
\end{equation}
Note that for $\phi\in\cH^G$, if we denote by $S^G_\phi$ the reduced scalar curvature of $(\eta_\phi ,\xi,\Phi_\phi,g_\phi )$, then
\[ S^G_\phi =S^T_{\phi} -\ol{S}^T-h^V_{\phi}.\]
The rest of the proof follows from (\ref{eq:diff-rel-K-en}) just as in the proof of Corollary~\ref{cor:Mab-bound}.

\begin{rmk}
Corollary~\ref{cor:Mab-rel-bound} can be easily generalized to any compact group $K\subset G$ such that
$\im V \in K$.  The inequality then applies to two potentials $\phi_0,\phi_1 \in\cH^K$ and in the righthand
side the Calabi functional $\Cal^K_{M,\xi}(\phi) =\int_M \bigl( S_\phi^K \bigr)^2 \, d\mu_\phi$,
where the reduced scalar curvature $S_\phi^K $ is defined as in (\ref{eq:red-sc}).
\end{rmk}

We now prove Corollary~\ref{cor:unique-gen}.  Let $(\eta_0,\xi_0,\Phi_0, g_0),(\eta_1,\xi_1,\Phi_1,g_1)$ be two
Sasaki-extremal structures with Reeb foliation $\mathscr{F}$ with its given transversely holomorphic structure.

We consider the \emph{leafwise cohomology} defined by the complex
\[ 0\rightarrow C^\infty(M)\overset{d^\mathscr{F}}{\rightarrow}C^\infty\bigl(\Lambda^1 \mathscr{F}\bigr)\rightarrow 0,\]
where for $f\in C^\infty(M),\ d^\mathscr{F} f=df|_{T\mathscr{F}}$.  Any contact form $\eta$ of a Sasakian structure
compatible with $\mathscr{F}$, with its holomorphic structure defines a class $[\eta]_{\mathscr{F}} \in H^1(\mathscr{F})$.
$H^1(\mathscr{F})$ can be identified with the component $E_1^{0,1}(\mathscr{F})$ of the $E_1$-term of the spectral
sequence associated with $\mathscr{F}$ (cf.~\cite{KaTo83}).  Then $E_2^{0,1}(\mathscr{F})\subseteq E_1^{0,1}(\mathscr{F})$
consists of $d_1$-closed elements.  By~\cite[Cor. 4.7]{KaTo83} $\dim E_2^{0,1}(\mathscr{F})=1$.  Since $d_1 [\eta]_{\mathscr{F}} =0$
for any contact form of a compatible Sasakian structure, $E_2^{0,1}(\mathscr{F})$ is generated by $[\eta]_{\mathscr{F}}$.
Thus $[\eta_0 ]_{\mathscr{F}} =a[\eta_1 ]_{\mathscr{F}}$ for $a\neq 0$, and there exists $f\in C^\infty(M)$ with
\[ \eta_0 -a\eta_1 -df |_{T\mathscr{F}} =0.  \]

Define $\eta_t =\eta_0 +t(a\eta_1 -\eta_0)$ and define $X_t \in C^\infty(T\mathscr{F})$ by $\eta_t(X_t)=f$.
Then if $\Psi_t$ is the flow of $X_t$, an easy computation shows
\[ \frac{d}{dt}\Psi_t^* \eta_t |_{T\mathscr{F}} =0. \]
So $\eta_0 |_{T\mathscr{F}} = a\Psi_1^*\eta_1  |_{T\mathscr{F}}$.
It follows that $a>0$ and $\Psi_1^* (\eta_1,\xi_1,\Phi_1,g_1) =(a^{-1}\hat{\eta},a\xi_0,\hat{\Phi},\hat{g})$.

Given a Sasakian structure $(\eta,\xi,\Phi,g)$ the \emph{transverse homothety} by $a>0$ is the Sasakian structure
$(\eta_a,\xi_a,\Phi,g_a)$ with
\[ \eta_a =a\eta,\ \xi_a =a^{-1}\xi,\ \Psi_a =\Psi,\ g_a =ag+(a^2 -a)\eta\otimes\eta. \]
Since $g^T_a =ag^T$ the transverse Ricci curvature is unchanged $\Ric_{g^T_a} = \Ric_{g^T}$.  So a transverse
homothety of a cscS (respectively Sasaki-extremal) structure is cscS (respectively Sasaki-extremal).

Preforming a transverse homothety by $a>0$ we get $(\hat{\eta},\xi_0,\hat{\Phi},\hat{g}_a)$.
By Lemma~\ref{lem:trans-def}
\[ \hat{\eta} =\eta_0 +2d^c \phi +d\psi +\alpha, \]
where $\alpha\in\mathcal{H}^1_{g^T}$ is a transversal harmonic 1-form.
The exact component $d\psi$ is just given by a gauge transformation.  More precisely, if $b=\exp(-\psi\xi_0)$,
then $b^*\hat{\eta} =\eta_0 +2d^c \phi +\alpha$.  By Corollary~\ref{corint:unique-Sasak-ext} there is a
$g\in\Fol(\mathscr{F},J)$ with $g^* b^* \frac{1}{2}d\hat{\eta}=\frac{1}{2}d\eta_0$.

\subsection{Results on the $\alpha$-twisted case}

We prove uniqueness results for twisted constant scalar curvature metrics and, more generally, twisted
extremal metrics.  These metrics have been of interest in K\"{a}hler geometry~\cite{Sto09b,Che15} as a possible
approach to the general existence problem of constant scalar curvature metrics and their connection to
geometric stability.

In this section $\alpha$ will be any smooth, basic, strictly positive $(1,1)$-form on $M$.
A Sasakian structure $(\eta,\xi,\Phi,g)$ has constant $\alpha$-twisted scalar curvature if
\[ S_g^T -\tr_{\omega^T}\alpha =C_\alpha,\]
where $C_\alpha$ is a constant that depends only on the Sasakian structure and the basic cohomology
class $[\alpha] \in H^2_b(M,\R)$.  These metrics are precisely the Sasakian structures in
$\cS(\xi,J)$ which are critical points of
\begin{equation}\label{eq:K-E-twist}
\cM^{\alpha}(\phi) =\cM(\phi) +\cF^{\alpha}(\phi),\quad \phi\in\cH.
\end{equation}

\begin{thm}\label{thm:unique-acsc}
Any two $\alpha$-twisted constant scalar curvature structures in $\cS(\xi,J)$ have the same transversal
K\"{a}hler metric.
\end{thm}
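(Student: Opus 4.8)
The plan is to leverage the strict convexity that the strictly positive twist $\alpha$ injects into the modified Mabuchi functional $\cM^{\alpha}$. Unlike the untwisted cscS case treated in Corollary~\ref{corint:unique-cscS}, this should make the bifurcation/deformation machinery of \S\ref{sec:unique} unnecessary and yield equality of the transversal metrics on the nose. First I would set up the geodesic. Let $(\eta_0,\xi,\omega^T_0)$ and $(\eta_1,\xi,\omega^T_1)$ be the two $\alpha$-twisted cscS structures. Taking $\omega^T_0$ as base point, Lemma~\ref{lem:trans-def} lets me write $\omega^T_1 =\omega^T_0 +dd^c \phi_1$ with $\phi_1 \in\cH$ (the harmonic and gauge parts do not affect $\omega^T$), and I set $\phi_0 =0$. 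Let $\{u_t \ |\ 0\le t\le 1\}$ be the weak $C^{1,1}_w$ geodesic joining $\phi_0$ and $\phi_1$ from Theorem~\ref{thm:weak-geo}. By Theorem~\ref{thmint:conv-K-ener} the K-energy $\cM$ is convex along $\{u_t\}$, and by Proposition~\ref{prop:en-sec-var} so is $\cF^{\alpha}$; hence $\cM^{\alpha}=\cM +\cF^{\alpha}$ is convex along $\{u_t\}$.

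Next I would show $\cM^{\alpha}(u_t)$ is constant. Both endpoints are critical points: the twisted cscS condition $S^T_{u_j}-\tr_{u_j}\alpha =C_{\alpha}$, combined with the cohomological identities $\int_M (S^T_{u_j}-\ol{S}^T)\,d\mu_{u_j}=0$ and $\int_M (\tr_{u_j}\alpha -c_{\alpha})\,d\mu_{u_j}=0$, forces the density $S^T_{u_j}-\ol{S}^T -\tr_{u_j}\alpha +c_{\alpha}$ to be a constant of zero average, hence identically zero. Combining Lemma~\ref{lem:K-en-dif} for $\cM$ with the first variation of $\cF^{\alpha}$ from Corollary~\ref{cor:en-first-var}, the one-sided derivative satisfies
\[
\frac{d}{dt}\cM^{\alpha}(u_t)\big|_{t=0^+}\ \geq\ -\int_M \bigl(S^T_{u_0}-\ol{S}^T -\tr_{u_0}\alpha +c_{\alpha}\bigr)\dot u_0 \, d\mu_{u_0}\ =\ 0,
\]
and symmetrically $\frac{d}{dt}\cM^{\alpha}(u_t)|_{t=1^-}\le 0$. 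Since the derivative of a convex function is non-decreasing, these two inequalities pinch it to zero, so $\cM^{\alpha}(u_t)$ is constant. Being a sum of two convex functions that is affine, $\cF^{\alpha}(u_t)$ must itself be affine, i.e. $\frac{d^2}{dt^2}\cF^{\alpha}(u_t)=0$.

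Finally I would extract $\omega^T_0 =\omega^T_1$ from the strict convexity of $\cF^{\alpha}$. Along a smooth geodesic the geodesic equation kills the first term of the second-variation formula~(\ref{eq:2nd-var-alph}), leaving
\[
\frac{d^2}{dt^2}\cF^{\alpha}(u_t)=\int_M \bigl(d\dot u_t \wedge d^c \dot u_t ,\alpha\bigr)_{\omega^T_{u_t}}(\omega^T_{u_t})^m \wedge\eta,
\]
which, as $\alpha>0$, is nonnegative and vanishes only where $d\dot u_t =0$. Because $\{u_t\}$ is only $C^{1,1}_w$, I would run this along the smooth $\epsilon$-geodesics $u^{\epsilon}_t$ approximating $\{u_t\}$: there the factor $\ddot u^{\epsilon}_t-\tfrac12|d\dot u^{\epsilon}_t|^2$ equals a controlled $O(\epsilon)$, so integrating the second variation over $[0,1]$ gives
\[
\frac{d}{dt}\cF^{\alpha}(u^{\epsilon}_t)\big|_{t=1}-\frac{d}{dt}\cF^{\alpha}(u^{\epsilon}_t)\big|_{t=0}=O(\epsilon)+\int_0^1 \!\!\int_M \bigl(d\dot u^{\epsilon}_t \wedge d^c \dot u^{\epsilon}_t ,\alpha\bigr)\,d\mu_{u^{\epsilon}_t}\,dt.
\]
Passing $\epsilon\to 0$, the left side tends to $0$ by the affineness just established, and the nonnegative integrand on the right is forced to vanish, so $d\dot u_t \equiv 0$. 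Hence $\dot u_t$ is transversally constant, $dd^c \dot u_t =0$, and $\frac{d}{dt}\omega^T_{u_t}=dd^c \dot u_t =0$; the transversal metric is frozen along the geodesic, giving $\omega^T_0 =\omega^T_1$.

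The main obstacle is precisely this last step: the clean second-variation identity and the pointwise conclusion $d\dot u_t =0$ are literally valid only for smooth paths, whereas a weak $C^{1,1}_w$ geodesic need not be smooth. The technical work lies in the $\epsilon$-geodesic passage to the limit, in particular justifying the weak convergence of the measures $\bigl(d\dot u^{\epsilon}_t \wedge d^c \dot u^{\epsilon}_t ,\alpha\bigr)\,d\mu_{u^{\epsilon}_t}$ through Theorem~\ref{thm:weak-con} and confirming that the limiting integrand is genuinely positive, so that strict convexity survives onto the weak geodesic. Note that, in contrast to Corollary~\ref{corint:unique-cscS}, the strict positivity of $\alpha$ destroys the flat directions arising from transversely holomorphic automorphisms, which is exactly why the conclusion is $\omega^T_0 =\omega^T_1$ rather than merely equality up to $\Fol(\mathscr{F}_\xi,J)$.
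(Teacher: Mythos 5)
Your strategy up to the affineness of $\cF^{\alpha}(u_t)$ is correct and is a genuinely different (and more direct) route than the paper's: the endpoint computation showing $S^T_{u_j}-\ol{S}^T-\tr_{u_j}\alpha+c_{\alpha}\equiv 0$ is right, the pinching of the convex function $\cM^{\alpha}(u_t)$ between the two one-sided derivatives is right, and the deduction that each convex summand of an affine sum is affine is right. The gap is in the final step, where you need \emph{strict} convexity of $\cF^{\alpha}$ along the \emph{weak} geodesic. The paper deliberately claims only that $\cF^{\alpha}$ is convex along weak geodesics and strictly convex along \emph{smooth} geodesics, and your $\epsilon$-geodesic limit does not close this. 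What you obtain in the limit is that $\int_0^1\int_M \bigl(d\dot u^{\epsilon}_t\wedge d^c\dot u^{\epsilon}_t,\alpha\bigr)_{\omega^T_{u^{\epsilon}_t}}(\omega^T_{u^{\epsilon}_t})^m\wedge\eta\,dt\to 0$, but the positive integrand is weighted by the Monge--Amp\`ere measure $(\omega^T_{u^{\epsilon}_t})^m\wedge\eta$, which admits no uniform positive lower bound as $\epsilon\to 0$ (the $\epsilon$-geodesic equation only fixes the product $(\ddot u-\tfrac12|d\dot u|^2)(\omega^T_u)^m=\epsilon(\omega^T)^m$). Diagonalizing $\omega^T_{u}$ against $\omega^T$ with eigenvalues $\lambda_i\le C$, that integrand is bounded below only by $c\sum_i |\dot u_i|^2\,\lambda_i^{-1}\prod_{j\ne i}\lambda_j$, whose coefficients degenerate whenever some $\lambda_j\to 0$ with $j\ne i$ (so for $m\ge 2$ the vanishing of the integral does not force $d\dot u_t=0$, only its vanishing where the transversal form stays nondegenerate). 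For the same reason your ``$O(\epsilon)$'' estimate on the first term of~(\ref{eq:2nd-var-alph}) is not uniform, since it equals $\epsilon\int\bigl[\tr_{\omega^T_{u^{\epsilon}_t}}\alpha-c_\alpha\bigr](\omega^T)^m\wedge\eta$ and $\tr_{\omega^T_{u^{\epsilon}_t}}\alpha$ can blow up.

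Contrast this with $\cF^{\mu}$ in Proposition~\ref{prop:vol-twist-con}: there the positive second-variation term is integrated against the \emph{fixed} measure $\mu\ge A(\omega^T)^m\wedge\eta$, which combined with the Poincar\'e inequality yields the quantitative bound $\hat C\, d(\omega^T_{u_a},\omega^T_{u_b})^2$ that survives the weak limit. This is exactly why the paper, even in the twisted case, still perturbs to $\cM^{\alpha,t\mu}=\cM^{\alpha}+t\cF^{\mu}$ and produces a path of critical points by the implicit function theorem --- simpler here than in Proposition~\ref{prop:csc-def}, because $\alpha>0$ makes the linearization $dG|_{\phi_0}$ injective and it is Fredholm of index zero, so no bifurcation analysis on a kernel is needed --- and only then runs your pinching argument, with the strict convexity supplied by the $t\cF^{\mu}$ term, before letting $t\to 0$. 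To repair your proof you would either need to establish strict convexity of $\cF^{\alpha}$ along degenerate weak geodesics (not known) or fall back on the $\cF^{\mu}$-deformation. Your closing remark that positivity of $\alpha$ removes the automorphism ambiguity is correct and is reflected in the injectivity of $dG|_{\phi_0}$ in the paper's argument.
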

\begin{proof}
As before, we consider a perturbed Mabuchi functional
\begin{equation}\label{eq:K-E-alpha-per}
\cM^{\alpha, t\mu} := \cM^{\alpha} +t\cF^{\mu}.
\end{equation}
Suppose that $\phi_0 \in\cH$ is such that $\int\phi_0 d\mu_{\phi_0} =0$ and $\omega^T_{\phi_0}$ has constant
$\alpha$-twisted scalar curvature.
Define
\[ G :\tilde{\cH}^{k+4,\alpha} \rightarrow \tilde{C}^{k,\alpha}_b(M)d\mu_{\phi_0}, \]
where $\tilde{\cH}^{k+4,\alpha}=\{\phi\in C_b^{k+4,\alpha}\ |\ (\omega^T +dd^c \phi)^m \wedge\eta >0,\ \int\phi\, d\mu_{\phi_0} =0  \}$, and $\tilde{C}^{k,\alpha}_b(M)d\mu_{\phi_0}$ is the subspace with integral zero, by
\[ G(\phi)=\bigl(\ol{S}^T -S_{\phi}^T \bigr)(\omega^T_{\phi})^m \wedge\eta +m\alpha\wedge(\omega^T_{\phi})^{m-1}\eta -C_{\alpha}(\omega^T_{\phi})^m \wedge\eta.  \]
We compute
\begin{equation}
dG|_{\phi_0} (u) =\Bigl( \Li_{\phi_0}(u) +\i\bigl(\ol{\del}u,\ol{\del}^* \alpha\bigr)_{\phi_0} -\bigl( \i\del\ol{\del}u,\alpha \bigr)_{\phi_0} \Bigr)(\omega^T_{\phi_0})^m \wedge\eta,
\end{equation}
where we have used that $S^T_{\phi_0}$ satisfies the $\alpha$-twisted cscS equation.
Integrating by parts gives
\[\Bigl\langle dG|_{\phi_0} (u),u \Bigr\rangle_{L^2}=\int_M u\Li_{\phi_0}(u)\, d\mu_{\phi_0}
+\int_M \bigl(\i\del u\wedge\ol{\del},\alpha \bigr)_{\phi_0} \, d\mu_{\phi_0} >0, \]
unless $u$ is constant.
Since $dG|_{\phi_0}$ transversely is elliptic, it is Fredholm.  And since it differs from $\Delta^2_{\phi_0}$
by a compact operator its index is zero, and is therefore an isomorphism.

Define
\[ \cG :\tilde{\cH}^{k+4,\alpha}\times[0,\epsilon) \rightarrow \tilde{C}^{k,\alpha}_b(M)d\mu_{\phi_0}\times[0,\epsilon), \]
\[ \cG(\phi,t)=\bigl(G(\phi)+ t(d\mu -(\omega^T_{\phi})^m \wedge\eta),t \bigr).\]
Thus
\[d\cG|_{\phi_0}(u,a) =\bigl( dG|_{\phi_0}(u) +a(d\mu -(\omega^T_{\phi})^m \wedge\eta),a \bigr) \]
is an isomorphism.  The implicit function theorem then gives a path $\{\phi_t \ |\ t\in[0,\epsilon) \}$
in $\cH$ with $\phi_t$ a critical point of $\cM^{\alpha,t\mu}$.
The proof is completed just as that of Corollary~\ref{corint:unique-cscS} using that $\cF^\alpha$ is convex
along weak geodesics.
\end{proof}

A Sasakian structure $(\eta,\xi,\Phi,g)$ is $\alpha$-twisted extremal if
\[ S_g^T -\tr_{\omega^T}\alpha =\mathscr{H}_g .\]
Thus the left hand side is $h^V +C_\alpha$, where $h^V$ is the normalized holomorphy potential.
Since it is a real potential, $\del^{\#}h^V =V\in\Ham$ has $\im V$ preserving $(\eta,\xi,\Phi,g)$.
\begin{lem}
We have $\cL_{\im V} \alpha =0$.  Thus if $K\subset\Aut(\eta,\xi,\Phi,g)$ is the closure of
$\{\exp(s\xi),\exp(t\im V)\ |\ s,t\in\R \}$.  Then $\alpha$ is $K$-invariant.
\end{lem}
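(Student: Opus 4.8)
The plan is to differentiate the twisted extremal equation
\[ S_g^T -\tr_{\omega^T}\alpha =h^V +C_\alpha \]
along the flow of $\im V$. Since $\im V\in\aut(\eta,\xi,\Phi,g)$, its flow preserves $g$ and the entire Sasakian structure, hence it preserves the transversal metric $g^T$, the transversal K\"{a}hler form $\omega^T$, and the transversal scalar curvature $S_g^T$; in particular $\cL_{\im V}S_g^T=0$. Next I would record that $(\im V)h^V=0$: a direct computation from $\ol{\del}_b h^V=-\sqrt{-1}\,V\contr\omega^T$ (together with $h^V$ real) shows $\im V\contr\omega^T=\tfrac12\,dh^V$, so that $(\im V)h^V=dh^V(\im V)=2\,\omega^T(\im V,\im V)=0$ by antisymmetry, where the $\xi$-component of $\im V$ contributes nothing because $h^V$ is basic. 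Applying $\cL_{\im V}$ to the equation thus gives $\cL_{\im V}(\tr_{\omega^T}\alpha)=0$, and since $\im V$ preserves $\omega^T$ the trace commutes with the Lie derivative, so that $\tr_{\omega^T}(\cL_{\im V}\alpha)=0$.

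It then remains to upgrade vanishing of the trace to vanishing of the full form. The observation is that $\cL_{\im V}\alpha=d(\im V\contr\alpha)$ is exact (because $d\alpha=0$), is basic, and is of pure type $(1,1)$ (because $\im V$ is transversally holomorphic, so $\cL_{\im V}$ preserves $\del$, $\ol{\del}$, and the bidegree). By the transversal $\del\ol{\del}$-lemma for the K\"{a}hler foliation $(\mathscr{F}_\xi,J)$, an exact basic $(1,1)$-form is $dd^c$-exact, so $\cL_{\im V}\alpha=dd^c\phi$ for some $\phi\in C^\infty_b(M)$. Taking the trace and using $dd^c=2\sqrt{-1}\,\del\ol{\del}$ gives, up to a positive constant, $\Delta_b\phi=\tr_{\omega^T}(\cL_{\im V}\alpha)=0$, whence $\phi$ is constant by compactness and $\cL_{\im V}\alpha=0$. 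The final assertion follows immediately: $\alpha$ is annihilated by $\xi$ because it is basic and by $\im V$ by what we have just shown, so $\alpha$ is invariant under the group generated by $\{\exp(s\xi),\exp(t\im V)\}$, and by continuity under its closure $K$.

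The commutation facts (of $\cL_{\im V}$ with $\tr_{\omega^T}$, with $\del,\ol{\del}$, and with $\Delta_b$) are routine, all resting on $\im V$ being a transversally holomorphic Killing field. The step carrying the real content, and the one I would be most careful about, is the passage from $\tr_{\omega^T}(\cL_{\im V}\alpha)=0$ to $\cL_{\im V}\alpha=0$: this is precisely where the positivity and the otherwise arbitrary nature of $\alpha$ drop out, and where the transversal $\del\ol{\del}$-lemma (equivalently, basic Hodge theory on the compact Sasakian manifold) is indispensable. As a backup in case one prefers to avoid quoting that lemma, I would instead Hodge-decompose $\alpha=\alpha_{\mathrm{harm}}+dd^c f$; then $\cL_{\im V}\alpha_{\mathrm{harm}}=0$ since the isometry $\exp(t\im V)$ fixes the harmonic representative, reducing to $\cL_{\im V}\alpha=dd^c((\im V)f)$, after which the identical trace argument finishes the proof.
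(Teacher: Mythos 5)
Your proof is correct, and its main line of argument is genuinely different from the paper's, though both ultimately rest on the same two inputs: basic Hodge theory (the transversal $\del\ol{\del}$-lemma) on the compact Sasakian manifold, and the vanishing of basic harmonic functions up to constants. The paper decomposes \emph{first}: it averages $\alpha$ over $K$ to get an invariant representative $\hat{\alpha}$ with $\alpha=\hat{\alpha}+dd^c\psi$, rewrites the twisted extremal equation as $S_g-\tr_{\omega^T}\hat{\alpha}-\Delta_{\omega^T}\psi=h^V+C_\alpha$, and only then applies $\cL_{\im V}$, so that everything collapses to $\Delta_{\omega^T}(\im V(\psi))=0$ and hence $\im V(\psi)=0$. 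You differentiate \emph{first}, obtaining $\tr_{\omega^T}(\cL_{\im V}\alpha)=0$ from the same invariance facts ($\cL_{\im V}S_g^T=0$, $(\im V)h^V=0$), and then apply the $\del\ol{\del}$-lemma to the exact basic $(1,1)$-form $\cL_{\im V}\alpha=d(\im V\contr\alpha)$ itself to write it as $dd^c\phi$ with $\Delta_b\phi=0$. Your route has the minor advantage that you only need $\phi$ to be constant, whereas the paper's conclusion $\im V(\psi)=0$ strictly requires the extra (easy, but unstated) observation that the constant $\im V(\psi)$ has zero average; on the other hand the paper's route avoids having to check that $\cL_{\im V}$ preserves bidegree and exactness of basic forms, since the only object it differentiates is a function. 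Your ``backup'' argument, with the harmonic representative in place of the $K$-average, is essentially the paper's proof verbatim, so you have in effect recovered both. One small point worth making explicit in either version: after showing $\im V(\psi)=0$ (or $\phi$ constant) one still needs $\cL_{\im V}dd^c\psi=dd^c(\im V(\psi))$, which uses that the flow of $\im V$ preserves the transversal complex structure and hence commutes with $d^c$ on basic functions; you state this commutation, the paper leaves it implicit.
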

\begin{proof}
Averaging $\alpha$ with respect to $K$ gives a $K$-invariant $\hat{\alpha}$ with $[\hat{\alpha}]=[\alpha]$.
So there exists $\psi\in C_b^\infty(M)$ with $\alpha =\hat{\alpha} +dd^c \psi$.  Then the
$\alpha$-twisted extremal equation becomes
\[ S_g -\tr_{\omega^T}\hat{\alpha} -\Delta_{\omega^T} \psi =h^V +C_\alpha. \]
Taking the Lie derivative gives $\Delta_{\omega^T}\im V(\psi) =0$, which implies
$\im V(\psi) =0$.
\end{proof}

We are able to prove a partial uniqueness result for $\alpha$-twisted extremal structures
by modifying the proof of Theorem~\ref{thm:unique-acsc}.
\begin{thm}\label{thm:unique-aextr}
Any two $\alpha$-twisted extremal structures in $\cS(\xi,J)$ with
$\del^{\#}( S_g^T -\tr_{\omega^T}\alpha)=V$ have the same transversal K\"{a}hler metric.
\end{thm}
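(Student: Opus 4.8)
The plan is to modify the deformation argument of Theorem~\ref{thm:unique-acsc} so as to incorporate the extremal vector field $V$. By the lemma preceding the statement each $\alpha$-twisted extremal structure is invariant under the compact group $K$ generated by $\xi$ and $\im V$, and $\alpha$ is $K$-invariant; so I work throughout on the space $\cH^K$ of $K$-invariant potentials. The correct functional is $\cM^{V,\alpha}:=\cM+\cE^V+\cF^{\alpha}$ on $\cH^K$. Computing the first variation from $d\cE^V|_{\phi}(\psi)=\int_M\psi\,h^V_{\phi}\,(\omega^T_{\phi})^m\wedge\eta$ (see~(\ref{eq:extr-en})) together with (\ref{eq:K-en-sm}) and the variation of $\cF^{\alpha}$, one finds that $\phi\in\cH^K$ is a critical point of $\cM^{V,\alpha}$ exactly when $S^T_{\phi}-\tr_{\omega^T_{\phi}}\alpha-h^V_{\phi}$ is constant, i.e.\ when $\omega^T_{\phi}$ is $\alpha$-twisted extremal with extremal field $V$.

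Convexity of $\cM^{V,\alpha}$ along weak geodesics is immediate from the pieces already established: $\cM$ is convex by Theorem~\ref{thmint:conv-K-ener}, $\cE^V$ is affine by Proposition~\ref{prop:extr-en-geo}, and $\cF^{\alpha}$ is convex by Proposition~\ref{prop:en-sec-var}. Since $\cM^{V,\alpha}$ is not known to be strictly convex, I perturb exactly as in Theorem~\ref{thm:unique-acsc}, setting $\cM^{V,\alpha,t\mu}:=\cM^{V,\alpha}+t\cF^{\mu}$ for a smooth strictly positive $K$-invariant measure $\mu$; this is strictly convex along weak geodesics by Proposition~\ref{prop:vol-twist-con}.

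The substantive step is to produce, via the implicit function theorem, a smooth path $\phi_t\in\cH^K$ of critical points of $\cM^{V,\alpha,t\mu}$ issuing from a given $\alpha$-twisted extremal $\phi_0$. This amounts to linearizing $G^V(\phi)=(\ol S^T-S^T_{\phi}+\tr_{\omega^T_{\phi}}\alpha+h^V_{\phi})(\omega^T_{\phi})^m\wedge\eta$ at $\phi_0$. The fourth-order part is again $\Li_{\phi_0}$, but in contrast to the twisted cscS case two first-order contributions now survive,
\[ dG^V|_{\phi_0}(u)=\Bigl(\Li_{\phi_0}u+\i(\ol\del u,\ol\del^{*}\alpha)_{\phi_0}-(\i\del\ol\del u,\alpha)_{\phi_0}+V(u)-\tfrac12(\del S^T_{\phi_0},\del u)_{\phi_0}\Bigr)(\omega^T_{\phi_0})^m\wedge\eta, \]
namely the transport term $V(u)$ coming from $h^V_{\phi}=h^V+V(\phi)$ and the scalar-curvature gradient term $-\tfrac12(\del S^T_{\phi_0},\del u)_{\phi_0}$, which no longer vanishes since $S^T_{\phi_0}$ is not constant.

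The main obstacle is precisely these first-order terms, which carry no sign and so destroy the manifest positivity of the $L^2$ pairing that made the argument of Theorem~\ref{thm:unique-acsc} work. I expect to resolve it using the twisted extremal identity $\del^{\#}(S^T-\tr_{\omega^T}\alpha)=V$: it lets one reorganize the surviving first-order terms as the transport part of the \emph{weighted} Lichnerowicz operator $\Li^V_{\phi_0}$, which is self-adjoint and nonnegative with respect to the weighted measure $e^{h^V}d\mu_{\phi_0}$ and has kernel the $K$-invariant holomorphy potentials $\mathscr{H}^K_{\phi_0}$. In the weighted pairing
\[ \bigl\langle dG^V|_{\phi_0}(u),u\bigr\rangle_{e^{h^V}d\mu_{\phi_0}}=\int_M u\,\Li^V_{\phi_0}u\;e^{h^V}d\mu_{\phi_0}+\int_M(\i\del u\wedge\ol\del u,\alpha)_{\phi_0}\,e^{h^V}d\mu_{\phi_0}, \]
both terms are nonnegative and the $\alpha$-twisting term is strictly positive unless $\del u=0$; so the pairing is positive for every non-constant $u$, and in particular the twisting removes the degeneracy of $\Li^V_{\phi_0}$ on $\mathscr{H}^K_{\phi_0}$. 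As $dG^V|_{\phi_0}$ is transversely elliptic and differs from $\tfrac14\Delta^2_{\phi_0}$ by a compact operator it has index zero, whence positivity forces it to be an isomorphism on mean-zero $K$-invariant potentials and, in contrast to Proposition~\ref{prop:S-E-def}, no bifurcation is required. The implicit function theorem then yields the path $\phi_t$, and uniqueness of the transversal metric follows exactly as in Corollary~\ref{corint:unique-cscS}: joining two such deformation paths by weak geodesics, the one-sided derivatives of $\cM^{V,\alpha,t\mu}$ at the endpoints are $\ge0$ and $\le0$ by the extremal form of Lemma~\ref{lem:K-en-dif}, while the strict convexity supplied by $\cF^{\mu}$ forces the connecting geodesic to be trivial, giving $\omega^T_0=\omega^T_1$.
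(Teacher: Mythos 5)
Your overall architecture coincides with the paper's: Theorem~\ref{thm:unique-aextr} is proved there by running the argument of Theorem~\ref{thm:unique-acsc} ``mutatis mutandis'' on $K$-invariant data, with the preceding lemma on $K$-invariance of $\alpha$ supplying the setting, the perturbation $t\cF^{\mu}$ supplying strict convexity, and an index-zero/positivity argument for the linearization replacing the bifurcation analysis of Proposition~\ref{prop:S-E-def}. Your identification of the critical points of $\cM+\cE^V+\cF^{\alpha}$, the convexity bookkeeping, and the endgame via Lemma~\ref{lem:K-en-dif} and strict convexity of $\cF^{\mu}$ all match the paper.

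The one step where you depart from the paper is the positivity of the linearization, and it is exactly the step you leave unverified (``I expect to resolve it\dots''). Two concrete issues. First, your formula for $dG^V|_{\phi_0}(u)$ appears to double-count: the operator $\Li_{\phi_0}$ of (\ref{eq:hol-oper}) already contains the first-order term $(\del S^T)\contr\del_g^{\#}u$, and at a twisted extremal metric $\del^{\#}S^T=\del^{\#}\tr_{\omega^T}\alpha+V$, so the transport term $V(u)$ and the scalar-curvature gradient term are precisely what is needed to reassemble $\Li_{\phi_0}u+\i(\ol{\del}u,\ol{\del}^*\alpha)_{\phi_0}-(\i\del\ol{\del}u,\alpha)_{\phi_0}$ --- the same expression as in Theorem~\ref{thm:unique-acsc} --- rather than extra terms sitting on top of it. Once this is observed, the unweighted pairing $\langle dG^V|_{\phi_0}(u),u\rangle_{L^2}=\int_M u\Li_{\phi_0}(u)\,d\mu_{\phi_0}+\int_M(\i\del u\wedge\ol{\del}u,\alpha)_{\phi_0}\,d\mu_{\phi_0}>0$ for nonconstant $u$ is immediate, and no weighted measure is needed. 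Second, if you insist on the weighted route, the claimed identity for $\langle dG^V|_{\phi_0}(u),u\rangle_{e^{h^V}d\mu_{\phi_0}}$ requires proof: integrating the $\alpha$-terms by parts against $e^{h^V}d\mu_{\phi_0}$ produces cross terms in $\del h^V$ with no sign, and these are not accounted for. So the mechanism you propose has a genuine gap at its crux, even though the conclusion you are driving at --- injectivity, hence invertibility by the index-zero argument, of $dG^V|_{\phi_0}$ on mean-zero $K$-invariant potentials, with no bifurcation needed --- is correct and is exactly what the paper's ``mutatis mutandis'' amounts to.
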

The proof goes through just as Theorem~\ref{thm:unique-acsc}, mutatis mutandis.
Unlike the untwisted extremal case there is no reason for the vector field $V$ to
be an invariant of the polarization.

We remark that versions of Propositions~\ref{prop:csc-def} (respectively~\ref{prop:S-E-def}) involving
the deformed Mabuchi functional $\cM^{t\alpha}=\cM +t\cF^\alpha$
(respectively $\cM^{V,t\alpha}=\cM^V +\cF^\alpha$) can be proved following the same method as the
above proofs, as long as $[\alpha]=[\omega^T ]$ in basic cohomology.

\bibliographystyle{amsplain}

\providecommand{\bysame}{\leavevmode\hbox to3em{\hrulefill}\thinspace}
\providecommand{\MR}{\relax\ifhmode\unskip\space\fi MR }
\providecommand{\MRhref}[2]{%
  \href{http://www.ams.org/mathscinet-getitem?mr=#1}{#2}
}
\providecommand{\href}[2]{#2}

\end{document}